\def\thmt@refnamewithcomma #1#2#3,#4,#5\@nil{%
  \@xa\def\csname\thmt@envname #1utorefname\endcsname{#3}%
  \ifcsname #2refname\endcsname
    \csname #2refname\expandafter\endcsname\expandafter{\thmt@envname}{#3}{#4}%
  \fi
}
\declaretheorem[numberwithin=section]{theorem}
\declaretheorem[sibling=theorem]{proposition}
\declaretheorem[sibling=theorem]{corollary}
\declaretheorem[sibling=theorem]{lemma}
\declaretheorem[sibling=theorem,style=definition]{definition}
\declaretheorem[sibling=theorem,style=remark]{remark}
\declaretheorem[sibling=theorem,style=remark]{example}
\newcommand{\Cyl}[1]{\,\,\,\ldbrack{#1}\rdbrack}
\newcommand{\seq}[1]{{\left\langle{#1}\right\rangle}}
\newcommand{\rest}[1]{\!\! \upharpoonright_{#1}}
\newcommand{\tth}{{}^{\textup{th}}}
\newcommand{\conc}{\hat{\,\,}}
\newcommand{\andd}{\,\,\,\&\,\,\,}
\DeclareMathOperator{\dom}{dom}
\DeclareMathOperator{\id}{id}
\newcommand{\converge}{\!\!\downarrow}
\newcommand{\emptystring}{{\seq{}}}
\newcommand{\w}{\omega}
\newcommand{\s}{\sigma}
\newcommand{\vphi}{\varphi}
\renewcommand{\le}{\leqslant}
\renewcommand{\ge}{\geqslant}
\renewcommand{\preceq}{\preccurlyeq}
\renewcommand{\succeq}{\succcurlyeq}
\newcommand{\nle}{\nleqslant}
\newcommand{\Tur}{\textup{\scriptsize T}}
\newcommand{\PP}{{\mathbb{P}}}
\newcommand{\QQ}{{\mathbb{Q}}}
\newcommand{\GG}{{{G}}}
\newcommand{\pp}{{\mathbf{p}}}
\newcommand{\qq}{{\mathbf{q}}}
\newcommand{\rr}{{\mathbf{r}}}
\renewcommand{\ss}{{\mathbf{s}}}
\newcommand{\Quick}{{\mathcal{Q}}}
\newcommand{\DNC}{{\textup{DNC}}}
\newcommand\force{\Vdash}
\newcommand{\CC}{\mathcal{C}}
\newcommand{\+}[1]{{\boldsymbol{#1}}}
\newcommand{\length}[1]{\ell(#1)}
\newcommand{\chop}[1]{#1\!\!\downarrow}
\newcommand{\cchop}[1]{#1\downarrow}
\newcommand{\interpret}[1]{\left\lfloor #1 \right\rfloor}
\newcommand{\project}[3]{\pi^{#1}_{#2}(#3)}
\newcommand{\Split}[4]{#1\text{\textup{-Sp}}^{#2}_{#3}(#4)}
\DeclareDocumentCommand\Strings{ g }
	{ \IfNoValueF{#1}{\left(} \w^{<\w}
		\IfNoValueF{#1}{\right)^{#1}}
	}
\DeclareDocumentCommand\Baire{ g }
	{ \IfNoValueF{#1}{\left(} \w^{\w}
		\IfNoValueF{#1}{\right)^{#1}}
	}
\newcommand {\revmathfont}[1]{\mathsf{#1}}
\newcommand {\naughtsys}[1]{{\revmathfont{#1}}_0}
\newcommand\WKLz{\naughtsys{WKL}}
\newcommand\WWKLz{\naughtsys{WWKL}}
\newcommand\DNRz{\naughtsys{DNR}}
\title{DNR and incomparable Turing degrees}
\author{Minzhong Cai}
\address{Department of Mathematics, Dartmouth College, Hanover, NH 03755, USA}
\email{Mingzhong.Cai@dartmouth.edu}
\author{Noam Greenberg}
\address{School of Mathematics Statistics and Operations Research, Victoria University of Wellington, P.O.~Box 600, Wellington, New Zealand}
\email{greenberg@msor.vuw.ac.nz}
\urladdr{\url{http://homepages.mcs.vuw.ac.nz/~greenberg/}}
\author{Michael McInerney}
\address{School of Mathematics Statistics and Operations Research, Victoria University of Wellington, P.O.~Box 600, Wellington, New Zealand}
\email{michael.mcinerney@msor.vuw.ac.nz}
\thanks{Cai was supported by NSF Grant DMS-1458061; Greenberg was supported by the Marsden Fund and a Rutherford Discovery Fellowship from the Royal Society of New Zealand, and by a Turing Research Fellowship ``Mind, Mechanism and Mathematics'' from the John Templeton Foundation.}
\begin{document}

\begin{abstract}
We construct an increasing $\w$-sequence $\seq{\+{a}_n}$ of Turing degrees which forms an initial segment of the Turing degrees, and such that each~${\+{a}_{n+1}}$ is diagonally noncomputable relative to $\+a_n$. It follows that the~$\mathsf{DNR}$ principle of reverse mathematics does not imply the existence of Turing incomparabile degrees.
\end{abstract}

\maketitle

\section{Introduction}

In~\cite{KuceraSlaman:TuringIncomparabilitScottSets}, Ku\v{c}era and Slaman solved a long-standing open problem by showing that no Scott set is ``hourglass-shaped'': if $\mathcal S$ is a Scott set of reals and $x\in \mathcal S$ is noncomputable then there is some $y\in \mathcal S$ which is Turing incomparable with~$x$. In other words, Turing incomparability holds in every $\w$-model of the system $\WKLz$ (weak K\"{o}nig's lemma) --- the system ensuring the existence of completions of Peano Arithmetic. This was improved by Conidis~\cite{Conidis:WWKL} to show that the statement holds in~$\w$-models of the weaker system $\WWKLz$, the system which ensures the existence of a Martin-L\"{o}f random set. A prominent system below $\WWKLz$ is $\DNRz$, the system which ensures the existence of a \emph{diagonally noncomputable function} (DNC): a function $f\colon \w\to\w$ which disagrees with the Turing Jump function (for example $J(e)= \vphi_e(e)$) on the latter's domain. These functions were introduced by Jockusch~\cite{Jockusch:DNR}, who showed that their Turing degrees conicide with the degrees of fixed-point-free functions, those functions which escape the recursion (fixed-point) theorem. The two systems~$\WWKLz$ and $\DNRz$ were first separated by Ambos-Spies et al.~\cite{A-S_K-H_L_S:DNRandWWKL}. They used a tame version of the ``bushy tree'' forcing technique first used by Kumabe in his construction of a fixed-point-free minimal degree (see~\cite{KumabeLewis}). In this paper we extend this technique to show:

\begin{theorem} \label{thm:main}
	There is an initial segment $\+a_1 < \+a_2 < \+a_3 < \cdots$ of the Turing degrees such that each~$\+a_{n+1}$ is a~$\DNC$ degree relative to $\+a_n$.
\end{theorem}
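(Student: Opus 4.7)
I build the sequence inductively. Setting $A_0 = \emptyset$, and given $A_n$, I construct $A_{n+1}$ by a bushy-tree forcing argument relative to $A_n$ that delivers: (i) $A_{n+1} \geq_\Tur A_n$ uniformly; (ii) $A_{n+1}$ computes a function which is DNC relative to $A_n$; (iii) $A_{n+1}$ is a \emph{strong minimal cover} of $A_n$, meaning every $B \leq_\Tur A_{n+1}$ satisfies either $B \leq_\Tur A_n$ or $B \equiv_\Tur A_{n+1}$. A straightforward induction on $n$, using (iii) at each step, shows that the Turing degrees $\leq \+{a}_n$ are exactly $\+{0} < \+{a}_1 < \cdots < \+{a}_n$, so $\seq{\+{a}_n}$ is an initial segment of the Turing degrees. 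The strict inequality $\+{a}_n < \+{a}_{n+1}$ follows from (ii) together with Jockusch's theorem that a DNC function relative to $A_n$ is never computable from $A_n$.

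The forcing conditions at stage $n$ are pairs $(\s, T)$ with $\s \in \w^{<\w}$ and $T$ an $A_n$-computable $k$-bushy tree above $\s$ --- every node of $T$ extending $\s$ has at least $k$ immediate successors in $T$ --- with the bushiness parameter $k$ allowed to grow through the construction. The dual notion of ``$A_n$-smallness'' (sets containing no $k$-bushy subtree) drives every decision. Since the set of functions failing to be DNC relative to $A_n$ is $A_n$-computably small, thinning to bushy subtrees automatically preserves the DNC requirement. For strong minimality I interleave the splitting dichotomy from Kumabe's construction of a DNC minimal degree: for each Turing functional $\Psi$, either $T$ can be thinned to an $A_n$-computable bushy subtree of $\Psi$-splittings (whence $\Psi^{A_{n+1}} \geq_\Tur A_{n+1}$ by the standard recovery argument), or to a bushy subtree on which $\Psi$ has no splittings (whence $\Psi^{A_{n+1}}$, if total, is computed by $A_n$ by $A_n$-computably searching the tree).

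The principal technical obstacle is proving this dichotomy \emph{while preserving bushiness}. Naively, $\Psi$-splittings may be too sparse inside $T$ to yield a bushy subtree of splittings, yet the non-splitting behaviour may not cover enough of $T$ to produce smallness of the splitting set in the needed sense. The Kumabe--Lewis combinatorics, refined in the tame version of Ambos-Spies et al., handles this by a two-level counting argument that trades a controlled loss of bushiness for either abundance of splittings or smallness of the non-splitting set. Calibrating the bushiness parameters so that they cohere through the $\w$-iteration --- so that DNC preservation and strong minimality both survive at every level, with $A_{n+1}$ uniformly computing $A_n$ --- is the technical heart of the construction. Once the inductive step is in place, the sequence $\seq{\+{a}_n}$ is assembled routinely and the initial-segment claim is immediate.
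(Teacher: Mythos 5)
There is a genuine gap, and it is located exactly where the whole difficulty of the theorem lives: the inductive step "given $A_n$, force with $A_n$-computable bushy trees to obtain a strong minimal cover $A_{n+1}$" cannot be carried out as described. The Kumabe--Lewis splitting dichotomy, relativised to a fixed oracle $A_n$, gives you in the splitting case a recovery procedure that computes the new generic $y$ from $\Gamma(A_{n+1})$ \emph{together with} $A_n$ — i.e.\ $\Gamma(A_{n+1})\oplus A_n\ge_\Tur A_{n+1}$, which is the statement that $A_{n+1}$ is a minimal cover of $A_n$. For a \emph{strong} minimal cover you must show $\Gamma(A_{n+1})\ge_\Tur A_{n+1}$ outright, and in particular recover $A_n$ itself from $\Gamma(A_{n+1})$. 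That requires $\Gamma$-splittings between nodes lying over \emph{different} candidates for $A_n$; once $A_n$ has been fixed as an oracle and your trees only branch in the new coordinate, no such splittings exist to be found. Without the strong version, degrees strictly below $\+a_{n+1}$ need not lie below $\+a_n$, and the initial-segment claim collapses. There is also a prior obstruction: not every degree has a strong minimal cover at all (any degree with the cupping property has none), so the existence of a strong minimal cover of $A_n$ is a property that must be secured \emph{while building} $A_n$, not afterwards for an already-fixed $A_n$. Your induction, in which $A_n$ is completely determined before stage $n+1$ begins, cannot retroactively arrange this.

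The paper resolves both problems by abandoning iterated relativised forcing in favour of building all $n$ coordinates simultaneously: the conditions in $\PP_n$ carry a single ground-model-computable \emph{tree system} (a tree in the first coordinate together with coherently end-extending finite trees attached to its nodes, iterated $n$ times), so that splittings across different first-coordinate nodes remain available. Two separate splitting lemmas are then needed — a "local" one yielding $\Gamma(\+x)\oplus x_1\ge_\Tur x_2$ and a "global" one yielding $\Gamma(\+x)\ge_\Tur x_1$ — and the concatenation property, which you invoke implicitly when you speak of thinning to bushy subtrees, actually fails for tree systems and must be replaced by a weaker version. Finally, the forcings $\PP_n$ for different $n$ are glued by restriction maps $\QQ_n\to\QQ_{n-1}$ on dense subsets, and one generic for the union produces the whole $\w$-sequence coherently; this replaces your "routine assembly," which presupposes the (unavailable) step-by-step induction. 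Your overall architecture — DNC preservation via small c.e.\ bad sets, plus a splitting/non-splitting dichotomy per functional — is the right skeleton, but the relativised one-coordinate-at-a-time formulation of the minimality argument is not repairable without reintroducing simultaneous control of all coordinates.
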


\begin{corollary} \label{cor:main}
	The system~$\DNRz$ does not imply Turing incomparability, in fact it does not imply the existence of a pair of Turing incomparable reals.
\end{corollary}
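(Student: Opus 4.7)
The plan is to prove Theorem~\ref{thm:main}; Corollary~\ref{cor:main} follows immediately by taking the $\omega$-model generated by the constructed sequence, so I focus on the theorem.

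I would build the sequence $A_1, A_2, \ldots$ (with $A_n \in \mathbf{a}_n$) by iterated bushy-tree forcing, relativised at each stage. At stage~$n$, working in~$A_{n-1}$ (where $A_0 = \emptyset$), I construct $A_n$ as a path through an $A_{n-1}$-computable bushy tree $T_n \subseteq \omega^{<\omega}$. The DNC requirement is met by forbidding, at the $\ell$-th level of~$T_n$, the single value $\varphi_\ell^{A_{n-1}}(\ell)$ when defined. By the fundamental combinatorial lemma of Kumabe and Ambos-Spies et al., forbidding an $A_{n-1}$-small set at each level preserves the existence of an $A_{n-1}$-computable bushy subtree, so every path through the resulting $T_n$ is DNC relative to~$A_{n-1}$; in particular $\mathbf{a}_n > \mathbf{a}_{n-1}$.

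The real work is meeting, at stage~$n$, the minimal-cover-style requirements that enforce the initial segment property: for each Turing functional $\Phi$, either $\Phi^{A_n}$ is partial, or $\Phi^{A_n} \leq_T A_{n-1}$, or $\Phi^{A_n} \geq_T A_n$. Given such a requirement at a condition $T$, I would seek an $A_{n-1}$-computable bushy subtree $T' \subseteq T$ on which $\Phi$ exhibits ``wide splittings''---at each node $\sigma$ there are enough incomparable extensions in $T'$ on which $\Phi$ takes distinct values---allowing $A_n$ to be decoded from $\Phi^{A_n}$ by searching within $T'$, so that $A_n \leq_T \Phi^{A_n} \oplus A_{n-1} \equiv_T \Phi^{A_n}$. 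If no such $T'$ exists, the ``big/small'' combinatorics of bushy tree forcing yield an $A_{n-1}$-computable bushy subtree on which $\Phi$-values are pinned down by $A_{n-1}$-computable information, so $\Phi^{A_n} \leq_T A_{n-1}$. The main obstacle is making this splitting/non-splitting dichotomy genuinely work for bushy trees: in classical perfect-tree (Spector) forcing the dichotomy follows from K\"onig's lemma applied to subtrees of $2^{<\omega}$, but here the trees are only ``large in a counting sense'', so the argument must be reformulated with $k$-big versus $k$-small sets of strings, and the decoding in the splitting case requires that the subtrees be bushy enough (for a growing parameter $k$) to make the search effective. Assembling the requirements by a standard fusion of a descending sequence of bushy conditions, with bushiness parameters chosen to grow sufficiently fast, yields $A_n$ and completes the construction.

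Finally, for Corollary~\ref{cor:main}, the $\omega$-model $\mathcal{M} = \{X : \exists n\; X \leq_T A_n\}$ is a Turing ideal; any $X \in \mathcal{M}$ lies below some~$A_n$, and $A_{n+1}$, being DNC relative to~$A_n$, is also DNC relative to~$X$, so $\mathcal{M} \models \DNRz$. But the Turing degrees realised in~$\mathcal{M}$ form the linear chain $\mathbf{0} < \mathbf{a}_1 < \mathbf{a}_2 < \cdots$, which contains no incomparable pair; hence $\DNRz$ does not imply the existence of Turing incomparable reals.
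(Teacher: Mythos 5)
Your reduction of the corollary to \cref{thm:main} is correct and is exactly what the paper intends: the $\w$-model $\mathcal M=\{X: \exists n\, X\le_\Tur A_n\}$ is a Turing ideal satisfying $\DNRz$ (any $X\in\mathcal M$ is computable from some $A_n$, and a $\DNC^{A_n}$ function computes a $\DNC^{X}$ function), while the initial segment property forces every member of $\mathcal M$ to have degree in the chain $\mathbf 0<\mathbf a_1<\mathbf a_2<\cdots$, so $\mathcal M$ contains no Turing incomparable pair.

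The gap is in your sketch of the theorem itself. A stage-by-stage construction in which $A_n$ is generic for $A_{n-1}$-computable bushy trees yields, at best, that $\mathbf a_n$ is a \emph{minimal cover} of $\mathbf a_{n-1}$: your splitting-case decoding gives $A_n\le_\Tur \Phi^{A_n}\oplus A_{n-1}$, and the step ``$\Phi^{A_n}\oplus A_{n-1}\equiv_\Tur \Phi^{A_n}$'' is precisely what is not justified. For the initial segment property you need each $\mathbf a_n$ to be a \emph{strong} minimal cover of $\mathbf a_{n-1}$: if $\Phi^{A_n}$ is total and not computable from $A_{n-1}$, it must compute $A_{n-1}$ (and hence, by the minimal cover property, $A_n$); otherwise $\deg_\Tur(\Phi^{A_n})$ could be incomparable with $\mathbf a_{n-1}$, destroying the initial segment. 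Forcing with a single $A_{n-1}$-computable tree after $A_{n-1}$ has been fixed cannot achieve this, because the splittings must encode not only which branch of the tree for $A_n$ is taken but also $A_{n-1}$ itself; that requires splittings across different values of the lower coordinates, i.e.\ the tuple $(A_1,\dots,A_n)$ must be constructed jointly. This is exactly why the paper forces with length-$n$ tree systems and distinguishes local splittings (\cref{lem:two_steps:finding_local_splittings}, which give only the minimal cover) from global splittings (\cref{lem:two_steps:getting_many_global_splittings}, which give the strong minimal cover), and why coherence of the successive stages then becomes a genuine issue, handled by the restriction maps of \cref{prop:n_step:restriction_maps}. A secondary problem with relativising to $A_{n-1}$ is the one the paper notes about Khan's construction: the set of strings that are not $\DNC^{A_{n-1}}$ is only c.e.\ in $A_{n-1}$, so it cannot serve as the c.e.\ bad set attached to a computable tree; the tree systems sidestep this because $B_{\DNC^n}$, which queries finite strings as oracles, is genuinely c.e.
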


We prove \cref{thm:main} in four steps. The third step (in \cref{sec:step_n}) provides the construction, for each~$n<\w$, of an initial segment $\+a_1 < \cdots < \+a_n$ of the desired infinite sequence $\seq{\+a_k}$. The fourth and last step (in \cref{sec:proof_of_the_main_theorem}) shows how to string these constructions together and so prove \cref{thm:main}. The first two steps serve as an introduction to the construction of \cref{sec:step_n}. In \cref{sec:step_one} we recast Kumabe's construction in the language of forcing that we subsequently use. In \cref{sec:step_two} we discuss the case $n=2$ (the construction of a minimal~$\DNC$ degree~$\+a_1$ and a strong minimal cover~$\+a_2$ of~$\+a_1$ which is $\DNC$ relative to~$\+a_1$).

\subsection{Quick-growing functions} \label{subsec:quick}

Below we use trees (or tree systems) which are fairly ``bushy'' but associated with them we will have sets of ``bad'' strings which we want to avoid. In the first step we use infinite trees and for example declare every string which is not DNC to be bad. We then extend the bad set of strings when we force divergence or force a functional to be constant on a tree. We cannot simply remove the bad strings from the tree because the trees will be computable whereas the set of bad strings will be c.e. To ensure that most strings are not bad, and that the construction can proceed, we will require that the tree is~$h$-bushy and that the bad set of strings is~$b$-small above the stem of the tree, where~$h$ grows much more quickly than the order-function~$b$. Here we discuss the notion of relative quickness that we will use.

For an equivalence notion of rate of growth we close under relative elementary recursive functions. (We could use relative primitive recursive functions but this is not needed.) For any order function~$h$ one defines the class of order functions which are obtained from~$h$ using a list of rules such as substitution and bounded summation and multiplication.

We are only concerned with rates of growth. If~$h$ grows sufficiently quickly then~$g$ is bounded by a function elementary in~$h$ if and only if it is dominated by an iterated composition of~$h$ with itself. In particular, the elementary recursive functions are those which are bounded by iterated exponentials.

It will be convenient to consider functions that may be undefined on a finite initial segment of~$\w$.

\begin{definition} \label{def:quick-growing-functions}
	Let~$\Quick$ denote the collection of nondecreasing computable functions~$h\colon \w \to  [2,\w)$ satisfying $h(n)\ge 2^{n}$ for all~$n$. \end{definition}

For~$h\in \Quick$ let $h^{(1)}=h$ and for~$k\ge 1$, $h^{(k+1)} = h\circ h^{(k)}$. For two functions~$h$ and~$g$ in~$\Quick$ we say that~$h$ \emph{majorises}~$g$ if $h(n)\ge g(n)$ for all~$n$ (and write $h\ge g$). We say that $h\ge g$ \emph{above~$m$} if $h(n)\ge g(n)$ for all $n\ge m$. We say that~$h$ \emph{dominates}~$g$ if $h\ge g$ above some~$m$ (and write $h\ge^* g$).

\medskip

We will use the fact that iterated exponentials of~$h$ are dominated by iterates of~$h$. For example:

\begin{example} \label{lem:bounded_products}
	Let~$h\in \Quick$. Let $g(n) =  \prod_{m\in [0,n)} h(m)$. Then $g\le^* h^{(3)}$. For $g \le h^h$ whereas $h^{(2)} \ge  2^h$ and $h^{(3)}\ge 2^{2^h}$, and $2^{2^h}\ge^* h^h$.
\end{example}

\begin{definition} \label{def:much_larger}
	Let $h,g\in \Quick$. We say that~$h$ \emph{dominates the iterates of~$g$ uniformly}, and write $h\gg g$, if there is a computable sequence~$\seq{d_k}$ such that 
	for all~$k\ge 1$, $h\ge g^{(k)}$ on the interval $(d_k,\w)$.
\end{definition}


 The relation~$\gg$ on~$\Quick$ is transitive. Indeed if $h\gg g$, $h'\ge^* h$ and $g\ge^* g'$ then $h'\gg g'$. Further, $h\gg g^{(k)}$ for all~$k$, and so for example $h\gg 2^g$.

\smallskip

The following density lemma will be used to keep extending conditions.

\begin{lemma} \label{lem:density_of_gg}
	For all $h,g\in \Quick$ such that $h\gg g$ there is some $f\in \Quick$ such that $h \gg f \gg g$.
\end{lemma}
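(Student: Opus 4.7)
The plan is to construct $f$ as a ``slow iterate'' $f(n) = g^{(\ell(n))}(n)$ of $g$, where $\ell$ is a computable function that tends to infinity very slowly. The two desired conclusions impose competing demands on $\ell$: it must be unbounded so that $f$ dominates every $g^{(m)}$, yet grow slowly enough that iterating $f$ only modestly inflates the exponent, so that iterates of $f$ remain dominated by $h$. Balancing these will be the main technical obstacle.

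To set up, I would first thicken the witness sequence $\seq{d_k}$ for $h\gg g$ to a computable sequence $\seq{e_k}$ with $e_k > d_k$ satisfying the \emph{sparsity condition} $e_{k+2} > g^{(k)}(e_{k+1})$, which is arranged by induction. Then set $k(n) = \max\{k : e_k \leq n\}$ (and $k(n)=0$ for $n < e_1$), $\ell(n) = \max\{1, \lfloor \sqrt{k(n)} \rfloor\}$, and $f(n) = g^{(\ell(n))}(n)$. One verifies routinely that $f \in \Quick$: it is computable, nondecreasing (since $\ell$ and $g$ are), and $f(n) \geq g(n) \geq 2^n$. The domination $f \gg g$ is immediate: whenever $k(n) \geq m^2$ we have $\ell(n) \geq m$, so $f(n) \geq g^{(m)}(n)$, and hence $\seq{e_{m^2}}$ witnesses $f \gg g$.

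The heart of the argument is $h \gg f$. The sparsity condition guarantees that a single application of $f$ can increase $k$ by at most $1$: since $n < e_{k(n)+1}$, we have $f(n) \leq g^{(k(n))}(e_{k(n)+1}) < e_{k(n)+2}$, so $k(f(n)) \leq k(n) + 1$. By induction $k(f^{(j)}(n)) \leq k(n) + j$, and writing $f^{(m)}(n) = g^{(s_m(n))}(n)$ we obtain $s_m(n) = \sum_{j<m} \ell(f^{(j)}(n)) = O(m\sqrt{k(n) + m})$. Once $k(n)$ exceeds a fixed multiple of $m^2$, this sum is at most $k(n)$, so $f^{(m)}(n) \leq g^{(k(n))}(n) \leq h(n)$ (using $n > e_{k(n)} > d_{k(n)}$). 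A witness for $h \gg f$ is therefore $\seq{e_{Cm^2}}$ for a suitable absolute constant $C$. The square-root choice of $\ell$ is what gives enough slack for both competing conditions to hold simultaneously.
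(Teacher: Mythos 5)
Your proof is correct, and it is the same basic construction as the paper's: take $f$ to be a ``slow iterate'' of $g$, with a quadratic trade-off between the level of $f$'s iterates and the level of $g$'s iterates. The difference is only in the bookkeeping. The paper defines $f$ piecewise, setting $f=g^{(k)}$ on $(a_{k-1},a_k]$ with $a_k=g^{(k^2)}(d_{(k+1)^2})$, and bounds iterates via the observation that if $f\le g^{(k)}$ on $[0,g^{(k^2)}(e)]$ then $f^{(k)}\le g^{(k^2)}$ on $[0,e]$; you instead put the square root into the exponent ($f(n)=g^{(\lfloor\sqrt{k(n)}\rfloor)}(n)$) and control iterates through the sparsity condition $e_{k+2}>g^{(k)}(e_{k+1})$, which guarantees that each application of $f$ advances the level counter $k(\cdot)$ by at most one, so that $f^{(m)}(n)=g^{(s)}(n)$ with $s\le m\sqrt{k(n)+m}\le k(n)$ once $k(n)\gtrsim m^2$. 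Both devices deliver exactly the same estimate; yours is arguably a little more transparent about \emph{why} the square root is the right rate, at the cost of tracking the counter $k$ along the orbit of $f$. The only loose end is the boundary case $k(n)=0$ (where $\ell(n)=1>k(n)$, so the step $f(n)\le g^{(k(n))}(e_{k(n)+1})$ fails as literally written); this affects only the finitely many $n<e_1$ and never enters the induction for $n$ beyond your witness points $e_{Cm^2}$, but it is worth a sentence to dismiss it.
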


\begin{proof}
The idea is to gradually let~$f$ copy~$g^{(k)}$. If~$f$ is bounded by~$g^{(k)}$ for a long time, then for a shorter time we can ensure that $f^{(k)}$ is bounded by $g^{(k^2)}$, so we do this until the point where~$h$ starts to majorise~$g^{((k+1)^2)}$, and only then start copying~$g^{(k+1)}$.

Since~$g$ is nondecreasing and dominates the identity, each~$g^{(k)}$ is nondecreasing and $g^{(k+1)}\ge g^{(k)}$.

Let $k\ge 1$, $e\ge 0$ and let~$f$ be a function. Suppose that $f \le g^{(k)}$ on the interval $[0,g^{(k^2)}(e)]$ (actually the interval $[0,g^{(k^2-k)}(e)]$ will suffice). Then $f^{(k)}\le g^{(k^2)}$ on the interval $[0,e]$: by induction on $j\le k$ we see that $f^{(j)}\le g^{(kj)}$ on the interval $[0,g^{(k(k-j))}(e)]$.

Let~$\seq{d_k}$ witness that $h\gg g$. We may assume that~$\seq{d_k}$ is nondecreasing.

We define a computable sequence $-1= a_0 \le a_1 \le \cdots$ and then define~$f$ by letting $f = g^{(k)}$ on the interval $(a_{k-1},a_k]$. So the sequence $\seq{a_{k-1}}$ witnesses that $f \gg g$. But also $f\le g^{(k)}$ on the interval $[0,a_k]$ for all $k\ge 1$. So we let $a_k = g^{(k^2)}(d_{(k+1)^2})$. This ensures that $f^{(k)}\le g^{(k^2)}$ on $[0,d_{(k+1)^2}]$, which in turn shows that $h\ge f^{(k)}$ on the interval $(d_{k^2},d_{(k+1)^2}]$. Since $f\in \Quick$, $f^{(m)}\ge f^{(k)}$ if $m\ge k$, so the sequence $\seq{d_{k^2}}$ witnesses that $h\gg f$.
\end{proof}

\subsection{Other notation and conventions} \label{subsec:conventions}

A \emph{string} is a finite sequence of natural numbers, an element of~$\w^{<\w}$. If~$\s$ is a string then we let $\s^{\preceq}$ be the collection of strings which extend~$\s$, and $[\s]^\prec$ be the set of elements of Baire space~$\w^\w$ which extend~$\s$. If~$C$ is a set of strings then $C^\preceq = \bigcup_{\s\in C} \s^\preceq$ and so $[C]^\prec = \bigcup_{\s\in C}[\s]^\prec$.

We may assume that for any Turing functional~$\Gamma$ and for any string~$\tau$, the domain of~$\Gamma(\tau)$ is downwards closed. Thus~$\Gamma$ determines a monotone computable map $\tau\mapsto \Gamma(\tau)$ from strings to strings, which induces a partial computable function on Baire space: $\Gamma(x) =  \bigcup \{ \Gamma(\tau)\,:\,\tau\prec x\}$.

We let lowercase Greek letters denote strings, lowercase Roman letters denote elements of Baire space, and uppercase Roman letters denote sets of strings and sometimes subsets of Baire space.

\subsection{Compactness, splittings and computability} \label{subsec:compactness_splitting}

\begin{definition} \label{def:computably_bounded}
	A subset~$X$ of Baire space is \emph{computably bounded} if some computable function majorises every element of~$X$.
\end{definition}

Every computably bounded and closed subset of Baire space is compact.

\smallskip

The following is well-known.

\begin{lemma} \label{lem:1-1_or_constant_on_compact_space}
	Let~$X\subseteq \w^\w$ be $\Pi^0_1$ and computably bounded; let $f\colon X\to 2^\w$ be a computable function.
	\begin{itemize}
		\item If~$f$ is constant on~$X$ then this constant value is computable.
		\item If~$f$ is 1-1 on~$X$ then for all $x\in X$, $x\equiv_\Tur f(x)$.
	\end{itemize}
\end{lemma}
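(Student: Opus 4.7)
The plan is to use effective compactness. Since~$X$ is $\Pi^0_1$ and bounded by a computable~$g$, there is a computable decreasing sequence of clopen sets $C_0 \supseteq C_1 \supseteq \cdots$, each a finite union of cones $[\sigma]^\prec$ with~$\sigma$ coordinate-wise bounded by~$g$, with $X = \bigcap_s C_s$. Equivalently, for any computably bounded $\Pi^0_1$ class~$Y$, the emptiness of $Y \cap [\sigma]^\prec$ is a $\Sigma^0_1$ predicate of~$\sigma$, and this relativises to any oracle. Both parts then reduce to standard continuity arguments.

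For the first bullet, let~$y$ be the common value of $f = \Gamma$ on~$X$. Continuity of~$\Gamma$ on~$X$, together with compactness of~$X$, yields for each~$n$ a length~$\ell$ such that $X \subseteq U_{\ell,n} := \bigcup\{[\sigma]^\prec : |\sigma| = \ell,\, \Gamma(\sigma)(n)\converge = y(n)\}$, and then some~$C_s \subseteq U_{\ell,n}$ by compactness. Because $U_{\ell,n}$ is a union of level-$\ell$ cones, any $g$-bounded~$\sigma$ of length~$\ell$ meeting~$C_s$ lies in the defining set of~$U_{\ell,n}$, so $\Gamma(\sigma)(n)\converge = y(n)$. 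Hence the computable search for~$s$ and~$\ell$ such that every $g$-bounded~$\sigma$ of length~$\ell$ meeting~$C_s$ satisfies $\Gamma(\sigma)(n)\converge$ with all values agreeing terminates, and its common output is~$y(n)$.

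For the second bullet, $f(x) \le_\Tur x$ is immediate from the computability of~$f$. For the converse, compute~$x$ from $y = f(x)$ as follows. Since~$f$ is injective and each $[\sigma]^\prec \cap X$ is compact, the finite family $\{f([\sigma]^\prec \cap X) : \sigma \in \w^n,\, \sigma \le g\rest{n},\, [\sigma]^\prec \cap X \ne \emptyset\}$ consists of pairwise disjoint compact subsets of~$2^\w$. Hence there is some~$m$ such that exactly one such~$\sigma$ satisfies $[\sigma]^\prec \cap X \cap \Gamma^{-1}([y\rest{m}]^\prec) \ne \emptyset$, and that~$\sigma$ must be $x\rest{n}$. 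Since this intersection is a computably bounded $\Pi^0_1(y)$ class, its emptiness is $\Sigma^0_1(y)$, so we can $y$-computably search for~$m$ and the unique surviving~$\sigma$ (the one whose emptiness has not been certified), and return~$\sigma(n)$.

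The only real content is the effective compactness principle (that emptiness of a computably bounded $\Pi^0_1$ class is c.e., relativised as needed); given this, both parts are short continuity-plus-pigeonhole arguments, and no serious obstacle is anticipated.
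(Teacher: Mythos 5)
Your proof is correct and rests on the same effective-compactness principle the paper uses (emptiness of a computably bounded $\Pi^0_1$ class, relativised, is $\Sigma^0_1$); the paper phrases the first bullet as enumerating the $\alpha$ with $X=f^{-1}\left[[\alpha]^\prec\right]$ and the second as computability of $f^{-1}$ on $Y=f[X]$, which is just your ``unique surviving $\sigma$'' search in different clothing. The only blemish is the harmless indexing slip at the end (you should output $\sigma$ itself as $x\rest{n}$, not ``$\sigma(n)$'').
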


\begin{proof}
	Suppose that~$f$ is constant on~$X$; let $f[X] = \{y\}$. The fact that~$X$ is computably bounded implies that the set of~$\alpha\in 2^{<\w}$ such that $X= f^{-1}\left[[\alpha]^\prec\right]$ is c.e.; this is the set of initial segments of~$y$, so~$y$ is computable.
		
	\smallskip
	
	Suppose that~$f$ is 1-1 on~$X$. Let~$Y = f[X]$. Then~$Y$ a $\Pi^0_1$ subset of~$2^\w$ and~$f$ is a homeomorphism between~$X$ and~$Y$. And~$f^{-1}$ is computable: the set of pairs $(\alpha,\tau)$ such that $[\alpha]^\prec\cap Y\subseteq f\left[[\tau]^\prec\right]$ is c.e.
\end{proof}	

If $X\subseteq \Baire{2}$ and $x\in \Baire$ we let $X_x = \{y\,:\, (x,y)\in X\}$.

\begin{lemma} \label{lem:1-1_on_sections_of_compact}
	Let~$X\subseteq \Baire{2}$ be $\Pi^0_1$ and computably bounded. Let $f\colon X\to 2^\w$ be computable and suppose that the collection of sets $f[X_x]$ for $x\in \dom X$ are pairwise disjoint. Then for all $(x,y)\in X$, $x\le_\Tur f(x,y)$.
\end{lemma}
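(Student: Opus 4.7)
The plan is to show that $z = f(x,y)$ computes $x$, by using the disjointness of the sections $f[X_{x'}]$ to pin $x$ down as the unique first coordinate with $z \in f[X_{x'}]$, and then to extract each initial segment of $x$ via a compactness-plus-enumeration argument in the spirit of the second half of \cref{lem:1-1_or_constant_on_compact_space}.

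The key auxiliary objects are, for each string $\tau \in \w^{<\w}$, the sets
\[
    S_\tau \,=\, \{(x',y') \in X : \tau \prec x'\}.
\]
Since $X$ is $\Pi^0_1$ and computably bounded, so is $S_\tau$ uniformly in $\tau$; hence each $f[S_\tau]$ is a closed subset of $2^\w$, and uniformly so. More precisely, the relation ``$[\alpha]^\prec \cap f[S_\tau] = \emptyset$'' is c.e.\ in $\alpha$ and $\tau$, because it is equivalent to the emptiness of the uniformly computably bounded $\Pi^0_1$ set $S_\tau \cap f^{-1}\left[[\alpha]^\prec\right]$, which is $\Sigma^0_1$.

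The heart of the argument, and the only place where disjointness enters, is the equivalence
\[
    z \in f[S_\tau] \Iff \tau \prec x.
\]
The direction ``$\Leftarrow$'' is immediate from $(x,y) \in S_\tau$. For ``$\Rightarrow$'', if $z = f(x',y')$ for some $(x',y') \in S_\tau$, then $z$ lies in both $f[X_x]$ and $f[X_{x'}]$, so disjointness forces $x = x'$ and hence $\tau \prec x$.

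To recover $x$ from $z$, let $B$ be a computable bound on $\dom X$ and let $F_n$ be the finite set of $\tau \in \w^n$ with $\tau(i) < B(i)$ for $i < n$, so that $x\rest{n} \in F_n$. Every $\tau \in F_n \setminus \{x\rest{n}\}$ satisfies $z \notin f[S_\tau]$, and since $f[S_\tau]$ is closed some $\alpha \prec z$ witnesses this separation. Using $z$ as oracle we can therefore enumerate the wrong candidates in $F_n$, and the unique survivor is $x\rest{n}$. The argument is a direct parametric generalisation of the preceding lemma; the only substantive new ingredient is the disjointness of sections, which is used exactly once, to pass from ``$z \in f[S_\tau]$'' to ``$\tau \prec x$''.
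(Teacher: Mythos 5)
Your proof is correct and follows essentially the same route as the paper's: both arguments exploit the disjointness of the sections exactly once, to show that $z=f(x,y)$ lies in the image of $\{(x',y')\in X : \tau\prec x'\}$ if and only if $\tau\prec x$, and both then recover $x\rest{n}$ from $z$ by a compactness-plus-enumeration argument using that these images are uniformly compact $\Pi^0_1$ classes. The only cosmetic difference is that the paper enumerates clopen sets $C$ with $f[X_\tau]=C\cap f[X]$ (a global clopen separation), whereas you eliminate the finitely many wrong candidates $\tau$ one at a time via basic neighbourhoods of $z$; the two are interchangeable.
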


\begin{proof}
	For $\tau\in \Strings$ let $X_\tau = \bigcup_{x\in [\tau]^\prec} X_x$. The set of pairs $(\tau,C)$ where $C\subseteq 2^\w$ is clopen and $f[X_\tau] = C\cap f[X]$ is c.e.
\end{proof}

\subsection{Forcing with closed sets} \label{subsec:forcing_with_closed_sets}

\begin{definition} \label{def:forcing_with_closed_sets}
	Let~$\PP$ be a notion of forcing. Suppose that with each condition $\pp\in \PP$ we associate a closed subset~$X^\pp$ of Baire space. We call this assignment \emph{acceptable} if:
\begin{enumerate}
	\item[(a)] for all $\pp\in \PP$, $X^\pp$ is nonempty;
	\item[(b)] if $\qq$ extends~$\pp$ then $X^\qq\subseteq X^\pp$; and
	\item[(c)] for every~$m$, the set of conditions~$\pp\in \PP$ such that $X^\pp\subseteq [\s]^{\prec}$ for some string~$\s$ of length~$m$ is dense in~$\PP$.	
\end{enumerate}	
\end{definition}

(Below we will consider finite powers $(\w^\w)^n$ of Baire space, but these are of course effectively isomorphic to Baire space.)

\smallskip

Recall the \emph{Borel codes} for Borel subsets of Baire space. These can be identified with propositional sentences in~$L_{\w_1,\w}$. To be precise:
\begin{itemize}
	\item Every finite set of strings~$C$ is a Borel code;
	\item If~$C$ is a Borel code then $\lnot C$ is a Borel code;
	\item If $\CC$ is a countable set of Borel codes, then $\bigvee \CC$ and $\bigwedge \CC$ are Borel codes.
\end{itemize}

The semantics are obvious (a finite set of strings~$C$ defines the set $[C]^\prec$); if~$C$ is a Borel code then we let~$\interpret{C}$ be the Borel subset of Baire space defined by~$C$.

\smallskip

Suppose that $\PP$ is a notion of forcing equipped with an acceptable assignment of closed sets $X^\pp$.
We define the forcing relation $\pp\force C$ between conditions in~$\PP$ and Borel codes~$C$. We start with strong forcing.

\begin{definition} \label{def:strong_forcing}
	Let~$C$ be a Borel code and let $\pp\in \PP$. We say that $\pp$ \emph{strongly forces} $C$ if $X^{\pp}\subset \interpret{C}$. We write $\pp\force^* C$.
\end{definition}

Now by recursion on Borel codes~$C$ we define forcing.
\begin{itemize}
	\item For a finite set of strings~$D$, $\pp\force D$ if the collection of conditions which strongly force~$D$ is dense below~$\pp$.
	\item $\pp\force \lnot C$ if no extension of~$\pp$ forces~$C$.
	\item $\pp\force \bigwedge \CC$ if $\pp\force C$ for all $C\in \CC$.
	\item $\pp\force \bigvee \CC$ if the set of conditions which force some element of~$\CC$ is dense below~$\pp$.
\end{itemize}

The basic properties of forcing hold.

\begin{lemma} \label{lem:forcing_behaves}
	Let $\pp\in \PP$ and let~$C$ be a Borel code.
	\begin{enumerate}
		\item No condition forces both~$C$ and $\lnot C$.
		\item The set of conditions which decide~$C$ is dense in~$\PP$.
		\item If~$\qq$ extends~$\pp$ and $\pp\force C$ then $\qq\force C$.
		\item If the set of conditions which force~$C$ is dense below~$\pp$ then $\pp\force C$.
	\end{enumerate}
\end{lemma}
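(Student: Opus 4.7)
The plan is to verify the four clauses together by induction on the complexity of the Borel code~$C$, with the verifications interlocking in a carefully chosen order: (1) first (essentially trivial), then (3) by induction, then (4) by induction with free use of~(1) and~(3), and finally (2) by a short direct argument.

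Item~(1) is immediate from the definition of negation: $\pp\force \lnot C$ asserts that no extension of~$\pp$ forces~$C$, and~$\pp$ is an extension of itself, so $\pp\force C$ is already ruled out. For item~(3), I would induct on~$C$. The cases in which forcing is defined via density below~$\pp$---namely finite sets of strings and disjunctions $\bigvee\CC$---reduce to the elementary observation that a set dense below~$\pp$ is automatically dense below any extension $\qq\le \pp$, since extensions of~$\qq$ are extensions of~$\pp$. The case $C=\lnot C'$ works the same way straight from the definition. Only the conjunction case invokes the inductive hypothesis, applied conjunct by conjunct.

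Item~(4), again by induction on~$C$, contains the one piece of genuine content. For a finite set of strings~$D$, given $\qq\le \pp$ I would use the density hypothesis to pick $\rr\le \qq$ with $\rr\force D$; by definition the set of strong forcers of~$D$ is then dense below~$\rr$, so a strong forcer lies below~$\qq$, as required. The $\bigvee\CC$ case is immediate by unfolding the definitions. For $C=\bigwedge\CC$, note that any condition forcing $\bigwedge\CC$ forces each conjunct, so the density hypothesis transfers to each~$C'\in\CC$ and the inductive hypothesis completes the case. The delicate case is $C=\lnot C'$: if some extension $\qq\le \pp$ were to force~$C'$, the density hypothesis would yield $\rr\le \qq$ forcing~$\lnot C'$; by item~(3) applied to~$C'$, $\rr$ would also force~$C'$, contradicting item~(1).

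Finally, item~(2) falls out of the definition of~$\lnot$: given any~$\pp$, either some extension $\qq\le \pp$ forces~$C$, in which case~$\qq$ decides~$C$, or no such~$\qq$ exists, in which case~$\pp$ itself forces~$\lnot C$ by definition and so decides~$C$. The entire lemma is a textbook verification; the only place requiring real care is the inductive interlock in~(4) for the negation case, which appeals to the previously proved~(1) and~(3). Everything else amounts to unfolding the definitions together with the elementary fact that density below a condition is inherited by extensions.
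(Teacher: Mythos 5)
Your proof is correct. The paper states this lemma without proof (it is the standard verification that the recursively defined forcing relation has the usual properties), and your argument is exactly the expected one: (1) and (3) are definition-unfoldings plus the observation that density below $\pp$ passes to extensions of $\pp$, (2) follows directly from the clause for $\lnot$, and the one genuinely delicate point --- the negation case of (4), where a hypothetical $\qq\le\pp$ forcing $C'$ must be refuted by combining the density hypothesis with (3) and (1) applied to the subcode $C'$ --- is handled properly and without circularity, since (1) and (3) are established for all codes before (4) is used.
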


Forcing equals truth. It will be convenient to consider directed subsets of~$\PP$ rather than filters; of course the upwards closure of a directed set is a filter, so we can always pass to filters without adding information. Genericity for directed sets is defined using desne \emph{open} sets: dense subsets of~$\PP$ which are closed downwards (closed under taking extensions). Note that the dense sets of conditions mentioned above are all open.

Suppose that $\GG\subset \PP$ is a directed set. If~$\GG$ meets all of the dense open sets of conditions guaranteed by~(c) above, then $\bigcap_{\pp\in \PP} X^{\pp}$ is a singleton that we denote by~$\{x^\GG\}$. (This uses the completeness of Baire space; we do not need the sets~$X^\pp$ to be compact.)

In the rest of the paper, the statement ``for all sufficiently generic $\GG\subset \PP$ ...'' means: there is a countable collection~$\mathcal D$ of dense open subsets of~$\PP$ such that for every directed subset of~$\PP$ meeting all the sets in~$\mathcal D$, ...

\begin{lemma} \label{lem:forcing_equals_truth}
	Let~$C$ be a Borel code. If $\GG\subset \PP$ is a sufficiently generic directed set then $x^\GG\in \interpret{C}$ if and only if $\pp\force C$ for some~$\pp\in \GG$.
\end{lemma}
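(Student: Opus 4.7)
The plan is to prove \cref{lem:forcing_equals_truth} by induction on the Borel code~$C$, establishing both directions of the equivalence simultaneously at each step. I take \cref{lem:forcing_behaves} for granted (it is provable by a parallel induction on~$C$), and I build the countable collection~$\mathcal D$ of dense open sets witnessing sufficient genericity as the argument requires: for each sub-code~$C'$ of~$C$, $\mathcal D$ will contain the set of conditions deciding~$C'$ (dense by \cref{lem:forcing_behaves}(2)); for each base sub-code~$D$, the dense open set from clause~(c) of \cref{def:forcing_with_closed_sets} at some length $m_D > \max_{\sigma \in D}|\sigma|$; and one additional set per $\bigwedge$ sub-code, described below. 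Since~$C$ has only countably many sub-codes, $\mathcal D$ remains countable.

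For the base case $C = D$ a finite set of strings, pick $\pp \in \GG$ with $X^\pp \subseteq [\tau]^\prec$ for some $\tau$ of length~$m_D$; then $\tau$ is an initial segment of~$x^\GG$, and depending on whether $\tau$ extends some $\sigma \in D$ or is incompatible with every element of~$D$, $\pp$ strongly forces either~$D$ or~$\lnot D$. This gives the forward direction directly (on the positive side $X^\pp \subseteq [D]^\prec$, yielding both $\pp \force D$ and $x^\GG \in [D]^\prec$), and the backward direction because any $\qq \in \GG$ forcing~$D$ is compatible with this~$\pp$ in~$\GG$, so a common extension would force both~$D$ and~$\lnot D$, contradicting \cref{lem:forcing_behaves}(1); hence the negative case is excluded. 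The case $\lnot C$ uses a $\pp \in \GG$ deciding $C$: if $x^\GG \notin \interpret{C}$ then the inductive hypothesis for~$C$ rules out $\pp \force C$, so $\pp \force \lnot C$; conversely, if $\pp \in \GG$ forces $\lnot C$ and some $\qq \in \GG$ forced $C$, directedness would produce a common extension contradicting \cref{lem:forcing_behaves}(1). For $\bigvee \CC$, if $x^\GG \in \interpret{C}$ for some $C \in \CC$ the inductive hypothesis yields $\pp \in \GG$ with $\pp \force C$, whence every extension of~$\pp$ forces some member of $\CC$ by persistence, so $\pp \force \bigvee \CC$; the converse follows by applying genericity against the dense open set of conditions forcing some $C \in \CC$ or incompatible with the witnessing~$\pp$.

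The main obstacle is the conjunction case, since the definition $\pp \force \bigwedge \CC$ requires a single condition forcing every $C \in \CC$, and directedness only provides common extensions of finitely many prescribed members of~$\GG$. I circumvent this by adding to~$\mathcal D$ the set
\[
E_{\bigwedge\CC} = \{\pp \in \PP : \pp \force \bigwedge\CC\} \cup \{\pp \in \PP : \pp \force \lnot C \text{ for some } C \in \CC\},
\]
whose density is verified as follows: given $\rr \in \PP$, use \cref{lem:forcing_behaves}(2) to find $\pp_0 \le \rr$ deciding $\bigwedge\CC$; if $\pp_0 \force \bigwedge\CC$ we are done, otherwise $\pp_0 \force \lnot\bigwedge\CC$, and I claim some extension of~$\pp_0$ forces $\lnot C$ for some $C \in \CC$. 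For if not, then for every $C \in \CC$ no extension of~$\pp_0$ forces $\lnot C$, so by \cref{lem:forcing_behaves}(2) the conditions extending $\pp_0$ and forcing~$C$ are dense below~$\pp_0$, whence $\pp_0 \force C$ by \cref{lem:forcing_behaves}(4); since this holds for every $C \in \CC$, the definition of forcing gives $\pp_0 \force \bigwedge\CC$, contradicting \cref{lem:forcing_behaves}(1). With density of $E_{\bigwedge\CC}$ in hand, take $\pp \in \GG \cap E_{\bigwedge\CC}$: if $\pp \force \bigwedge\CC$ this is the forward direction; otherwise $\pp \force \lnot C$ for some $C \in \CC$, and the already-established~$\lnot C$ case of the induction yields $x^\GG \notin \interpret{C}$, contradicting $x^\GG \in \interpret{\bigwedge \CC}$. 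The converse is immediate: if $\pp \in \GG$ forces $\bigwedge\CC$ then $\pp \force C$ for every $C \in \CC$, and the inductive hypothesis gives $x^\GG \in \interpret{C}$ for each~$C$.
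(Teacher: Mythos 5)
Your argument is essentially the paper's: the paper proves exactly the base case (using clause~(c) of \cref{def:forcing_with_closed_sets} to localise $x^\GG$ to a string that settles membership in $[D]^\prec$, so that some $\pp\in\GG$ strongly forces $D$ or $\lnot D$) and then appeals to ``the usual proof of the equivalence of forcing and truth'' for the inductive cases, which is what you have written out in full. Your negation and conjunction cases are correct; in particular the dense open set $E_{\bigwedge\CC}$ and its density argument are exactly the right device for handling infinite conjunctions, where directedness of $\GG$ alone would only give common extensions of finitely many conditions.

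The one step that does not work as literally stated is the converse of the disjunction case. The countable family $\mathcal D$ must be fixed before $\GG$ is chosen, but the dense set you invoke there --- conditions forcing some $C\in\CC$ \emph{or incompatible with the witnessing $\pp$} --- is parametrised by an element of $\GG$; including such a set for every potential witness would require $\PP$ to be countable, which the lemma does not assume. The repair is the same device you already use for conjunctions: put into $\mathcal D$ the set of conditions that either force some $C\in\CC$ or force $\lnot\bigvee\CC$. This set is open by persistence, and dense: given $\rr$, first pass to an extension deciding $\bigvee\CC$, and in the positive case use the density clause in the definition of $\force\bigvee\CC$ to extend once more to a condition forcing some $C\in\CC$. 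Now if $\pp\in\GG$ forces $\bigvee\CC$ and $\qq\in\GG$ lies in this set, then $\qq$ cannot force $\lnot\bigvee\CC$ (a common extension of $\pp$ and $\qq$ in $\GG$ would be an extension of $\qq$ forcing $\bigvee\CC$), so $\qq$ forces some $C\in\CC$ and the inductive hypothesis finishes the argument. With that substitution the proof is complete.
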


\begin{proof}
	First note that if $\pp\in \GG$ and $\pp\force^* C$ then $x^\GG\in \interpret{C}$. On the other hand, suppose that~$D$ is a finite set of strings, and suppose that~$x^\GG\in [D]^\prec$: there is some~$\tau\prec x^\GG$ such that $\tau \in  D$. By assumption, there is some string~$\eta$ of length~$|\tau|$ and some~$\pp\in \GG$ such that~$X^\pp\subseteq [\eta]^\prec$. Then~$\eta=\tau$, and so $\pp\force^* D$, which implies that $\pp\force D$.
	
	The rest of the argument follows the usual proof of the equivalence of forcing and truth for generic filters.	
\end{proof}

Since every condition can be extended to a sufficiently generic directed set, we conclude:

\begin{corollary} \label{cor:forcing_and_generics}
	Let~$\pp\in \PP$ and let~$C$ be a Borel code.
	\begin{enumerate}
		\item $\pp\force C$ if and only if for every sufficiently generic directed set~$\GG$, if $\pp\in \GG$ then $x^\GG\in \interpret{C}$.
		\item If $\interpret{C} \subseteq  \interpret{C'}$ and $\pp\force C$ then $\pp\force C'$.
		\item If $\pp\force^* C$ then $\pp\force C$.
	\end{enumerate}
\end{corollary}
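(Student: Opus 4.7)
My plan is to deduce all three parts from \cref{lem:forcing_equals_truth} together with the basic properties in \cref{lem:forcing_behaves}, proving (1) first and using it to obtain (2) and (3).

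For the forward direction of (1), suppose $\pp \force C$ and let $\GG$ be a sufficiently generic directed set containing $\pp$; then $\pp$ itself witnesses, via \cref{lem:forcing_equals_truth}, that $x^\GG \in \interpret{C}$. For the backward direction I would argue contrapositively. Assume $\pp \not\force C$. By \cref{lem:forcing_behaves}(4) the set of conditions forcing $C$ cannot be dense below $\pp$, so there is some $\qq \le \pp$ no extension of which forces $C$; applying the density of deciders (\cref{lem:forcing_behaves}(2)) below $\qq$ produces some $\rr \le \qq$ that decides $C$, and it must force $\lnot C$. Now, for any prescribed countable family $\mathcal D$ of dense open sets (in particular one witnessing genericity for both $C$ and $\lnot C$ in \cref{lem:forcing_equals_truth}), I would build a directed $\GG$ meeting $\mathcal D$ with $\pp, \rr \in \GG$. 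This is possible because $\rr \le \pp$ makes $\{\pp, \rr\}$ automatically directed, and a standard countable diagonalisation lets me extend $\rr$ through all of $\mathcal D$. Then \cref{lem:forcing_equals_truth} applied to $\rr \force \lnot C$ yields $x^\GG \in \interpret{\lnot C}$, so $x^\GG \notin \interpret{C}$ even though $\pp \in \GG$; this contradicts the assumed right-hand side of~(1).

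Parts (2) and (3) will then fall out of~(1). For (2), if $\pp \force C$ and $\interpret{C} \subseteq \interpret{C'}$, then for every sufficiently generic $\GG \ni \pp$ one has $x^\GG \in \interpret{C} \subseteq \interpret{C'}$ by the forward direction of~(1), and the backward direction of~(1) gives $\pp \force C'$. For (3), I would observe that $x^\GG \in X^\pp$ whenever $\pp \in \GG$, since $\{x^\GG\} = \bigcap_{\qq\in\GG} X^\qq$; so $\pp \force^* C$, which is to say $X^\pp \subseteq \interpret{C}$, forces $x^\GG \in \interpret{C}$ for every such $\GG$, and~(1) yields $\pp \force C$.

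The only delicate point is the backward direction of~(1): one must both extract an extension $\rr \le \pp$ forcing $\lnot C$ (combining density of deciders with \cref{lem:forcing_behaves}(4)) and arrange a generic directed set that \emph{literally} contains $\pp$. The latter is precisely why the paper works with directed generics rather than filters: since $\rr \le \pp$, the pair $\{\pp, \rr\}$ is already directed and can be extended to a sufficiently generic directed set through the usual countable meeting argument.
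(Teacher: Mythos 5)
Your proposal is correct and is essentially the paper's own argument: the paper offers no detailed proof, simply noting that the corollary follows from \cref{lem:forcing_equals_truth} together with the fact that every condition extends to a sufficiently generic directed set, which is exactly the machinery you deploy. Your extra care in the backward direction of (1) --- extracting $\rr\le\pp$ forcing $\lnot C$ and building a generic directed set containing both $\pp$ and $\rr$ --- correctly fills in what the paper leaves implicit.
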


In light of~(2) we write $\pp\force x^\GG\in  A$ when~$A$ is a Borel subset of Baire space, rather than a code for such a set. 

\subsection{Simplified iterated forcing} \label{subsec:simplified_iterated_forcing}

We give a not-completely-standard definition for restriction maps between notions of forcing.

\begin{definition} \label{def:restriction_maps}
	Let~$\PP$ and~$\QQ$ be partial orderings. A \emph{restriction map} from~$\QQ$ to~$\PP$ is an order-preserving map $i$ from~$\QQ$ to~$\PP$ such that for all $\qq\in \QQ$, the image of $\QQ(\le q)$ (the set of extensions of~$\qq$) under~$i$ is dense below~$i(\qq)$.
\end{definition}
That is, for all $q\in \QQ$ and $p\le i(q)$ there is some $r\le q$ in~$\QQ$ such that $i(r)  \le p$.

\begin{lemma} \label{lem:restriction_maps_and_generics}
	Let $i\colon \QQ\to \PP$ be a restriction map.
\begin{enumerate}
	\item If $G\subset \QQ$ is a directed set then $i[G]\subset \PP$ is a directed set.
	\item If $D\subseteq \PP$ is dense and open then $i^{-1}[D]\subseteq \QQ$ is dense and open.
\end{enumerate}
\end{lemma}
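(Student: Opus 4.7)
The plan is to verify each clause directly from the definition of a restriction map, using only the two defining properties: order-preservation and the density of the image of $\QQ(\le q)$ below $i(q)$.

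For clause~(1), I would take $\pp_1, \pp_2 \in i[\GG]$, say $\pp_j = i(\qq_j)$ with $\qq_j \in \GG$. Since $\GG$ is directed, there is some $\qq \in \GG$ extending both~$\qq_1$ and~$\qq_2$. Because $i$ is order-preserving, $i(\qq)$ extends $\pp_1$ and $\pp_2$ in~$\PP$, and lies in $i[\GG]$. This is essentially immediate and does not use the density half of the restriction map definition.

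For clause~(2), openness is handled by order-preservation alone: if $\qq \in i^{-1}[D]$ and $\rr \le \qq$ in~$\QQ$, then $i(\rr) \le i(\qq) \in D$, and since~$D$ is downward closed we get $i(\rr)\in D$, i.e., $\rr \in i^{-1}[D]$. For density, I would start with an arbitrary $\qq \in \QQ$ and use density of~$D$ in~$\PP$ to produce $\pp \le i(\qq)$ with $\pp \in D$; then the restriction property supplies some $\rr \le \qq$ in~$\QQ$ with $i(\rr) \le \pp$, and downward closure of~$D$ gives $i(\rr) \in D$, so $\rr$ is the required extension of~$\qq$ in $i^{-1}[D]$.

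There is no real obstacle here; both parts are direct unpackings of the definitions. The only mildly subtle point is noticing that density of $i^{-1}[D]$ is exactly what the second clause of \cref{def:restriction_maps} was designed to guarantee --- in standard treatments one would demand that $i$ preserve incompatibility (or be a complete embedding), whereas the weaker condition used here is tailored precisely so that preimages of dense open sets remain dense open, which is all that is needed to transfer genericity from $\QQ$ to $\PP$ in the subsequent iteration arguments.
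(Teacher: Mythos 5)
Your proof is correct and is exactly the routine verification the paper has in mind; the paper in fact states this lemma without proof, treating both clauses as immediate from \cref{def:restriction_maps}. Your observation that density of $i^{-1}[D]$ uses the openness of $D$ (to pass from $i(\rr)\le \pp\in D$ to $i(\rr)\in D$) is the only point worth making explicit, and you make it.
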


Hence for any collection~$\mathcal D$ of dense open subsets of~$\PP$ there is a collection~$\mathcal E$ of dense open subsets of~$\QQ$ such that if~$G\subset\QQ$  is a directed set which meets every set in~$\mathcal E$, then $i[G]$ is a directed set which meets every set in~$\mathcal D$. In other words, if $G$ is sufficiently generic then so is~$i[G]$.

\smallskip

Suppose that $\PP$ and~$\QQ$ have acceptable assignments of closed sets $X^\pp\subseteq \Baire$ for $\pp\in \PP$ and $Y^\qq\subseteq \Baire{2}$ for $\qq\in \QQ$. Suppose that $i\colon \QQ\to \PP$ is a restriction map and further that for all $\qq\in \QQ$, $X^{i(\qq)} \supseteq \dom Y^{\qq}$. Let~$G\subset \QQ$ be sufficiently generic; we denote the generic pair of reals by $(x^G,y^G)$. Then $x^{i[G]} = x^G$.

\subsection{The plan} 
\label{sub:the_plan}

To prove \cref{thm:main}, for each~$n<\w$ we define a notion of forcing~$\PP_n$ which adds an initial segment of the degrees of length~$n$, each degree DNC relative to the one below it. We then show that there are restriction maps from each~$\PP_n$ to~$\PP_{n-1}$. This will allow us to obtain generic $\GG_n\subset \PP_n$ which are coherent, from which we will obtain the desired $\w$-sequence of degrees.


\section{A DNC minimal degree} \label{sec:step_one}

Khan (see~\cite{MillerKhan}) showed that for any~$x\in 2^\w$ there is a $\DNC^x$ function of minimal Turing degree. He presented an elaboration on the Kumabe-Lewis construction using the language of forcing in computability (rather than give an explicit construction). The extra complication is due to the fact that the set of strings which are not $\DNC^x$ is c.e.\ in~$x$, rather than merely c.e. We have no access to this set when defining the computable trees. For this reason Khan needs to use trees with terminal elements (and the set of terminal elements is co-c.e.\ but not computable).

In this section we present a proof of the original Kumabe-Lewis theorem using the language of forcing. We use c.e.\ sets of bad strings and trees with no terminal elements.

\subsection{Trees and forests} \label{subsec:trees}

We follow~\cite{A-S_K-H_L_S:DNRandWWKL,GreenbergMiller:DNC_and_Hausdorff,MillerKhan} and use trees which are sets of strings rather than function trees (as in~\cite{Cai:2-minimal_GH1,KumabeLewis}). We localise to basic clopen sets.

Recall that for a string~$\s$, $\s^\preceq$ is the set of strings extending~$\s$. A \emph{tree above~$\s$} is a nonempty subset of~$\s^{\preceq}$ which is closed in~$\s^\preceq$ under taking initial segments. If~$A$ is a finite prefix-free set of strings then a \emph{forest above~$A$} is a set~$T\subseteq A^\preceq$ such that for all $\s\in A$, $T\cap \s^\preceq$ is a tree above~$\s$. In particular we require that $A\subseteq T$. When we just say ``tree'' we mean a tree above~$\s$ for some~$\s$; the string~$\s$ will usually be clear from the context or unimportant. The same holds for forests. We will mostly only use finite forests, but will use both finite and infinite trees.

Let~$T$ be a forest and let $\tau\in T$. An \emph{immediate successor} of~$\tau$ on~$T$ is a string $\tau'\succ \tau$ on~$T$ such that $|\tau'| = |\tau|+1$. A \emph{leaf} of a forest~$T$, also known as a \emph{terminal} element of~$T$, is a string on~$T$ which has no proper successors on~$T$.

A \emph{subtree} of a tree~$T$ is a subset $S\subseteq T$ which is a tree. Note that the stem of~$S$ may equal the stem of~$T$, or properly extend the stem of~$T$. If~$T$ is a tree and~$\tau\in T$ then the full subtree of~$T$ above~$\tau$ is $T\cap \tau^\preceq$, the set of strings on~$T$ which extend~$\tau$.

If~$T$ is a tree above~$\s$ then~$[T]$ is the set of infinite paths of~$T$, the set of~$x\in \w^\w$ such that $x\rest{n}\in T$ for all~$n\ge |\s|$. This is a closed subset of~$\w^\w$. Recall that~$[\s]^\prec$ is the set of extensions of~$\s$ in Baire space; in our notation, $[\s]^\prec = \left[\s^\preceq\right]$.

A tree~$T$ is \emph{bounded} by a function~$h$ if for all~$\tau \in T$, $\tau(n) < h(n)$ for all $n\le |\tau|$. It is \emph{computably bounded} if~$h$ can be taken to be computable. If~$T$ is computably bounded then so is~$[T]$ (\cref{def:computably_bounded}).

\subsection{Bushy notions of largeness} \label{subsec:bushy}

The basic notions of ``bushiness'' were extended from constant bounds to order functions, see \cite{Cai:PhD,MillerKhan}. We recall the definitions and basic properties. A \emph{bounding function} is a computable function from~$\w$ to~$[2,\w)$.

\begin{definition} \label{def:bushy}
Let~$T$ be a forest above a finite prefix-free set of strings~$A$; let $h$ be a bounding function. We say that~$T$ is \emph{$h$-bushy} if every nonterminal $\tau\in T$ has at least~$h(|\tau|)$ many immediate successors on~$T$.
\end{definition}

Note that for the notion of bushiness, only the values of~$h$ for $n\ge |\s|$ matter.

\begin{definition} \label{def:large}
	Let~$A$ be a finite prefix-free set of strings and let~$B$ be a set of strings. Let~$h$ be a bounding function. The set~$B$ is \emph{$h$-big above~$A$} if there is a finite forest~$T$ above~$A$ which is $h$-bushy, all of whose leaves are elements of~$B$.

	If~$A$ is an infinite set of strings then we say that~$B$ is $h$-big above~$A$ if $B$ is $h$-big above every finite, prefix-free subset of~$A$.
	
	If~$B$ is not $h$-big above~$A$ then we say it is \emph{$h$-small} above~$A$.
\end{definition}

If~$A$ is a singleton~$\{\s\}$ then we say that~$B$ is $h$-big (or $h$-small) above~$\s$. If $A\subseteq B$ then~$B$ is $h$-big above~$A$ for all bounding functions~$h$. A set~$B$ is $h$-big above~$A$ if and only if the set of minimal strings in~$B$ is $h$-big above~$A$. We thus often use the notion for either prefix-free sets of strings, or for \emph{open} sets of strings -- those that are upwards closed (closed under taking extensions). Also note that sometimes we do not assume that~$B$ only contains extensions of~$A$, but of course for this notion it suffices to look at $B\cap A^{\preceq}$.

\smallskip

The following remark is trivial. Its generalisations in later sections will be less so.

\begin{remark} \label{rmk:canonical_bushy_witness_is_a_subtree}
	Suppose that~$B$ is a set of strings, $h$-big above~$A$, and that~$A,B\subseteq T$ for some tree~$T$. Then any forest~$S$ which witnesses that~$B$ is $h$-big above~$A$ is a subset of~$T$.
\end{remark}

The basic combinatorial properties of this notion of largeness have been repeatedly observed \cite{Jockusch:DNR,KumabeLewis,GreenbergMiller:DNC_and_Hausdorff,MillerKhan}.

\begin{lemma}[Big subset property]	\label{lem:bigsubset}
Let~$h$ and~$g$ be bounding functions. Let~$B$ and~$C$ be sets of strings, let~$\s$ be a string, and suppose that $B\cup C$ is $(h+g)$-big above~$\s$. Then either~$B$ is~$h$-big above~$\s$ or~$C$ is $g$-big above~$\s$.
\end{lemma}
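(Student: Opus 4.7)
I would prove the contrapositive by a pigeonhole/induction argument on witnessing forests. Suppose $B\cup C$ is $(h+g)$-big above $\s$, as witnessed by a finite forest $T$ above $\s$ which is $(h+g)$-bushy and all of whose leaves lie in $B\cup C$. For each $\tau\in T$ I will assign a ``colour'' from $\{B,C\}$ in such a way that nodes coloured $B$ witness that $B$ is $h$-big above~$\tau$, and nodes coloured $C$ witness that $C$ is $g$-big above~$\tau$. Applying this at the root $\s$ will finish the proof.

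The colouring is defined by bottom-up recursion on the finite subtree $T\cap \tau^{\preceq}$. For a leaf $\tau$, by hypothesis $\tau\in B\cup C$, and the one-node forest $\{\tau\}$ is vacuously both $h$-bushy and $g$-bushy above $\tau$, so I colour $\tau$ according to whichever side it belongs to. For an internal $\tau$, its immediate successors $\tau_1,\dots,\tau_k$ on $T$ satisfy $k\ge(h+g)(|\tau|)=h(|\tau|)+g(|\tau|)$; by the inductive hypothesis each $\tau_i$ has a colour. By pigeonhole, either at least $h(|\tau|)$ of the $\tau_i$ are coloured $B$, or at least $g(|\tau|)$ are coloured $C$. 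In the first case I form a new forest above $\tau$ by taking $\tau$, those $B$-coloured $\tau_i$, and the $h$-bushy witness forests to $B$ above each of them; this forest is $h$-bushy above $\tau$ and its leaves lie in $B$, so I colour $\tau$ by $B$. The symmetric argument handles the other case.

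The only verification needed is that the forest assembled at $\tau$ is genuinely $h$-bushy (respectively $g$-bushy): at $\tau$ this holds because we kept $\ge h(|\tau|)$ immediate successors, and at every other node it is inherited from the inductive witnesses. There is no real obstacle here — the argument is essentially a pigeonhole on the branching at each level, made legitimate by the additivity of the bound $h+g$. (The analogous ``big subset'' property for the more elaborate tree systems considered later in the paper will require more care, but at this level the statement reduces cleanly to this one-step counting argument.)
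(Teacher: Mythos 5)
Your proof is correct and is essentially the same as the paper's: the paper also labels the leaves of a witnessing tree by membership in $B$ or $C$, propagates the labels upward by pigeonhole on the branching bound $h(|\rho|)+g(|\rho|)$, and reads off the witnessing subtree from the label at $\s$. The only cosmetic difference is that the paper extracts the final witness as the set of all nodes carrying the root's label, while you assemble it recursively.
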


Here it is important that we work above a single string~$\s$ and not above any finite~$A$.

\begin{proof}
Let~$T$ be a tree which witnesses that~$B\cup C$ is $(h+g)$-big above~$\s$. Label a leaf~$\tau$ of~$T$ ``B'' if it is in~$B$, and~``C'' otherwise. Now if $\rho\in T$ and all immediate successors of~$\rho$ have been labelled then since~$\rho$ has at least $h(|\rho|)+ g(|\rho|)$ immediate successors on~$T$, either at least~$h(|\tau|)$ of these are labelled ``B'' or at least $g(|\tau|)$ of them are labelled~``C''. In the first case label $\rho$ ``B'', in the other label it~``C''. Eventually~$\sigma$ is labelled. If~$\sigma$ is labelled ``B'' then set of $\rho\in T$ labelled ``B'' form a tree which witnesses that~$B$ is $h$-big above~$\s$; and similarly if~$\s$ is labelled~``C''.
\end{proof}

\begin{lemma}[Concatenation property] \label{lem:concatentation_property}
Let~$h$ be a bounding function. Let~$A,B$ and~$C$ be sets of strings. Suppose that~$B$ is $h$-big above~$A$, and that $C$ is $h$-big above every $\tau\in B$. Then~$C$ is $h$-big above~$A$.
\end{lemma}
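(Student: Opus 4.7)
The plan is a straightforward gluing argument: take a witness forest for the $h$-bigness of $B$ above $A$, and graft onto each of its leaves the corresponding witness tree for the $h$-bigness of $C$ above that leaf.

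First I would reduce to the case that $A$ is finite and prefix-free. By \cref{def:large}, to show that $C$ is $h$-big above~$A$ when~$A$ is infinite, it suffices to consider an arbitrary finite prefix-free $A_0 \subseteq A$; and $B$ is $h$-big above~$A_0$ by hypothesis. So fix such a finite prefix-free~$A$ (or~$A_0$), and let~$T$ be a finite $h$-bushy forest above~$A$ all of whose leaves lie in~$B$, witnessing that $B$ is $h$-big above~$A$.

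Next, for each leaf~$\tau$ of~$T$, the hypothesis gives a finite $h$-bushy tree $S_\tau$ above $\tau$, all of whose leaves are in~$C$, witnessing that~$C$ is $h$-big above~$\tau$. Since the leaves of~$T$ are pairwise incomparable, the sets $S_\tau$ live above disjoint cones, and so
\[
T' \;=\; T \,\cup\, \bigcup\bigl\{ S_\tau \,:\, \tau \text{ a leaf of } T \bigr\}
\]
is a well-defined subset of $A^{\preceq}$ which is closed under taking initial segments down to~$A$; that is, $T'$ is a forest above~$A$. Its leaves are precisely the leaves of the various~$S_\tau$, all of which lie in~$C$.

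It remains to verify that $T'$ is $h$-bushy. The only nontrivial check is at nonterminal nodes~$\rho \in T'$. If $\rho$ is a nonterminal node of~$T$, then $\rho$ has at least $h(|\rho|)$ immediate successors already in~$T \subseteq T'$. If $\rho$ is a leaf of~$T$ then it is the stem of~$S_\tau$ for $\tau = \rho$, and so has at least $h(|\rho|)$ immediate successors in~$S_\rho \subseteq T'$ (unless $S_\rho = \{\rho\}$, in which case $\rho \in C$ is a leaf of~$T'$ and there is nothing to check). If $\rho$ lies properly inside some~$S_\tau$ and is nonterminal there, its immediate successors in~$S_\tau$ witness bushiness. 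Thus~$T'$ witnesses that~$C$ is $h$-big above~$A$. There is no real obstacle here — the argument is routine — and I would expect the main thing to be careful about is simply that $T'$ genuinely is a forest above~$A$ (which is why we needed $A$ prefix-free) and that the leaves-of-$T$ are absorbed as interior nodes of~$T'$ with enough successors.
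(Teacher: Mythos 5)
Your proposal is correct and is essentially identical to the paper's proof: reduce to a finite prefix-free subset of~$A$, take a witness forest~$T$ for the bigness of~$B$, and append to each leaf~$\tau$ of~$T$ a witness tree for the bigness of~$C$ above~$\tau$, the union being the required witness. The extra care you take with the bushiness check at the grafting points (including the degenerate case $S_\tau=\{\tau\}$) is fine and matches what the paper leaves implicit.
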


\begin{proof}
	Let~$A'$ be a finite, prefix-free subset of~$A$. Let~$T$ be a forest which witnesses that~$B$ is $h$-big above~$A'$. For a leaf~$\tau$ of~$T$ let $R_\tau$ be a tree which witnesses that~$C$ is $h$-big above~$\tau$. Then $T\cup \bigcup R_\tau$, where $\tau$ ranges over the leaves of~$T$, witnesses that~$C$ is $h$-big above~$A'$.
\end{proof}

The concatenation property will sometimes be used to recursively build bushy trees meeting infinitely many big sets. Again the following are fairly immediate; their generalisations in the next sections will be less so.

\begin{definition} \label{def:end-extensions}
A forest~$R$ is an \emph{end-extension} of a forest~$S$ if every string in $R\setminus S$ extends some leaf of~$S$. 	
\end{definition}

(This is not the same as the usual definition for partial orderings, but under the usual definition, any tree extension is an end-extension.) The argument proving the concatenation is broken up to show:

\begin{lemma} \label{lem:one_step:extended_concatenation}
	Let~$A,B,C$ be sets of strings, and let~$h$ be a bounding function.
	\begin{enumerate}
		\item Suppose that~$C$ is $h$-big above every~$\tau\in B$. Then~$C$ is $h$-big above~$B$.
		\item Suppose that~$A$ is prefix-free and finite; suppose that~$B$ is $h$-big above~$A$ and that $C$ is~$h$-big above~$B$. Then any forest which witnesses that~$B$ is $h$-big above~$A$ has an end-extension which witnesses that~$C$ is $h$-big above~$A$.
	\end{enumerate}
\end{lemma}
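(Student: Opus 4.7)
The plan is to prove both parts by direct forest-pasting constructions, using the earlier observation (\cref{rmk:canonical_bushy_witness_is_a_subtree}) implicitly only insofar as we can freely attach witnesses above strings.

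For part~(1), the task is to witness that $C$ is $h$-big above every finite prefix-free $B'\subseteq B$. Given such a~$B'$, the hypothesis supplies, for each $\tau\in B'$, a finite $h$-bushy tree $T_\tau$ above~$\tau$ whose leaves lie in~$C$. Because $B'$ is prefix-free, the sets $\tau^{\preceq}$ for $\tau\in B'$ are pairwise disjoint, so $\bigcup_{\tau\in B'}T_\tau$ is a well-defined forest above~$B'$ whose restriction to each $\tau^{\preceq}$ is exactly $T_\tau$. It is $h$-bushy and its leaves are in~$C$, as required.

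For part~(2), fix a forest~$S$ witnessing that~$B$ is $h$-big above~$A$, and let~$L$ be the (finite, prefix-free) set of leaves of~$S$. Since $L\subseteq B$ and $C$ is $h$-big above~$B$, $C$ is $h$-big above~$L$; apply part~(1) (or unfold the definition) to obtain a finite $h$-bushy forest~$R$ above~$L$ with leaves in~$C$. I would then verify that $S\cup R$ is the desired end-extension. First, $R\subseteq L^{\preceq}$, so every element of $(S\cup R)\setminus S$ extends a leaf of~$S$, giving the end-extension property. Second, $S\cup R$ is a forest above~$A$: for each $\s\in A$, $(S\cup R)\cap\s^{\preceq}$ is closed under initial segments in~$\s^{\preceq}$, because $S\cap\s^{\preceq}$ already is, and any initial segment of an element of $R\cap\s^{\preceq}$ is either still in~$R$ (as $R$ is a forest above~$L$) or is an initial segment of some leaf of~$S\cap\s^{\preceq}$, hence in~$S$.

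The only slightly fiddly point is bushiness, so I would go through the trichotomy for a nonterminal $\tau\in S\cup R$: if $\tau\in S$ is not a leaf of~$S$, its immediate successors in~$S$ already give $\ge h(|\tau|)$; if $\tau$ is a leaf of~$S$, then $\tau\in L\subseteq R$ and its immediate successors in~$R$ supply the bushiness (and if $\tau$ is already a leaf of~$R$, then $\tau\in C$ and $\tau$ is a leaf of~$S\cup R$, so nothing to check); and if $\tau\in R\setminus S$, then bushiness is inherited from~$R$. The leaves of~$S\cup R$ are therefore exactly the leaves of~$R$, all in~$C$. This is the only real obstacle — keeping careful track of which successors come from which forest — but it is a bookkeeping exercise rather than a genuine difficulty. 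Part~(1) is then recovered as a corollary by choosing $A$ to be any finite prefix-free subset of~$B$ and taking $S=A$ itself as the trivial witness.
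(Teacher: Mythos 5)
Your proof is correct and follows essentially the same route as the paper, which obtains this lemma by breaking up the proof of the concatenation property: paste disjoint witness trees over a prefix-free set for part~(1), and glue witnesses above the leaves of the given forest for part~(2). The bookkeeping you do for bushiness and for identifying the leaves of $S\cup R$ is exactly the verification the paper leaves implicit.
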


\begin{remark} \label{rmk:one_step:we_assume_that_big_sets_are_leaves_of_trees}
	Throughout, we will assume that whenever we are given a set of strings which is guaranteed to have some largeness property, then this set is the set of leaves of a forest witnessing this property. For example, suppose that we are given a set~$B$ which is~$h$-big above some~$\s$. We will assume, often without mentioning it, that~$B$ is finite, that it is prefix-free, and that every string in~$B$ extends~$\s$.
\end{remark}

\subsection{The notion of forcing and the generic} \label{subsec:step_one:the_notion}

Let~$B_{\DNC}$ be the set of strings~$\s$ that are not initial segments of diagonally noncomputable functions: $\s(e) = J(e)\converge$ for some $e<|\s|$, where~$J$ is a fixed universal jump function, for example $J(e) = \vphi_e(e)$.

Let~$T$ be a tree. We say that a set of strings~$B\subseteq T$ is \emph{open in $T$} if it is upwards closed in~$T$: if $\s\in B$ and $\tau\succeq\s$ is in~$T$ then $\tau\in B$.

\smallskip

We let~$\PP_1$ be the set of tuples $\pp = (\s^\pp,T^\pp,B^\pp,h^\pp,b^\pp)$ satisfying:
\begin{enumerate}
	\item $T^\pp$ is a computably bounded, computable tree above~$\s^\pp$ with no leaves.
	\item $h^\pp\in \Quick$ and $T^\pp$ is~$h^\pp$-bushy.
	\item $B^\pp\subset T^\pp$ is c.e.\ and open in~$T^\pp$, and $B^\pp\supseteq B_{\DNC}\cap T^\pp$.
	\item $b^\pp\in \Quick$ and $B^\pp$ is $b^\pp$-small above~$\s^\pp$.
	\item $h^\pp \gg b^\pp$ and $h^\pp\ge b^\pp$ above $|\s^\pp|$.
\end{enumerate}

\begin{lemma} \label{lem:OneStep:nonempty}
	$\PP_1$ is nonempty.
\end{lemma}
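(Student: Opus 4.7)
The plan is to exhibit an explicit condition. I would take $\s^\pp = \emptystring$, and choose $b^\pp, h^\pp \in \Quick$ with $h^\pp \gg b^\pp$ and $h^\pp \ge b^\pp$ everywhere --- for concreteness, $b^\pp(n) = 2^n$ and $h^\pp(n) = (b^\pp)^{(n)}(n)$ (the diagonal of the iterates), which lies in $\Quick$ and majorises every iterate of $b^\pp$ past a computable threshold, so $h^\pp \gg b^\pp$. I would then let $T^\pp = \{\s \in \w^{<\w} : \s(n) < h^\pp(n)\text{ for all }n<|\s|\}$ be the full $h^\pp$-bounded tree above the empty string, and set $B^\pp = B_{\DNC} \cap T^\pp$.

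Clauses (1), (2), (3), and (5) of the definition of~$\PP_1$ are then essentially by inspection: $T^\pp$ is computable and computably bounded by $h^\pp$, every string has exactly $h^\pp(|\tau|)$ immediate successors so $T^\pp$ is $h^\pp$-bushy and has no leaves; $B^\pp$ is c.e.\ and upward-closed inside $T^\pp$ because $B_{\DNC}$ itself is upward-closed in $\w^{<\w}$; and the quickness inequalities hold by the choice of $h^\pp, b^\pp$.

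The only clause needing an argument is (4), that $B^\pp$ is $b^\pp$-small above~$\emptystring$. The main step is the observation that $B_{\DNC}$ is actually $2$-small above~$\emptystring$ (so a fortiori $b^\pp$-small). The idea is this: for any $\tau \notin B_{\DNC}$, at most one immediate successor $\tau\conc\seq{v}$ lies in $B_{\DNC}$, namely the one with $v=J(|\tau|)\converge$, because $J$ is a partial function. Suppose for contradiction that a finite $2$-bushy tree $S$ above $\emptystring$ had all leaves in $B_{\DNC}$. Since $\emptystring \notin B_{\DNC}$ the root is not a leaf; and inductively, at any non-leaf $\tau \in S$ with $\tau \notin B_{\DNC}$, among the at least two immediate successors of $\tau$ on~$S$ at most one lies in $B_{\DNC}$, so some successor $\tau'$ lies outside $B_{\DNC}$, and this $\tau'$ must be a non-leaf since every leaf of $S$ is in $B_{\DNC}$. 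This produces an infinite descending chain in the finite tree~$S$, a contradiction.

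The main (and really only) obstacle is this last small-set computation; once it is in hand the lemma is just unwinding the definitions. It is also the prototype for the much more delicate smallness verifications that presumably drive the later sections, so writing it out cleanly here is worth the effort.
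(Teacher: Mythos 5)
Your proposal is correct and follows essentially the same route as the paper: take the full $h$-bounded tree above the empty string for suitable $h\gg b$ in $\Quick$ (the paper uses $h(n)=b^{(n+1)}(n)$, you use the diagonal of iterates, which works equally well), with bad set $B_{\DNC}$ restricted to the tree. The only difference is that you spell out the $2$-smallness of $B_{\DNC}$ above $\emptystring$, which the paper asserts without proof; your argument for it (at most one immediate successor of a string outside $B_{\DNC}$ can enter $B_{\DNC}$, so a finite $2$-bushy witness would yield an infinite chain) is correct.
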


\begin{proof}
	The set~$B_{\DNC}$ is c.e.\ and is $2$-small above the empty string~$\emptystring$. Fix some $b\in \Quick$ (and recall that $b\ge 2$); and find some $h\in \Quick$ such that $h\gg b$ and $h\ge b$ (for example $h(n) = b^{(n+1)}(n)$). Recall that $h^{<\w}$ is the set of~$h$-bounded strings. Then $\pp = (\emptystring, h^{<\w},B_{\DNC}\cap h^{<\w}, h,b)$ is a condition in~$\PP_1$.
\end{proof}

We define a partial ordering on~$\PP_1$ as follows. A condition~$\qq$ extends a condition~$\pp$ if $\s^\pp\preceq \s^\qq$, $T^\qq$ is a subtree of~$T^\pp$, $B^\pp \cap T^\qq\subseteq B^\qq$, and $h^\qq\le h^\pp$  and $b^\qq\ge b^\pp$ above~$|\s^\qq|$.

\smallskip

To use the machinery of forcing developed in \cref{subsec:forcing_with_closed_sets} we need to associate with each condition~$\pp\in \PP_1$ a closed set~$X^\pp$.

\begin{lemma} \label{lem:one_step:acceptability_of_closed_sets}
	The assignment of closed sets $X^\pp = [T^\pp]\setminus [B^\pp]^\prec = [T^\pp\setminus B^\pp]$ for $\pp\in \PP_1$ is acceptable (\cref{def:forcing_with_closed_sets}).
\end{lemma}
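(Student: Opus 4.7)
The plan is to verify the three acceptability clauses of \cref{def:forcing_with_closed_sets} in turn: (a) nonemptiness, (b) monotonicity, and (c) density of refining conditions. Note first that the two expressions for $X^\pp$ agree: since $B^\pp\subseteq T^\pp$, a path of the subtree $T^\pp\setminus B^\pp$ is precisely a path of $T^\pp$ none of whose initial segments lies in $B^\pp$.

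For (a) I would introduce the ``first-encounter'' subtree $S\subseteq T^\pp$ consisting of those $\sigma\in T^\pp$ with no proper initial segment in $B^\pp$. Because $B^\pp$ is open in $T^\pp$ and $T^\pp$ has no leaves, a short check shows that the leaves of $S$ are exactly $S\cap B^\pp$, while every non-leaf of $S$ keeps all its $T^\pp$-successors; hence $S$ is $h^\pp$-bushy above $\s^\pp$. If $X^\pp=[T^\pp\setminus B^\pp]$ were empty, then $S$ would have no infinite path, so by K\"onig's lemma (using computable boundedness of $T^\pp$) $S$ would be finite, yielding an $h^\pp$-bushy witness that $B^\pp$ is $h^\pp$-big---hence $b^\pp$-big via $h^\pp\ge b^\pp$ above $|\s^\pp|$---above $\s^\pp$, contradicting clause~(4).

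Monotonicity (b) is essentially a direct unfolding of the extension conditions: from $T^\qq\subseteq T^\pp$ and $B^\pp\cap T^\qq\subseteq B^\qq$ one gets $T^\qq\setminus B^\qq\subseteq T^\pp\setminus B^\pp$, and hence every infinite path of the left-hand tree is one of the right-hand tree. The only subtlety is that the initial segments of such a path at levels strictly between $|\s^\pp|$ and $|\s^\qq|$ must also avoid $B^\pp$; this follows from openness of $B^\pp$ in $T^\pp$ together with $\s^\qq\notin B^\pp$, which in turn follows from $B^\pp\cap T^\qq\subseteq B^\qq$ and $\s^\qq\notin B^\qq$ (the latter because the singleton $\{\s^\qq\}$ would otherwise witness $b^\qq$-bigness of $B^\qq$ above $\s^\qq$, violating clause~(4) for $\qq$).

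The substantive step is (c). Given $\pp$ and $m>|\s^\pp|$, let $L$ be the set of length-$m$ strings of $T^\pp$; since $T^\pp$ is $h^\pp$-bushy and has no leaves, $L$ is $h^\pp$-big---and hence $b^\pp$-big---above $\s^\pp$. If $B^\pp$ were $b^\pp$-big above every $\tau\in L$, then the concatenation property (\cref{lem:concatentation_property}) would make $B^\pp$ itself $b^\pp$-big above $\s^\pp$, contradicting clause~(4); so I may pick $\tau\in L$ above which $B^\pp$ is $b^\pp$-small. The extension $\qq=(\tau,\,T^\pp\cap\tau^\preceq,\,B^\pp\cap\tau^\preceq,\,h^\pp,\,b^\pp)$ then inherits all five clauses from $\pp$ (the smallness of $B^\qq\subseteq B^\pp$ above $\tau$ being exactly the choice of $\tau$), extends $\pp$ by inspection, and satisfies $X^\qq\subseteq[\tau]^\prec$. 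I expect the only delicate step to be (a); once the first-encounter subtree is identified as the right $h^\pp$-bushy witness, (b) is bookkeeping and (c) reduces to a single application of the concatenation property.
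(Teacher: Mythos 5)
Your proposal is correct and follows essentially the same route as the paper: clause (b) by unfolding the extension relation, clause (a) by a compactness argument producing a finite $h^\pp$-bushy (hence $b^\pp$-bushy) witness that $B^\pp$ would be big above $\s^\pp$, and clause (c) by one application of the concatenation property to find a string of length $m$ above which $B^\pp$ stays $b^\pp$-small. Your ``first-encounter subtree plus K\"onig's lemma'' phrasing of (a) is just the dual formulation of the paper's finite-subcover argument, so no substantive difference.
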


\begin{proof}
	Requirement~(b), that $X^\qq\subseteq X^\pp$ if~$\qq$ extends~$\pp$, follows directly from the definition of the partial ordering on~$\PP_1$.

	\smallskip

Let $\pp\in \PP_1$. Suppose that $[T^\pp]\subseteq [B^\pp]^\prec$. Since~$T^\pp$ is bounded, $[T^\pp]$ is compact.
This implies that there is a prefix-free, finite set $C\subset B^\pp$ such that every $\tau\in T^\pp$ is comparable with some element of~$C$. The collection of strings in~$T^\pp$ extended by some string in~$C$ witnesses that~$B^\pp$ is $h^\pp$-big above~$\s^\pp$. Since~$h^\pp\ge b^\pp$ above~$|\s^\pp|$ this implies that~$B^\pp$ is~$b^\pp$-big above~$\s^\pp$. We get requirement~(a): $X^\pp$ is nonempty.

\smallskip

Again let $\pp\in \PP_1$.
Let~$m \ge |\s^\pp|$. There is some~$\tau\in T^\pp$ of length~$m$ above which~$B^\pp$ is~$b^\pp$-small; otherwise, the concatenation property implies that~$B^\pp$ is $b^\pp$-big above~$\s^\pp$. If~$B^\pp$ is~$b^\pp$-small above~$\tau$ then
 	$   \qq = (\tau,T^\pp \cap \tau^{\preceq},B^\pp\cap \tau^\preceq,h^\pp,b^\pp)   $
is a condition in~$\PP_1$ extending~$\pp$ and satisfying $X^\qq\subseteq [T^\qq] \subseteq [\tau]^\prec$. This gives requirement~(c) of \cref{def:forcing_with_closed_sets}.
\end{proof}

As discussed in \cref{subsec:forcing_with_closed_sets}, if~$\GG\subset \PP_1$ is sufficiently generic then $\bigcap_{\pp\in \GG} [T^\pp\setminus B^\pp]$ is a singleton $\{x^\GG\}$. In fact
\[ x^\GG = \bigcup \left\{ \s^\pp  \,:\, \pp\in \GG \right\}.\]

Let~$\pp\in \PP_1$; since $B_\DNC \cap T^\pp\subseteq B^\pp$ we see that $X^\pp\subseteq \DNC$. Since strong forcing implies forcing (\cref{cor:forcing_and_generics}(3)) we get:

\begin{proposition} \label{prop:OneStep:DNC}
	Every condition in~$\PP_1$ forces that $x^\GG\in \DNC$.
\end{proposition}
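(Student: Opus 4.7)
The plan is to verify that $X^\pp \subseteq \DNC$ as a subset of Baire space for every condition $\pp \in \PP_1$, and then invoke \cref{cor:forcing_and_generics}(3), which says that strong forcing implies forcing. Since $\DNC$ is a $\Pi^0_1$ (hence closed) subset of $\omega^\omega$, it is described by a Borel code, and the containment $X^\pp \subseteq \DNC$ amounts to $\pp\force^* x^\GG\in \DNC$, which yields $\pp\force x^\GG\in \DNC$.

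To establish $X^\pp \subseteq \DNC$, I would first recall that $x\in \DNC$ if and only if no initial segment of $x$ lies in $B_{\DNC}$, and that $B_{\DNC}$ is closed under taking extensions (the clause $\s(e) = J(e){\converge}$ for some $e<|\s|$ is preserved when passing to longer strings). Now take $x \in X^\pp = [T^\pp \setminus B^\pp]$. Then $x$ is an infinite path through $T^\pp$, so $x$ extends $\s^\pp$, and every initial segment of $x$ of length at least $|\s^\pp|$ lies in $T^\pp\setminus B^\pp$. By clause~(3) in the definition of $\PP_1$, $B^\pp \supseteq B_{\DNC}\cap T^\pp$, so none of these initial segments can belong to $B_{\DNC}$.

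The remaining initial segments of $x$ are proper initial segments of $\s^\pp$. Because $B_{\DNC}$ is extension-closed, none of these lies in $B_{\DNC}$ provided $\s^\pp$ itself does not. The key subsidiary observation, and the only mildly nontrivial step, is precisely that $\s^\pp\notin B_{\DNC}$. Suppose for contradiction that $\s^\pp \in B_{\DNC}$. Then $\s^\pp \in B_{\DNC}\cap T^\pp \subseteq B^\pp$. Since $B^\pp$ is open in $T^\pp$, every string of $T^\pp$ extending $\s^\pp$ lies in $B^\pp$; in other words $B^\pp$ contains the entire tree $T^\pp$ above $\s^\pp$. But $T^\pp$ is $h^\pp$-bushy and, by clause~(5), $h^\pp \ge b^\pp$ above $|\s^\pp|$, so $B^\pp$ is $b^\pp$-big above $\s^\pp$, contradicting clause~(4). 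Hence $\s^\pp\notin B_{\DNC}$, and combining the two cases shows $x\in \DNC$, completing the verification and hence the proof.
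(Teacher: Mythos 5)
Your proof is correct and takes the same route as the paper, which disposes of this proposition in one sentence: since $B_{\DNC}\cap T^\pp\subseteq B^\pp$, we have $X^\pp\subseteq\DNC$, and strong forcing implies forcing. Your additional care about the proper initial segments of $\s^\pp$ (via the upward closure of $B_{\DNC}$ and the $b^\pp$-smallness of $B^\pp$) is exactly the detail the paper leaves implicit, and it is handled correctly.
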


\begin{remark} \label{rmk:OneStep:homogenising_conditions}
	Let~$A$ be an open set of strings and let~$g$ be a bounding function. We say that~$A$ is \emph{$g$-closed} if every string above which~$A$ is $g$-big is an element of~$A$.

	The concatenation property implies that every set~$A$ has a $g$-closure: the set of all strings above which~$A$ is $g$-big is $g$-closed.

	Let~$\pp\in \PP_1$. We could require that~$B^\pp$ be $b^\pp$-closed by replacing it by its $b^\pp$-closure. In this case $T^\pp\setminus B^\pp$ is an $(h^\pp-b^\pp)$-bushy tree with no leaves.

	In later sections we will use notions of largeness for which the concatenation property fails, and so will not be able to quite mimic this operation. Some amount of closure would be required to ensure that we get a restriction map from~$\PP_n$ to~$\PP_{n-1}$.
\end{remark}

\subsection{Totality} \label{subsec:step_one:totality}

Recall that for a set of strings~$C$ we let~$[C]^{\prec} = \bigcup_{\s\in C}[\s]^\prec$ be the set of $x\in \w^\w$ which extend some string in~$C$.

\begin{lemma} \label{lem:one_step:forcing_Sigma_1}
	Let $\pp\in \PP_1$. Let~$C\subseteq T^\pp$ be a c.e.\ and open in~$T^\pp$. Suppose that $\pp\force x^\GG\in [C]^\prec$. Let $\tau\in T^\pp$; let $g\in \Quick$ such that $h^\pp\gg g$, and $h^\pp\ge g \ge b^\pp$ above~$|\tau|$. Then the set $B^\pp\cup C$ is $g$-big above~$\tau$.
\end{lemma}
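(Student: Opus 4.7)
The plan is to argue by contradiction: assume that $B^\pp \cup C$ is $g$-small above $\tau$ and build an extension $\qq \le \pp$ with $\s^\qq = \tau$ that strongly forces $x^\GG \notin [C]^\prec$. This will contradict the hypothesis $\pp \force x^\GG \in [C]^\prec$ via \cref{lem:forcing_behaves}(1) and~(3).

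First, if $\tau \in B^\pp \cup C$, then the single-node forest $\{\tau\}$ trivially witnesses that $B^\pp \cup C$ is $g$-big above $\tau$, so we may assume $\tau \notin B^\pp \cup C$. Now I would set
\[
	\s^\qq = \tau, \quad T^\qq = T^\pp \cap \tau^\preceq, \quad h^\qq = h^\pp, \quad b^\qq = g, \quad B^\qq = (B^\pp \cup C) \cap T^\qq,
\]
and verify that $\qq \in \PP_1$. The tree $T^\qq$ is still computably bounded, computable, $h^\pp$-bushy, and has no leaves. The set $B^\qq$ is c.e.\ and open in $T^\qq$ (both $B^\pp$ and $C$ are open in $T^\pp$, and openness passes to the restriction), and it contains $B_{\DNC} \cap T^\qq$ since $B^\pp$ does. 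It is $g$-small above $\tau$ by the contradiction hypothesis, and the growth-rate clauses $h^\pp \gg g$ and $h^\pp \ge g$ above $|\tau|$ are precisely what is assumed. The extension relation $\qq \le \pp$ is immediate, with $b^\qq = g \ge b^\pp$ above $|\tau|$ coming from the hypothesis.

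The payoff is that $X^\qq \cap [C]^\prec = \emptyset$. If some $x \in X^\qq$ extends $\s \in C$, then because $x$ also extends $\tau$, either $\s \preceq \tau$ or $\tau \prec \s$. In the first case, the openness of $C$ in $T^\pp$ forces $\tau \in C$, contradicting our reduction. In the second case, $\s \in T^\qq \cap C \subseteq B^\qq$, so $x \in [B^\qq]^\prec$, contradicting $x \in X^\qq = [T^\qq] \setminus [B^\qq]^\prec$. Thus $\qq$ strongly forces $x^\GG \notin [C]^\prec$, and by \cref{cor:forcing_and_generics}(3), $\qq \force x^\GG \notin [C]^\prec$; together with $\qq \le \pp \force x^\GG \in [C]^\prec$, this is the contradiction.

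I do not expect a substantive obstacle. The hypothesis $h^\pp \gg g$ with $h^\pp \ge g \ge b^\pp$ above $|\tau|$ was clearly tailored so that $(h^\pp, g)$ functions as a legal pair of bushiness/smallness parameters for an extension, and once that is noted the verification is bookkeeping. The only cosmetic subtlety is the preliminary reduction to $\tau \notin B^\pp \cup C$, without which $\tau \in B^\qq$ would itself obstruct $g$-smallness of $B^\qq$ above its own stem.
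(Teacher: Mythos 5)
Your proposal is correct and is essentially the paper's proof: the paper likewise assumes $g$-smallness of $B^\pp\cup C$ above $\tau$ and forms the condition $(\tau, T^\pp\cap\tau^\preceq, (B^\pp\cup C)\cap\tau^\preceq, h^\pp, g)$, which strongly forces $x^\GG\notin[C]^\prec$ and extends $\pp$ (the clause $g\ge b^\pp$ above $|\tau|$ being needed exactly for the extension relation, as you note). Your extra remarks --- the reduction to $\tau\notin B^\pp\cup C$ and the explicit verification that $X^\qq\cap[C]^\prec=\emptyset$ --- are just details the paper leaves implicit.
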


\begin{proof}
	Otherwise $  \qq = (\tau,T^\pp\cap \tau^\preceq,(B^\pp\cup C)\cap \tau^\preceq,h^\pp,g)$
	is a condition extending~$\pp$ which strongly forces that~$x^\GG\notin [C]^\prec$. (We need $g\ge b^\pp$ above~$|\tau|$ not to ensure that~$\qq$ is a condtion but to ensure that it extends~$\pp$.)
\end{proof}

\begin{remark} \label{rmk:one_step:bushiness_of_convergence}
	Let $\pp\in \PP_1$, let $C\subseteq T^\pp$ be c.e.\ and open in~$T^\pp$, and suppose that~$\pp$ strongly forces that~$x^\GG\in [C]^\prec$. By compactness there is some level~$m$ such that all strings in $T^\pp$ of length~$m$ are in $B^\pp\cup C$. This shows that $B^\pp\cup C$ is $h^\pp$-big above every~$\tau\in T^\pp$.
\end{remark}

The following proposition shows that we can always strongly force totality of~$\Gamma(x^\GG)$ for any Turing functional~$\Gamma$. Indeed it is equivalent to forcing totality, since every~$\Pi^0_2$ class is the domain of some Turing functional.

\begin{proposition} \label{prop:one_step:forcing_Pi_2}
	Let~$C\subseteq \w^\w$ be~$\Pi^0_2$ and let~$\pp\in \PP_1$. Then $\pp\force x^\GG\in C$ if and only if the set of conditions which strongly force that~$x^\GG\in C$ is dense below~$\pp$.
\end{proposition}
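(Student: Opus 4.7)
The backward direction is immediate from the general forcing machinery: strong forcing implies forcing by \cref{cor:forcing_and_generics}(3), so if strong forcers of $x^\GG\in C$ are dense below $\pp$ then so are forcers, and \cref{lem:forcing_behaves}(4) yields $\pp\force x^\GG\in C$.

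For the forward direction, write $C=\bigcap_n [W_n]^{\prec}$ with $(W_n)$ a uniformly c.e.\ sequence of sets of strings; by the recursive definition of forcing $\pp\force x^\GG\in [W_n]^{\prec}$ for each $n$, and this descends to every $\qq\le \pp$. Fix such a $\qq$; the goal is to build $\rr\le \qq$ with $X^\rr\subseteq C$. First use \cref{lem:density_of_gg} to pick parameters $h^\rr,b^\rr\in \Quick$ sandwiched so that $h^\qq\gg h^\rr\gg b^\rr\gg b^\qq$, with $h^\rr\le h^\qq$ and $b^\rr\ge b^\qq$ above $|\s^\qq|$. Take $\s^\rr=\s^\qq$, lengthening if necessary so that also $h^\rr\ge b^\qq$ throughout $[|\s^\rr|,\w)$. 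With these choices \cref{lem:one_step:forcing_Sigma_1} applied with $g=h^\rr$ to the c.e.\ open set $W_n\cap T^\qq$ gives, for every $\tau\in T^\qq$ of length at least $|\s^\rr|$ and every $n$, that $B^\qq\cup (W_n\cap T^\qq)$ is $h^\rr$-big above $\tau$.

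The plan is then to build $T^\rr\subseteq T^\qq$ in stages. Start with the single \emph{active} leaf $\s^\rr$. At stage $n$, for each currently active leaf $\tau$, dovetail enumerations of $B^\qq$ and $W_n$ until a finite $h^\rr$-bushy witness $S_\tau$ above $\tau$ to the bigness of $B^\qq\cup (W_n\cap T^\qq)$ appears; such a witness exists by the previous paragraph and lies inside $T^\qq$ by \cref{rmk:canonical_bushy_witness_is_a_subtree}. Append $S_\tau$ to the tree. Leaves of $S_\tau$ enumerated into $B^\qq$ are declared \emph{bad endpoints}; the remaining leaves, which lie in $W_n\setminus B^\qq$, become the active leaves for stage $n+1$. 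After all stages, above every bad endpoint $\sigma$ attach the full subtree $T^\qq\cap\sigma^{\preceq}$: this extension is $h^\qq$-bushy (hence $h^\rr$-bushy) and lies entirely in $B^\qq$ by openness of $B^\qq$ in $T^\qq$. Finally set $B^\rr:=B^\qq\cap T^\rr$.

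It then remains to verify the condition axioms and the forcing claim. $T^\rr$ is $h^\rr$-bushy at each node (either from a bushy witness $S_\tau$, or from the $h^\qq$-bushy attached full subtrees, or at the junctions between them), has no leaves, and is computably bounded; $B^\rr$ is c.e., open in~$T^\rr$, and is $b^\rr$-small above $\s^\rr$ since it is contained in $B^\qq$ and $b^\rr\ge b^\qq$. Any path $x\in X^\rr$ avoids $B^\qq$, so at every stage $n$ it must travel through an active leaf, and hence through $W_n$; this places $x\in C$, and so $\rr$ strongly forces $x^\GG\in C$. The main delicate point is computability of $T^\rr$: each stage must be shown to terminate, which it does because the bigness of $B^\qq\cup W_n$ above each active $\tau$ is a true $\Sigma^0_1$ fact, and the dovetailed search across the finitely many active leaves at each stage must be uniformly effective so that the resulting tree is genuinely computable rather than merely $\Sigma^0_2$.
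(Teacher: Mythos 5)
Your argument follows the paper's proof of this proposition closely: reduce to producing, below each $\qq\le\pp$, a single extension strongly forcing $x^\GG\in C$; write $C\cap[T^\qq]$ as $[T^\qq]\cap\bigcap_n[C_n]^\prec$ for c.e.\ sets $C_n$ open in $T^\qq$; use \cref{lem:density_of_gg} and \cref{lem:one_step:forcing_Sigma_1} to see that each $B^\qq\cup C_n$ is big above every node of $T^\qq$; and assemble a bushy computable subtree in stages whose $B^\qq$-avoiding paths meet every $C_n$. The cosmetic differences (two fresh parameters where the paper uses one and keeps $b^\qq$; attaching full subtrees of $T^\qq$ above ``bad endpoints'' at the end rather than continuing to extend all leaves uniformly) are harmless.

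There is one point you flag but do not discharge, and it is exactly where the construction needs one more idea: the computability of $T^\rr$ and the requirement that $T^\rr$ have no leaves. The witness $S_\tau$ to the bigness of $B^\qq\cup C_n$ above an active leaf $\tau$ may be the trivial tree $\{\tau\}$ (e.g.\ if $\tau$ itself is enumerated into $C_n$), and in general nothing forces the lengths of active leaves to grow from stage to stage. A string could then remain an active leaf forever, so $T^\rr$ would have a leaf, violating clause (1) of the definition of $\PP_1$; and even absent that, membership of a length-$k$ string in $T^\rr$ would not be decidable by running finitely many stages, so $T^\rr$ would be merely c.e. The paper's fix is small but necessary: after appending the witnesses at stage $n$, extend every leaf of the resulting finite tree by one further level of children drawn from $T^\qq$ (possible since $T^\qq$ has no leaves and $h^\qq\ge h^\rr$ on the relevant lengths). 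Then the minimum leaf length strictly increases with $n$, so the limit tree has no leaves and a string of length $k$ belongs to it iff it appears by stage $k$. With that step added (and the routine care, when lengthening $\s^\rr$, to land on a node above which $B^\qq$ remains $b^\qq$-small, as guaranteed by the concatenation property), your proof is complete.
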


\begin{proof}
It suffices to show that if $\pp\force x^\GG\in C$ then~$\pp$ has an extension which strongly forces that~$x^\GG\in C$. Fix such~$\pp$.
	
	By \cref{lem:density_of_gg}, find some~$g\in \Quick$ such that $h^\pp\gg g\gg b^\pp$. As discussed above, every level of~$T^\pp$ contains a string above which~$B^\pp\cap T^\pp$ is $b^\pp$-small. So by extending~$\s^\pp$ (and taking the full subtree above that string) we may assume that $h^\pp\ge g\ge b^\pp$ above $|\s^\pp|$.

		Let~$\seq{C_k}$ be a uniform sequence of c.e.\ subsets of~$T^\pp$, open in~$T^\pp$, such that $C\cap [T^\pp] = [T^\pp]\cap \bigcap_k [C_k]^\prec$. \Cref{lem:one_step:forcing_Sigma_1} says that for all~$\tau\in T^\pp$, for all~$k$, the set $B^\pp\cup C_k$ is $g$-big above~$\tau$.
	
	\medskip
	
	We effectively define an increasing sequence $\seq{S_k}$ of finite $g$-bushy trees with the following properties:
\begin{itemize}
	\item $S_k$ is~$g$-bushy;
	\item $S_{k+1}$ is an end-extension of~$S_k$, and no leaf of~$S_k$ is a leaf of~$S_{k+1}$;
	\item $S_k\subset T^\pp$; and
	\item the leaves of $S_{k+1}$ lie in $B^\pp\cup C_k$.
\end{itemize}

	We start with $S_0 = \{\s^\pp\}$. We know that $B^\pp\cap C_0$ is $g$-big above~$\s^\pp$;
\Cref{lem:one_step:extended_concatenation} shows that for all $k>0$, $B^\pp\cup C_{k}$ is $g$-big above $B^\pp\cup C_{k-1}$. Thus, given~$S_k$ we can find a $g$-bushy end-extension~$S'_k$ of~$S_k$ with leaves in $B^\pp\cup C_k$; \cref{rmk:canonical_bushy_witness_is_a_subtree} shows that $S'_k\subset T^\pp$. Since~$T^\pp$ has no leaves, we can extend~$S'_k$ to the required $S_{k+1}$ by adding children from~$T^\pp$ to each leaf of~$S'_k$ (using the fact that $h^\pp\ge g$ above $|\s^\pp|$).

	Having defined the trees~$S_k$ we let $S = \bigcup_k S_k$. Then $S\subseteq T^\pp$, $S$ is $g$-bushy, and~$S$ has no leaves. Also,~$S$ is computable: a string of length~$k$ is in~$S$ if and only if it is in~$S_k$.

	Every path in~$S$ lies in $[B^\pp\cup C_k]^\preceq$ for all~$k$ and so $[S\setminus B^\pp]\subseteq C$. We required that $g\gg b^\pp$, so $\qq= (\s^\pp, S, B^\pp\cap S, g, b^\pp)$ is a condition which extends~$\pp$ and strongly forces that~$x^\GG\in C$.
\end{proof}

\subsection{Minimality}

We prove:

\begin{proposition} \label{prop:one_step:minimality}
	Every condition in~$\PP_1$ forces that $\deg_\Tur(x^\GG)$ is minimal.
\end{proposition}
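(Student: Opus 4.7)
As usual, the minimality of $\deg_\Tur(x^\GG)$ amounts to the statement, for each Turing functional $\Gamma$, that either $\Gamma(x^\GG)$ is partial, or $\Gamma(x^\GG)$ is computable, or $x^\GG \le_\Tur \Gamma(x^\GG)$. It therefore suffices to show that, for each fixed $\Gamma$, the set of conditions forcing one of these three alternatives is dense in $\PP_1$. Fix $\pp \in \PP_1$ and $\Gamma$. If some extension of $\pp$ forces $\Gamma(x^\GG)$ partial, we are done; otherwise $\pp$ forces $\Gamma(x^\GG)$ total, and by \Cref{prop:one_step:forcing_Pi_2} we pass to an extension $\pp'$ which \emph{strongly} forces totality, so that $\Gamma$ is defined on all of $X^{\pp'} = [T^{\pp'} \setminus B^{\pp'}]$. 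Using \Cref{lem:density_of_gg}, choose $g \in \Quick$ with $h^{\pp'} \gg g \gg b^{\pp'}$; by first lengthening $\s^{\pp'}$ inside $T^{\pp'} \setminus B^{\pp'}$ we may also assume $h^{\pp'} \ge g \ge b^{\pp'}$ everywhere above $|\s^{\pp'}|$.

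The heart of the argument is the $\Gamma$-splitting dichotomy. Call $\sigma_0, \sigma_1 \in T^{\pp'}$ $\Gamma$-\emph{splitting} if $\Gamma(\sigma_0)$ and $\Gamma(\sigma_1)$ are incomparable, and call $\tau \in T^{\pp'}$ \emph{splittable} if some finite $g$-bushy tree above $\tau$ inside $T^{\pp'}$ has pairwise $\Gamma$-splitting leaves; this is a $\Sigma^0_1$ property. Suppose first that every $\tau \in T^{\pp'}$ extending $\s^{\pp'}$ is splittable. Then starting from $S_0 = \{\s^{\pp'}\}$, I recursively build a computable $g$-bushy subtree $S^* \subseteq T^{\pp'}$: at each stage, for every current leaf $\tau$ of $S_s$ I search for a splittability-witness above $\tau$ and append it, obtaining $S_{s+1}$. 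All searches terminate, and $S^* = \bigcup_s S_s$ is computable, $g$-bushy, and has no leaves. Any two distinct infinite paths of $S^*$ diverge inside some stage-witness, exit through distinct leaves of that witness, and so (by monotonicity of $\Gamma$) acquire incomparable $\Gamma$-values; hence $\Gamma$ is 1-1 on $X^\qq = [S^* \setminus B^{\pp'}]$, where $\qq = (\s^{\pp'}, S^*, B^{\pp'} \cap S^*, g, b^{\pp'})$ is a valid extension of $\pp'$. Since $X^\qq$ is $\Pi^0_1$ and computably bounded, \Cref{lem:1-1_or_constant_on_compact_space} yields $\qq \force x^\GG \le_\Tur \Gamma(x^\GG)$.

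Otherwise some $\tau \in T^{\pp'}$ is not splittable, so no $g$-bushy $\Gamma$-splitting tree above $\tau$ lies in $T^{\pp'}$. Restricting $\pp'$ to the full subtree above $\tau$, I construct an extension forcing $\Gamma(x^\GG)$ equal to some computable real $y$, mirroring the proof of \Cref{prop:one_step:forcing_Pi_2}: build a decreasing sequence of $g$-bushy finite subtrees on which $\Gamma$ has a common prefix $y \rest k$ of increasing length. At each stage, totality (via \Cref{lem:one_step:forcing_Sigma_1}) yields a $g$-big set of extensions on which $\Gamma$ has length $\ge k+1$; the big subset property (\Cref{lem:bigsubset}), applied against the absence of a $g$-bushy $\Gamma$-splitting configuration above $\tau$, forces a single value for the next bit of $y$ to claim a $g$-big share. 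This bit is uniformly computably recovered, $y$ is thereby determined, and the limit subtree yields an extension strongly forcing $\Gamma(x^\GG) = y$, which is computable by \Cref{lem:1-1_or_constant_on_compact_space}. The principal technical obstacle lies precisely in this conversion of ``no $g$-bushy $\Gamma$-splitting above $\tau$'' into a majority-vote argument producing a single uniformly computable branch of $\Gamma$-values; the various rate-of-growth bounds must be maintained consistently through repeated thinning, which is handled by \Cref{lem:density_of_gg} and the stability of $\gg$ so that the final object remains a genuine condition extending $\pp'$.
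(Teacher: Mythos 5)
Your overall reduction (partial / computable / $\Gamma(x^\GG)\ge_\Tur x^\GG$, pass to a condition strongly forcing totality via \cref{prop:one_step:forcing_Pi_2}, then run a splitting dichotomy and invoke \cref{lem:1-1_or_constant_on_compact_space}) is the same as the paper's. But the way you set up the dichotomy hides a genuine gap: the two branches of your case split do not use matching notions of splitting, and the branch that would reconcile them is exactly the combinatorial heart of the Kumabe--Lewis argument, which you have elided.

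Concretely: your ``splittable'' means that above $\tau$ there is a single $g$-bushy finite tree \emph{all} of whose leaves pairwise $\Gamma$-split (and note this must be ``mod $B^{\pp'}$'', per \cref{def:step_one:big_splittings} --- bad leaves may carry short or comparable $\Gamma$-values, so demanding that they split too would make splittability fail for spurious reasons). In the negative branch, the failure of \emph{this} strong property above $\tau$ is far too weak to drive the majority-vote argument. That argument (the proof of \cref{lem:one_step:finding_splittings}) needs, at each step, that if $A_{\alpha\conc i}$ is $g$-big then $A_{\perp\alpha\conc i}$ is $g$-small, and the only way to see this is that otherwise one obtains \emph{two} $g$-big sets above $\tau$ which $\Gamma$-split mod $B^{\pp'}$. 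The existence of such a pair of $g$-big splitting sets does not produce a single $g$-bushy tree with all leaves pairwise splitting, so ``$\tau$ not splittable'' in your sense is compatible with an abundance of two-set splittings, and your computable-value construction stalls. If, to fix this, you weaken the dichotomy to ``two-set splittings exist above every $\tau$ vs.\ not'', then the positive branch breaks instead: a two-set splitting above each leaf does not give the fully pairwise-splitting witness your construction appends, and making $\Gamma$ eventually 1-1 across \emph{all} pairs of diverging paths requires combining splittings over the many leaves of the current finite tree. That is precisely the content of \cref{lem:one_step:finding_many_splittings} and the combinatorial \cref{lemma:one_step:combinatorial} (Kumabe--Lewis Lemma~6.2), which the paper uses to build a \emph{delayed}-splitting tree in \cref{prop:one_step:splitting_tree}: the trees $R_\tau$ appended above distinct leaves $\tau$ of $S_k$ cross-split each other, and two paths acquire incomparable $\Gamma$-values one stage after they diverge. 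Your writeup acknowledges a ``principal technical obstacle'' only in the negative branch, but the real missing work is this many-way splitting lemma; without it (or an argument that fully pairwise-splitting witnesses exist above every node under the correct negative hypothesis), the proof is incomplete.
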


Let~$\Gamma\colon \w^\w\to 2^\w$ be a Turing functional. There are three ways to ensure that~$\Gamma(x^\GG)$ does not violate the minimality of~$\deg_{\Tur}(x^\GG)$: ensuring that it is partial, ensuring that it is computable, or ensuring that it computes~$x^\GG$.

\medskip

For the rest of this section, fix a Turing functional~$\Gamma\colon \w^\w\to 2^\w$.

\begin{definition} \label{def:step_one:big_splittings}
	Let~$B$ be a set of strings. Two sets~$A_0$ and~$A_1$ of strings \emph{$\Gamma$-split mod~$B$} if $\Gamma(\tau_0)\perp \Gamma(\tau_1)$ for all $\tau_0\in A_0\setminus B$ and $\tau_1\in A_1\setminus B$.
\end{definition}

\begin{lemma} \label{lem:one_step:finding_splittings}
	Suppose that $\pp\in \PP_1$ strongly forces that $\Gamma(x^\GG)$ is total, and forces that~$\Gamma(x^\GG)$ is noncomputable.
	
	Let~$\tau\in T^\pp$. Let $g\in \Quick$ such that $h^\pp\gg g$, and $h^\pp\ge 3g$ and $g\ge b^\pp$ above~$|\tau|$. Then there are~$A_0,A_1\subset T^\pp$, each~$g$-big above~$\tau$, which $\Gamma$-split mod $B^\pp$.
\end{lemma}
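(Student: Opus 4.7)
I plan to argue the contrapositive: assume no such $A_0, A_1$ exist and derive that $\pp$ has an extension forcing $\Gamma(x^\GG)$ to be computable, contradicting the hypothesis. As a preliminary reduction, if $B^\pp$ is $g$-big above $\tau$, then $A_0 = A_1 = B^\pp$ satisfies the conclusion vacuously (the $\Gamma$-split mod $B^\pp$ condition quantifies only over non-$B^\pp$ strings), so I assume $B^\pp$ is $g$-small above~$\tau$.

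For $\mu \in 2^{<\w}$ let $F_\mu = \{\s \in T^\pp : \Gamma(\s) \succeq \mu\}$ and let $E_\ell = \bigcup_{\mu \in 2^\ell} F_\mu$. Strong forcing of totality combined with \cref{rmk:one_step:bushiness_of_convergence} gives that $B^\pp \cup E_\ell$ is $h^\pp$-big above every string of $T^\pp$; using $h^\pp \ge 3g$ above $|\tau|$ and the big subset property with $g$-smallness of~$B^\pp$, we get $E_\ell$ is $2g$-big above~$\tau$. The no-splitting hypothesis forces that at each level $\ell$ at most one $\mu \in 2^\ell$ has $F_\mu$ being $g$-big above $\tau$, since two such $\mu \ne \mu'$ would give $A_0 = F_\mu$, $A_1 = F_{\mu'}$ as a splitting ($\Gamma$-values lying above the incomparable $\mu$ and $\mu'$). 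Define $\mu_\ell \in 2^\ell$ (when it exists) as this unique string, with $\mu_\ell \prec \mu_{\ell+1}$ automatic from $F_\nu \subseteq F_{\nu \rest \ell}$.

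The heart of the argument is showing $\mu_\ell$ exists for every $\ell$, yielding a total $f = \bigcup_\ell \mu_\ell \in 2^\w$ that is computable by effective search for the witnessing bushy tree. By induction: given $\mu_\ell$ witnessed by a $g$-bushy tree $S_\ell$ above~$\tau$, monotonicity of $\Gamma$ gives $E_{\ell+1} \cap \s^\preceq \subseteq F_{\mu_\ell \conc 0} \cup F_{\mu_\ell \conc 1}$ for each leaf~$\s$ of $S_\ell$, hence $B^\pp \cup F_{\mu_\ell \conc 0} \cup F_{\mu_\ell \conc 1}$ is $h^\pp$-big, so $3g$-big, above~$\s$. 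Iterated 3-way big subset then labels each leaf by one of $B$, $0$, or $1$ according to which of the three sets is $g$-big above it. A uniformly-$B$ labeling yields $B^\pp$ $g$-big above $\tau$ by concatenation, contradicting the reduction; a uniformly-$i$ labeling yields $F_{\mu_\ell \conc i}$ $g$-big above $\tau$, extending $f$; and any mixed labeling should directly produce a $g$-big $\Gamma$-splitting above~$\tau$, contradicting the no-splitting hypothesis. The main technical obstacle is this mixed case: one must carefully assemble witness subtrees, distributing $B$-labeled leaves between both sides of the splitting (exploiting that $B^\pp$-strings contribute vacuously to the $\Gamma$-split condition), with the factor $h^\pp \ge 3g$ providing precisely the slack needed at each leaf.

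With $f$ total and computable, I build the contradicting extension $\qq$ by moving the stem to some $\tau' \succeq \tau$ above which $B^\pp$ is $b^\pp$-small (as in the proof of \cref{lem:one_step:acceptability_of_closed_sets}) and setting $B^\qq = B^\pp \cup \{\s \in T^\pp \cap (\tau')^\preceq : \Gamma(\s) \perp f\}$. The added strings decompose as the disjoint union over $\ell \ge 1$ of the sets $F_{\sigma_\ell}$, where $\sigma_\ell = (f \rest (\ell-1)) \conc (1 - f(\ell-1))$, each $g$-small above $\tau$ by uniqueness of~$\mu_\ell$. Bounding this union by a $b^\qq \in \Quick$ with $h^\pp \gg b^\qq$---using that at each depth $n$ only boundedly many $\sigma_\ell$ are relevant, namely those with $\ell$ at most the maximum $|\Gamma(\s)|$ for $\s \in T^\pp$ of length $n$---yields a valid condition $\qq$ strongly forcing $\Gamma(x^\GG) = f$, the required contradiction.
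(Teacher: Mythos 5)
Your skeleton matches the paper's proof: assuming no splitting exists, follow the unique ``big'' branch of $\Gamma$-values to define a computable $f$, then pass to a condition whose bad set contains the strings whose $\Gamma$-value leaves $f$. But the induction step has a genuine gap. By keeping $B^\pp$ outside the sets $F_\mu$ and maintaining only that $F_{\mu_\ell}$ is $g$-big above $\tau$, you are forced into the three-way leaf-labelling, and the mixed case cannot be repaired as you suggest: if a leaf $\s$ of $S_\ell$ is labelled $1$, you have no guarantee that either $F_{\mu_\ell\conc 0}$ or $B^\pp$ is $g$-big above $\s$ (every extension of $\s$ outside $B^\pp$ may have $\Gamma$-value extending $\mu_\ell\conc 1$), so there is nothing to place above $\s$ on the $0$-side of the alleged splitting; ``distributing the $B$-labelled leaves'' does not touch this. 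Nor can you dodge the case split by concatenating first and applying the big subset property at $\tau$: concatenation through the merely $g$-bushy $S_\ell$ only yields that $B^\pp\cup F_{\mu_\ell\conc 0}\cup F_{\mu_\ell\conc 1}$ is $g$-big above $\tau$, and a three-way big-subset split of a $g$-big set loses a factor of $3$ at every level. The missing idea is the paper's bootstrap: work with $A_\alpha=B^\pp\cup F_\alpha$ and maintain the stronger invariant that $A_{\mu_\ell}$ is $2g$-big above $\tau$. Then $A_{\mu_\ell\conc 0}\cup A_{\mu_\ell\conc 1}$ is $2g$-big above $\tau$ by concatenation at level $2g$, the big subset property gives some $A_{\mu_\ell\conc i}$ which is $g$-big, the no-splitting hypothesis makes $A_{\perp\mu_\ell\conc i}$ $g$-small, and since $A_{\mu_\ell\conc i}\cup A_{\perp\mu_\ell\conc i}$ is $h^\pp$-big, hence $3g$-big, above $\tau$ directly from strong forcing of totality, $A_{\mu_\ell\conc i}$ is again $2g$-big. (That the extra $B^\pp$ inside $A_\alpha$ is harmless is exactly your own observation that $B^\pp$-strings contribute vacuously to the splitting condition.)

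There is a second, smaller gap at the end. The set you add to $B^\qq$ is a \emph{disjoint} union of the $g$-small sets $F_{\sigma_\ell}$, and a disjoint union of $g$-small sets need not be $g$-small; smallness passes only to \emph{increasing} unions, because a witness to bigness is a finite tree. Write the added set instead as the increasing union over $\ell$ of $\left\{ \s : \Gamma(\s)\perp f\rest{\ell} \right\}$; each of these is $g$-small because it would otherwise form a $\Gamma$-splitting against the $g$-big $F_{\mu_\ell}$ --- that is, by the no-splitting hypothesis, not merely by uniqueness of $\mu_\ell$. With that, the new condition simply carries the smallness parameter $g$ (for which $h^\pp\gg g$ is already assumed); the depth-by-depth count of ``relevant'' $\sigma_\ell$ and the search for a new $b^\qq$ are neither needed nor meaningful, since smallness above $\tau$ is a global property of the set, not one that can be certified level by level.
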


\begin{proof}
	Suppose that~$\tau$ and~$g$ witness the failure of the lemma; we find an extension of~$\pp$ which forces that~$\Gamma(x^\GG)$ is computable.
	
	For $\alpha\in 2^{<\w}$ let
	\[ A_\alpha = B^\pp \cup \left\{ \rho\in T^\pp  \,:\,  \Gamma(\rho)\succeq \alpha \right\} \]
	and
	\[ A_{\perp \alpha} = \bigcup A_\beta\,\Cyl{\beta\in 2^{<\w} \andd \beta\perp \alpha} .\]

	Let~$\alpha\in 2^{<\w}$ and suppose that~$A_\alpha$ is $2g$-big above~$\tau$. By \cref{rmk:one_step:bushiness_of_convergence} the set $A_{\alpha\conc 0}\cup A_{\alpha\conc 1}$ is $h^\pp$-big above every $\rho\in A_{\alpha}$. Since $h^\pp \ge 2g$ above $|\tau|$, the concatenation property implies that $A_{\alpha\conc 0}\cup A_{\alpha\conc 1}$ is $2g$-big above~$\tau$. By the big subset property there is some $i<2$ such that $A_{\alpha\conc i}$ is $g$-big above~$\tau$ [Here we use that the range of~$\Gamma$ is in Cantor rather than Baire space; we also use this in the proof of \cref{lemma:one_step:combinatorial}].
	
	The assumption implies that $A_{\perp \alpha\conc i}$ is $g$-small above~$\tau$. Since $A_{\alpha\conc i}\cup A_{\perp \alpha\conc i}$ is $h^\pp$-big above~$\tau$ and $3g \le h^\pp$ above $|\tau|$ it must be that in fact~$A_{\alpha\conc i}$ is $2g$-big above~$\tau$.
	
	\medskip
	
	By recursion define the unique $z\in 2^\w$ such that for all $\alpha\prec z$, $A_\alpha$ is $2g$-big above~$\tau$. Note that~$z$ is computable. The set
	\[ A_{\perp z} =  \bigcup_{k<\w} A_{\perp z\,\,\rest k} \]
 is $g$-small above~$\tau$ because it is the union of an increasing sequence of sets, each $g$-small above~$\tau$; since largeness is witnessed by a finite tree, $g$-smallness above~$\tau$ is preserved when taking the union. The fact that~$z$ is computable shows that~$A_{\perp z}$ is c.e., whence the tuple $(\tau,T^\pp\cap \tau^\preceq,A_{\perp z}\cap \tau^\preceq, h^\pp,g)$
is a condition extending~$\pp$ as required (recalling that $B^\pp\subseteq A_{\perp z}$).
\end{proof}	
	
The following lemma will allow us to construct a ``delayed splitting'' subtree of~$T^\pp$.

\begin{lemma} \label{lem:one_step:finding_many_splittings}
		Suppose that $\pp\in \PP_1$ strongly forces that $\Gamma(x^\GG)$ is total, and forces that~$\Gamma(x^\GG)$ is noncomputable.
			Suppose that~$\tau_1,\tau_2,\dots, \tau_k\in T^\pp$.
Let $g\in \Quick$ such that $h^\pp\gg g$, and $g\ge b^\pp$ and $h^\pp\ge 3^kg$ above~$\min \{|\tau_1|,|\tau_2|, \dots, |\tau_k|\}$.
	Then there are sets $A_1,A_2,\dots, A_k\subset T^\pp$, each~$A_j$ $g$-big above~$\tau_j$, which pairwise $\Gamma$-split mod~$B^\pp$.
\end{lemma}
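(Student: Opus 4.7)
My plan is to proceed by induction on $k$. The base case $k=1$ is immediate: take $A_1=\{\tau_1\}$. A singleton forest is vacuously $g$-bushy (its sole element is terminal), hence $g$-big above $\tau_1$, and the pairwise splitting requirement is vacuous.

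For the inductive step, assume the result for $k-1$, and set $m:=\min_{1\le j\le k}|\tau_j|$. I first apply the inductive hypothesis to $\tau_2,\dots,\tau_k$ with $3g$ in place of $g$. The required bound $h^\pp\ge 3^{k-1}\cdot (3g)=3^k g$ above $\min_{j\ge 2}|\tau_j|\ge m$ follows from the hypothesis, as do $3g\ge b^\pp$ above $m$ and $h^\pp\gg 3g$ (since $h^\pp\gg g$ and $3g\le g^{(2)}$ eventually). This yields sets $A_2,\dots,A_k\subset T^\pp$, each $3g$-big above $\tau_j$ and pairwise $\Gamma$-splitting mod $B^\pp$. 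It remains to find $A_1\subset T^\pp$ that is $g$-big above $\tau_1$ and $\Gamma$-splits mod $B^\pp$ with each of $A_2,\dots,A_k$.

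For this, let $V:=\bigcup_{j\ge 2}\Gamma[A_j\setminus B^\pp]$, a finite set of binary strings whose elements drawn from distinct $A_j$'s are pairwise incomparable (by the pairwise splitting established above). Consider the c.e.\ open set
\[ C\;:=\;B^\pp\cup\bigl\{\rho\in T^\pp:\Gamma(\rho)\perp\beta\text{ for every }\beta\in V\bigr\}. \]
If $C$ is $g$-big above $\tau_1$, then by \Cref{rmk:canonical_bushy_witness_is_a_subtree} a finite prefix-free $g$-bushy witness lies inside $T^\pp$, and its leaves form the desired $A_1$: for each leaf $\rho\notin B^\pp$, $\Gamma(\rho)$ is incomparable with every element of $V$, so $A_1$ $\Gamma$-splits mod $B^\pp$ with every $A_j$ for $j\ge 2$.

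The main obstacle is to rule out $C$ being $g$-small above $\tau_1$. In that case, $(\tau_1, T^\pp\cap\tau_1^{\preceq}, C\cap\tau_1^{\preceq}, h^\pp, g)$ is a condition extending $\pp$ that strongly forces $\Gamma(x^\GG)$ into the finite union of cones $\bigcup_{\beta\in V}[\beta]^\prec$. Using that $V$-elements drawn from distinct $A_j$'s are pairwise incomparable, a further extension commits to a specific $\beta_0\in V_{j_0}:=\Gamma[A_{j_0}\setminus B^\pp]$, forcing $\Gamma(x^\GG)\succeq\beta_0$. To close the argument by contradiction with the hypothesis that $\pp$ forces $\Gamma(x^\GG)$ noncomputable, I would adapt the splitting-tree recursion from the proof of \Cref{lem:one_step:finding_splittings} inside the cone above $\beta_0$, aiming to extract a computable $z\in 2^\w$ with $\Gamma(x^\GG)=z$ forced. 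The combinatorial bookkeeping in this final step — showing that each level of the recursion costs a factor of $3$ in bushiness and that the bound $h^\pp\ge 3^k g$ accommodates both the inductive application and the internal recursion — is the technically most delicate part of the proof.
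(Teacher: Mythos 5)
Your base case and the first half of the inductive step (invoking the induction hypothesis with $3g$ in place of $g$) match the paper, but the second half has a genuine gap. You fix $A_2,\dots,A_k$ once and for all, set $V=\bigcup_{j\ge 2}\Gamma[A_j\setminus B^\pp]$, and then try to find $A_1$ whose $\Gamma$-values are incomparable with everything in $V$. There is no reason for the set $C$ to be $g$-big above $\tau_1$, and the bad case is \emph{not} contradictory. Concretely, already for $k=2$ your induction hypothesis returns the singleton $A_2=\{\tau_2\}$, and if $\Gamma(\tau_2)$ is short (say the empty string) then no string $\rho$ satisfies $\Gamma(\rho)\perp\beta$ for all $\beta\in V$, so $C=B^\pp$ is automatically $g$-small above $\tau_1$. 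More generally, the condition you build in the bad case only forces $\Gamma(x^\GG)$ into a finite union of cones $\bigcup_{\beta\in V}[\beta]^\prec$, and committing further to $\Gamma(x^\GG)\succeq\beta_0$ is perfectly consistent with $\Gamma(x^\GG)$ being noncomputable --- think of $\Gamma$ mapping everything above $\tau_1$ into $[\beta_0]^\prec$ while still splitting freely inside that cone. The recursion in the proof of \cref{lem:one_step:finding_splittings} cannot be ``adapted inside the cone above $\beta_0$'': that recursion is driven at \emph{every} stage $\alpha\prec z$ by the standing hypothesis that no $g$-big $\Gamma$-splittings exist above $\tau$, which is what forces one of $A_{\alpha\conc 0},A_{\alpha\conc 1}$ to remain $2g$-big. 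You have only a one-shot smallness assumption about the particular finite set $V$, so the recursion stalls after one step and no computable $z$ is produced.

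The fix requires abandoning the idea that $A_2,\dots,A_k$ can be frozen before $A_1$ is found: the new set and the old sets must be refined \emph{simultaneously}. This is what the paper does. Above each element $\rho$ of each $A_j$ it places a local splitting pair $E_{\rho,0},E_{\rho,1}$ (via \cref{lem:one_step:finding_splittings}), it chooses the set $F$ above the new string so that all its $\Gamma$-values are strictly longer than those of the $E_{\rho,i}$ (via \cref{rmk:one_step:bushiness_of_convergence}), and then \cref{lemma:one_step:combinatorial} selects, for each $\rho$, one of the two halves $E_{\rho,i}$ together with a still-big subset of $F$ so that the chosen pieces $\Gamma$-split. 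The pre-arranged splittings above each $\rho\in A_j$ are exactly the ``room'' that your argument lacks; they are why the maximality argument on $\alpha$ in \cref{lemma:one_step:combinatorial} succeeds where your one-shot incomparability requirement can fail outright.
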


To prove \cref{lem:one_step:finding_many_splittings} we need the following, which (mod~$B$) is Lemma~6.2 of~\cite{KumabeLewis}.

\begin{lemma} \label{lemma:one_step:combinatorial}
	Let~$g,h\in \Quick$; let~$B$ be a set of strings. Suppose that:
	\begin{itemize}
		\item $\tau$ and $\tau^*$ are strings;
		\item $A$ is a set of strings, $3g$-big above~$\tau$;
		\item For all $\rho\in A$, $E_{\rho,0}$ and $E_{\rho,1}$ are $3g$-big above~$\rho$ and $\Gamma$-split mod~$B$;
		and
		\item $F$ is a set of strings, $3h$-big above~$\tau^*$, satisfying $|\Gamma(\s)|> |\Gamma(\nu)|$ for all $\s\in F\setminus B$ and all $\nu\in E\setminus B$, where $ E= \bigcup E_{\rho,i}\,\Cyl{\rho\in A, i<2}$.
	\end{itemize}
	Then there are $E'\subseteq E$, $g$-big above~$\tau$, and $F'\subseteq F$, $h$-big above~$\tau^*$, which $\Gamma$-split mod~$B$.
\end{lemma}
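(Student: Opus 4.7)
I would begin with a top-level three-way case analysis based on the big subset property (\cref{lem:bigsubset}), and then handle the hardest case with a further refinement. The key simplification from the hypothesis $|\Gamma(\s)| > |\Gamma(\nu)|$ is that $\Gamma$-incompatibility between $\s \in F \setminus B$ and $\nu \in E \setminus B$ reduces to the concrete condition $\Gamma(\nu) \not\preceq \Gamma(\s)$, which interacts cleanly with the per-$\rho$ splits of $E_{\rho,0}$ and $E_{\rho,1}$.

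For each $\s \in F \setminus B$, I classify $\s$ according to whether $\Gamma(\s)$ extends some $\Gamma(\nu)$ with $\nu$ in $E^0 := \bigcup_{\rho \in A} E_{\rho,0} \setminus B$ alone, in $E^1 := \bigcup_{\rho \in A} E_{\rho,1} \setminus B$ alone, in both, or in neither, producing a partition $F_{\text{left}} \sqcup F_{\text{right}} \sqcup F_{\text{both}} \sqcup F_{\text{none}}$. Since $E_{\rho,0}$ and $E_{\rho,1}$ $\Gamma$-split mod $B$ for every $\rho$, any $\s \in F_{\text{left}} \cup F_{\text{none}}$ $\Gamma$-splits mod $B$ with all of $E^1$, and symmetrically for the right. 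Grouping gives three sets $G_0 := F_{\text{left}} \cup F_{\text{none}} \cup (F \cap B)$, $G_1 := F_{\text{right}} \cup F_{\text{none}} \cup (F \cap B)$, and $G_2 := F_{\text{both}}$ whose union is $F$. Since $F$ is $3h$-big above $\tau^*$, iterated application of the big subset property yields that at least one $G_i$ is $h$-big above $\tau^*$.

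If $G_0$ is $h$-big I set $F' := G_0$ and $E' := \bigcup_{\rho \in A} E_{\rho,1}$: by the concatenation property (\cref{lem:concatentation_property}), combining the $3g$-bigness of each $E_{\rho,1}$ above $\rho$ with the $3g$-bigness of $A$ above $\tau$ shows $E'$ is $3g$-big above $\tau$, hence $g$-big, and $E'$ and $F'$ $\Gamma$-split mod $B$ by construction. The $G_1$ case is symmetric.

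The main obstacle is the case $G_2 = F_{\text{both}}$ $h$-big, where every chosen $\s$ extends $\Gamma$-prefixes on both sides, so neither $E^0$ nor $E^1$ alone works as $E'$. To handle this, I would assign to each $\s \in F_{\text{both}}$ its \emph{deepest witness} $\nu^*_\s \in E \setminus B$ with $\Gamma(\nu^*_\s) \preceq \Gamma(\s)$ of maximal length; by the $\Gamma$-split hypothesis this $\nu^*_\s$ lies in a unique $E_{\rho^*_\s, i^*_\s}$. A further big-subset argument — either partitioning $F_{\text{both}}$ by the $(\rho^*, i^*)$ label and restricting $E'$ to those $\nu$ with $|\Gamma(\nu)| > |\Gamma(\nu^*)|$, or performing an outer induction on the bushy depth of a forest witnessing $A$ is $3g$-big above $\tau$ with a strengthened hypothesis that tracks both a candidate $E'_\rho$ and an ``excluded'' subset of $F$ — should close the case. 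The delicate part is the quantitative bookkeeping: verifying that the factor-of-three slack in $3g$ and $3h$ absorbs losses from both the top-level three-way split and the inner refinement, so that $E'$ remains $g$-big above $\tau$ and $F'$ remains $h$-big above $\tau^*$ at the end.
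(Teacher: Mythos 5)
The top-level three-way split and the two easy cases are fine: if $G_0$ (or $G_1$) is $h$-big above $\tau^*$, then taking $E'$ to be $\bigcup_\rho E_{\rho,1}$ (respectively $\bigcup_\rho E_{\rho,0}$) does work, by the concatenation property and the observation that $|\Gamma(\s)|>|\Gamma(\nu)|$ turns ``does not extend'' into ``is incomparable''. But the case you correctly identify as the main obstacle --- $F_{\text{both}}$ being $h$-big --- is exactly where the content of the lemma lies, and your proposal does not close it. The essential feature of that case is that no \emph{uniform} choice of side works: the witnesses $\nu_0\in E^0$, $\nu_1\in E^1$ with $\Gamma(\nu_0),\Gamma(\nu_1)\preceq\Gamma(\s)$ necessarily come from \emph{different} $\rho\in A$ (within one $\rho$ they would split, hence be $\Gamma$-incomparable), so the correct $E'$ must make a $\rho$-dependent choice of $E_{\rho,0}$ versus $E_{\rho,1}$ and assemble these over a $g$-big set of $\rho$'s. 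Neither of your sketched refinements delivers this. Partitioning $F_{\text{both}}$ by the label $(\rho^*_\s,i^*_\s)$ of the deepest witness produces up to $2|A|$ classes, and $|A|$ is unbounded, so the factor-of-three slack cannot absorb the resulting big-subset argument; and restricting $E'$ to $\nu$ with $|\Gamma(\nu)|>|\Gamma(\nu^*_\s)|$ gives a set that depends on $\s$ and need not be $g$-big above $\tau$ once you take the worst $\s$ in $F'$. The ``outer induction on bushy depth'' is not specified enough to evaluate.

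For comparison, the paper's proof decomposes along a different axis: it fixes a string $\alpha\in 2^{<\w}$, maximal with respect to $F_{\succeq\alpha}$ being $h$-big above $\tau^*$, and then analyses how $E$ sits relative to $\alpha$. If $E_{\preceq\alpha}$ is $g$-big above $\tau$, then for a $g$-big set of $\rho\in A$ some $E_{\rho,i}$ meets $E_{\preceq\alpha}\setminus B$, and the splitting hypothesis forces the \emph{other} side $E_{\rho,1-i}$ into $E_{\perp\alpha}$; concatenation then makes $E'=E_{\perp\alpha}$ $g$-big, and $F'=F_{\succeq\alpha}$ works. This is precisely the mixed, $\rho$-dependent selection your case $F_{\text{both}}$ requires. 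If instead $E_{\preceq\alpha}$ is $g$-small, the big subset property gives that $E_{\perp\alpha}$ or $E_{\succeq\alpha}$ is $g$-big; in the latter case the maximality of $\alpha$ (here the factor $3h$ and the binary range of $\Gamma$ are used) forces $F_{\perp\alpha}$ to be $h$-big, yielding the split the other way around. If you want to complete your argument, the single maximal $\alpha$ on the $F$-side is the device to import; the classification of $F$ by which of $E^0,E^1$ it extends does not refine well enough on its own.
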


We delay the proof of \cref{lemma:one_step:combinatorial} until the end of the section.

\begin{proof}[Proof of \cref{lem:one_step:finding_many_splittings}, given \cref{lemma:one_step:combinatorial}]
	The proof is by induction on~$k$. The lemma is vacuous for~$k=1$. Assume the lemma has been proven for~$k$. Let~$\tau_1,\dots,\tau_k$ and $\tau^*$ be strings on $T^\pp$; suppose that $h^\pp\gg g$, and $h^\pp\ge 3^{k+1}g$ and $g\ge b^\pp$ above $\min \{|\tau^*|,|\tau_1|,|\tau_2|, \dots, |\tau_k|\}$. The hypothesis for~$k$ holds for the bound~$3g$ instead of~$g$, and so by induction we find finite sets~$A_1,\dots, A_k\subset T^\pp$, each~$A_j$ $3g$-big above~$\tau_j$, which pairwise~$\Gamma$-split mod~$B^\pp$. As per \cref{rmk:one_step:we_assume_that_big_sets_are_leaves_of_trees} we assume that $A_j\subset \tau_j^\preceq$.
	
	For every $j=1,\dots, k$, for every $\rho\in A_j$, by \cref{lem:one_step:finding_splittings} find finite $E_{\rho,0}$ and~$E_{\rho,1}$, subsets of~$T^\pp$, each~$3g$-big above~$\rho$ and contained in~$\rho^\preceq$, which $\Gamma$-split mod~$B^\pp$. Let $E_j = \bigcup E_{\rho,i}\Cyl{\rho\in A_j,i<2}$. Note that the~$E_j$ also pairwise $\Gamma$-split mod~$B^\pp$.

	Since~$\bigcup_{j\le k}E_j$ is finite,~$\pp$ strongly forces totality of~$\Gamma(x^\GG)$ and $3^{k+1}g \le h^\pp$ above $|\tau^*|$, by \cref{rmk:one_step:bushiness_of_convergence} we find $F\subset T^\pp$ which is $3^{k}g$-big above~$\tau^*$, such that $|\Gamma(\s)|> |\Gamma(\nu)|$  for all $\s\in F\setminus B^\pp$ and $\nu\in \bigcup_{j\le k} E_j\setminus B^\pp$.
	
	\medskip

	Let $F_{k} = F$. By (reverse) recursion on $j = k,k-1,\dots, 1$ we define sets $E'_j\subseteq E_j$ and $F_{j-1}\subseteq F_{j}$ such that every $E'_j$ is $g$-big above~$\tau_j$, $F_j$ is $3^{j}g$-big above~$\tau^*$ and $E'_j$ and $F_{j-1}$ pairwise $\Gamma$-split mod~$B^\pp$. To do this, given~$F_{j}$ apply \cref{lemma:one_step:combinatorial} with $\tau=\tau_j$, $A = A_j$, $g$, $\tau^*$ and $E_{\rho,i}$ as themselves, $F = F_{j}$ and $h = 3^{j-1}g$.
	
	In the end the sets $E'_j$ for $j\le k$ and~$F_0$ are as required. 	
\end{proof}

\begin{proposition} \label{prop:one_step:splitting_tree}
	Every condition in~$\PP_1$ forces that if~$\Gamma(x^\GG)$ is total and noncomputable then $\Gamma(x^\GG)\equiv_\Tur x^\GG$.
\end{proposition}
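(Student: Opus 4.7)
The plan is to fix $\pp\in \PP_1$ and, under the assumption that~$\pp$ strongly forces $\Gamma(x^\GG)$ is total and forces it noncomputable, produce an extension~$\qq\le \pp$ strongly forcing that~$\Gamma$ is injective on~$X^\qq$. Once this is done, the 1-1 clause of \cref{lem:1-1_or_constant_on_compact_space}, which applies since~$X^\qq$ is $\Pi^0_1$ and computably bounded, yields $x\equiv_\Tur \Gamma(x)$ for every $x\in X^\qq$, so~$\qq$ strongly forces $\Gamma(x^\GG)\equiv_\Tur x^\GG$. Combined with \cref{prop:one_step:forcing_Pi_2} (to reduce the general case by first looking for an extension forcing $\Gamma(x^\GG)$ partial, then an extension strongly forcing ``total'', and then if possible an extension forcing $\Gamma(x^\GG)$ computable), this density gives that every condition forces the required implication.

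Using \cref{lem:density_of_gg} pick $f\in \Quick$ with $h^\pp\gg f\gg b^\pp$; extending the stem as in the proof of \cref{prop:one_step:forcing_Pi_2} we may assume $h^\pp\ge f\ge b^\pp$ above $|\s^\pp|$. Since $h^\pp\gg f$, fix a computable sequence of thresholds $\ell_1\le \ell_2\le \cdots$ with $h^\pp(n)\ge 3^N f(n)$ for all $n\ge \ell_N$. I build computably a nested sequence $S_0\subseteq S_1\subseteq \cdots$ of finite $f$-bushy subtrees of~$T^\pp$, each~$S_{k+1}$ an end-extension of~$S_k$, starting from $S_0 = \{\s^\pp\}$. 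Given~$S_k$, first use the $f$-bushiness of~$T^\pp$ to form an $f$-bushy end-extension~$S'_k\subseteq T^\pp$ of~$S_k$ whose leaves all have length at least~$\ell_N$, where~$N$ is an upper bound on the leaf count of~$S'_k$ (available since the leaf count of a finite $f$-bushy tree at a given depth is bounded by a product of values of~$f$, dominated by $f^{(3)}$ via \cref{lem:bounded_products}). Writing~$\Lambda'_k$ for the leaves of~$S'_k$, apply \cref{lem:one_step:finding_many_splittings} to these strings with bound~$f$; its hypothesis $h^\pp\ge 3^{|\Lambda'_k|}f$ above the minimum length of~$\Lambda'_k$ holds by the choice of~$\ell_N$. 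This yields sets~$A_\tau\subseteq T^\pp$ indexed by~$\tau\in \Lambda'_k$, each $f$-big above~$\tau$, pairwise $\Gamma$-splitting modulo~$B^\pp$. Let~$S_{k+1}$ be the end-extension of~$S'_k$ whose new leaves form $\bigcup_{\tau\in\Lambda'_k} A_\tau$.

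Set $S = \bigcup_k S_k$. Then~$S$ is a computable, $f$-bushy subtree of~$T^\pp$ with no leaves, and $\qq = (\s^\pp, S, B^\pp\cap S, f, b^\pp)$ is a condition in~$\PP_1$ extending~$\pp$. To verify that~$\Gamma$ is injective on~$X^\qq=[S\setminus B^\pp]$, take distinct $x,y\in X^\qq$. Each passes through a unique leaf of each~$S_k$; let~$k^*$ be least such that these leaves differ. In the padding step going from~$S_{k^*}$ to~$S'_{k^*}$, the two distinct leaves of~$S_{k^*}$ through which~$x$ and~$y$ pass are padded into disjoint subtrees of~$S'_{k^*}$, so~$x$ and~$y$ pass through distinct leaves~$\mu^x,\mu^y\in\Lambda'_{k^*}$. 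Consequently the leaves of~$S_{k^*+1}$ hit by~$x$ and~$y$ lie in~$A_{\mu^x}$ and~$A_{\mu^y}$ respectively, and since these sets $\Gamma$-split modulo~$B^\pp$ while no initial segment of~$x$ or~$y$ lies in~$B^\pp$, we conclude $\Gamma(x)\perp\Gamma(y)$. The main obstacle is the quick-growth bookkeeping: the leaf count~$|\Lambda'_k|$ grows with~$k$, so the factor~$3^{|\Lambda'_k|}$ demanded by \cref{lem:one_step:finding_many_splittings} is unbounded, but the hypothesis $h^\pp\gg f$ together with the padding step~$S_k\to S'_k$ ensures that the lemma applies at every stage.
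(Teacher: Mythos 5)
Your overall strategy is the paper's: build inside $T^\pp$ a computable bushy subtree $S$ with no leaves by alternating padding steps with applications of \cref{lem:one_step:finding_many_splittings}, so that distinct paths of $X^\qq$ eventually pass through sets which $\Gamma$-split mod~$B^\pp$, then invoke the 1-1 clause of \cref{lem:1-1_or_constant_on_compact_space}. The reduction to a density statement and the final injectivity argument are fine. But there is a genuine gap in the quantitative bookkeeping, at exactly the point you flag as ``the main obstacle''. You bound the leaf count $N$ of $S'_k$ by asserting that ``the leaf count of a finite $f$-bushy tree at a given depth is bounded by a product of values of~$f$''. This is backwards: by \cref{def:bushy}, $f$-bushy means every nonterminal node has \emph{at least} $f(|\tau|)$ immediate successors, so $\prod_m f(m)$ is a \emph{lower} bound on the leaf count, not an upper bound. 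As you describe them, $S'_k$, the witnessing trees for the sets $A_\tau$ returned by \cref{lem:one_step:finding_many_splittings}, and hence each $S_{k+1}$, are merely $f$-bushy subtrees of~$T^\pp$, so their branching is controlled only by the computable bound on~$T^\pp$, which bears no useful relation to~$f$ or~$h^\pp$. Consequently $N$ is not controlled, the hypothesis $h^\pp\ge 3^{N}f$ above the leaf level cannot be verified, and the self-referential choice of $N$ (``where $N$ is an upper bound on the leaf count of $S'_k$'', with $S'_k$ itself chosen in terms of $\ell_N$) has no guaranteed fixed point.

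The repair is precisely the device the paper makes a point of: prune every tree in the construction to be \emph{exactly} $f$-bushy, i.e.\ each nonterminal node has precisely $f(|\tau|)$ immediate successors, so that the number of leaves at depth $d$ is exactly $\prod_{m\in[|\s^\pp|,d)}f(m)\le \bar{f}(d)$ where $\bar{f}(n)=\prod_{m<n}f(m)$. With that in hand, instead of an ad hoc sequence of thresholds one simply extends the stem so that $h^\pp\ge 3^{\bar{f}}f$ above $|\s^\pp|$ — legitimate because $3^{\bar{f}}f$ is dominated by an iterate of $f$ by \cref{lem:bounded_products} and $h^\pp\gg f$ — and then at every level $d$ the hypothesis $h^\pp\ge 3^{\bar{f}(d)}f\ge 3^{N}f$ above $d$ holds automatically, dissolving the circularity. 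The rest of your argument then goes through and coincides with the paper's proof.
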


\begin{proof}
It suffices to show that if $\pp\in \PP_1$ forces that~$\Gamma(x^\GG)$ is total and noncomputable then~$\pp$ has an extension which forces that $\Gamma(x^\GG)\equiv_\Tur x^\GG$. By \cref{prop:one_step:forcing_Pi_2} we may assume that~$\pp$ strongly forces that~$\Gamma(x^\GG)$ is total.
	
By \cref{lem:density_of_gg} find some~$g\in \Quick$ such that $h^\pp\gg g \gg b^\pp$. Let~$\bar g(n) = \prod_{m<n} g(m)$. As above by extending~$\s^\pp$ we may assume that $h^\pp \ge 3^{\bar g}g$ and $g\ge b^\pp$ above~$|\s^\pp|$ (see \cref{lem:bounded_products}).
	
We effectively define an increasing sequence~$\seq{\ell_{k}}$ and a sequence $\seq{S_k}$ of finite subtrees of~$T^\pp$ such that: (a) $S_{k+1}$ is an end-extension of~$S_k$; (b) the leaves of~$S_k$ all have length~$\ell_k$; and (c)~$S_k$ is \emph{exactly}~$g$-bushy: every nonterminal $\tau\in S_k$ has precisely $g(|\tau|)$ many immediate extensions on~$S_k$.
		
	Let $\ell_{0} = |\s^\pp|$ and $S_0 = \{\s^\pp\}$. Suppose that~$S_k$ and~$\ell_{k}$ have been defined. For every leaf~$\tau$ of~$S_k$ we find a finite tree $R_\tau\subset T^\pp$, exactly $g$-bushy above~$\tau$, such that the sets of leaves of the various~$R_\tau$ pairwise $\Gamma$-split mod~$B^\pp$. This can be done since the number of leaves of~$S_k$ is $\prod_{m\in [|\s^\pp|,\ell_{k})} g(m)$, which is bounded by~$\bar g(\ell_{k})$. We assumed that $h^\pp\ge 3^{\bar g}g$ and so $h^\pp\ge 3^{\bar g(\ell_k)}g$ above $\ell_k$; so \cref{lem:one_step:finding_many_splittings} applies.
	
	Let~$S'_k$ be the union of~$S_k$ with the trees $R_\tau$ for all leaves~$\tau$ of~$S_k$. Let~$\ell_{k+1}$ be greater than the height of~$S'_k$; obtain~$S_{k+1}$ by appending a subtree of~$T^\pp$, exactly $g$-bushy above~$\rho$, to every leaf~$\rho$ of $S'_k$.

	\smallskip

	Let $S = \bigcup_k S_k$. As in the proof of \cref{prop:one_step:forcing_Pi_2}, $S$ is computable, computably bounded and has no leaves. It is~$g$-bushy, and~$\Gamma$ is 1-1 on $[S\setminus B^\pp]$: if $x,x'\in [S\setminus B^\pp]$ and $x\rest{\ell_k}\ne x'\rest{\ell_k}$ then $\Gamma(x\rest{\ell_{k+1}})\perp \Gamma(x'\rest{\ell_{k+1}})$. The tuple $(\s^\pp,S,B^\pp\cap S, g, b^\pp)$ is a condition as required (\cref{lem:1-1_or_constant_on_compact_space}).
\end{proof}

\begin{proof}[Proof of \cref{prop:one_step:minimality}]
	Let~$\pp\in \PP_1$. Let~$\Gamma$ be a Turing functional. If~$\pp$ has an extension which forces that $\Gamma(x^\GG)$ is partial then we are done. Otherwise~$\pp$ forces that~$\Gamma(x^\GG)$ is total. We can extend~$\pp$ to a condition~$\qq$ which decides whether~$\Gamma(x^\GG)$ is computable or not. If the former then we are done. Otherwise, \cref{prop:one_step:splitting_tree} says that~$\qq$ forces that~$\Gamma(x^\GG)\equiv_\Tur x^\GG$.
\end{proof}

\begin{proof}[Proof of \cref{lemma:one_step:combinatorial}]
	Let $E = \bigcup E_{\rho,i}\,\Cyl{i<2 \andd \rho\in A}$.
	
	For a string $\alpha\in 2^{<\w}$ let
	\[ F_{\succeq \alpha} = (F\cap B) \cup \left\{ \s\in F  \,:\, \Gamma(\s)\succeq \alpha \right\} ,\]
	and similarly define $F_{\perp\alpha}$, $E_{\succeq\alpha}$, $E_{\preceq\alpha}$ and so on.
	
	\smallskip
	
	If $F\cap B$ is $h$-big above~$\tau^*$ then we can let~$F' = F\cap B$ and $E' = E$. Similarly if $E\cap B$ is~$g$-big above~$\tau$.
	
	 Suppose otherwise. In that case, for sufficiently long~$\alpha$, $F_{\succeq \alpha}$ is $h$-small above~$\tau^*$ (as it equals $F\cap B)$. Let~$\alpha$ be a string, maximal with respect to $F_{\succeq \alpha}$ being~$h$-big above~$\tau^*$. We will show that either
	\begin{enumerate}
		\item $E_{\perp\alpha}$ is $g$-big above~$\tau$, or
		\item $E_{\succeq \alpha}$ is $g$-big above~$\tau$ and $F_{\perp\alpha}$ is $h$-big above~$\tau^*$.
	\end{enumerate}
	In both cases we can find~$E'$ and~$F'$ as required.
	
	We examine two cases, depending on $E_{\preceq \alpha}$.
	
	\smallskip
	
	First, suppose that~$E_{\preceq \alpha}$ is $g$-big above~$\tau$. Let~$R$ be a tree witnessing this. Every leaf of~$R$ extends some element of~$A$, so every element of~$R$ is comparable with some element of~$A$. Since~$A$ is an antichain, the restriction of~$R$ to initial segments of elements of~$A$ is $g$-bushy. This shows that~$A'$, the set of $\rho\in A$ such that $E_{\preceq \alpha}$ is $g$-big above~$\rho$, is $g$-big above~$\tau$.  We show that $E_{\perp\alpha}$ is~$g$-big above every~$\rho\in A'$; with the concatenation property this implies~(1). Let~$\rho\in A'$; there are two possibilities. If~$B\cap E$ is $g$-big above~$\rho$ then we are done. Otherwise for some~$i<2$, $E_{\rho,i}$ intersects $E_{\preceq \alpha}\setminus B$. But then $E_{\rho,1-i}\subseteq E_{\perp\alpha}$, and $E_{\rho,1-i}$ is $3g$-big above~$\rho$.
	
\smallskip
	
	In the second case, suppose that~$E_{\preceq\alpha}$ is $g$-small above~$\tau$. Since~$E = E_{\perp\alpha}\cup E_{\succeq \alpha}\cup E_{\preceq \alpha}$ is~$3g$-big above~$\tau$, either (1) holds, or~$E_{\succeq \alpha}$ is $g$-big above~$\tau$. Assume the latter. We assumed that $E\cap B$ is $g$-small above~$\tau$; together, we see that $E_{\succeq \alpha}\setminus B$ is nonempty. In turn this implies that $|\Gamma(\s)|>|\alpha|$ for all~$\s\in F\setminus B$; so $F = F_{\succneq \alpha}\cup F_{\perp\alpha}$.
	
	The maximality of~$\alpha$ ensures that $F_{\succneq \alpha}$ is $2h$-small above~$\tau^*$ [Here again we use the fact that~$\Gamma$ maps into Cantor space]. Since~$F$ is $3h$-big above~$\tau^*$ it must be that $F_{\perp\alpha}$ is $h$-big above~$\tau^*$, so (2) holds. 	
\end{proof}

\section{A relative DNC strong minimal cover of a DNC minimal degree} \label{sec:step_two}

We now construct two sequences $x,y\in \w^\w$ such that $x\in \DNC$, $x$ has minimal Turing degree, $y\in \DNC^x$ and $\deg_\Tur(x,y)$ is a strong minimal cover of~$\deg_\Tur(x)$.

We use the mechanism of tree systems that was used by Cai~\cite{Cai:2minimalANR,Cai:2-minimal_nonGL2,Cai:2-minimal_GH1} to show that there is a generalised high degree which is a minimal cover of a minimal degree. This is a more versatile approach than the homogenous trees which are usually used to construct initial segments of the Turing degrees (as in~\cite{Lerman:book}).

\subsection{Length 2 tree systems} \label{subsec:step_two:tree_systems}

Let~$A\subseteq \w^{<\w}\times \w^{<\w}$ be a set of pairs of strings. For $\tau\in \w^{<\w}$ we let
\[ A(\tau)  = \left\{ \rho\in \w^{<\w} \,:\,  (\tau,\rho)\in A \right\}. \]
Of course $\dom A = \left\{ \tau \,:\,  \exists \rho\,\,[(\tau,\rho)\in A] \right\}$.

\begin{definition} \label{def:TwoStep:TreeSystem}
	A \emph{tree system} of length 2 above a pair~$(\s,\mu)$ is a set~$T$ of pairs of strings satisfying:
	\begin{itemize}
		\item $\dom T$ is a tree above~$\s$;
		\item For all $\tau\in \dom T$, $T(\tau)$ is a finite tree above~$\mu$; and
		\item If $\tau\prec \tau'$ are in~$\dom T$ then $T(\tau')$ is an end-extension of $T(\tau)$.
	\end{itemize}
\end{definition}

In this section we only consider systems of length 2 and so we omit mentioning the length.

\smallskip

A tree system~$S$ is a subsystem of~$T$ if $S\subseteq T$. This means that $\dom S$ is a subtree of $\dom T$ and for all~$\tau\in \dom S$, $S(\tau)$ is a subtree of~$T(\tau)$. If $(\tau,\rho)\in T$ then $T\cap (\tau,\rho)^\preceq$ is a tree system, the system whose domain is the full subtree of~$\dom T$ above~$\tau$ and which maps all~$\tau'$ in its domain to the full subtree of~$T(\tau')$ above~$\rho$. Here of course $(\tau,\rho)^\preceq = \tau^\preceq \times \rho^\preceq$ is the upwards-closure of $\{(\tau,\rho)\}$ in the partial ordering $\preceq$ on $\Strings{2}$ defined by the product of extension on strings: $(\tau,\rho)\preceq (\tau',\rho')$ if $\tau\preceq \tau'$ and $\rho\preceq \rho'$.

\smallskip

A tree system is $h$-\emph{bounded} if for all~$(\tau,\rho)\in T$, $\tau(n) < h(n)$ for all $n< |\tau|$ and $\rho(n)< h(n)$ for all $n<|\rho|$. It is \emph{computably bounded} if it is bounded by some computable function.

If~$T$ is a computable and computably bounded tree system then $\dom T$ is computable and the map $\tau\mapsto T(\tau)$ is computable (for each $\tau\in \dom T$ we obtain a canonical index for $T(\tau)$ as a finite set).

\subsubsection*{Forest systems} 
\label{ssub:forest_systems}

To iterate largeness we require the notion of forest systems.

We call a set of pairs of strings $A\subset \Strings{2}$ \emph{prefix-free} if $\dom A$ is prefix-free and for all $\tau\in \dom A$, $A(\tau)$ is prefix-free. For a set of pairs~$A$ let $A^\preceq = \bigcup_{(\s,\mu)\in A} (\s,\mu)^\preceq$ be the upwards closure of~$A$ under~$\preceq$. If~$A$ is prefix-free then $A^\preceq$ is the \emph{disjoint} union of $(\s,\mu)^\preceq$ for $(\s,\mu)\in A$. In other words, if $(\tau,\rho)$ extends some element of~$A$ then that element is unique. We denote this element by $(\tau,\rho)^{-A}$.

\begin{definition} \label{def:TwoStep:ForestSystem}
	A \emph{forest system} of length 2 above a finite prefix-free set~$A\subset \Strings{2}$ is a set~$T$ of pairs of strings satisfying:
	\begin{itemize}
		\item $\dom T$ is a forest above~$\dom A$;
		\item For all $\tau\in \dom T$, $T(\tau)$ is a finite forest above~$A(\tau^{-\dom A})$ (where again $\tau^{-\dom A}$ is $\tau$'s unique predecessor in~$\dom A$); and
		\item If $\tau\prec \tau'$ are in~$\dom T$ then $T(\tau')$ is an end-extension of $T(\tau)$.
	\end{itemize}
\end{definition}

A \emph{leaf} of a forest system~$T$ is a pair~$(\tau,\rho)\in T$ such that $\tau$ is a leaf of~$\dom T$ and~$\rho$ is a leaf of~$T(\tau)$. Equivalently, it is a maximal element of the set of pairs~$T$, if~$T$ is partially ordered by double extension~$\preceq$. The set of leaves of a finite forest system is prefix-free.


\subsubsection*{Paths of tree systems}

Let~$T$ be a tree system above $(\s,\mu)$. For $x\in [\dom T]$ we let
\[ T(x) = \bigcup T(\tau)\Cyl{\s\preceq \tau\prec x}. \]
We also let
 \[ [T] = \{ (x,y)\,:\, x\in [\dom T]\andd y\in [T(x)]\} .\]
In general the set~$[T]$ need not be closed.

\begin{lemma} \label{lem:TwoSteps:Set_of_paths_is_closed}
	Suppose that for all~$x\in [\dom T]$ the tree $T(x)$ has no leaves. Then~$[T]$ is a closed subset of $[\s,\mu]^\prec$.
\end{lemma}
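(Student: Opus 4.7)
The plan is to verify sequential closure of $[T]$ in $[\s,\mu]^\prec$. Suppose $(x_n, y_n) \in [T]$ converges to $(x,y)$. The first coordinate is immediate: $\dom T$ is a tree above $\s$, so $[\dom T]$ is closed in $[\s]^\prec$ and hence $x \in [\dom T]$. The substantive task is to show $y \in [T(x)]$, i.e.\ that every initial segment $y\rest{m}$ (with $m \geq |\mu|$) lies in $T(\tau)$ for some $\tau \prec x$ in $\dom T$.

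I will argue by contradiction. Let $m$ be minimal with $y\rest{m} \notin T(x)$; note $m > |\mu|$ since $\mu \in T(\tau)$ for every $\tau \in \dom T$, and hence $\lambda := y\rest{(m-1)} \in T(x)$. The no-leaves hypothesis yields an immediate successor $\lambda^* \succ \lambda$ of length $m$ with $\lambda^* \in T(x)$. Fix $\tau \prec x$ in $\dom T$ long enough that $\lambda, \lambda^* \in T(\tau)$, and pick $n$ large enough that $\tau \prec x_n$ and $y_n\rest{m} = y\rest{m}$. Since $y_n \in [T(x_n)]$, we obtain $y\rest{m} \in T(\tau'_n)$ for some $\tau'_n \prec x_n$ in $\dom T$.

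The heart of the proof is then a case split on how $\tau'_n$ relates to $\tau$ along $x_n$. If $\tau'_n \preceq \tau$, then $T(\tau'_n) \subseteq T(\tau) \subseteq T(x)$, contradicting $y\rest{m} \notin T(x)$. If $\tau \prec \tau'_n$, then $T(\tau'_n)$ is an end-extension of $T(\tau)$, so $y\rest{m}$ must properly extend some leaf $\nu$ of $T(\tau)$. Comparability of $\nu$ and $\lambda$ as initial segments of $y\rest{m}$, combined with $|\nu| \leq m - 1 = |\lambda|$, forces $\nu \preceq \lambda$; but $\nu = \lambda$ contradicts $\lambda^* \in T(\tau)$ being a child of $\lambda$, and $\nu \prec \lambda$ contradicts $\lambda \in T(\tau)$ being a proper extension of $\nu$. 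I expect the main subtlety to be the precise role of the no-leaves hypothesis: it is used exactly to guarantee that at some finite stage $T(\tau)$ the string $\lambda$ is already \emph{not} a leaf, which is what prevents the end-extension mechanism at later nodes of $\dom T$ from legitimately producing new strings just past $\lambda$.
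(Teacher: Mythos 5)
Your proof is correct, but it takes a different route from the paper's. The paper exhibits $[T]$ directly as a countable intersection of clopen sets: for each $n$ it forms the clopen set $E_n$ of pairs $(x,y)$ such that $x\rest{n}\in\dom T$ and $y$ extends some leaf of $T(x\rest{n})$, shows $[T]\subseteq\bigcap_n E_n$ unconditionally via the end-extension property, and uses the no-leaves hypothesis for the converse by arguing that the minimum leaf length of $T(x\rest{n})$ must tend to infinity (else $T(x)$ would have a leaf). You instead verify sequential closedness directly, taking a minimal $m$ with $y\rest{m}\notin T(x)$ and deriving a contradiction from the case analysis on $\tau'_n$ versus $\tau$; your use of the hypothesis — producing a child $\lambda^*$ of $\lambda$ inside some finite stage $T(\tau)$, so that $\lambda$ is provably not a leaf there and the end-extension mechanism cannot legitimately insert $y\rest{m}$ later — is sound, and all the small points (that $y\succeq\mu$ so $m>|\mu|$, that $\tau'_n$ and $\tau$ are comparable as initial segments of $x_n$, that $\nu\preceq\lambda$ by length comparison) check out. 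The trade-off: your argument is more elementary and self-contained, but the paper's presentation of $[T]$ as $\bigcap_n E_n$ with $E_n$ uniformly computable clopen sets is what later yields that $[T]$ is \emph{effectively} closed when $T$ is computable, computably bounded and balanced — a fact the construction actually needs — whereas the sequential argument gives only topological closedness.
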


\begin{proof}
	For~$\tau\in \dom T$ let
	\[ E_\tau = \bigcup \,[\rho]^\prec \,\Cyl{\rho\text{ a leaf of }T(\tau)}; \]
	for $n\ge |\s|$ let
	\[ E_n = \bigcup \, \left([\tau]^\prec\times E_\tau\right) \,\Cyl{\tau\in \dom T \andd |\tau|=n} .\]
	Each~$E_n$ is clopen. We show that $[T]=\bigcap E_n$. We always have $[T]\subseteq \bigcap_{n\ge |\s|}E_n$. For suppose that~$(x,y)\in [T]$, and let $n\ge |\s|$. Let $\tau = x\rest n$; so $\tau\in \dom T$. Let~$m$ be greater than the height of~$T(\tau)$, and let $\rho = y\rest{m}$. Since~$\rho\in T(x)$ there is some~$\tau'\prec x$ such that $\rho\in T(\tau')$. Since $\rho\notin T(\tau)$ we must have $\tau\prec \tau'$, and so~$\rho$ extends some leaf of~$T(\tau)$; this shows that $y\in E_\tau$, so $(x,y)\in E_n$.
	
	In the other direction we use our assumption. Suppose that $(x,y)\in \bigcap_{n\ge |\s|} E_n$. For all~$n\ge|\s|$, $(x,y)\in E_n$ implies that $x\rest n\in \dom T$, so $x\in [\dom T]$. For all~$n\ge |\s|$, some leaf of~$T(x\rest{n})$ is an initial segment of~$y$. To show that $y\in [T(x)]$ it suffices to show that the minimum length of a leaf in $T(x\rest{n})$ is unbounded as $n\to \infty$. But otherwise~$T(x)$ would have a leaf.
\end{proof}

We will require that the pairs in tree systems appearing in our conditions can be extended to paths. It is not enough to require that the system not have leaves.

\begin{lemma} \label{lem:TwoSteps:always_extendible_is_local_by_compactness}
	 Let~$T$ be a bounded tree system and suppose that~$\dom T$ has no leaves. The following are equivalent:
	\begin{enumerate}
			\item For all $k$ there is some~$m$ such that for every $\tau\in \dom T$ of length~$m$, every leaf of $T(\tau)$ has length at least~$k$.
		\item For all $x\in [\dom T]$, $T(x)$ has no leaves.
	\end{enumerate}
\end{lemma}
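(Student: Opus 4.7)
The plan is to prove the two directions separately. For (1) $\Rightarrow$ (2) I argue by contradiction: suppose $\rho$ is a leaf of $T(x)$ for some $x \in [\dom T]$. Fixing $\tau_0 \prec x$ with $\rho \in T(\tau_0)$, the end-extension property forces $\rho$ to remain a leaf of $T(\tau)$ for every $\tau$ with $\tau_0 \preceq \tau \prec x$, since a proper extension of $\rho$ in a later $T(\tau)$ would lie in $T(x)$. A second use of end-extension shows that the minimum leaf length of $T(\tau)$ is non-decreasing as $\tau$ moves along $x$: any leaf of a later $T(\tau')$ is either already a leaf of $T(\tau)$ or properly extends one. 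Applying (1) with $k = |\rho|+1$ and taking $\tau = x\rest{m}$ sufficiently long then contradicts the existence of $\rho$.

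For (2) $\Rightarrow$ (1) I argue by contrapositive, using K\"onig's lemma. Suppose some $k$ witnesses failure of (1), and let $S \subseteq \dom T$ consist of those $\tau$ for which $T(\tau)$ has a leaf of length $<k$. A short case analysis using end-extension --- a short leaf of $T(\tau')$ was either already a short leaf of $T(\tau)$, or strictly extends some leaf of $T(\tau)$, which must then itself be short --- shows that $S$ is closed under taking initial segments within $\dom T$. Since $T$ is computably bounded, $\dom T$ is finitely branching, hence so is $S$; and $S$ is infinite by the failure of (1), so K\"onig's lemma produces $x \in [\dom T]$ such that $T(x\rest{n})$ has a leaf of length $<k$ for every $n$.

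The remaining step --- the only genuinely combinatorial one, and the main obstacle --- is to promote this into a single $\rho$ of length $<k$ which is a leaf of $T(x)$ itself. The key observation is a monotonicity principle: once a proper extension of $\rho$ appears in some $T(x\rest{n})$, it persists in every later $T(x\rest{n'})$, so the set $N_\rho = \{n : \rho \text{ is a leaf of } T(x\rest{n})\}$ is an interval in $\w$. Because $T$ is bounded by some $h$, the candidates $\rho$ with $|\rho|<k$ lie in a finite pool, and at every level $n$ at least one candidate sits in its interval. Pigeonhole therefore forces some $\rho$ whose $N_\rho$ is cofinite; this $\rho$ is then a leaf of $T(x) = \bigcup_n T(x\rest{n})$ of length $<k$, contradicting (2).
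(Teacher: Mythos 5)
Your proof is correct, and for the substantive direction it takes a genuinely different route from the paper's. The forward implication (1)~$\Rightarrow$~(2) is the one the paper dismisses as immediate, and your fleshing-out via the two end-extension observations (a leaf of $T(x)$ stays a leaf of every later $T(\tau)$, and minimum leaf length is non-decreasing along $x$) is exactly right. For (2)~$\Rightarrow$~(1) the paper argues directly: assuming (2), \cref{lem:TwoSteps:Set_of_paths_is_closed} makes $[T]$ closed, hence compact since $T$ is bounded, and the clopen rectangles $[\tau,\rho]^\prec$ with $\rho$ a leaf of $T(\tau)$ of length at least $k$ cover $[T]$; a finite subcover yields~$m$. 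You instead argue the contrapositive and replace topological compactness of the product space by two combinatorial compactness steps: K\"onig's lemma on the subtree $S\subseteq \dom T$ of nodes with a short leaf (your end-extension case analysis correctly shows $S$ is closed under initial segments), followed by a pigeonhole over the finite pool of $h$-bounded strings of length $<k$ to extract one persistent short leaf of $T(x)$. Both steps are sound; in the last one you only need some $N_\rho$ to be infinite, since a proper extension of $\rho$ appearing in $T(x)$ would exclude all sufficiently large $n$ from $N_\rho$ --- so the convexity of $N_\rho$, while true, is not strictly required. What each approach buys: yours is self-contained and never needs $[T]$ to be closed (indeed it reproves a shadow of that fact along the way), at the cost of some length; the paper's is shorter because it leans on the lemma it has just established.
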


\begin{proof}
	That~(1) implies~(2) is immediate. Suppose~(2) holds. By \cref{lem:TwoSteps:Set_of_paths_is_closed}, $[T]$ is closed; since~$T$ is bounded, $[T]$ is compact. Let~$k<\w$. The collection of clopen rectangles $[\tau,\rho]^\prec$ where $\tau\in \dom T$, $\rho$ is a leaf of~$T(\tau)$, and $|\rho|\ge k$ is an open cover of~$[T]$; a finite sub-cover gives the desired~$m$. 	
\end{proof}

To simplfy the combinatorics of finding big splittings, we restrict ourselves to ``balanced'' tree systems.

\begin{definition} \label{def:two_step:balanced_tree_systems}
	Let~$T$ be a tree system and let~$n<\w$. We say that~$m$ is a \emph{balanced level of~$T$} if for all $\tau\in \dom T$ of length~$m$, every leaf of~$T(\tau)$ has length~$m$. We say that~$T$ is \emph{balanced} if $\dom T$ has no leaves and~$T$ has infinitely many balanced levels.
\end{definition}

If~$T$ is bounded and balanced then it satisfies the conditions of \cref{lem:TwoSteps:always_extendible_is_local_by_compactness} and so by \cref{lem:TwoSteps:Set_of_paths_is_closed}, $[T]$ is closed. If~$T$ is balanced, computable and computably bounded then~$[T]$ is effectively closed (this is really where we use the requirement that if~$\tau'$ extends~$\tau$ in~$\dom T$ then $T(\tau')$ is an end-extension, rather than any extension, of~$T(\tau)$).

\subsection{Bushiness for forest systems}

\begin{definition} \label{def:TwoSteps:bushy}
	Let~$g$ and~$h$ be bounding functions. A forest system~$T$ is \emph{$(g,h)$-bushy} if $\dom T$ is $g$-bushy and for all~$\tau\in \dom T$, $T(\tau)$ is $h$-bushy.
\end{definition}

\begin{lemma} \label{lem:two_step:bigness}
Let~$A\subset \Strings{2}$ be finite and prefix-free, and let~$g$ and~$h$ be bounding functions.
The following are equivalent for a set~$B$ of pairs of strings:
\begin{enumerate}
	\item There is finite $(g,h)$-bushy forest system above~$A$, all of whose leaves lie in~$B$.
	\item The set of~$\tau$ such that $B(\tau)$ is $h$-big above~$A(\tau^{-\dom A})$ is $g$-big above~$\dom A$.
\end{enumerate}
\end{lemma}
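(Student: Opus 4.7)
The plan is to unpack both definitions and exhibit explicit witnesses in each direction.

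For $(1) \Rightarrow (2)$: I would let $T$ witness~(1) and set $C = \{\tau : B(\tau) \text{ is $h$-big above } A(\tau^{-\dom A})\}$, then argue that every leaf of $\dom T$ lies in $C$. Since $T$ is $(g,h)$-bushy, $T(\tau)$ is an $h$-bushy forest above $A(\tau^{-\dom A})$ for every $\tau \in \dom T$; when $\tau$ is a leaf of $\dom T$, the leaves of $T(\tau)$ are precisely the second coordinates of the leaves of $T$, which by hypothesis lie in $B(\tau)$. Thus $T(\tau)$ witnesses $\tau \in C$, and then $\dom T$ itself is a finite $g$-bushy forest above $\dom A$ whose leaves all lie in $C$, showing $C$ is $g$-big above $\dom A$.

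For $(2) \Rightarrow (1)$: starting from a finite $g$-bushy forest $S$ above $\dom A$ whose leaves lie in $C$, and choosing, for each leaf $\tau$ of $S$, a finite $h$-bushy forest $R_\tau$ above $A(\tau^{-\dom A})$ with leaves in $B(\tau)$, I would assemble the desired system by setting $\dom T = S$, taking $T(\tau) = R_\tau$ at the leaves of $S$, and $T(\tau) = A(\tau^{-\dom A})$ at interior $\tau$. The $g$-bushiness of $\dom T$ is immediate, and each $T(\tau)$ is $h$-bushy (non-trivially at leaves of $S$, vacuously at interior nodes since every element of $A(\tau^{-\dom A})$ is terminal in $T(\tau)$). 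The end-extension property splits into two cases: if $\tau \prec \tau'$ in $S$ are both interior, they share a common $\dom A$-predecessor, so $T(\tau) = T(\tau')$; if $\tau'$ is a leaf extending an interior $\tau$, then $R_{\tau'}$ automatically end-extends the bare root set $A(\tau^{-\dom A})$ because every string of $R_{\tau'}$ extends some element of $A(\tau^{-\dom A})$, which is itself terminal in $T(\tau)$. The leaves of $T$ are then pairs $(\tau,\rho)$ with $\tau$ a leaf of $S$ and $\rho$ a leaf of $R_\tau$, so they lie in $B$ as required.

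The statement is essentially bookkeeping, and neither direction contains a real combinatorial obstacle; its purpose is to confirm that two natural ways of formulating bigness for forest systems coincide. The only point requiring any attention is the use of the trivial assignment $T(\tau) = A(\tau^{-\dom A})$ at interior nodes in the second direction; this is legitimate precisely because $h$-bushiness constrains only nonterminal vertices, so a forest consisting purely of its root set is vacuously $h$-bushy.
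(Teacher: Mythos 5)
Your proof is correct and, for the substantive direction $(2)\Rightarrow(1)$, is exactly the paper's construction: take $\dom T$ to be a $g$-bushy witness for $(2)$, attach an $h$-bushy witness $R_\tau$ at each leaf $\tau$, and put the bare root set $A(\tau^{-\dom A})$ at interior nodes. The converse direction $(1)\Rightarrow(2)$, which the paper leaves implicit, is handled correctly by reading off the definition of a leaf of a forest system, and your verification of the end-extension and vacuous-bushiness points is sound.
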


\begin{proof}
Assume~(2). We define a forest system~$S$ by first defining~$\dom S$, and then for all $\tau\in \dom S$, defining $S(\tau)$. We let~$\dom S$ be a $g$-bushy forest above~$\dom A$ such that for every leaf~$\tau$ of~$\dom S$, $B(\tau)$ is $h$-big above~$A(\tau^{-\dom A})$. Now let~$\tau\in \dom S$; let $\s = \tau^{-\dom A}$. There are two cases. If $\tau$ is a leaf of~$\dom S$ then we let $S(\tau)$ be an $h$-bushy forest above~$A(\s)$ which witnesses that $B(\tau)$ is $h$-big above~$A(\s)$. If~$\tau$ is not a leaf of~$\dom S$ then we let $S(\tau) = A(\s)$.
\end{proof}

These equivalent conditions define the notion of~$B$ being \emph{$(g,h)$-big} above~$A$; if they fail, we say that~$B$ is \emph{$(g,h)$-small} above~$A$. If~$A$ is infinite then we say that~$B$ is $(g,h)$-big above~$A$ if it is $(g,h)$-big above every finite prefix-free subset of~$A$.

For brevity we let for $B\subseteq \Strings{2}$, a bounding function~$h$ and a finite prefix-free set of strings~$D$
 \[ \project{h}{D}{B}  = \left\{ \tau  \,:\, B(\tau) \text{ is $h$-big above }D \right\} .\]
Note that $\project{h}{D}{B} = \bigcap_{\rho\in D} \project{h}{\rho}{B}$. A set $B$ is $(g,h)$-big above a finite prefix-free set~$A$ if and only if for all~$\s\in \dom A$, $\project{h}{A(\s)}{B}$ is $g$-big above~$\s$.

\smallskip

The big subset property holds.

\begin{lemma} \label{lem:TwoSteps:BigSubset}
	Let~$g,g'$ and~$h,h'$ be bounding functions and let~$(\s,\mu)\in \Strings{2}$. Suppose that~$B,C\subseteq \Strings{2}$ and that $B\cup C$ is $(g+g',h+h')$-big above~$(\s,\mu)$. Then either $B$ is $(g,h)$-big above~$(\s,\mu)$ or $C$ is $(g',h')$-big above~$(\s,\mu)$.
\end{lemma}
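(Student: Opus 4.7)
The plan is to iterate the one-dimensional big subset property (\cref{lem:bigsubset}) twice, using \cref{lem:two_step:bigness} to convert the largeness of the forest-system into largeness statements for strings at each ``level''.

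By \cref{lem:two_step:bigness}, the assumption that $B\cup C$ is $(g+g',h+h')$-big above $(\s,\mu)$ is equivalent to saying that
\[ \project{h+h'}{\mu}{B\cup C} \text{ is $(g+g')$-big above } \s. \]
For any $\tau\in \project{h+h'}{\mu}{B\cup C}$, the set $(B\cup C)(\tau) = B(\tau)\cup C(\tau)$ is $(h+h')$-big above the single string~$\mu$, so the one-dimensional big subset property applies: either $B(\tau)$ is $h$-big above~$\mu$, that is $\tau\in \project{h}{\mu}{B}$, or $C(\tau)$ is $h'$-big above~$\mu$, that is $\tau\in \project{h'}{\mu}{C}$. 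Hence
\[ \project{h+h'}{\mu}{B\cup C} \,\subseteq\, \project{h}{\mu}{B}\,\cup\, \project{h'}{\mu}{C}. \]

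The first inclusion shows that $\project{h}{\mu}{B}\cup \project{h'}{\mu}{C}$ is itself $(g+g')$-big above~$\s$ (largeness is preserved under taking supersets, since any witnessing bushy tree for the subset is also a witnessing tree for the superset). Now I apply the one-dimensional big subset property a second time, this time to the single string~$\s$: either $\project{h}{\mu}{B}$ is $g$-big above~$\s$, which by \cref{lem:two_step:bigness} says exactly that~$B$ is $(g,h)$-big above~$(\s,\mu)$, or $\project{h'}{\mu}{C}$ is $g'$-big above~$\s$, which says that~$C$ is $(g',h')$-big above~$(\s,\mu)$.

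There is no real obstacle here; the only point that requires care is that the one-dimensional big subset property works above a \emph{single} string rather than above an arbitrary finite prefix-free set (this is the reason the statement is formulated above a single pair~$(\s,\mu)$ and not an arbitrary finite prefix-free $A\subseteq \Strings{2}$). Both applications above are above a single string, so the invocations of \cref{lem:bigsubset} are legitimate.
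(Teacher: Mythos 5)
Your proof is correct and follows essentially the same route as the paper's: one application of the one-dimensional big subset property on the second coordinate to get the inclusion $\project{h+h'}{\mu}{B\cup C}\subseteq \project{h}{\mu}{B}\cup \project{h'}{\mu}{C}$, then a second application on the first coordinate above~$\s$. Your version merely spells out the details (the identity $(B\cup C)(\tau)=B(\tau)\cup C(\tau)$ and the monotonicity of bigness under supersets) that the paper leaves implicit.
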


\begin{proof}
	The set $\project{h+h'}{\mu}{B\cup C}$ is $(g+g')$-big above~$\s$.
The big subset property implies that $\project{h+h'}{\mu}{B\cup C}\subseteq \project{h}{\mu}{B}\cup \project{h'}{\mu}{C}$. Utilising the big subset property again, this time on the left coordinate, we see that either $\project{h}{\mu}{B}$ is $g$-big above~$\tau$ or $\project{h'}{\mu}{C}$ is $g'$-big above~$\tau$. The first means that $B$ is $(g,h)$-big above $(\s,\mu)$; the second, that~$C$ is $(g',h')$-big above~$(\s,\mu)$.
\end{proof}

\subsubsection*{Weak concatenation} 
\label{subsub:the_weak_concatenation_property}

The concatenation property fails. Suppose that~$A$ is $(g,h)$-big above $(\s,\mu)$, and that~$B$ is $(g,h)$-big above every $(\tau,\rho)\in A$. It is possible that~$B$ is not $(g,h)$-big above $(\s,\mu)$: take for example two strings $\rho_1$ and $\rho_2$ and a string~$\tau$ such that $(\tau,\rho_1), (\tau,\rho_2)\in A$. Then $\project{h}{\rho_1}{B}$ and $\project{h}{\rho_2}{B}$ are both $g$-big above~$\tau$, but the trees witnessing these facts need not be the same. That is, it is possible that $\project{h}{\{\rho_1,\rho_2\}}{B}$ is $g$-small above~$\tau$. As a result, it is possible that a set~$B$ is $(g,h)$-small above some~$(\s,\mu)$ but the set of pairs above which~$B$ is $(g,h)$-big is $(g,h)$-big above~$(\s,\mu)$. Instead, we will employ a weak version of the concatenation property.

\begin{definition} \label{def:two_step:end_extension}
	Let~$S$ and~$R$ be forest systems. We say that~$R$ is an \emph{end-extension} of~$S$ if:
	\begin{itemize}
		\item $\dom R$ is an end-extension of $\dom S$;
		\item If $\tau\in \dom S$ is not a leaf of $\dom S$, then $R(\tau) = S(\tau)$;
		\item If~$\tau$ is a leaf of~$\dom S$ then $R(\tau)$ is an end-extension of $S(\tau)$.
	\end{itemize}
\end{definition}

Note that this relation is transitive. Now if~$T$ is a finite (length 1) forest above~$A$, $E$ is the set of leaves of~$T$, and~$U$ is a forest above~$E$, then $T\cup U$ is a forest above~$A$, an end-extension of~$T$ whose leaves are the leaves of~$U$. For forest systems we cannot take unions. Suppose that~$S$ is a finite forest system above~$A$; let~$D$ be the set of leaves of~$S$, and suppose that~$R$ is a forest system above~$D$. We define the concatenation $S\conc R$ of~$S$ and~$R$:
\begin{itemize}
	\item $\dom(S\conc R) = (\dom S)\cup (\dom R)$;
	\item For $\tau\in \dom S\setminus \dom D$, $(S\conc R)(\tau) = S(\tau)$;
	\item For $\tau\in \dom R$, $(S\conc R)(\tau) = (S(\tau^{-\dom D}))\cup R(\tau)$.
\end{itemize}
This is a forest system above~$A$, an end-extension of~$S$ whose leaves are the leaves of~$R$. Note that if~$\tau\in \dom D$ then we do not assume that $R(\tau) = D(\tau)$, and so it is possible that $(S\conc R)(\tau)\ne S(\tau)$. If both~$S$ and~$R$ are $(g,h)$-bushy then so is $S\conc R$. We conclude:

\begin{lemma} \label{lem:two_step:concatenating_forest_systems_and_bigness}
	Suppose that~$B$ is $(g,h)$-big above~$A$, and that~$C$ is $(g,h)$-big above~$B$. Then~$C$ is $(g,h)$-big above~$A$. Indeed, every finite $(g,h)$-bushy forest system whose leaves are in~$B$ has a finite $(g,h)$-bushy end-extension whose leaves are in~$C$.
\end{lemma}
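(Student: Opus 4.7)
The plan is to deduce the lemma directly from the concatenation operation $S \conc R$ defined immediately before the statement, which already packages the essential combinatorics. The ``Indeed'' clause is the substantive content; the first sentence follows from it.

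For the ``Indeed'' clause, I would begin with an arbitrary finite $(g,h)$-bushy forest system $S$ above $A$ whose leaves lie in $B$, and let $D \subseteq B$ be the (finite, prefix-free) set of leaves of $S$. The assumption that $C$ is $(g,h)$-big above $B$ includes, by the convention for infinite $A$, that $C$ is $(g,h)$-big above every finite prefix-free subset of $B$; in particular $C$ is $(g,h)$-big above $D$. By \cref{lem:two_step:bigness}, there is a finite $(g,h)$-bushy forest system $R$ above $D$, all of whose leaves lie in $C$. Then $S \conc R$ is defined, is a finite forest system above $A$ (as noted in the text just before the lemma), is $(g,h)$-bushy (since both $S$ and $R$ are), is an end-extension of $S$ (by definition of $\conc$), and its leaves are exactly the leaves of $R$ and hence lie in $C$. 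This is the desired end-extension.

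For the first sentence, suppose $A' \subseteq A$ is finite and prefix-free. Since $B$ is $(g,h)$-big above $A'$, \cref{lem:two_step:bigness} yields a finite $(g,h)$-bushy forest system $S$ above $A'$ with leaves in $B$. Applying the ``Indeed'' clause to $S$ produces a finite $(g,h)$-bushy forest system above $A'$ with leaves in $C$, so $C$ is $(g,h)$-big above $A'$. As $A'$ was arbitrary, $C$ is $(g,h)$-big above $A$.

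There is no real obstacle here; the only point to verify carefully is that the set $D$ of leaves of a finite forest system is automatically finite and prefix-free (which is noted after \cref{def:TwoStep:ForestSystem}), so that the hypothesis ``$C$ is $(g,h)$-big above $B$'' may be specialised to $D$ without invoking the failed concatenation property of \cref{subsub:the_weak_concatenation_property}. The whole point of building in the weakening via end-extensions and the operation $\conc$ in advance is precisely to make this step trivial.
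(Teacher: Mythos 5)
Your proposal is correct and is essentially the paper's own argument: the paper states this lemma as an immediate consequence (``We conclude:'') of the properties of the concatenation $S\conc R$ established in the paragraph just before it, which is exactly the construction you carry out. Your explicit reduction of the first sentence to finite prefix-free subsets of $A$, and your observation that the hypothesis ``$C$ is $(g,h)$-big above $B$'' is applied only to the finite prefix-free set of leaves $D$ (so the failure of the general concatenation property is irrelevant), are precisely the points the paper leaves implicit.
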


A set~$B$ of pairs of strings is \emph{open} if it is upwards closed in the partial ordering~$\preceq$: closed under taking extensions in either coordinate. 

The following lemma concerns sets of \emph{strings}, not pairs of strings. It is a consequence of the concatenation property, and is formally proved by induction on $|\mathcal B|$.

\begin{lemma} \label{lem:two_step:one_step_induction_for_weak_concatenation}
	Let~$\mathcal B$ be a finite collection of open sets of strings, and let~$A$ be a finite, prefix-free set of strings. Suppose that each~$B\in \mathcal B$ is $g$-big above every $\s\in A^\preceq$. Then $\bigcap \mathcal B$ is $g$-big above~$A$. \qedhere
\end{lemma}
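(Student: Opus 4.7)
The plan is to induct on~$|\mathcal B|$, with the inductive step given by one application of the concatenation property (\cref{lem:concatentation_property}), together with the openness assumption used to keep the leaves of the newly built tree inside the intersection accumulated so far.

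For the base case $|\mathcal B|=1$, say $\mathcal B=\{B\}$, the set $\bigcap\mathcal B = B$ is $g$-big above each $\s\in A$ by hypothesis, so the union of $g$-bushy trees above the various $\s\in A$ (with leaves in $B$) witnesses that $B$ is $g$-big above the finite prefix-free set $A$.

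For the inductive step, fix $B\in\mathcal B$ and let $\mathcal B' = \mathcal B\setminus\{B\}$. By the inductive hypothesis, $\bigcap\mathcal B'$ is $g$-big above $A$; let $T$ be a $g$-bushy forest above $A$ witnessing this, all of whose leaves lie in $\bigcap\mathcal B'$. For each leaf $\tau$ of~$T$ we have $\tau\in A^\preceq$, so by hypothesis $B$ is $g$-big above~$\tau$; let $R_\tau$ be a $g$-bushy tree above $\tau$ with leaves in~$B$. Here is where I use openness: since $\tau\in B'$ for every $B'\in\mathcal B'$, and each $B'$ is closed under taking extensions, every string in $R_\tau$ (in particular every leaf) lies in~$\bigcap\mathcal B'$. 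Thus the leaves of $R_\tau$ lie in $\bigcap\mathcal B'\cap B = \bigcap\mathcal B$. The forest $T\cup \bigcup_\tau R_\tau$ (where $\tau$ ranges over the leaves of $T$) is then a $g$-bushy end-extension of $T$ above $A$ whose leaves lie in $\bigcap\mathcal B$, so $\bigcap\mathcal B$ is $g$-big above $A$ as required.

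There is no real obstacle; the only point worth highlighting is that the argument depends essentially on openness of the $B'$'s, which is exactly what prevents the failure of the concatenation property from biting: if the $B'$'s were merely arbitrary, the leaves of the trees $R_\tau$ might leave $\bigcap\mathcal B'$ and the induction would collapse.
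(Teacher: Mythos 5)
Your proof is correct and follows exactly the route the paper indicates (the paper only remarks that the lemma ``is a consequence of the concatenation property, and is formally proved by induction on $|\mathcal B|$''): induction on $|\mathcal B|$, with one concatenation step per new set and openness of the remaining sets guaranteeing that the appended trees' leaves stay in the intersection already achieved. Your closing observation about why openness is essential is precisely the right point.
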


\begin{lemma} \label{lem:two_step:last_step_for_weak_concatenation}
	Let~$A$ and~$B$ be sets of pairs of strings, and let~$g$ and~$h$ be bounding functions. Suppose that~$B$ is open. Suppose that for all $(\s,\mu)\in A$, for all $\s'\succeq \s$, $B$ is $(g,h)$-big above $(\s',\mu)$. Then~$B$ is $(g,h)$-big above~$A$.
\end{lemma}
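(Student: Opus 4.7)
The plan is to reduce this to \cref{lem:two_step:one_step_induction_for_weak_concatenation}, which handles the analogous fact at the level of sets of strings. By the definition of $(g,h)$-bigness for infinite sets, it suffices to show that $B$ is $(g,h)$-big above every finite prefix-free subset $A' \subseteq A$. Fix such an~$A'$; unpacking the equivalent conditions from \cref{lem:two_step:bigness}, what must be shown is that for each $\s \in \dom A'$, the set $\project{h}{A'(\s)}{B} = \bigcap_{\mu \in A'(\s)} \project{h}{\mu}{B}$ is $g$-big above~$\s$.

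First I would verify the crucial preliminary observation that for each fixed~$\mu$, the set $\project{h}{\mu}{B} \subseteq \w^{<\w}$ is open (upward closed): if $\tau \in \project{h}{\mu}{B}$ and $\tau' \succeq \tau$, then the fact that~$B$ is open in the $\preceq$ ordering on $\Strings{2}$ implies $B(\tau) \subseteq B(\tau')$, so $B(\tau')$ is still $h$-big above~$\mu$. Next, fix $\s \in \dom A'$. For each $\mu \in A'(\s)$, the pair $(\s,\mu)$ lies in~$A$, so the hypothesis applied to every $\s' \succeq \s$ tells us that~$B$ is $(g,h)$-big above $(\s',\mu)$; by \cref{lem:two_step:bigness} this means precisely that $\project{h}{\mu}{B}$ is $g$-big above~$\s'$.

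Now set $\mathcal{B} = \{\, \project{h}{\mu}{B} : \mu \in A'(\s) \,\}$. This is a \emph{finite} collection (since $A'$ is finite, so is $A'(\s)$) of open sets of strings, and each element of $\mathcal{B}$ is $g$-big above every string in $\s^{\preceq} = \{\s\}^{\preceq}$. Applying \cref{lem:two_step:one_step_induction_for_weak_concatenation} with the finite prefix-free set $\{\s\}$ in place of~$A$, we conclude that $\bigcap \mathcal{B} = \project{h}{A'(\s)}{B}$ is $g$-big above~$\s$, as required. Since $\s \in \dom A'$ was arbitrary, $B$ is $(g,h)$-big above~$A'$; and since $A' \subseteq A$ was an arbitrary finite prefix-free subset, $B$ is $(g,h)$-big above~$A$.

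The only step with any substance is the openness observation, which is precisely what lets us invoke the weak-concatenation lemma at the level of strings; without it, the failure of the concatenation property for pairs (described in the paragraph preceding \cref{def:two_step:end_extension}) would block the argument. Everything else is bookkeeping with the definitions of $\project{h}{-}{-}$ and of $(g,h)$-bigness.
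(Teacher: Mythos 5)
Your proof is correct and follows essentially the same route as the paper's: reduce to a finite prefix-free subset, observe that openness of~$B$ makes each $\project{h}{\mu}{B}$ an open set of strings which by hypothesis is $g$-big above every extension of~$\s$, and then invoke \cref{lem:two_step:one_step_induction_for_weak_concatenation}. The extra detail you supply (explicitly checking upward closure of $\project{h}{\mu}{B}$) is exactly the step the paper states without verification.
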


\begin{proof}
	 It suffices to show that for any $\s\in \dom A$ and any finite, prefix-free $E\subseteq A(\s)$, $\project{h}{E}{B}$ is $g$-big above~$\s$. We apply \cref{lem:two_step:one_step_induction_for_weak_concatenation} to the collection of sets $\project{h}{\mu}{B}$ for $\mu\in E$. The fact that~$B$ is open implies that each $\project{h}{\mu}{B}$ is open; the assumption is that each $\project{h}{\mu}{B}$ is $g$-big above every extension of~$\s$.
\end{proof}

\begin{corollary}[Weak concatenation property] \label{lem:two_step:weak_concatentation}
	Let~$A$, $B$ and~$C$ be sets of pairs of strings, and suppose that~$C$ is open. Suppose that~$B$ is $(g,h)$-big above~$A$, and that for all $(\tau,\rho)\in B$, for all $\tau'\succeq \tau$, $C$ is $(g,h)$-big above $(\tau',\rho)$. Then~$C$ is $(g,h)$-big above~$A$.
\end{corollary}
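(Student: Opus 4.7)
The plan is to derive this corollary immediately from the two preceding lemmas: first use \cref{lem:two_step:last_step_for_weak_concatenation} to bootstrap the local bigness hypothesis on $C$ into global bigness of $C$ above $B$, then apply the concatenation result \cref{lem:two_step:concatenating_forest_systems_and_bigness} to compose $B$-above-$A$ with $C$-above-$B$.

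In more detail: the hypothesis on $C$ matches exactly what is required to invoke \cref{lem:two_step:last_step_for_weak_concatenation} with $B$ playing the role of the base set $A$ and $C$ playing the role of the set called $B$ in that lemma. Indeed, $C$ is open by assumption, and for every $(\tau,\rho)\in B$ and every $\tau'\succeq \tau$, $C$ is $(g,h)$-big above $(\tau',\rho)$. The lemma then yields that $C$ is $(g,h)$-big above~$B$.

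Now \cref{lem:two_step:concatenating_forest_systems_and_bigness} directly gives the conclusion: since $B$ is $(g,h)$-big above~$A$ and $C$ is $(g,h)$-big above~$B$, the set $C$ is $(g,h)$-big above~$A$. (In fact, the stronger statement of that lemma says that any finite $(g,h)$-bushy forest system with leaves in $B$ can be end-extended to a finite $(g,h)$-bushy forest system with leaves in $C$, so we additionally get constructive information about the witnessing forest system should we need it later.)

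There is no real obstacle here; the whole content of the weak concatenation property has already been packaged into the two lemmas. The only subtlety worth noting is where openness of $C$ is used: it enters solely through \cref{lem:two_step:last_step_for_weak_concatenation}, which in turn relies on \cref{lem:two_step:one_step_induction_for_weak_concatenation} and the fact that each projection $\project{h}{\mu}{C}$ inherits openness from $C$. This is precisely why the ordinary concatenation property fails for forest systems and a weakened, openness-assuming version is the best one can hope for.
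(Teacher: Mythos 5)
Your proof is correct and is exactly the intended derivation: the paper states this as an immediate corollary of \cref{lem:two_step:last_step_for_weak_concatenation} (applied with $B$ as the base set and $C$ as the big set, which yields that $C$ is $(g,h)$-big above~$B$) followed by the concatenation of forest systems in \cref{lem:two_step:concatenating_forest_systems_and_bigness}. Your remark about where openness of $C$ enters is also accurate.
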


\subsubsection*{Working within tree systems} 
\label{ssub:working_within_tree_systems}

We will need to apply the weak concatenation property while working within a given tree system~$T$.

\begin{remark} \label{rmk:two_step:minimal_bigness_witness}
	Suppose that $B$ is $(g,h)$-big above $A$, that~$T$ is a tree system and that $A,B\subseteq T$. Then the forest system constructed in the proof of \cref{lem:two_step:bigness} is a subset of~$T$.
\end{remark}

Fix a tree system~$T$. Suppose that~$S$ is finite forest system; let~$D$ be the set of leaves of~$S$. Let~$R$ be a forest system above~$D$. Suppose that both~$S$ and~$R$ are subsets of~$T$. Then $S\conc R$ is also a subset of~$T$. Thus, \cref{rmk:two_step:minimal_bigness_witness} can be extended. Suppose that $B$ is $(g,h)$-big above~$A$, that~$C$ is $(g,h)$-big above~$B$, and that $A,B,C\subseteq T$. Then not only is there a finite $(g,h)$-bushy forest system $S\subseteq T$ above~$A$ whose leaves are in~$B$, but further, any such system~$S$ can be end-extended to a finite $(g,h)$-bushy forest system $R\subseteq T$ above~$A$ whose leaves are in~$C$.

\smallskip

 If~$T$ is a tree system and~$B\subseteq T$ then we say that~$B$ is \emph{open in~$T$} if it is upwards closed in the restriction of the partial ordering~$\preceq$ to~$T$. \Cref{lem:two_step:one_step_induction_for_weak_concatenation} can be ``restricted to a tree~$S$'': if $A,B\subseteq S$ and each~$B\in \mathcal B$ is open in~$S$ and $g$-big above $A^\preceq \cap S$, then $\bigcap\mathcal B$ is $g$-big above~$A$. We then obtain a version of \cref{lem:two_step:last_step_for_weak_concatenation} restricted to~$T$:

\begin{lemma} \label{lem:two_step:last_step_for_weak_concatenation_witin_T}
	Let~$T$ be a tree system; let~$A,B\subseteq T$, and let~$g$ and~$h$ be bounding functions. Suppose that~$B$ is open in~$T$, and that for all $(\s,\mu)\in A$, for all $\s'\succeq \s$ in $\dom T$, $B$ is $(g,h)$-big above $(\s',\rho)$. Then~$B$ is $(g,h)$-big above~$A$.
\end{lemma}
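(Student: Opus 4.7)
The plan is to mimic the proof of \cref{lem:two_step:last_step_for_weak_concatenation}, but using the tree-restricted form of \cref{lem:two_step:one_step_induction_for_weak_concatenation} that the author flags in the paragraph immediately preceding the statement. That restricted form says: if $A, \mathcal{B} \subseteq S$ for a tree $S$ and each $B \in \mathcal{B}$ is open in $S$ and $g$-big above $A^\preceq \cap S$, then $\bigcap \mathcal{B}$ is $g$-big above $A$.

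First I would reduce to the case where $A$ is finite and prefix-free, and then further reduce the verification of $(g,h)$-bigness of $B$ above $A$ to showing, for each $\s \in \dom A$, that $\project{h}{A(\s)}{B}$ is $g$-big above $\s$ inside the tree $\dom T$. Fix such an $\s$ and a finite prefix-free $E \subseteq A(\s)$; the goal is then $\project{h}{E}{B} = \bigcap_{\mu \in E} \project{h}{\mu}{B}$ is $g$-big above $\s$.

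Now I would apply the tree-restricted version of \cref{lem:two_step:one_step_induction_for_weak_concatenation} to the collection $\mathcal{B} = \{\project{h}{\mu}{B} \cap \dom T : \mu \in E\}$, viewed as subsets of $\dom T$, with base point $\s$. Two hypotheses need checking. First, each $\project{h}{\mu}{B}$ is upwards closed in $\dom T$: if $\s' \in \project{h}{\mu}{B}$ is witnessed (via \cref{rmk:two_step:minimal_bigness_witness}) by a finite $h$-bushy tree $R \subseteq T(\s')$ above $\mu$ with leaves in $B(\s')$, and $\s'' \succeq \s'$ lies in $\dom T$, then by the tree-system axiom $T(\s'') \supseteq T(\s')$ and so $R \subseteq T(\s'')$; for each leaf $\rho$ of $R$ the pair $(\s', \rho) \in B$, and openness of $B$ in $T$ together with $(\s'', \rho) \in T$ gives $(\s'', \rho) \in B$, showing $\s'' \in \project{h}{\mu}{B}$. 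Second, the bigness of each $\project{h}{\mu}{B}$ above every $\s' \succeq \s$ in $\dom T$ is exactly the hypothesis (applied with $\s'$ in place of $\s$ and $\mu$ the corresponding string). Combining these conclusions yields the desired bigness of $\project{h}{E}{B}$ above $\s$.

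The only place requiring any care is the openness verification, which is the step that genuinely uses the structure of tree systems (specifically, that successor domain points carry end-extensions of the earlier fibres) together with the hypothesis that $B$ is open in $T$. Everything else is a routine transcription of the proof of \cref{lem:two_step:last_step_for_weak_concatenation} with the ambient tree $\dom T$ replacing $\w^{<\w}$.
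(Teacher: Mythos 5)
Your proposal is correct and matches the paper's (largely implicit) argument: the paper obtains this lemma by rerunning the proof of \cref{lem:two_step:last_step_for_weak_concatenation} with the tree-restricted form of \cref{lem:two_step:one_step_induction_for_weak_concatenation}, exactly as you do. Your explicit verification that each $\project{h}{\mu}{B}$ is open in $\dom T$ (via the end-extension property of fibres and the openness of $B$ in $T$) is the one detail the paper leaves unstated, and you carry it out correctly.
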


And so we get the weak concatenation property within a tree system:

\begin{corollary} \label{lem:two_step:restricted_weak_concatentation}
	Let~$T$ be a tree system, let~$A,B,C\subseteq T$, and suppose that~$C$ is open in~$T$. Suppose that~$B$ is $(g,h)$-big above~$A$, and that for all $(\tau,\rho)\in B$, for all $\tau'\succeq \tau$ in $\dom T$, $C$ is $(g,h)$-big above $(\tau',\rho)$. Then~$C$ is $(g,h)$-big above~$A$, and in fact every finite $(g,h)$-bushy forest system $S\subseteq T$ which witnesses that~$B$ is $(g,h)$-big above~$A$ has an end-extension $R\subseteq T$ which witnesses that~$C$ is $(g,h)$-big above~$A$.
\end{corollary}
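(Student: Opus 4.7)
The plan is to reduce the statement to two applications of facts already established in the tree-system setting: first, the restricted weak concatenation at the ``midpoint'' $B$, and second, the straightforward concatenation of bushy forest-system witnesses inside~$T$.

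For the first step, I would apply \cref{lem:two_step:last_step_for_weak_concatenation_witin_T} with the roles of ``$A$'' and ``$B$'' there played by the present $B$ and $C$. The hypotheses match exactly: $B,C\subseteq T$, $C$ is open in~$T$, and by assumption for every $(\tau,\rho)\in B$ and every $\tau'\succeq \tau$ in $\dom T$, $C$ is $(g,h)$-big above $(\tau',\rho)$. This yields that $C$ is $(g,h)$-big above~$B$.

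For the second step, let $S\subseteq T$ be any finite $(g,h)$-bushy forest system witnessing that $B$ is $(g,h)$-big above~$A$, and let $D$ be its (finite, prefix-free) set of leaves, so $D\subseteq B$. Since $C$ is $(g,h)$-big above~$B$ it is in particular $(g,h)$-big above~$D$, and by \cref{rmk:two_step:minimal_bigness_witness} applied inside $T$ (with $D$ in place of ``$A$'' and $C$ in place of ``$B$''), there is a finite $(g,h)$-bushy forest system $R'\subseteq T$ above~$D$ whose leaves all lie in~$C$. Form the concatenation $R = S\conc R'$ as defined just before \cref{lem:two_step:concatenating_forest_systems_and_bigness}. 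Then $R$ is a finite $(g,h)$-bushy forest system above $A$ which end-extends~$S$ and whose leaves, being the leaves of~$R'$, lie in~$C$. The observation recorded in the discussion immediately preceding the corollary --- that $S,R'\subseteq T$ implies $S\conc R'\subseteq T$ --- shows $R\subseteq T$, completing the construction and in particular yielding that $C$ is $(g,h)$-big above~$A$.

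The only point that requires care is the midpoint step: one cannot invoke the naive concatenation property here because, as explained right after \cref{lem:two_step:concatenating_forest_systems_and_bigness}, it genuinely fails for forest systems --- hence the need to pass through the weak, tree-system-relative version. Once that step is in hand, appending a witness for ``$C$ big above $D$'' to the given witness~$S$, and verifying that the appended system still lives inside~$T$, is routine.
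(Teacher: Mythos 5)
Your proof is correct and follows exactly the route the paper intends: first \cref{lem:two_step:last_step_for_weak_concatenation_witin_T} (with the present $B$, $C$ in the roles of its $A$, $B$) to get that $C$ is $(g,h)$-big above~$B$, then the concatenation $S\conc R'$ inside~$T$ as set up in the discussion preceding the corollary. Nothing to add.
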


We obtain a lemma which would allow us to take full subsystems as extensions.

\begin{lemma} \label{lem:two_step:can_take_full_subtree}
	Let~$T$ be a bounded and balanced $(b,c)$-bushy tree system above $(\s,\mu)$ and let $B\subset T$ be open in~$T$ and $(b,c)$-small above $(\s,\mu)$. Then for every~$m$ there is some $(\tau,\rho)\in T$ such that $|\tau|,|\rho|\ge m$ and above which~$B$ is $(b,c)$-small.
\end{lemma}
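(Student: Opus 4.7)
The plan is to argue by contradiction. Suppose that for some~$m$, $B$ is $(b,c)$-big above every pair $(\tau,\rho)\in T$ with $|\tau|,|\rho|\ge m$; I will derive that $B$ is $(b,c)$-big above $(\s,\mu)$, contradicting the hypothesis of the lemma. Using balancedness, fix a balanced level $m'\ge m$ of~$T$ and let
\[ A = \left\{ (\tau,\rho)\in T \,:\, |\tau|=m' \andd \rho\text{ is a leaf of }T(\tau) \right\}. \]
By the definition of a balanced level, every $\rho$ appearing in~$A$ also has $|\rho|=m'$, so all pairs in~$A$ have both coordinates of length at least~$m$.

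The first step is to show that $A$ is $(b,c)$-big above $(\s,\mu)$. Consider the truncation $S$ of~$T$ given by $\dom S = \{\tau\in \dom T \,:\, |\tau|\le m'\}$ and $S(\tau)=T(\tau)$ for $\tau\in \dom S$; this is a finite forest subsystem of~$T$. Since $\dom T$ is a $b$-bushy tree above~$\s$ with no leaves, $\dom S$ is a finite $b$-bushy forest above~$\s$ whose leaves are the strings of $\dom T$ of length~$m'$; each $S(\tau)=T(\tau)$ is $c$-bushy. Hence~$S$ is a finite $(b,c)$-bushy forest system above $(\s,\mu)$, and by balancedness at level~$m'$ its leaves are exactly the elements of~$A$.

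The second step is to invoke the restricted weak concatenation property (\cref{lem:two_step:restricted_weak_concatentation}) with base $\{(\s,\mu)\}$, intermediate layer~$A$, and target~$B$. The set~$B$ is open in~$T$ and $A,B\subseteq T$, so we only need to verify the pointwise bigness condition. Let $(\tau,\rho)\in A$ and let $\tau'\succeq \tau$ in $\dom T$. Because $T(\tau')$ end-extends $T(\tau)$ we have $\rho\in T(\tau')$, so $(\tau',\rho)\in T$; and $|\tau'|\ge m'\ge m$, $|\rho|=m'\ge m$, so the contradictory assumption gives that~$B$ is $(b,c)$-big above $(\tau',\rho)$. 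The corollary then yields that~$B$ is $(b,c)$-big above $(\s,\mu)$, contradicting the hypothesis.

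The only delicate point is the first step: the balanced level is exactly what is needed to ensure that truncating~$T$ at level~$m'$ produces a forest system whose leaves are $(b,c)$-big on \emph{both} coordinates simultaneously. After that, the restricted weak concatenation property does the rest of the work in one stroke.
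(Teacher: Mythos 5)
Your proof is correct and follows essentially the same route as the paper: the paper also takes a balanced level $m'\ge m$, observes that the pairs of $T$ at that level are $(b,c)$-big above $(\s,\mu)$, and applies the restricted weak concatenation property (\cref{lem:two_step:restricted_weak_concatentation}) to reach the same contradiction. Your write-up merely fills in the details (the truncation argument for bigness of~$A$ and the verification of the pointwise hypothesis of the corollary) that the paper leaves implicit.
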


\begin{proof}
	Let~$m$ be some balanced level of~$T$. Let~$D$ be the set of pairs $(\tau,\rho)\in T$ such that $|\tau| = |\rho|\ge m$. Then~$D$ is $(b,c)$-big above $(\s,\mu)$. If there is no pair as required then the weak concatenation property localised to~$T$ (\cref{lem:two_step:restricted_weak_concatentation}) shows that $B$ is $(b,c)$-big above~$(\s,\mu)$.
\end{proof}

\begin{remark}  \label{rmk:two_step:we_assume_that_big_sets_are_leaves_of_tree_systems}
We use the same convention discussed in \cref{rmk:one_step:we_assume_that_big_sets_are_leaves_of_trees}; we assume that large sets given to us are sets of leaves of tree systems witnessing their largeness. For example, if we are given a set~$B$ of pairs, $(g,h)$-big above some~$A$, then we assume that~$B$ is finite and prefix-free; that for all~$\tau\in \dom B$, $B(\tau)$ is $h$-big above $A(\tau^{-\dom A})$; and that~$B\subseteq A^\preceq$.
\end{remark}

\subsection{The notion of forcing and the generic} \label{subsec:step_two:the_notion}

Let~$B_{\DNC^2}$ be the set of pairs~$(\tau,\rho)$ such that $\tau\in B_{\DNC}$ or $\rho\in B_{\DNC^\tau}$; the latter means that $\rho(e) = J^\tau(e)\converge$ for some $e<|\rho|$. Note that this set of pairs is $(2,2)$-small above $(\emptystring,\emptystring)$.

\smallskip

We let~$\PP_2$ be the set of tuples $\pp = ((\s^\pp,\mu^\pp),T^\pp,B^\pp,h^\pp,b^\pp)$ satisfying:
\begin{enumerate}
	\item $T^\pp$ is a computably bounded, computable, balanced tree system above~$(\s^\pp,\mu^\pp)$;
	\item $h^\pp\in \Quick$ and~$T^\pp$ is~$(h^\pp,h^\pp)$-bushy;
	\item $B^\pp\subset T^\pp$ is c.e.\ and open in~$T^\pp$, and $B^\pp\supseteq B_{\DNC^2}\cap T^\pp$;
	\item $b^\pp\in \Quick$ and $B^\pp$ is $(b^\pp,b^\pp)$-small above~$(\s^\pp,\mu^\pp)$; and
	\item $h^\pp\gg b^\pp$ and $h^\pp\ge b^\pp$ above $\min \{|\s^\pp|,|\mu^\pp|\}$.
\end{enumerate}

We define a partial ordering on~$\PP_2$ as follows. A condition~$\qq$ extends a condition~$\pp$ if $(\s^\pp,\mu^\pp)\preceq (\s^\qq,\mu^\qq)$, $T^\qq$ is a subsystem of~$T^\pp$, $B^\pp \cap T^\qq\subseteq B^\qq$, and $h^\qq\le h^\pp$ and $b^\qq\ge b^\pp$ above $\min\{ |\s^\qq|,|\mu^\qq|\}$.

\begin{lemma} \label{lem:two_step:acceptability_of_closed_sets}
	The assignment of closed sets $X^\pp = [T^\pp]\setminus [B^\pp]^\prec$ for $\pp\in \PP_2$ is acceptable (\cref{def:forcing_with_closed_sets}).
\end{lemma}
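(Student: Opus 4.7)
The plan is to mirror the proof of \cref{lem:one_step:acceptability_of_closed_sets}, verifying the three clauses (a), (b), (c) of \cref{def:forcing_with_closed_sets} in turn; the only genuine work lies in handling compactness and balancedness for length-$2$ tree systems.

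Clause (b) is immediate from the definition of extension on $\PP_2$: if $\qq$ extends $\pp$ then $T^\qq \subseteq T^\pp$ (and hence $[T^\qq]\subseteq [T^\pp]$) and $B^\pp\cap T^\qq \subseteq B^\qq$ (so $[B^\pp]^\prec \cap [T^\qq]\subseteq [B^\qq]^\prec$), yielding $X^\qq\subseteq X^\pp$.

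For clause (a), observe first that since $T^\pp$ is balanced and bounded, \cref{lem:TwoSteps:always_extendible_is_local_by_compactness,lem:TwoSteps:Set_of_paths_is_closed} ensure that $[T^\pp]$ is a nonempty closed (hence compact) subset of $\Baire{2}$. Suppose for contradiction that $[T^\pp]\subseteq [B^\pp]^\prec$. By compactness there is a finite subset of $B^\pp$ whose upward closure covers $[T^\pp]$. Pick a balanced level $m$ of $T^\pp$ larger than the length of every string appearing in this finite cover. For every pair $(\tau,\rho)\in T^\pp$ with $|\tau|=|\rho|=m$ (with $\rho$ a leaf of $T^\pp(\tau)$), any path through $(\tau,\rho)$ in $[T^\pp]$ must extend some pair in the cover, and since $m$ is past every length in the cover and $B^\pp$ is open in $T^\pp$, we get $(\tau,\rho)\in B^\pp$. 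Thus the restriction of $T^\pp$ to pairs of length at most~$m$ is a finite $(h^\pp,h^\pp)$-bushy forest system above $(\s^\pp,\mu^\pp)$ whose leaves all lie in $B^\pp$, so $B^\pp$ is $(h^\pp,h^\pp)$-big above $(\s^\pp,\mu^\pp)$. Since $h^\pp\ge b^\pp$ above $\min\{|\s^\pp|,|\mu^\pp|\}$, this forest system is also $(b^\pp,b^\pp)$-bushy, contradicting condition~(4) of being a condition in~$\PP_2$. Hence $X^\pp\ne\emptyset$.

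For clause (c), given $m$, apply \cref{lem:two_step:can_take_full_subtree} to obtain a pair $(\tau,\rho)\in T^\pp$ with $|\tau|,|\rho|\ge m$, at a balanced level of $T^\pp$, above which $B^\pp$ is $(b^\pp,b^\pp)$-small. Set
\[
\qq = \bigl((\tau,\rho),\, T^\pp\cap(\tau,\rho)^\preceq,\, B^\pp\cap(\tau,\rho)^\preceq,\, h^\pp,\, b^\pp\bigr).
\]
The full subsystem $T^\pp\cap(\tau,\rho)^\preceq$ inherits computable boundedness and $(h^\pp,h^\pp)$-bushiness from $T^\pp$, and it remains balanced because every balanced level of $T^\pp$ above the chosen level is also a balanced level of this subsystem. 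All other defining conditions for membership in $\PP_2$ and for $\qq$ extending $\pp$ are straightforward to check. Finally, $X^\qq\subseteq [T^\qq]\subseteq [\tau]^\prec\times[\rho]^\prec$, so under the effective identification of $\Baire{2}$ with $\Baire$ the set $X^\qq$ is contained in $[\eta]^\prec$ for some $\eta$ of length at least $m$. The main subtleties are keeping track of balancedness (needed so that compactness is available for (a) and so that the full subsystem in (c) is again a legal tree system) and ensuring the cover in (a) is lifted to a balanced level before extracting the bushy forest system witnessing largeness.
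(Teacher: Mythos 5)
Your proof is correct and follows essentially the same route as the paper's: clause (b) from the definition of extension, clause (a) by compactness of $[T^\pp]$ plus turning a finite cover from $B^\pp$ into a witness that $B^\pp$ is $(h^\pp,h^\pp)$- and hence $(b^\pp,b^\pp)$-big, and clause (c) via \cref{lem:two_step:can_take_full_subtree} and the full subsystem above the resulting pair. The only difference is presentational: you make explicit the step (lifting the cover to a balanced level to extract the bushy forest system) that the paper leaves implicit.
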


Note that $T^\pp\setminus B^\pp$ may not be a tree system and so we have not defined $[T^\pp\setminus B^\pp]$.

\begin{proof}
	As discussed above, the fact that~$T^\pp$ is balanced implies that $[T^\pp]$ is closed. That $X^\qq\subseteq X^\pp$ when~$\qq$ extends~$\pp$ again follows directly from the definition of the partial ordering on~$\PP_2$.

	\smallskip

Let $\pp\in \PP_2$. Suppose that $[T^\pp]\subseteq [B^\pp]^\prec$. Since~$T^\pp$ is bounded, $[T^\pp]$ is compact. There is some finite~$C\subset B^\pp$ such that $[T^\pp]\subseteq [C]^\prec$. We may assume that~$C$ is prefix-free. Then~$C$ shows that~$B^\pp$ is $(h^\pp,h^\pp)$- and so $(b^\pp,b^\pp)$-big above $(\s^\pp,\mu^\pp)$. Hence $X^\pp$ is nonempty.

\smallskip

Let~$m<\w$. Since $h^\pp\ge b^\pp$ above $\min \{ |\s^\pp|,|\mu^\pp|\}$ \cref{lem:two_step:can_take_full_subtree} shows that there is some pair $(\tau,\rho)\in T^\pp$ with $|\tau|,|\rho|\ge m$ above which $B^\pp$ is $(b^\pp,b^\pp)$-small. Then
 	$   \qq = ((\tau,\rho),T^\pp \cap (\tau,\rho)^{\preceq},B^\pp\cap (\tau,\rho)^{\preceq},h^\pp,b^\pp)   $
is a condition in~$\PP_2$ extending~$\pp$ satisfying $X^\qq\subseteq [T^\qq] \subseteq [\tau,\rho]^\prec$. 
Thus for every~$m$, the set of conditions~$\pp\in \PP_2$ such that $X^\pp\subseteq [\tau,\rho]^{\prec}$ for some strings~$\tau,\rho$, both of length at least~$m$, is dense in~$\PP_2$; this implies requirement~(c) of \cref{def:forcing_with_closed_sets}.
\end{proof}

As in the previous section, if~$\GG\subset \PP_2$ is sufficiently generic then $\bigcap_{\pp\in \GG} [T^\pp]\setminus [B^\pp]^\prec$ is a singleton which we denote by $\{(x^\GG,y^\GG)\}$. In fact
 $x^\GG = \bigcup \left\{ \s^\pp  \,:\, \pp\in \GG \right\}$ and
 $y^\GG = \bigcup \left\{ \mu^\pp  \,:\, \pp\in \GG \right\}$.

Let~$\pp\in \PP_2$; since $B_{\DNC^2} \subseteq B^\pp$ we see:

\begin{proposition} \label{prop:TwoStep:DNC}
	Every condition in~$\PP_2$ forces that $x^\GG\in \DNC$ and that $y^\GG\in \DNC^{x^\GG}$.
\end{proposition}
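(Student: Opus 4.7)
The plan is to show that the claimed Borel condition is strongly forced by every $\pp \in \PP_2$, which by \cref{cor:forcing_and_generics}(3) yields the proposition. That is, I will argue that for every $(x,y) \in X^\pp$, one has $x \in \DNC$ and $y \in \DNC^{x}$. The entire argument is essentially a chase through the definitions; the only subtlety is reconciling the ``pairs'' formulation of $B_{\DNC^2}$ with the end-extension structure of $T^\pp$.

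First I would handle $x \in \DNC$. Suppose for contradiction that $(x,y) \in X^\pp$ but $x \notin \DNC$; pick $\tau \prec x$ with $\tau \in B_{\DNC}$ and $\tau \succeq \s^\pp$. Since $x \in [\dom T^\pp]$, we have $\tau \in \dom T^\pp$, and because $T^\pp$ is a tree system above $(\s^\pp, \mu^\pp)$, the string $\mu^\pp$ belongs to $T^\pp(\tau)$, so $(\tau, \mu^\pp) \in T^\pp$. As $\tau \in B_{\DNC}$, the pair $(\tau, \mu^\pp)$ lies in $B_{\DNC^2} \cap T^\pp \subseteq B^\pp$. Now $(x,y)$ extends $(\tau, \mu^\pp)$, so some initial segment of $(x,y)$ (a sufficiently long extension of $(\tau, \mu^\pp)$ inside $T^\pp$) lies in $B^\pp$ by openness of $B^\pp$ in $T^\pp$, contradicting $(x,y) \notin [B^\pp]^\prec$.

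Next I would handle $y \in \DNC^{x}$. Suppose instead $(x,y) \in X^\pp$ with $y \notin \DNC^{x}$; pick $\rho \prec y$ and $e < |\rho|$ with $\rho(e) = J^{x}(e) \converge$, and let $\tau \prec x$ be long enough to compute $J^{x}(e)$, so that $\rho(e) = J^{\tau}(e) \converge$ and thus $\rho \in B_{\DNC^{\tau}}$. Since $y \in [T^\pp(x)]$, there is some $\tau' \prec x$ with $\rho \in T^\pp(\tau')$. Take $\tau'' \prec x$ extending both $\tau$ and $\tau'$; by the end-extension clause of \cref{def:TwoStep:TreeSystem} we still have $\rho \in T^\pp(\tau'')$, i.e.\ $(\tau'', \rho) \in T^\pp$. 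Because $\tau \preceq \tau''$, the jump $J^{\tau''}(e)$ has the same use as $J^{\tau}(e)$, so $\rho \in B_{\DNC^{\tau''}}$, giving $(\tau'', \rho) \in B_{\DNC^2} \cap T^\pp \subseteq B^\pp$. Again openness in $T^\pp$ forces $(x,y) \in [B^\pp]^\prec$, a contradiction.

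Thus $X^\pp$ is contained in the Borel set $\{(x,y) : x \in \DNC \text{ and } y \in \DNC^{x}\}$, so $\pp \force^* (x^\GG \in \DNC \andd y^\GG \in \DNC^{x^\GG})$, and the proposition follows from \cref{cor:forcing_and_generics}(3). There is no real obstacle here; the only point requiring care is making sure that in the second step we pass to a common predecessor $\tau''$ of $\tau$ and $\tau'$ along $x$ and use the end-extension condition to keep $\rho$ in $T^\pp(\tau'')$.
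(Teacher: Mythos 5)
Your proposal is correct and follows exactly the paper's (very terse) argument: the paper simply observes that $B_{\DNC^2}\cap T^\pp\subseteq B^\pp$ forces $X^\pp$ into the target set, so the condition strongly forces the statement, and strong forcing implies forcing by \cref{cor:forcing_and_generics}(3). Your write-up just fills in the routine verification that any $(x,y)\in X^\pp$ satisfies $x\in\DNC$ and $y\in\DNC^x$, including the correct use of the end-extension clause to place $(\tau'',\rho)$ in $T^\pp$.
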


\subsubsection*{The restriction of~$\PP_2$ to~$\PP_1$} 
\label{ssub:the_restriction_of_P2_to_P1}

We do not actually have a restriction map to~$\PP_1$ from~$\PP_2$ but from a dense subset of~$\PP_2$. Note that if $\QQ\subseteq \PP$ is dense and $G\subset \QQ$ is a generic directed set, then it is also a generic directed subset of~$\PP$.

\begin{proposition} \label{lem:TwoStep:the_restriction_map}
	There is a dense subset~$\QQ_2\subseteq \PP_2$ and a restriction map $i\colon \QQ_2 \to \PP_1$  such that for all $\pp\in \QQ_2$, $X^{i(\pp)}\supseteq \dom X^\pp$.
\end{proposition}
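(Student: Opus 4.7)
I would define $\QQ_2 \subseteq \PP_2$ as the collection of \emph{row-saturated} conditions: those $\pp$ such that whenever $\tau \in \dom T^\pp$ and $B^\pp(\tau)$ is $b^\pp$-big above $\mu^\pp$, the whole slice $T^\pp(\tau)$ lies in $B^\pp$. For $\pp \in \QQ_2$, I set
\[
i(\pp) = \bigl(\s^\pp,\,\dom T^\pp,\,B^{i(\pp)},\,h^\pp,\,b^\pp\bigr),\qquad B^{i(\pp)} = \{\tau \in \dom T^\pp : T^\pp(\tau) \subseteq B^\pp\}.
\]
Since $T^\pp(\tau)$ is $h^\pp$-bushy above $\mu^\pp$ and $h^\pp \ge b^\pp$ on the relevant range, row-saturation gives the identity $B^{i(\pp)} = \project{b^\pp}{\mu^\pp}{B^\pp}$. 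The checks that $i(\pp) \in \PP_1$ are routine: $\dom T^\pp$ is computable, leafless (by balancedness) and $h^\pp$-bushy; $B^{i(\pp)}$ is c.e., open in $\dom T^\pp$ by openness of $B^\pp$ in $T^\pp$ combined with end-extension of slices, contains $B_\DNC \cap \dom T^\pp$ (since $\tau \in B_\DNC$ forces the whole slice to lie in $B_{\DNC^2} \subseteq B^\pp$), and is $b^\pp$-small above $\s^\pp$ via the projection identity. The inclusion $X^{i(\pp)} \supseteq \dom X^\pp$ is immediate: if $(x,y) \in X^\pp$ and some $\tau \prec x$ satisfied $T^\pp(\tau) \subseteq B^\pp$, any $\rho \prec y$ with $\rho \in T^\pp(\tau)$ would put $(\tau,\rho) \in B^\pp$, contradicting $(x,y) \notin [B^\pp]^\prec$.

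For density of $\QQ_2$ in $\PP_2$, given $\pp$ I would enlarge $B^\pp$ to
\[
\bar B = B^\pp \cup \bigl\{(\tau,\rho) \in T^\pp : B^\pp(\tau)\text{ is $b^\pp$-big above }\mu^\pp\bigr\}.
\]
This is c.e.\ and open in $T^\pp$ (the added set is upward closed in $\tau$ because $B^\pp(\tau') \supseteq B^\pp(\tau)$ when $\tau \prec \tau'$), and since replacing an already-big row with its full slice leaves $\project{b^\pp}{\mu^\pp}{\cdot}$ unchanged, $\bar B$ remains $(b^\pp,b^\pp)$-small above $(\s^\pp,\mu^\pp)$; the resulting condition is row-saturated and extends $\pp$. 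Order-preservation of $i$ is equally direct: if $\qq \le \pp$ in $\QQ_2$ and $\tau \in \dom T^\qq$ has $T^\pp(\tau) \subseteq B^\pp$, then $T^\qq(\tau) \subseteq T^\pp(\tau) \cap T^\qq \subseteq B^\pp \cap T^\qq \subseteq B^\qq$, so $\tau \in B^{i(\qq)}$.

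The main work is showing that $i[\QQ_2]$ is dense below $i(\pp)$. Given $\qq' \le i(\pp)$ in $\PP_1$, the plan is: pick $h^\qq, b^\qq \in \Quick$ with $h^\qq \le h^{\qq'}$, $b^\qq \ge \max(b^\pp, b^{\qq'})$, and $h^\qq \gg b^\qq$ (by \cref{lem:density_of_gg}); locate a balanced $(\s^\qq,\mu^\qq) \in T^\pp$ with $\s^\qq \in T^{\qq'}$ extending $\s^{\qq'}$ and $\mu^\qq$ extending $\mu^\pp$, both of large length, above which $B^\pp$ is $(b^\qq,b^\qq)$-small; carve out a balanced $(h^\qq,h^\qq)$-bushy subsystem $T^\qq$ of $T^\pp$ above $(\s^\qq,\mu^\qq)$ with $\dom T^\qq \subseteq T^{\qq'}$; set $B^\qq = (B^\pp \cap T^\qq) \cup \{(\tau,\rho) \in T^\qq : \tau \in B^{\qq'}\}$; and row-saturate to land in $\QQ_2$. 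Row-saturation preserves column projections and hence smallness, so $\qq \le \pp$ and $i(\qq) \le \qq'$.

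The key step, and main obstacle, is locating $(\s^\qq,\mu^\qq)$ with the required smallness. What makes this tractable is that $\pp \in \QQ_2$ together with $\qq' \le i(\pp)$ (which gives $B^{i(\pp)} \cap T^{\qq'} \subseteq B^{\qq'}$) forces every $\tau \in T^{\qq'} \setminus B^{\qq'}$ to have $T^\pp(\tau) \not\subseteq B^\pp$; by row-saturation, $B^\pp(\tau)$ is then $b^\pp$-small above $\mu^\pp$, hence $b^\qq$-small. Inside the hybrid subsystem of $T^\pp$ whose first-coordinate tree is restricted to $T^{\qq'}$ above $\s^{\qq'}$, this row-wise smallness combined with the $b^{\qq'}$-smallness of $B^{\qq'}$ above $\s^{\qq'}$ gives $(b^\qq, b^\qq)$-smallness of the restricted $B^\pp$ above $(\s^{\qq'}, \mu^\pp)$; a version of \cref{lem:two_step:can_take_full_subtree} applied within the hybrid subsystem then produces the desired balanced pair. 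The big-subset property (\cref{lem:TwoSteps:BigSubset}) with $b^\qq \ge \max(b^\pp, b^{\qq'})$ is what allows the two smallness contributions to be combined.
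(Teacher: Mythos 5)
Your proposal is correct and follows essentially the same route as the paper: your ``row-saturated'' conditions are (up to the immaterial difference between $\mu^\pp\in B^\pp(\tau)$ and $T^\pp(\tau)\subseteq B^\pp$) exactly the paper's $\QQ_2$, your saturation map $\bar B$ is the paper's $\nu$, and your hybrid system with $\dom T=T^{\qq'}$ and slices from $T^\pp$, together with the leaf-by-leaf smallness argument and an appeal to \cref{lem:two_step:can_take_full_subtree}, is precisely how the paper proves density of the image below $i(\pp)$. The only cosmetic differences are that the paper simply reuses $h^{\qq'},b^{\qq'}$ for the new condition rather than invoking \cref{lem:density_of_gg}, and works with the mixed bound $(b^{\qq'},b^\pp)$ before passing to a long enough pair, rather than taking a common majorant up front.
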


In particular this shows that~$\PP_2$ is nonempty.

\begin{proof}
We define $i\colon \PP_2\to \PP_1$ by letting
\[ i(\qq)= (\s^\qq, \dom T^\qq, \project{b^\qq}{\mu^\qq}{B^\qq}, h^\qq,b^\qq)\]where we recall that $\project{b^\qq}{\mu^\qq}{B^\qq}$ is the set of~$\tau$ such that $B^\qq(\tau)$ is $b^\qq$-big above~$\mu^\qq$.

Let~$\qq\in \PP_2$. It is routine to check that $i(\qq)\in \PP_1$.

However, we cannot show that~$i$ is order-preserving. For this reason we let
		\[
			\QQ_2  = \left\{ \pp\in \PP_2 \,:\,  \project{b^\pp}{\mu^\pp}{B^\pp} = \left\{ \tau\in \dom T^\pp \,:\, \mu^\pp\in B^\pp(\tau)  \right\} \right\}.
		\]
	Suppose that~$\qq\in \QQ_2$; then $X^{i(\qq)} \supseteq \dom X^\qq$. To check this we observe that if $(x,y)\in [T^\qq]\setminus [B^\qq]^\prec$ then for all $\tau\prec x$, $(\tau,\mu^\qq)\notin B^\qq$ and so $\tau\notin B^{i(\qq)}$; so $x\in [\dom T^\qq]\setminus [B^{i(\qq)}]^\prec$. (In fact $X^{i(\qq)}= \dom X^\qq$; if $x\in X^{i(\qq)}$ then $B^\qq(x)$ is $b^\qq$-small above~$\mu^\qq$, so $T^\qq(x)\setminus B^\qq(x)$ has a path.)

	\smallskip

	Let $\qq\in \PP_2$. Define a set $B\subset T^\qq$: for $\tau\in \dom T^\qq \setminus \project{b^\qq}{\mu^\qq}{B^\qq}$ we let $B(\tau) = B^\qq(\tau)$; for $\tau\in\project{b^\qq}{\mu^\qq}{B^\qq}$
	we let $B(\tau) = T^\qq(\tau)$. Let~$\nu(\qq) = ((\s^\qq,\mu^\qq),T^\qq,B,h^\qq,b^\qq)$. The concatenation property implies that $\project{b^\qq}{\mu^\qq}{B^\qq} = \project{b^\qq}{\mu^\qq}{B}$, which shows that $\nu(\qq)\in \PP_2$, in fact that $\nu(\qq)\in \QQ_2$, and it extends~$\qq$. Hence $\QQ_2$ is dense in~$\PP_2$. We observe that $i(\qq) = i(\nu(\qq))$.

	\smallskip

	To show that the restriction of~$i$ to~$\QQ_2$ is order-preserving we need to check that if~$\qq$ extends~$\ss$ are in~$\QQ_2$ then $B^{i(\ss)}\cap T^{i(\qq)}\subseteq B^{i(\qq)}$. If $\tau\in B^{i(\ss)}$ (and $\tau\in T^{i(\qq)}$) then $(\tau,\mu^\ss)\in B^\ss$; since $B^\ss$ is open in~$T^\ss$, this means that $(\tau, \mu^\qq)\in B^\ss$; since $B^\ss\cap T^\qq\subseteq B^\qq$, $(\tau,\mu^\qq)\in B^\qq$ and so $\tau\in B^{i(\qq)}$.

	\smallskip

	Let~$\qq\in \QQ_2$ and let~$\pp\in \PP_1$ extend $i(\qq)$; we need to find $\rr\in \QQ_2$ extending~$\qq$ such that $i(\rr)$ extends~$\pp$. Using the map~$\nu$, it suffices to find $\rr\in \PP_2$.

	Let $T$ be the restriction of~$T^\qq$ to~$T^\pp$: $\dom T = T^\pp$ and for $\tau\in T^\pp$, $T(\tau) = T^\qq(\tau)$. The system~$T$ is $(h^\pp,h^\qq)$-bushy above $(\s^\pp,\mu^\qq)$.

	Also define $B\subseteq T$; if $\tau\in B^\pp$ then $B(\tau) = T(\tau)$; if $\tau\in T^\pp\setminus B^\pp$ then $B(\tau) = B^\qq(\tau)$. The set~$B$ is open in~$T$, is c.e., and is $(b^\pp,b^\qq)$-small above $(\s^\pp,\mu^\qq)$. To see that let~$S$ be $(b^\pp,b^\qq)$ bushy above $(\s^\pp,\mu^\qq)$; by \cref{rmk:two_step:minimal_bigness_witness} we may assume that $S\subset T$. Since $\dom S$ is a subtree of~$T^\pp$ we find a leaf~$\tau$ of $\dom S$ which is not in~$B^\pp$. Since~$\pp$ extends~$i(\qq)$, $\tau\notin B^{i(\qq)}$ and so $B(\tau) = B^\qq(\tau)$ is $b^\qq$-small above~$\mu^\qq$, so $S(\tau)$ has a leaf $\rho$ which is not in $B(\tau)$.

	Since $h^\pp\ge b^\pp$ above~$|\s^\pp|$ and $h^\qq\ge b^\qq$ above $|\mu^\qq|$, $T$ is $(b^\pp,b^\qq)$-bushy. By \cref{lem:two_step:can_take_full_subtree} we can find $(\s,\mu)\in T$ such that $|\s|,|\mu| \ge \max\{ |\s^\pp|,|\mu^\qq|\}$ and above which~$B$ is $(b^\pp,b^\qq)$-small.

	We now define $\rr = ((\s,\mu), T\cap (\s,\mu)^\preceq, B\cap (\s,\mu)^\preceq, h^\pp,b^\pp)$. The point is that $h^\pp\le h^\qq$ and $b^\pp\ge b^\qq$ above $\min \{|\s|,|\mu|\}$ and so~$T^\rr$ is $(h^\pp,h^\pp)$-bushy and $B^\rr$ is $(b^\pp,b^\pp)$-small above $(\s,\mu)$. This also shows that $\rr$ extends~$\qq$. To show that $i(\rr)$ extends~$\pp$ we need to show that $B^\pp\cap \dom T^\rr\subseteq B^{i(\rr)}$. Let $\tau\in B^\pp\cap \dom T^\rr$. Then $\tau\succeq \s$ and so $\mu\in T(\tau) = B(\tau)$, so $\tau\in B^{i(\rr)}$.
\end{proof}

\begin{corollary} \label{cor:two_step:the_first_coordinate_is_as_desired}
	Every condition in~$\PP_2$ forces that~$x^\GG$ has minimal Turing degree.
\end{corollary}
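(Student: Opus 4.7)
The plan is to reduce the claim to the minimality result for $\PP_1$ established in \cref{prop:one_step:minimality} by transporting genericity along the restriction map $i\colon \QQ_2\to \PP_1$ of \cref{lem:TwoStep:the_restriction_map}. Fix $\pp\in \PP_2$; I want to show that for every sufficiently generic directed $\GG\subset \PP_2$ containing~$\pp$, the real $x^\GG$ has minimal Turing degree. Since $\QQ_2$ is dense in $\PP_2$, replacing the collection of dense open sets defining ``sufficiently generic'' by a larger one, I may assume that $\GG$ meets $\QQ_2$ (equivalently, that $\GG' := \GG\cap \QQ_2$ is cofinal in $\GG$, so in particular directed in~$\QQ_2$ and containing some extension of~$\pp$ inside~$\QQ_2$).

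Next, by \cref{lem:restriction_maps_and_generics} and the discussion in \cref{subsec:simplified_iterated_forcing}, enlarging the collection of dense open sets once more I may ensure that $i[\GG']$ is sufficiently generic in $\PP_1$. The key observation tying the two generics together is that $X^{i(\qq)}\supseteq \dom X^\qq$ for every $\qq\in \QQ_2$ (part of \cref{lem:TwoStep:the_restriction_map}). Since $\{(x^\GG, y^\GG)\} = \bigcap_{\qq\in \GG'} X^\qq$ (cofinality of $\GG'$ in $\GG$), projecting onto the first coordinate gives
\[
   x^\GG \in \bigcap_{\qq\in \GG'} \dom X^{\qq} \subseteq \bigcap_{\qq\in \GG'} X^{i(\qq)} = \{x^{i[\GG']}\},
\]
where the last equality uses that $i[\GG']$ is sufficiently generic in~$\PP_1$. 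Hence $x^\GG = x^{i[\GG']}$.

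Finally, by \cref{prop:one_step:minimality}, every condition in~$\PP_1$ forces that the $\PP_1$-generic real has minimal Turing degree; in particular $x^{i[\GG']}$ does, and so does $x^\GG$. The only step requiring care is the bookkeeping to check that enlarging the collection of dense open subsets of~$\PP_2$ twice (first to intersect~$\QQ_2$, then to pull back a family witnessing $\PP_1$-genericity through~$i$) still yields a countable family, which is immediate since both operations produce countably many new dense open sets from countably many old ones. I do not expect any genuine obstacle beyond this, as the real work has already been done in \cref{prop:one_step:minimality} and in the construction of the restriction map.
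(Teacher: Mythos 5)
Your strategy is exactly the one the paper intends: the corollary is stated without proof immediately after \cref{lem:TwoStep:the_restriction_map} precisely because that proposition, combined with the machinery of \cref{subsec:simplified_iterated_forcing} and with \cref{prop:one_step:minimality}, is meant to yield it. Your identification $x^\GG = x^{i[\GG']}$ via $X^{i(\qq)}\supseteq \dom X^\qq$, followed by the appeal to \cref{prop:one_step:minimality}, is the heart of the matter and is correct.

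The one place where your bookkeeping does not work as written is the passage from a sufficiently generic directed $\GG\subset\PP_2$ to a sufficiently generic directed $\GG'\subset\QQ_2$. Genericity is defined via dense \emph{open} (downward closed) sets, and $\QQ_2$ is dense but not open; likewise the pullbacks $i^{-1}[E]$ of dense open $E\subseteq\PP_1$ are dense open only in $\QQ_2$, not in $\PP_2$. So you cannot simply ``enlarge the collection of dense open sets'' by these sets. The natural surrogate, the downward closure $\left\{ \rr \,:\, \exists \qq\in i^{-1}[E]\,\ \rr\le\qq \right\}$, is dense open in $\PP_2$, but $\GG$ meeting it only yields some $\rr\in\GG$ extending some $\qq\in i^{-1}[E]$, not $\qq\in\GG$ itself; and ``$\GG$ meets $\QQ_2$'' is certainly not equivalent to ``$\GG\cap\QQ_2$ is cofinal in $\GG$''. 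Two standard repairs: (i) replace $\GG$ by the filter it generates (this changes neither directedness nor $x^\GG$, and an upward closed $\GG$ does contain the witnesses $\qq$ above) --- this is what the paper does explicitly in \cref{sec:proof_of_the_main_theorem}; or (ii) avoid arbitrary $\PP_2$-generics altogether by using \cref{lem:forcing_behaves}: if some $\pp$ failed to force minimality, some $\qq\le\pp$ lying in $\QQ_2$ would force the negation of one of its Borel conjuncts, and then a single sufficiently generic directed subset of $\QQ_2$ containing $\qq$ --- which is automatically generic for $\PP_2$ and pushes forward under $i$ to a generic for $\PP_1$ --- contradicts \cref{prop:one_step:minimality}. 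With either repair your argument is complete.
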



\subsubsection*{Totality} 
\label{subsub:two_step:totality}

\begin{proposition} \label{prop:two_step:forcing_Pi_2}
	Let~$C\subseteq \Baire{2}$ be~$\Pi^0_2$ and let~$\pp\in \PP_2$. If $\pp\force (x^\GG,y^\GG)\in C$ then~$\pp$ has an extension which strongly forces that~$(x^\GG,y^\GG)\in C$.
\end{proposition}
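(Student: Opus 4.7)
The plan is to mimic the proof of \cref{prop:one_step:forcing_Pi_2}, replacing the concatenation property with the restricted weak concatenation property (\cref{lem:two_step:restricted_weak_concatentation}) and padding each stage of the construction so as to preserve the balancedness required for a tree system to appear in a condition of~$\PP_2$.

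First I would apply \cref{lem:density_of_gg} to fix some $g\in\Quick$ with $h^\pp\gg g\gg b^\pp$, then pass via \cref{lem:two_step:can_take_full_subtree} to an extension of~$\pp$ in which $h^\pp\ge g\ge b^\pp$ above $\min\{|\s^\pp|,|\mu^\pp|\}$. Writing $C\cap[T^\pp] = [T^\pp]\cap\bigcap_k[C_k]^\prec$ for c.e.\ sets $C_k\subseteq T^\pp$ that are open in~$T^\pp$, the preparatory step is the two-step analog of \cref{lem:one_step:forcing_Sigma_1}: for every $(\tau,\rho)\in T^\pp$ and every~$k$, the set $B^\pp\cup C_k$ is $(g,g)$-big above~$(\tau,\rho)$. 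Otherwise $\qq=((\tau,\rho),T^\pp\cap(\tau,\rho)^\preceq,(B^\pp\cup C_k)\cap(\tau,\rho)^\preceq,h^\pp,g)$ would be a condition in~$\PP_2$ extending~$\pp$ and strongly forcing $(x^\GG,y^\GG)\notin[C_k]^\prec$, contradicting the hypothesis.

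Next, I would effectively build an increasing sequence $\seq{S_k}$ of finite $(g,g)$-bushy forest systems in~$T^\pp$, with $S_0=\{(\s^\pp,\mu^\pp)\}$, each $S_{k+1}$ an end-extension of~$S_k$ in the sense of \cref{def:two_step:end_extension}, the leaves of~$S_{k+1}$ lying in $B^\pp\cup C_k$, and those leaves sitting at a common balanced level~$m_k$ of~$T^\pp$ that strictly exceeds the height of~$S_k$. To pass from~$S_k$ to~$S_{k+1}$: apply \cref{lem:two_step:restricted_weak_concatentation} (with $A=B$ the set of leaves of~$S_k$, $S_k$ itself as the witnessing forest system, and $C=B^\pp\cup C_k$, whose openness in~$T^\pp$ and largeness above all extensions $(\tau',\rho)$ with $\tau'\succeq\tau$ in $\dom T^\pp$ are given by the preparatory step) to obtain an end-extension $S'_k\subseteq T^\pp$ of~$S_k$ that is $(g,g)$-bushy and whose leaves lie in $B^\pp\cup C_k$. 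Then pick a balanced level~$m_k$ of~$T^\pp$ above the height of~$S'_k$ and pad each leaf $(\tau,\rho)$ of~$S'_k$ by a $(g,g)$-bushy subsystem of $T^\pp\cap(\tau,\rho)^\preceq$ whose leaves are pairs $(\tau',\rho')$ with $|\tau'|=m_k$ and $\rho'$ a leaf of $T^\pp(\tau')$ (so $|\rho'|=m_k$, by balancedness of~$T^\pp$ at level~$m_k$). This padding exists because $T^\pp$ is $(h^\pp,h^\pp)$-bushy with $h^\pp\ge g$ above our starting lengths.

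Setting $S=\bigcup_k S_k$, the union is a computable, computably bounded, $(g,g)$-bushy tree system with no leaves in $\dom S$ (since $m_k$ strictly exceeds the height of~$S_k$) and with each~$m_k$ a balanced level, so it is a balanced tree system. Every path $(x,y)\in[S]$ lies in $[B^\pp\cup C_k]^\prec$ for every~$k$, so $[S]\setminus[B^\pp]^\prec\subseteq C$; openness of $B^\pp$ in~$T^\pp$ gives $[S]\setminus[B^\pp]^\prec=[S]\setminus[B^\pp\cap S]^\prec$. Hence $\qq=((\s^\pp,\mu^\pp),S,B^\pp\cap S,g,b^\pp)$ is a condition in~$\PP_2$ extending~$\pp$ which strongly forces $(x^\GG,y^\GG)\in C$. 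The main obstacle is the padding step: because the weak concatenation property gives no control over the shape of the extension it produces, the prearranged balanced levels of~$T^\pp$ (together with the gap between $h^\pp$ and~$g$) are essential for propagating balancedness to~$S$, and without them the resulting structure would fail to be a legitimate condition of~$\PP_2$.
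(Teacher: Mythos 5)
Your proposal is correct and follows essentially the same route as the paper: the same choice of $g$ with $h^\pp\gg g\gg b^\pp$, the same preliminary claim that $B^\pp\cup C_k$ is $(g,g)$-big above every pair of $T^\pp$ (proved by the same contrapositive), the same iterated use of the restricted weak concatenation property to build the end-extensions $S'_k$, and the same padding of leaves up to a balanced level of $T^\pp$ to keep the limit system balanced. The only (harmless) imprecision is in how you instantiate \cref{lem:two_step:restricted_weak_concatentation}: the witnessing system $S_k$ witnesses bigness of its leaf set above $(\s^\pp,\mu^\pp)$, not above the leaf set itself, but the end-extension you extract is exactly the one the paper uses.
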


\begin{proof}
	The proof is similar to the proof of \cref{prop:one_step:forcing_Pi_2}. We choose a function~$g\in \Quick$ such that $h^\pp\gg g\gg b^\pp$. By \cref{lem:two_step:can_take_full_subtree} we may assume that $h^\pp \ge g\ge b^\pp$ above $\min \{|\s^\pp|,|\mu^\pp|\}$.

	We fix a sequence of c.e.\ sets~$C_k\subseteq T^\pp$, open in~$T^\pp$, such that $C\cap [T^\pp] = [T^\pp]\cap \bigcap_k [C_k]^\prec$. For all $(\tau,\rho)\in T^\pp$, for all~$k$, the set $B^\pp\cup C_k$ is $(g,g)$-big above~$(\tau,\rho)$; otherwise $((\tau,\rho),T^\pp\cap (\tau,\rho)^\preceq, (B^\pp\cup C_k)\cap (\tau,\rho)^\preceq, h^\pp,g)$ is a condition extending~$\pp$ which forces that $(x^\GG,y^\GG)\notin C$.

	We define a sequence of finite tree systems $S_k\subset T^\pp$ such that: each~$S_k$ is $(g,g)$-bushy; $S_{k+1}$ is a proper end-extension of~$S_k$; the leaves of~$S_{k+1}$ are in $C_k\cup B^\pp$; if $k>0$ then there is some $\ell_k$ such that for every $k\ge 1$, for every leaf~$(\tau,\rho)$ of~$S_k$, $|\tau|= |\rho| = \ell_k$. We begin with $S_0 = \{(\s^\pp,\mu^\pp)\}$. Given~$S_k$, \cref{lem:two_step:restricted_weak_concatentation} says that $C_k\cup B^\pp$ is $(g,g)$-big above the set of leaves of~$S_k$, so we can find a finite $(g,g)$-bushy end-extension $S'_k\subset T^\pp$ of~$S_k$ with leaves in $C_k\cup B^\pp$.

	Now find some $\ell_{k+1}$, greater than $|\tau|$ and $|\rho|$ for any leaf $(\tau,\rho)$ of~$S'_k$, which is a balanced level for~$T^\pp$ (\cref{def:two_step:balanced_tree_systems}). Then the set of $(\tau,\rho)\in T^\pp$ such that $|\tau|=|\rho|=\ell_{k+1}$ is $(g,g)$-big above the set of leaves of~$S'_k$. Hence we can find~$S_{k+1}\subset T^\pp$ to be an end-extension of~$S'_k$ as required.

	It follows that $S = \bigcup_k S_k$ is a computable, $(g,g)$-bushy and balanced tree system above $(\s^\pp,\mu^\pp)$ and that the condition $((\s^\pp,\mu^\pp),S,B^\pp\cap S, g,b^\pp)$ extends~$\pp$ and strongly forces that $(x^\GG,y^\GG)\in C$.
\end{proof}


\subsection{Minimal cover} 
\label{sub:two_step:minimal_cover}

We work toward showing that $\deg_\Tur(x^\GG,y^\GG)$ is a strong minimal cover of $\deg_\Tur(x^\GG)$. We do this in two steps. First we show that it is a minimal cover. This mostly uses the tools of the previous section.

Let~$\Gamma\colon \Baire{2}\to 2^\w$ be a Turing functional. For a condition $\pp\in \PP_2$, a bounding function~$g$ and a string~$\mu$ let $\Split{\Gamma}{g}{\mu}{\pp}$ be the set of $\tau\in \dom T^\pp$ such that $T^\pp(\tau)$ contains two sets~$A_0(\tau)$ and~$A_1(\tau)$, both $g$-big above~$\mu$, which $\Gamma(\tau,-)$-split mod $B^\pp(\tau)$.

\begin{lemma} \label{lem:two_steps:finding_local_splittings}
	Suppose that $\pp\in \PP_2$ strongly forces that $\Gamma(x^\GG,y^\GG)$ is total and forces that~$\Gamma(x^\GG,y^\GG)\nle_\Tur x^\GG$.
	
	Let~$(\s,\mu)\in T^\pp$. Let $g\in\Quick$  such that
	 $h^\pp\gg g$, and $h^\pp\ge 3g$ and $ g\ge b^\pp$ above $\min \{|\s|,|\mu|\}$. Then $\Split{\Gamma}{g}{\mu}{\pp}$ is $g$-big above~$\s$.
\end{lemma}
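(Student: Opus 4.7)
The plan is to prove this by contradiction, adapting the argument of \cref{lem:one_step:finding_splittings} to the tree-system setting. Assume $\Split := \Split{\Gamma}{g}{\mu}{\pp}$ is $g$-small above~$\s$; I will construct an extension~$\qq$ of~$\pp$ that forces $\Gamma(x^\GG,y^\GG)\le_\Tur x^\GG$, contradicting the hypothesis. First observe that $\Split$ is open in $\dom T^\pp$: if $\tau\in \Split$ is witnessed by $A_0,A_1\subseteq T^\pp(\tau)$ and $\tau'\succeq \tau$ in $\dom T^\pp$, then since $T^\pp(\tau')$ end-extends $T^\pp(\tau)$, $B^\pp(\tau')\supseteq B^\pp(\tau)$, and $\Gamma(\tau',-)\succeq\Gamma(\tau,-)$ by monotonicity, the same $A_0,A_1$ witness $\tau'\in\Split$.

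Fiberwise, for each $\tau\in \dom T^\pp\setminus \Split$, the negation of the defining property of $\Split$ means the hypothesis of (the proof of) \cref{lem:one_step:finding_splittings} applies to $T^\pp(\tau)$, $B^\pp(\tau)$, and the functional $\Gamma(\tau,-)$ with bound~$g$ above~$\mu$ (using $h^\pp\ge 3g$). This produces, uniformly in~$\tau$, a computable $z_\tau\in 2^\w$ such that the set $A_{\tau,\perp z_\tau}=\bigcup_k A_{\tau,\perp z_\tau\rest k}\supseteq B^\pp(\tau)$ is c.e.\ and $g$-small above $\mu$, and every $\rho\in T^\pp(\tau)\setminus A_{\tau,\perp z_\tau}$ satisfies $\Gamma(\tau,\rho)\preceq z_\tau$.

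Now I build a computable, $h^\qq$-bushy subtree $U\subseteq \dom T^\pp$ above some extension~$\tau^*$ of~$\s$, entirely inside $N=\dom T^\pp\setminus \Split$, such that $\Split$ is $g$-small above every node of~$U$. By the concatenation property there exists $\tau^*\in \dom T^\pp$ extending $\s$ above which $\Split$ remains $g$-small and $\tau^*\notin\Split$. At any $\tau\in U$ with this property, fewer than $g(|\tau|+1)$ immediate successors $\tau'$ in $\dom T^\pp$ can have $\Split$ $g$-big above them (otherwise combining those one-level branches with $g$-bushy witnesses above each $\tau'$ would show $\Split$ is $g$-big above~$\tau$), so at least $h^\pp(|\tau|+1)-g(|\tau|+1)$ successors are usable. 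Choose intermediate $h^\qq\in\Quick$ with $h^\pp\gg h^\qq\gg g$ via \cref{lem:density_of_gg}; then $h^\qq\le^* h^\pp-g$ (since $h^\pp\gg h^\qq$ forces $h^\pp\ge^* 2h^\qq$, and $h^\qq\ge^* g$), so the recursive construction goes through.

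Set $\qq$ with stem $(\tau^*,\mu)$, tree system $T^\qq$ defined by $\dom T^\qq=U$ and $T^\qq(\tau)=T^\pp(\tau)$ for $\tau\in U$, bad set $B^\qq=(B^\pp\cap T^\qq)\cup \bigcup_{\tau\in U}\{\tau\}\times A_{\tau,\perp z_\tau}$, and bounds $h^\qq$ as above and $b^\qq=g$ (so $h^\qq\gg b^\qq$, and $b^\qq\ge b^\pp$ above $\min\{|\tau^*|,|\mu|\}$ by the lemma's hypothesis). Since $b^\qq=g$, each $g$-small fiber $A_{\tau,\perp z_\tau}$ is $b^\qq$-small above~$\mu$, so $B^\qq(\tau)=A_{\tau,\perp z_\tau}$ is $b^\qq$-small above~$\mu$ for every $\tau\in U$; thus $B^\qq$ is $(b^\qq,b^\qq)$-small above $(\tau^*,\mu)$, and balancedness is inherited from $T^\pp$. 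To see that $\qq$ forces $\Gamma(x^\GG,y^\GG)\le_\Tur x^\GG$, use strong forcing of totality together with compactness of $[T^\qq]$: for each~$n$ there is a computable bound $m_n$ such that every $(\tau,\rho)\in T^\qq\setminus B^\qq$ with $|\tau|,|\rho|\ge m_n$ satisfies $|\Gamma(\tau,\rho)|\ge n$. Then for any $(x,y)\in X^\qq$, taking $\tau=x\rest m_n$ and any $\rho\prec y$ of length $\ge m_n$ with $(\tau,\rho)\in T^\qq\setminus B^\qq$, we have $\Gamma(\tau,\rho)\preceq z_\tau$ and $|\Gamma(\tau,\rho)|\ge n$, hence $\Gamma(x,y)\rest n=z_\tau\rest n$; computing $\tau$ and then $z_\tau\rest n$ from $x$ alone shows $\Gamma(x^\GG,y^\GG)\le_\Tur x^\GG$.

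The main obstacle is the third paragraph's bushy-subtree construction: one must simultaneously keep $\Split$ small above every node (so that $z_\tau$ is defined and uniformly computable along the subtree), maintain enough bushiness that $h^\qq\gg b^\qq$ holds in the new condition, and ensure $B^\qq$ fibers absorb both $B^\pp$ and the new $A_{\tau,\perp z_\tau}$ without exceeding $b^\qq$-smallness---these three constraints force the delicate choice $h^\pp\gg h^\qq\gg g=b^\qq\gg b^\pp$.
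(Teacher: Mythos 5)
Your overall strategy (argue by contradiction and produce an extension forcing $\Gamma(x^\GG,y^\GG)\le_\Tur x^\GG$, reducing fiberwise to the argument of \cref{lem:one_step:finding_splittings}) matches the paper's, but your handling of the first coordinate has a genuine gap. You propose to build a \emph{computable} $h^\qq$-bushy subtree $U\subseteq \dom T^\pp$ whose nodes all avoid $C:=\Split{\Gamma}{g}{\mu}{\pp}$. But $C$ is only c.e.\ (its definition asks for the existence of finite $g$-big subsets of the finite tree $T^\pp(\tau)$ which $\Gamma(\tau,-)$-split mod the c.e.\ set $B^\pp(\tau)$, and each incompatibility or membership clause is a $\Sigma^0_1$ event), so the set of ``usable'' immediate successors at each stage of your recursion is merely co-c.e. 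A tree selected this way is co-c.e., not computable, and hence cannot serve as $\dom T^\qq$ for a condition in $\PP_2$. This is exactly the obstruction the bushy-tree machinery is designed to circumvent --- one cannot prune a c.e.\ bad set out of a computable tree --- and your counting argument for why enough successors survive, while combinatorially sound, does not make the selection effective.

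The paper's proof avoids pruning altogether: it keeps the full system $T^\pp\cap(\s,\mu)^\preceq$ and instead enlarges the bad set, adding every pair $(\tau,\rho)$ with $\tau\in C$ together with the pairs for which $\Gamma(\tau,\rho)$ is incompatible with some $\alpha$ such that $A_\alpha(\tau)$ is $g$-big above $\mu$. The resulting set $D$ is c.e.\ and open in $T^\pp$, and the assumption that $C$ is $g$-small above $\s$ is used exactly to show that $D\cup B^\pp$ is $(g,g)$-small above $(\s,\mu)$: any $(g,g)$-bushy witness has a domain-leaf $\tau\notin C$, above which the fiberwise one-step argument supplies a surviving $\rho$. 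On the surviving paths no initial segment of $x$ lies in $C$, the maximal strings $\Theta^\tau$ are nested with unbounded length, and $\Gamma(x,y)=\Theta(x)\le_\Tur x$; your closing computation of $z_\tau\rest{n}$ from $x$ is essentially this step. Replacing the construction of $U$ by this bad-set absorption turns your argument into the paper's proof; as written, the step producing the condition $\qq$ fails.
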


\begin{proof}
	Suppose that~$(\s,\mu)$ and~$g$ witness the failure of the lemma; we find an extension of~$\pp$ which forces that~$\Gamma(x^\GG,y^\GG)$ is computable from~$x^\GG$.

	Let~$\Theta$ be the (c.e.) set of pairs $(\tau,\alpha)$ such that $\tau\in \dom T^\pp$, $\alpha\in 2^{<\w}$ and $A_\alpha(\tau)$ is $g$-big above~$\mu$, where as before $A_\alpha = B^\pp\cup \{(\tau,\rho)\in T^\pp\,:\, \Gamma(\tau,\rho)\succeq \alpha\}$.

	For brevity let $C = \Split{\Gamma}{g}{\mu}{\pp}$. The set~$C$ is open in~$\dom T^\pp$. If $\tau\in \dom T^\pp\setminus C$ then the strings in $\Theta(\tau)$ are pairwise comparable.

	Let $\tau\in \dom T^\pp\setminus C$. The argument of the proof of \cref{lem:one_step:finding_splittings} shows that if $|\Gamma(\tau,\rho)|\ge m$ for every leaf~$\rho$ of $T^\pp(\tau)$ which is not in~$B^\pp(\tau)$ then $\Theta(\tau)$ contains a string of length~$m$. Also, $B^\pp(\tau)$ is $g$-small above~$\mu$ and so $\Theta(\tau)$ is finite; in this case we let $\Theta^\tau= \bigcup \Theta(\tau)$ be the longest string in $\Theta(\tau)$.

	If $\tau\preceq \tau'$ are in $\dom T^\pp\setminus C$ then $\Theta^\tau\preceq \Theta^{\tau'}$. This follows from the fact that $A_\alpha(\tau)\subseteq A_\alpha(\tau')$ for all~$\alpha$.

	\smallskip

	Let $ D \,= \{(\tau,\rho)\in T^\pp\,:\, \tau\in C \text{ or }  \Gamma(\tau,\rho)\perp \alpha \text{ for some }\alpha\in \Theta(\tau)\}.$
The set~$D$ is c.e.\ and is open in~$T^\pp$. Also, $D\cup B^\pp$ is $(g,g)$-small above $(\s,\mu)$. To see this, suppose that $S\subset T^\pp$ is a finite $(g,g)$-bushy tree system above $(\s,\mu)$ (as above we use \cref{rmk:two_step:minimal_bigness_witness}). Then there is a leaf~$\tau$ of $\dom S$ which is not in~$C$; and then $S(\tau)$ must contain a leaf~$\rho\notin B^\pp(\tau)$ such that $\Gamma(\tau,\rho)$ is compatible with $\Theta^\tau$.

Now suppose that~$(x,y)\in [T^\pp]\setminus [D\cup B^\pp]^\preceq$. No initial segment of~$x$ is in~$C$. A compactness argument shows that $\Theta(x) = \bigcup_{\tau\prec x} \Theta^\tau$ is total, and so $\Gamma(x,y)= \Theta(x)$. Certainly $\Theta(x)\le_\Tur x$. Therefore the condition $((\s,\mu), T^\pp\cap (\s,\mu)^\preceq, (D\cup B^\pp)\cap (\s,\mu)^\preceq, h^\pp,g)$ extends~$\pp$ and (strongly) forces that $\Gamma(x^\GG,y^\GG)\le_\Tur x^\GG$.
\end{proof}

\begin{definition} \label{def:two_step:local_splittings}
	Let~$B\subseteq \Strings{2}$. Two sets~$A_0$ and~$A_1$ of pairs of strings \emph{locally $\Gamma$-split mod~$B$} if for all~$\tau$, $A_0(\tau)$ and $A_1(\tau)$ form a $\Gamma(\tau,-)$-splitting mod $B(\tau)$. That is, if $(\tau,\rho_0)\in A_0\setminus B$ and $(\tau,\rho_1)\in A_1\setminus B$ then $\Gamma(\tau,\rho_0)\perp \Gamma(\tau,\rho_1)$.
\end{definition}

We introduce the notion of uniform largeness.

\begin{definition} \label{def:two_step:uniform_bigness}
	Let~$A$ be finite and prefix-free, and let $\mathcal B$ be a collection of sets of pairs of strings. We say that the sets in~$\mathcal B$ are \emph{uniformly $(g,h)$-big above~$A$} if the set of~$\tau$ such that for all ${B\in \mathcal B}$, $B(\tau)$ is $h$-big above~$A(\tau^{-\dom A})$, is~$g$-big above~$\dom A$.
\end{definition}

The conclusion of \Cref{lem:two_steps:finding_local_splittings} is that there are~$A_0$ and~$A_1$, subsets of~$T^\pp$ uniformly $(g,g)$-big above~$(\s,\mu)$, which locally $\Gamma$-split mod~$B^\pp$.

\begin{lemma} \label{lem:two_steps:finding_many_local_splittings}
	Suppose that $\pp\in \PP_2$ strongly forces that $\Gamma(x^\GG,y^\GG)$ is total and forces that~$\Gamma(x^\GG,y^\GG)\nle_\Tur x^\GG$.
	
	Let~$\s\in\dom T^\pp$, and let $\mu_1,\mu_2,\dots, \mu_k$ be elements of $T^\pp(\s)$. Let $g\in\Quick$  such that $h^\pp\gg g$, and $h^\pp\ge 3^kg$ and $g\ge b^\pp$ above~$\min \{|\s|,|\mu_1|, |\mu_2|, \dots, |\mu_k|\}$. Then there is a set~$A\subset T^\pp$, $(g,g)$-big above $\{(\s,\mu_j)\,:\, j\le k\}$, such that the sets $A\cap (\s,\mu_j)^\preceq$ pairwise locally $\Gamma$-split mod~$B^\pp$.
\end{lemma}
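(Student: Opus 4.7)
I would prove this by induction on $k$, mirroring the structure of \cref{lem:one_step:finding_many_splittings} but carrying out all splittings in the second coordinate uniformly in $\tau\in\dom T^\pp$. The base case $k=1$ is immediate: take $A$ to be the leaves, at some balanced level, of a $(g,g)$-bushy forest subsystem of $T^\pp$ above $(\s,\mu_1)$, which exists because $h^\pp\ge g$ above $\min\{|\s|,|\mu_1|\}$.

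For the step from $k$ to $k+1$, apply the inductive hypothesis with bounding function $3g$ to $\mu_1,\dots,\mu_k$ (valid since $h^\pp\ge 3^{k+1}g = 3^k\cdot 3g$), obtaining $A^{(1)}\subset T^\pp$ that is $(3g,3g)$-big above $\{(\s,\mu_j):j\le k\}$, with slices $A^{(1)}_j=A^{(1)}\cap(\s,\mu_j)^\preceq$ pairwise locally $\Gamma$-splitting mod $B^\pp$. Let $C^{(1)}\subseteq\dom T^\pp$ be the (open) upward closure of the set of $\tau$'s for which $A^{(1)}_j(\tau)$ is $3g$-big above $\mu_j$ for every $j\le k$; this $C^{(1)}$ is $3g$-big above~$\s$. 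Applying \cref{lem:two_steps:finding_local_splittings} at each $(\s',\mu_{k+1})\in T^\pp$ with $\s'\succeq\s$ (using bound $3g$, so $h^\pp\ge 9g$), the set $\Split{\Gamma}{3g}{\mu_{k+1}}{\pp}$ is open in $\dom T^\pp$ and $3g$-big above every such $\s'$ — openness follows from monotonicity of $\Gamma$ and the openness of $B^\pp$ in $T^\pp$. The one-step concatenation property, applied to a $3g$-bushy witness for $C^{(1)}$ above~$\s$ and extended by $3g$-bushy witnesses of $\Split{\Gamma}{3g}{\mu_{k+1}}{\pp}$ above its leaves, then shows that $C^{(1)}\cap\Split{\Gamma}{3g}{\mu_{k+1}}{\pp}$ is $3g$-big above~$\s$.

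For each such $\tau$, write $\Gamma_\tau:=\Gamma(\tau,-)$; we have slices $A^{(1)}_j(\tau)\subset T^\pp(\tau)$ for $j\le k$ (pairwise $\Gamma_\tau$-splitting mod $B^\pp(\tau)$) together with a $3g$-splitting pair $(E_0(\tau),E_1(\tau))$ above $\mu_{k+1}$. At each such $\tau$ we construct $A_{k+1}(\tau)\subset T^\pp(\tau)$, $g$-big above $\mu_{k+1}$ and $\Gamma_\tau$-splitting with each $A^{(1)}_j(\tau)$ mod $B^\pp(\tau)$, by iterating the combinatorial \cref{lemma:one_step:combinatorial} in the second coordinate at $\tau$ with the functional $\Gamma_\tau$, exactly as in the proof of \cref{lem:one_step:finding_many_splittings}. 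The iteration needs, for each $\rho\in A^{(1)}_j(\tau)$, a further $3g$-splitting pair $E_{\rho,0}(\tau),E_{\rho,1}(\tau)$ above $\rho$ (obtained by passing to a suitable extension $\tau'\succeq\tau$ in $\dom T^\pp$ via \cref{lem:two_steps:finding_local_splittings} applied at $(\tau,\rho)$, which preserves $A^{(1)}_j(\tau)\subseteq T^\pp(\tau')$ by end-extension) and a long-computation set $F(\tau)\subset T^\pp(\tau)$, $3^k g$-big above $\mu_{k+1}$, of strings $\rho$ with $|\Gamma_\tau(\rho)|$ exceeding $|\Gamma_\tau(\nu)|$ for every relevant $\nu$ outside $B^\pp(\tau)$. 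The set $F(\tau)$ is obtained uniformly in $\tau$ via the two-step compactness analogue of \cref{rmk:one_step:bushiness_of_convergence}: since $\pp$ strongly forces totality of $\Gamma(x^\GG,y^\GG)$, for every threshold $m$ the c.e.\ open set $\{(\tau',\rho'):|\Gamma(\tau',\rho')|>m\}\cup B^\pp$ is $(h^\pp,h^\pp)$-big above $(\s,\mu_{k+1})$, yielding by the big subset property a $g$-big set of $\tau$'s for which the second-coordinate slice is sufficiently large.

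Collecting the slices $A^{(1)}_j(\tau)$ for $j\le k$ and $A_{k+1}(\tau)$ over the resulting good set of $\tau$'s produces the desired $A$, $(g,g)$-big above $\{(\s,\mu_j):j\le k+1\}$ with $A\cap(\s,\mu_j)^\preceq$ pairwise locally $\Gamma$-splitting mod $B^\pp$. I expect the main technical obstacle to be the first-coordinate bookkeeping: the cascade of big-subset, concatenation, and local-splitting applications needed to secure the $E_{\rho,i}(\tau)$ and $F(\tau)$ structures uniformly in $\tau$ must transition cleanly from $(3g,3g)$-bigness down to $(g,g)$-bigness while preserving, at each $\tau$ in the surviving good set, all of the local splittings inherited from the inductive hypothesis together with the second-coordinate data constructed above $\mu_{k+1}$.
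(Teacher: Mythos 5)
Your overall strategy is the same as the paper's: induct on $k$, invoke the inductive hypothesis at bound $3g$, extend bushily in the first coordinate so that at each surviving $\tau$ you hold all the local data (the inherited slices, a splitting pair above each of their elements, and a long-computation set above the new string $\mu_{k+1}$), and then run the one-step iteration of \cref{lemma:one_step:combinatorial} inside each fibre $T^\pp(\tau)$. The first-coordinate bookkeeping you flag as the main obstacle is handled in the paper exactly as you sketch: one intersects the sets $\Split{\Gamma}{3g}{\nu}{\pp}$ over the finitely many $\nu$ in each slice using \cref{lem:two_step:one_step_induction_for_weak_concatenation}, and relabels $A$ along the first coordinate so that the slices persist on the extended domain.

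However, your final assembly is wrong as stated. You claim to produce $A_{k+1}(\tau)$ which $\Gamma(\tau,-)$-splits with each of the \emph{original} slices $A^{(1)}_j(\tau)$, and you then ``collect the slices $A^{(1)}_j(\tau)$ together with $A_{k+1}(\tau)$''. \cref{lemma:one_step:combinatorial} does not deliver this: at each application it refines \emph{both} sides, returning some $E'_j(\tau)\subseteq E_j(\tau)=\bigcup\,E_{\rho,i}(\tau)\,\Cyl{\rho\in A^{(1)}_j(\tau),\ i<2}$ (a $g$-big set of proper extensions of \emph{some} elements of $A^{(1)}_j(\tau)$) and $F_{j-1}(\tau)\subseteq F_j(\tau)$, and only these refined sets are guaranteed to split. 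In general the final $F'(\tau)$ will \emph{not} split against all of $A^{(1)}_j(\tau)$ or even all of $E_j(\tau)$ --- that is precisely the obstruction the combinatorial lemma is designed to circumvent. The correct conclusion, as in \cref{lem:one_step:finding_many_splittings}, replaces each $A^{(1)}_j(\tau)$ by the refined $E'_j(\tau)$; pairwise splitting among the first $k$ refined slices survives because their elements extend elements of the pairwise-splitting sets $A^{(1)}_j(\tau)$ and $\Gamma$ is monotone. With that correction your plan coincides with the paper's proof.
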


\begin{proof}
	The idea is to extend bushily on the first coordinate so that we can emulate the proof of \cref{lem:one_step:finding_many_splittings} on the second coordinate. Formally this is done by induction on~$k$. Suppose this has been shown for~$k$; let $\mu_1,\dots, \mu_k$ and $\mu^*$ be elements of $T^\pp(\s)$; suppose that $h\gg g$, and $h^\pp\ge 3^{k+1}g$ and $g\ge b^\pp$ above~$\min  \{|\s|,|\mu^*|,|\mu_j|\,:\, j\le k\}$. Then $h\gg 3g$; so by induction we can find a set~$A$, $(3g,3g)$-big above $\{(\s,\mu_j)\,:\, j\le k\}$ such that the sets $A\cap (\s,\mu_j)^\preceq$ pairwise locally~$\Gamma$-split mod~$B^\pp$. In fact we only need $(g,3g)$-big.

	 Let $(\zeta,\nu)\in A $.  By \cref{lem:two_steps:finding_local_splittings}, for all $\zeta'\succeq \zeta$ on~$\dom T^\pp$, $\Split{\Gamma}{3g}{\nu}{\pp}$ is $3g$-big above $\zeta'$ (again we only need~$g$-big). By repeatedly extending we see that for all~$\zeta\in \dom A$, $Q_\zeta = \zeta^\preceq \cap \bigcap_{\nu\in A(\zeta)} \Split{\Gamma}{3g}{\nu}{\pp}$ is~$3g$-big above~$\zeta$. We extend the set~$A$ by letting $A(\tau) = A(\zeta)$ for all $\tau\in Q_\zeta$. Let~$Q = \bigcup_{\zeta\in \dom A} Q_\zeta$; it is $3g$-big above~$\s$. For every~$\tau\in Q$ and all $\nu\in A(\tau)$ we can find sets $E_{\nu,0}(\tau), E_{\nu,1}(\tau)\subset T^\pp(\tau)$, each $3g$-big above~$\nu$, which $\Gamma(\tau,-)$-split mod $B^\pp(\tau)$.

	 Further, by extending in $\dom T^\pp$, we may assume that for all $\tau\in Q$ we can find $F(\tau)\subset T^\pp(\tau)$ which is $3^{k}g$-big above~$\mu^*$ and such that $|\Gamma(\tau,\rho)|> |\Gamma(\tau, \eta)|$ for all $\rho\in F(\tau)\setminus B^\pp(\tau)$ and all $\eta\in E_{\nu,i}(\tau)$ (for both $i<2$ and all $\nu\in A(\tau)$).

	 Overall we see that for all~$\tau\in Q$ we can run the argument proving \cref{lem:one_step:finding_many_splittings} inside $T^\pp(\tau)$ and using \cref{lemma:one_step:combinatorial} find $F'(\tau)\subseteq F(\tau)$, $g$-big above~$\mu^*$ and for $j\le k$, $E'_j(\tau)\subset T^\pp(\tau)$, $g$-big above~$\mu_j$, with every string in~$E'_j(\tau)$ extending some string in $A_j(\tau)$, such that $F'(\tau)$ and $E'_j(\tau)$ form a $\Gamma(\tau,-)$-splitting mod $B^\pp(\tau)$; the fact that strings in~$E'_j(\tau)$ extend strings in $A(\tau)\cap \mu_j^\preceq$ shows that the sets $E'_j(\tau)$ also pairwise $\Gamma(\tau,-)$-split mod $B^\pp(\tau)$.
\end{proof}

\begin{proposition} \label{lem:two_step:forcing_a_minimal_cover}
Every condition in~$\PP_2$ forces that if $\Gamma(x^\GG,y^\GG)$ is total and $\Gamma(x^\GG,y^\GG)\nle_\Tur x^\GG$ then $\Gamma(x^\GG,y^\GG)\oplus x^\GG\ge_\Tur y^\GG$.
\end{proposition}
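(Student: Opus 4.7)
The plan is to lift \cref{prop:one_step:splitting_tree} to tree systems. I will construct an extension $\qq$ of $\pp$ whose tree system $S = T^\qq$ has the property that for every $x \in [\dom S]$, the map $y \mapsto \Gamma(x, y)$ is $1$-$1$ on $[S(x)] \setminus [B^\pp(x)]^\prec$. A uniform-in-$x$ analog of the second bullet of \cref{lem:1-1_or_constant_on_compact_space} --- the sections $[S(x)] \setminus [B^\pp(x)]^\prec$ will be uniformly $\Pi^0_1$ and uniformly computably bounded in $x$ --- then yields $y \le_\Tur x \oplus \Gamma(x, y)$ on $X^\qq$, so that $\qq$ strongly forces the conclusion. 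By \cref{prop:two_step:forcing_Pi_2} I may pre-extend $\pp$ so as to strongly force totality of $\Gamma(x^\GG, y^\GG)$; non-computability from $x^\GG$ is a standing assumption.

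First I choose $g \in \Quick$ with $h^\pp \gg g \gg b^\pp$ via \cref{lem:density_of_gg}, set $\bar g(n) = \prod_{m < n} g(m)$, and note that by \cref{lem:bounded_products} and the definition of $\gg$, $h^\pp$ dominates $3^{\bar g} g$. Using \cref{lem:two_step:can_take_full_subtree} I extend the base pair of $\pp$ to some $(\s, \mu) \in T^\pp$ above which $B^\pp$ remains $(b^\pp, b^\pp)$-small and above which $h^\pp \ge 3^{\bar g} g$ and $g \ge b^\pp$ both hold. I then effectively build balanced levels $\ell_0 < \ell_1 < \cdots$ (with $\ell_0 = \min\{|\s|, |\mu|\}$) and a nested sequence of finite subsystems $S_0 \subseteq S_1 \subseteq \cdots$ of $T^\pp$ with $S_0 = \{(\s, \mu)\}$, each $S_k$ exactly $(g, g)$-bushy with all leaves of both coordinates at length $\ell_k$, and $S_{k+1}$ a proper end-extension of $S_k$ satisfying the following splitting property: for every leaf $\tau$ of $\dom S_k$, every pair of distinct leaves $\mu_j \ne \mu_{j'}$ of $S_k(\tau)$, and every leaf $\tau'$ of $\dom S_{k+1}$ with $\tau' \succeq \tau$, the $\mu_j$- and $\mu_{j'}$-extensions in $S_{k+1}(\tau')$ form a $\Gamma(\tau', -)$-splitting mod $B^\pp(\tau')$.

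The step $S_k \to S_{k+1}$ is where the combinatorics lives. For each leaf $\tau$ of $\dom S_k$ I will apply \cref{lem:two_steps:finding_many_local_splittings} with $\s = \tau$, the enumeration of leaves of $S_k(\tau)$ as $\mu_1, \ldots, \mu_K$ (so $K \le \bar g(\ell_k)$), and bounding function $g$; the required hypothesis $h^\pp \ge 3^K g$ above $\ell_k$ follows from $h^\pp \ge 3^{\bar g} g$. This delivers $A^\tau \subset T^\pp$ that is $(g, g)$-big above $\{(\tau, \mu_j) : j \le K\}$, with $\mu_j$-sections pairwise locally $\Gamma$-splitting mod $B^\pp$. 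I amalgamate the $A^\tau$ across the (prefix-free) leaves of $\dom S_k$ into a $(g, g)$-bushy end-extension of $S_k$, and then append arbitrary $T^\pp$-children to reach a common balanced level $\ell_{k+1}$ of $T^\pp$, producing $S_{k+1}$. The union $S = \bigcup_k S_k$ is a computable, computably bounded, balanced, $(g, g)$-bushy subsystem of $T^\pp$, so $\qq = ((\s, \mu), S, B^\pp \cap S, g, b^\pp)$ is a condition in $\PP_2$ extending $\pp$.

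The verification is routine: for distinct $y_0, y_1 \in [S(x)] \setminus [B^\pp(x)]^\prec$, taking $k$ minimal with $y_0 \rest \ell_k \ne y_1 \rest \ell_k$ makes these two strings distinct leaves of $S_k(x \rest \ell_k)$, and the splitting property at step $k$ delivers $\Gamma(x \rest \ell_{k+1}, y_0 \rest \ell_{k+1}) \perp \Gamma(x \rest \ell_{k+1}, y_1 \rest \ell_{k+1})$. The main obstacle is managing the double bushiness: at step $k$ one must simultaneously split all pairs of distinct leaves within every section $S_k(\tau)$, and the number of such leaves grows as $\bar g(\ell_k)$. The resulting factor of $3^{\bar g(\ell_k)}$ demanded from $h^\pp$ is exactly what the hypothesis $h^\pp \gg g$ (with \cref{lem:bounded_products}) was designed to absorb, in parallel with (but one level up from) the one-step construction of \cref{prop:one_step:splitting_tree}.
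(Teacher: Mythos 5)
Your proposal is correct and follows essentially the same route as the paper: strongly force totality, pre-extend so that $h^\pp\ge 3^{\bar g}g$ holds, and build an exactly $(g,g)$-bushy balanced subsystem $S=\bigcup_k S_k$ by applying \cref{lem:two_steps:finding_many_local_splittings} to the leaves of each $S_k(\tau)$, amalgamating over the leaves of $\dom S_k$, and concluding via the relativised form of \cref{lem:1-1_or_constant_on_compact_space}. This matches the paper's argument in both structure and in the handling of the $3^{\bar g(\ell_k)}$ bound.
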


\begin{proof}
	As in the proof of \cref{prop:one_step:splitting_tree} we take some $\pp\in \PP_2$ which strongly forces that $\Gamma(x^\GG,y^\GG)$ is total and forces that $\Gamma(x^\GG,y^\GG)\nle_\Tur x^\GG$, and find an extension of~$\pp$ which forces that $\Gamma(x^\GG,y^\GG)\oplus x^\GG\ge_\Tur y^\GG$.

	Find some $g\in \Quick$ such that $h^\pp\gg g\gg b^\pp$. Let $\bar g(m) = \prod_{k<m} g(k)$. By \cref{lem:two_step:can_take_full_subtree} we can extend $(\s^\pp,\mu^\pp)$ so that $h^\pp \ge 3^{\bar g}g$ and $g\ge b^\pp$ above $\min \{|\s^\pp|,|\mu^\pp|\}$.

	We define an increasing sequence~$\seq{\ell_k}$ and a sequence $\seq{S_k}$ of finite subsystems of~$T^\pp$ such that: $\dom S_k$ is $g$-bushy and for all $\tau\in \dom S_k$, $S_k(\tau)$ is exactly $g$-bushy; $S_{k+1}$ is a proper end-extension of~$S_k$; for every leaf~$(\tau,\rho)$ of~$S_k$, $|\tau| = |\rho| = \ell_k$.

	To begin we find some $\ell_0> |\s^\pp|, |\mu^\pp|$, a balanced level for~$T^\pp$. We let $\dom S_0 = \dom T^\pp\rest {\w^{\le \ell_0}}$ and for each leaf~$\tau$ of~$\dom S_0$ we let $S_0(\tau)$ be an exactly $g$-bushy subtree of~$T^\pp(\tau)$ whose leaves all have lenght~$\ell_0$. As usual if $\tau\in \dom S_0$ is not a leaf then we let $S_0(\tau) = \{\mu^\pp\}$.

	Given~$S_k$ we note that for every leaf~$\s$ of $\dom S_k$, the number of leaves of~$S_k(\s)$ is precisely $\prod_{m\in [|\mu^\pp|,\ell_k)} g(m)$ which is bounded by $\bar g(\ell_k)$; and $h^\pp\ge 3^{\bar g(\ell_k)} g$ above~$\ell_k$. By \cref{lem:two_steps:finding_many_local_splittings} we can find for each leaf~$\s$ of~$\dom S_k$ a finite $(g,g)$-bushy forest system $R_\s\subset T^\pp$ above~$\left\{ (\s,\nu) \,:\,  \nu \text{ a leaf of }S_k(\s) \right\}$, such that for every leaf~$\tau$ of~$\dom R_\s$, the sets $R_\s(\tau)\cap \nu^\preceq$ for the leaves~$\nu$ of $S_k(\s)$ pairwise $\Gamma(\tau,-)$-split mod~$B^\pp$. By shrinking we may assume that for all leaves~$\tau\in \dom R_\s$, $R_\s(\tau)$ is exactly $g$-bushy. Let~$R = \bigcup_\s R_\s$ and let $S'_k = S_k\conc R$.

	Now as in the proof of \cref{prop:two_step:forcing_Pi_2} we let~$\ell_{k+1}$ be a balanced level of~$T^\pp$, greater than the length of any string appearing in ~$S_{k}'$, and let~$S_{k+1}\subset T^\pp$ be an end-extension of~$S'_k$ with the desired properties.

	Let $S = \bigcup_k S_k$. Then for all $x\in [\dom S]$, $\Gamma(x,-)$ is 1-1 on $[S(x)]\setminus [B^\pp(x)]^\prec$. The tuple $((\s^\pp,\mu^\pp),S, B^\pp\cap S, g,b^\pp)$ is a condition as required (relativise \cref{lem:1-1_or_constant_on_compact_space} to each~$x$).
\end{proof}


\subsection{Strong minimal cover} 
\label{sub:strong_minimal_cover}

The following is the usual definition of splitting, restated for pairs of strings.

\begin{definition} \label{def:two_step:global_splittings}
	Let~$B\subseteq \Strings{2}$. Two sets~$A_0$ and~$A_1$ \emph{$\Gamma$-split mod~$B$} if for all $(\tau,\rho)\in A_0\setminus B$ and $(\tau',\rho')\in A_1\setminus B$, $\Gamma(\tau,\rho)\perp \Gamma(\tau',\rho')$.
\end{definition}

\begin{lemma} \label{lemma:two_steps:combinatorial}
	Let~$g_1,g_2,h_1,h_2\in \Quick$; let~$B$ be an open set of pairs of strings. Suppose that:
	\begin{itemize}
		\item $(\s,\mu)$ and $(\s^*,\mu^*)$ are pairs of strings;
		\item $A$ is $(3g_1,3g_2)$-big above~$(\s,\mu)$;
		\item $E_0$ and~$E_1$ are uniformly $(3g_1,3g_2)$-big above~$A$; and for all $(\tau,\rho)\in A$, $E_0\cap (\tau,\rho)^\preceq$ and $E_1\cap (\tau,\rho)^\preceq$ locally~$\Gamma$-split mod~$B$; and
		\item $F$ is $(3h_1,3h_2)$-big above $(\s^*,\mu^*)$, and $|\Gamma(\lambda,\nu)|> |\Gamma(\zeta,\eta)|$ for all $(\lambda,\nu)\in F\setminus B$ and all $(\zeta,\eta)\in E\setminus B$, where $E = E_0\cup E_1$.
	\end{itemize}
	Then there are $E'\subseteq E$, $(g_1,g_2)$-big above~$(\s,\mu)$, and $F'\subseteq F$, $(h_1,h_2)$-big above~$(\s^*,\mu^*)$, which $\Gamma$-split mod~$B$.
\end{lemma}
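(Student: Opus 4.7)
The plan is to follow the template of the one-dimensional \cref{lemma:one_step:combinatorial}, lifting its largeness arguments to the second coordinate. For $\alpha\in 2^{<\w}$, define $F_{\succeq\alpha}$, $F_{\perp\alpha}$, $E_{\succeq\alpha}$, $E_{\preceq\alpha}$, $E_{\perp\alpha}$ exactly as in the one-step proof, each including $F\cap B$ or $E\cap B$ as a universal fallback. If $F\cap B$ is already $(h_1,h_2)$-big above $(\s^*,\mu^*)$, take $F'=F\cap B$ and $E'=E$; otherwise $F_{\succeq\alpha}$ reduces to $F\cap B$ for all sufficiently long $\alpha$ and is hence $(h_1,h_2)$-small, so I fix $\alpha$ maximal with $F_{\succeq\alpha}$ being $(h_1,h_2)$-big above $(\s^*,\mu^*)$. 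The remainder of the proof splits on whether $E_{\preceq\alpha}$ is $(g_1,g_2)$-big above $(\s,\mu)$.

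Suppose first that $E_{\preceq\alpha}$ is $(g_1,g_2)$-big above $(\s,\mu)$; I show the same of $E_{\perp\alpha}$ and take $E'=E_{\perp\alpha}$, $F'=F_{\succeq\alpha}$. Let $\Pi^U$ be the set of first-coordinate strings witnessing the uniform bigness of $\{E_0,E_1\}$ above $A$. By the conventions of \cref{rmk:two_step:we_assume_that_big_sets_are_leaves_of_tree_systems}, I may replace $A$ by $\tilde A=\{(\tau,\rho)\,:\,\tau\in\Pi^U,\,\rho\in A(\tau^{-\dom A})\}$; after this renaming, $\dom A=\Pi^U=\dom E_0=\dom E_1$, each $A(\tau)$ is $3g_2$-big above $\mu$, each $E_i(\tau)$ is $3g_2$-big above $A(\tau)$, and the local-splitting hypothesis persists because it was posited for every first-coordinate extension of each element of the original $A$. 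Since $E_{\preceq\alpha}\subseteq E\subseteq A^\preceq$, the set $\Pi=\{\tau:E_{\preceq\alpha}(\tau)\text{ is }g_2\text{-big above }\mu\}$ is contained in $\dom A$, and by \cref{lem:two_step:bigness} it is $g_1$-big above $\s$. For each $\tau\in\Pi$, the argument proving~(1) in the proof of \cref{lemma:one_step:combinatorial} runs verbatim in the second coordinate, with $\mu$, $A(\tau)$, $E_0(\tau)$, $E_1(\tau)$, $B(\tau)$ and $\Gamma(\tau,-)$ playing the roles of $\tau$, $A$, $E_{\rho,0}$, $E_{\rho,1}$, $B$ and $\Gamma$, and yields that $E_{\perp\alpha}(\tau)$ is $g_2$-big above $\mu$. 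Another application of \cref{lem:two_step:bigness} promotes this pointwise bigness to $E_{\perp\alpha}$ being $(g_1,g_2)$-big above $(\s,\mu)$.

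Suppose instead that $E_{\preceq\alpha}$ is $(g_1,g_2)$-small above $(\s,\mu)$. Composing the forest system witnessing $A$ with the uniform witness for $E_0$ (via \cref{lem:two_step:concatenating_forest_systems_and_bigness}) shows $E$ is $(3g_1,3g_2)$-big above $(\s,\mu)$; iterating \cref{lem:TwoSteps:BigSubset} on $E=E_{\succeq\alpha}\cup E_{\preceq\alpha}\cup E_{\perp\alpha}$, either $E_{\perp\alpha}$ is $(g_1,g_2)$-big above $(\s,\mu)$ — then take $E'=E_{\perp\alpha}$, $F'=F_{\succeq\alpha}$ — or $E_{\succeq\alpha}$ is. In the latter subcase, $E\cap B\subseteq E_{\preceq\alpha}$ is $(g_1,g_2)$-small, so $E_{\succeq\alpha}\setminus B\neq\emptyset$, and the length-dominance hypothesis yields $|\Gamma(\lambda,\nu)|>|\alpha|$ for every $(\lambda,\nu)\in F\setminus B$, whence $F=F_{\succneq\alpha}\cup F_{\perp\alpha}$. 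Since $\Gamma$ maps into $2^\w$, $F_{\succneq\alpha}\subseteq F_{\succeq\alpha\conc 0}\cup F_{\succeq\alpha\conc 1}$, which by maximality of $\alpha$ and \cref{lem:TwoSteps:BigSubset} is $(2h_1,2h_2)$-small, forcing $F_{\perp\alpha}$ to be $(h_1,h_2)$-big above $(\s^*,\mu^*)$, and we take $E'=E_{\succeq\alpha}$, $F'=F_{\perp\alpha}$. In every subcase $(E',F')$ $\Gamma$-splits mod $B$ by construction. The main obstacle is the first (large) case: the one-step proof used the strong concatenation property to transfer $E_{\perp\alpha}$-bigness over an $A'\subseteq A$ back to $(\s,\mu)$, and that property fails in two dimensions; my plan avoids the issue by normalising the hypotheses (replacing $A$ by $\tilde A$) so that the one-dimensional argument runs at each individual $\tau\in\Pi$, after which \cref{lem:two_step:bigness} assembles the pointwise bigness into 2D-bigness above $(\s,\mu)$.
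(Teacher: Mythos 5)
Your proof is correct and follows essentially the same route as the paper's: the same maximal-$\alpha$ decomposition, the same case split on $E_{\preceq\alpha}$, and the same resolution of the concatenation-failure obstacle by running the one-dimensional argument fibrewise in the second coordinate and reassembling via the characterisation of $(g_1,g_2)$-bigness. The only (cosmetic) difference is that you normalise $A$ to $\tilde A$ and reassemble through the projection set $\Pi$ and \cref{lem:two_step:bigness}, whereas the paper works directly with the leaves $\zeta$ of the domain of a forest system witnessing that $E_{\preceq\alpha}$ is $(g_1,g_2)$-big and uses $A(\zeta^{-\dom A})\cap R(\zeta)$ as the fibre antichain.
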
		

\begin{proof}
	The proof is very similar to that of \cref{lemma:one_step:combinatorial}. As above, for a string $\alpha\in 2^{<\w}$ let
	 $F_{\succeq \alpha} = (F\cap B) \cup \left\{ (\tau,\rho)\in F  \,:\, \Gamma(\tau,\rho)\succeq \alpha \right\} ,$
	and similarly define $F_{\perp\alpha}$, $E_{\succeq\alpha}$, $E_{\preceq\alpha}$ and so on.
		If $F\cap B$ is $(h_1,h_2)$-big above~$(\s^*,\mu^*)$ then we can let~$F'=F\cap B$ and $E' = E$. Similarly if $E\cap B$ is~$(g_1,g_2)$-big above~$(\s,\mu)$.
	
	 Suppose otherwise. In that case, for sufficiently long~$\alpha$, $F_{\succeq \alpha}$ is $(h_1,h_2)$-small above~$(\s^*,\mu^*)$. Let~$\alpha$ be a string, maximal with respect to $F_{\succeq \alpha}$ being~$(h_1,h_2)$-big above~$(\s^*,\mu^*)$. As above we show that either
	\begin{enumerate}
		\item $E_{\perp\alpha}$ is $(g_1,g_2)$-big above~$(\s,\mu)$, or
		\item $E_{\succeq \alpha}$ is $(g_1,g_2)$-big above~$(\s,\mu)$ and $F_{\perp\alpha}$ is $(h_1,h_2)$-big above~$(\s^*,\mu^*)$.
	\end{enumerate}
	In both cases we can find~$E'$ and~$F'$ as required.
	
	Again we examine two cases, depending on $E_{\preceq \alpha}$.
	
	\smallskip
	
	First suppose that~$E_{\preceq \alpha}$ is $(g_1,g_2)$-big above~$(\s,\mu)$. Let~$R$ witness this. Fix $\zeta$, a leaf of $\dom R$. The argument of the proof of \cref{lemma:one_step:combinatorial} is now carried out within $R(\zeta)$. Let $\tau = \zeta^{-\dom A}$. Every $\nu\in E(\zeta)$ extends some unique $\rho\in A(\tau)$. The tree $R(\zeta)$ restricted to initial segments of strings in~$A(\tau)$ shows that $A(\tau)\cap R(\zeta)$ is $g_2$-big above~$\mu$; for each $\rho\in A(\tau)\cap R(\zeta)$, $E_{\preceq \alpha}(\zeta)$ is $g_2$-big above~$\rho$. The previous argument shows that for each such~$\rho$,  $E_{\perp \alpha}(\zeta)$ is $g_2$-big above~$\rho$. The concatenation property shows that $E_{\perp \alpha}(\zeta)$ is $g_2$-big above~$\mu$. And then $\dom R$ shows that $E_{\perp\alpha}$ is $(g_1,g_2)$-big above~$(\s,\mu)$.
	
\smallskip
	
	Next suppose that~$E_{\preceq\alpha}$ is $(g_1,g_2)$-small above~$(\s,\mu)$; the argument is now identical to the comparable one in \cref{lemma:one_step:combinatorial}, using \cref{lem:TwoSteps:BigSubset}. It shows that~(2) holds. 	
\end{proof}

\begin{lemma} \label{lem:two_steps:getting_many_global_splittings}
	Suppose that $\pp\in \PP_2$ strongly forces that $\Gamma(x^\GG,y^\GG)$ is total and forces that~$\Gamma(x^\GG,y^\GG)\nle_\Tur x^\GG$.

	Let $C\subset T^\pp$ be prefix-free and finite; let $g\in\Quick$ such that
	 $h^\pp\gg g$, and $h^\pp\ge 3^{|C|^2}g$ and $ g\ge b^\pp$ above $\min \{|\s|,|\mu|\,:\, (\s,\mu)\in C\}$.

Then there is a set $A\subset T^\pp$, $(g,g)$-big above~$C$, such that the sets $A\cap (\s,\mu^\pp)^\preceq$ (for $\s\in \dom C$) pairwise $\Gamma$-split mod~$B^\pp$.
\end{lemma}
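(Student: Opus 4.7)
The plan is to mimic the inductive argument of \cref{lem:one_step:finding_many_splittings}, substituting the two-step combinatorial \cref{lemma:two_steps:combinatorial} for its one-step ancestor. I proceed by induction on $|C|$; when $|C|=1$ there is nothing to split. For the inductive step, write $|C|=k+1$, fix a distinguished pair $(\s^*,\mu^*)\in C$, and enumerate $C_0 := C\setminus\{(\s^*,\mu^*)\} = \{(\s_1,\mu_1),\dots,(\s_k,\mu_k)\}$.

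First, apply the inductive hypothesis to $C_0$ with the bounding function $g$ replaced by $3g$. This requires $h^\pp\ge 3^{k^2}\cdot 3g$ above the relevant minimum, which is available since $h^\pp\ge 3^{(k+1)^2}g$ and $(k+1)^2 \ge k^2+1$. I obtain $A_0\subseteq T^\pp$ which is $(3g,3g)$-big above $C_0$ and whose restrictions $A_j := A_0\cap (\s_j,\mu_j)^\preceq$ pairwise $\Gamma$-split mod $B^\pp$. Next, for each $j\le k$ I invoke \cref{lem:two_steps:finding_local_splittings} at every leaf $(\zeta,\nu)$ of $A_j$, intersecting the resulting open sets $\Split{\Gamma}{3g}{\nu}{\pp}$ across $\nu\in A_j(\zeta)$ via the version of \cref{lem:two_step:one_step_induction_for_weak_concatenation} localised to $\dom T^\pp$. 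This yields, for each leaf $\zeta$ of $\dom A_j$, a set of first coordinates $\tau\succeq\zeta$ that is $3g$-big above $\zeta$ and simultaneously witnesses local splittings above every $\nu\in A_j(\zeta)$. Assembling produces $E_{j,0},E_{j,1}\subseteq T^\pp$ uniformly $(3g,3g)$-big above $A_j$, with $E_{j,0}(\tau)\cap \nu^\preceq$ and $E_{j,1}(\tau)\cap \nu^\preceq$ forming a $\Gamma(\tau,-)$-splitting mod $B^\pp(\tau)$ at every witnessing $\tau$ and relevant $\nu$. Set $E_j = E_{j,0}\cup E_{j,1}$; the sets $E_j$ still pairwise globally $\Gamma$-split mod $B^\pp$, by inheritance from $A_0$.

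Using strong forcing of totality together with the two-coordinate analogue of \cref{rmk:one_step:bushiness_of_convergence}, I extract $F\subseteq T^\pp$ which is $(3^k g,3^k g)$-big above $(\s^*,\mu^*)$ and satisfies $|\Gamma(\lambda,\eta)| > |\Gamma(\zeta,\rho)|$ for all $(\lambda,\eta)\in F\setminus B^\pp$ and all $(\zeta,\rho)\in\bigcup_{j\le k}E_j\setminus B^\pp$. I then run reverse recursion on $j=k,k-1,\dots,1$: starting from $F_k = F$, invoke \cref{lemma:two_steps:combinatorial} with $A = A_j$, the two halves of $E_j$, and $F = F_j$ (taking $g_1=g_2=g$ and $h_1=h_2=3^{j-1}g$) to extract $E'_j\subseteq E_j$ that is $(g,g)$-big above $(\s_j,\mu_j)$ and $F_{j-1}\subseteq F_j$ that is $(3^{j-1}g,3^{j-1}g)$-big above $(\s^*,\mu^*)$, with $E'_j$ and $F_{j-1}$ globally $\Gamma$-splitting mod $B^\pp$. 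Finally set $A = F_0\cup\bigcup_{j\le k}E'_j$; then $A$ is $(g,g)$-big above $C$, splittings between each $E'_j$ and $F_0$ come by construction, and splittings between $E'_j$ and $E'_i$ for $i\ne j$ are inherited from $A_0$.

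The main obstacle, mirroring the analogous difficulty in the one-step argument, is packaging the second-step local splittings to meet the uniform-bigness hypothesis of \cref{lemma:two_steps:combinatorial} in the sense of \cref{def:two_step:uniform_bigness}: one needs a \emph{single} set of first coordinates $\tau$ that simultaneously witnesses $3g$-bigness of both halves of $E_j$ above every second-coordinate leaf of $A_j$, rather than a separate witness per leaf. The intersection argument above, propagated through $T^\pp$ by \cref{lem:two_step:restricted_weak_concatentation}, handles this; the combined loss here together with the factor of $3$ absorbed at each application of \cref{lemma:two_steps:combinatorial} during the reverse recursion is precisely what the budget $3^{|C|^2}$ in the hypothesis accommodates.
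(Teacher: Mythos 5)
Your overall architecture --- induction on $|C|$, adjoining one pair $(\s^*,\mu^*)$, packaging local splittings into uniformly big pairs $E_{j,0},E_{j,1}$, a long-output set $F$, and a reverse recursion through \cref{lemma:two_steps:combinatorial} --- is the paper's, and the obstacle you single out (getting a \emph{single} set of first coordinates witnessing uniform bigness of both halves of $E_j$) is real and handled correctly by your intersection argument. The genuine gap is elsewhere: you enumerate \emph{all} of $C\setminus\{(\s^*,\mu^*)\}$ and you assemble the final set as a bare union $A=F_0\cup\bigcup_{j}E'_j$. This breaks down precisely when $C$ contains several pairs sharing a first coordinate --- which is the situation in the intended application (in \cref{prop:two_step:forcing_a_strong_minimal_cover}, $C$ is the set of leaves of $S_k$, with many second-coordinate leaves per first-coordinate leaf). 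Note first that the conclusion only requires splittings indexed by \emph{distinct} $\s\in\dom C$, so pairs with the same first coordinate need not, and should not, be split against each other; the paper accordingly runs the reverse recursion only over those $(\s_j,\mu_j)\in C$ with $\s_j\ne\s^*$. Your claim that the sets $E'_i$, $E'_j$ inherit pairwise splittings from $A_0$ is simply false when $\s_i=\s_j$, since the inductive hypothesis gives no splitting between extensions of $(\s,\mu_i)$ and $(\s,\mu_j)$.

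More seriously, $(g,g)$-bigness of $A$ above $C^*$ demands, for each $\s\in\dom C^*$, a \emph{single} $g$-big-above-$\s$ tree of first coordinates $\tau$ such that $A(\tau)$ is $g$-big above \emph{every} $\mu\in C^*(\s)$ simultaneously --- this is exactly the failure of concatenation for pairs that the paper keeps warning about. Your $F_0$ and the various $E'_j$ come out of separate applications of \cref{lemma:two_steps:combinatorial} and have no reason to share first-coordinate domains; so when $\s_j=\s^*$ (or $\s_i=\s_j$) the union $F_0\cup\bigcup_j E'_j$ is not a forest system that is $(g,g)$-big above $C^*$, and the induction does not close. The missing ingredient is the bookkeeping in the final paragraph of the paper's proof: at each stage one maintains a single coherent set $A_{j-1}$, $(3^{j-1}g,3^{j-1}g)$-big above \emph{all} of $C^*$, whose first-coordinate domain above $\s^*$ is $\dom F'_j$ and above $\s_j$ is $\dom E'_j$, and which, above every other $(\s,\mu)\in C^*$ with $\s\in\{\s^*,\s_j\}$ and $\mu$ not the distinguished second coordinate, carries the \emph{unrefined} forest $A_j(\zeta)\cap\mu^\preceq$ pulled back along $\zeta=\tau^{-\dom A_j}$ (legitimate because $T^\pp(\tau)$ end-extends $T^\pp(\zeta)$). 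Without this re-indexing step your construction proves the lemma only under the extra hypothesis that $\dom C$ and $C$ have the same cardinality, which is not the case needed downstream.
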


\begin{proof}
We prove the lemma by induction on $|C|$. Let~$C^* = C\cup \{(\s^*\mu^*)\}\subset T^\pp$ be finite and prefix-free, and suppose that the lemma is already known for~$C$. Let~$g$ satisfy the assumptions of the lemma for~$C^*$. The assumptions of the lemma hold for the set~$C$ and the function $3^{|C|}g$. Let~$A$ be as guaranteed by the lemma for~$C$ and $3^{|C|}g$.

Let $(\s_1,\mu_1), (\s_2,\mu_2),\dots, (\s_k,\mu_k)$ list the elements of~$C$ such that $\s_j\ne \s^*$. By reverse recursion on $j\le k$ we define a set $A_{j}\subset T^\pp$, $(3^jg,3^jg)$-big above $C^*$. We will ensure that $A_j\cap C^\preceq \subset A^\preceq$, and so the sets $A_j\cap (\s,\mu^\pp)^\preceq$ for $\s\in \dom C$ pairwise $\Gamma$-split mod~$B^\pp$. Further, we will ensure that $A_{j-1}\cap (\s^*,\mu^*)^\preceq$ and $A_{j-1}\cap (\s_j,\mu_j)^\preceq$ $\Gamma$-split mod~$B^\pp$; and that $A_{j-1}\subset A_{j}^\preceq$. Thus in the end, the set $A_{0}$ is as required.

We start with $A_{k} = A\cup \{ (\s^*,\mu^*)\}$. Now suppose that $j>0$ and we are given the sets $A_{j}$. Let $\tau\in (\dom A_j)\cap \s_j^\preceq$. \Cref{lem:two_steps:finding_local_splittings} says that for all~$\tau'\succeq\tau$ in $\dom T^\pp$, for all $\rho\in A_j(\tau)\cap \mu_j^\preceq$, the set $\Split{\Gamma}{3^jg}{\rho}{\pp}$ is $3^jg$-big above~$\s_j$. So applying \cref{lem:two_step:one_step_induction_for_weak_concatenation} to these sets, and repeating this process for all such~$\tau$, we find (finite) $E_{j,0}\subset T^\pp$ and $E_{j,1}\subset T^\pp$, uniformly $(3^jg,3^jg)$-big above $A_{j}\cap (\s_j,\mu_j)^\preceq$, such that for every $(\tau,\rho)\in A_{j}\cap (\s_j,\mu_j)^\preceq$, $E_{j,0}\cap (\tau,\rho)^\preceq$ and $E_{j,1}\cap (\tau,\rho)^\preceq$ locally $\Gamma$-split mod~$B^\pp$. Given $E_j = E_{j,0}\cup E_{j,1}$ we can find $F_j\subset T^\pp$, $(3^jg,3^jg)$-big above~$A_{j}\cap (\s^*,\mu^*)^\preceq$ (and lying above that set) such that $|\Gamma(\tau,\rho)|>|\Gamma(\tau',\rho')|$ for all $(\tau,\rho)\in  F_j\setminus B^\pp$ and all $(\tau',\rho')\in E_j$. We then appeal to \cref{lemma:two_steps:combinatorial} with~$F_j$ in the role of~$F$, $E_{j,i}$ in the role of~$E_i$, $A_{j}\cap (\s_j,\mu_j)^\preceq$ in the role of~$A$, and using the function~$3^{j-1}g$ we get $F'_j\subseteq F_j$, $(3^{j-1}g,3^{j-1}g)$-big above~$(\s^*,\mu^*)$ and $E'_j\subseteq E_j$, also $(3^{j-1}g,3^{j-1}g)$-big above~$(\s_j,\mu_j)$, which $\Gamma$-split mod~$B^\pp$.

We now define the set $A_{j-1}$. We first define~$\dom A_{j-1}$, and we do this by defining $(\dom A_{j-1})\cap \s^\preceq$ for all $\s\in \dom C^*$. Let $\s\in \dom C^*$. If $\s\ne \s_j,\s^*$ then $(\dom A_{j-1})\cap \s^\preceq = (\dom A_{j})\cap \s^\preceq$. We let $(\dom A_{j-1})\cap (\s^*)^\preceq = \dom F'_j$ and $(\dom A_{j-1})\cap (\s_j)^\preceq = \dom E'_j$.

Now for $\tau\in \dom A_{j-1}$ we define $A_{j-1}(\tau)$. Fix such~$\tau$; let  $\zeta = \tau^{-\dom A_j}$ and let $\s = \tau^{-\dom C^*} = \zeta^{-\dom C^*}$. If $\s\ne \s^*,\s_j$ then $\zeta = \tau$ and we let $A_{j-1}(\tau) = A_j(\tau)$. Otherwise, we define $A_{j-1}(\tau)$ by defining $A_{j-1}\cap \mu^\preceq$ for all $\mu\in C^*(\s)$. Suppose that~$\s = \s^*$. If $\mu\ne \mu^*$ then we let $A_{j-1}(\tau)\cap \mu^\preceq= A_j(\zeta)\cap \mu^\preceq$ (which inductively will just equal $A(\tau^{-\dom A})\cap \mu^\preceq$). We let $A_{j-1}(\tau)\cap (\mu^*)^\preceq = F'_j(\tau)$. Similarly, if $\s= \s_j$ and $\mu\ne \mu_j$ then we let $A_{j-1}(\tau)\cap \mu^\preceq= A_j(\zeta)\cap \mu^\preceq$; we let $A_{j-1}(\tau)\cap (\mu_j)^\preceq = E'_j(\tau)$.
\end{proof}

\begin{proposition} \label{prop:two_step:forcing_a_strong_minimal_cover}
	Every condition in~$\PP_2$ forces that if $\Gamma(x^\GG,y^\GG)$ is total and $\Gamma(x^\GG,y^\GG)\nle_\Tur x^\GG$ then $\Gamma(x^\GG,y^\GG)\ge_\Tur x^\GG$.
\end{proposition}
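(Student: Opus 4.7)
The plan is to mimic the construction in \cref{lem:two_step:forcing_a_minimal_cover}, but now using the global splitting lemma \cref{lem:two_steps:getting_many_global_splittings} in place of the local one, and then invoking \cref{lem:1-1_on_sections_of_compact} rather than \cref{lem:1-1_or_constant_on_compact_space} to recover~$x^\GG$ from~$\Gamma(x^\GG,y^\GG)$. Fix $\pp\in \PP_2$ forcing the hypothesis; by \cref{prop:two_step:forcing_Pi_2} we may assume~$\pp$ strongly forces totality of~$\Gamma(x^\GG,y^\GG)$. Choose $g\in \Quick$ with $h^\pp\gg g\gg b^\pp$ and let $\bar g(m)=\prod_{k<m}g(k)$; the function $3^{\bar g^4}g$ is elementary in~$g$, hence by \cref{lem:bounded_products} dominated by an iterate of~$g$, and hence by~$h^\pp$. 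Using \cref{lem:two_step:can_take_full_subtree}, extend $(\s^\pp,\mu^\pp)$ to arrange $h^\pp\ge 3^{\bar g^4}g$ and $g\ge b^\pp$ above $\min\{|\s^\pp|,|\mu^\pp|\}$.

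Inductively build an increasing sequence~$\seq{\ell_k}$ of balanced levels of~$T^\pp$ and a sequence $\seq{S_k}$ of finite, exactly $(g,g)$-bushy subsystems of~$T^\pp$, each leaf of~$S_k$ having both coordinates of length~$\ell_k$, and each $S_{k+1}$ a proper end-extension of~$S_k$. At stage~$k+1$, let~$C_k$ be the set of leaves of~$S_k$, so $|C_k|\le \bar g(\ell_k)^2$ and consequently $h^\pp\ge 3^{|C_k|^2}g$ above~$\ell_k$. By \cref{lem:two_steps:getting_many_global_splittings} applied to~$C_k$ with the function~$g$, there is $A\subseteq T^\pp$, $(g,g)$-big above~$C_k$, such that the parts of~$A$ above distinct first coordinates in~$\dom C_k$ pairwise $\Gamma$-split mod~$B^\pp$. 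Thinning its witnessing forest system to be exactly $(g,g)$-bushy and then padding up to a fresh balanced level~$\ell_{k+1}$ of~$T^\pp$ exceeding the height of that witness, using further exactly $(g,g)$-bushy subsystems of~$T^\pp$, yields~$S_{k+1}$.

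Set $S=\bigcup_k S_k$; this is a computable, computably bounded, balanced, $(g,g)$-bushy tree system above $(\s^\pp,\mu^\pp)$, so the tuple $\qq=((\s^\pp,\mu^\pp),S,B^\pp\cap S,g,b^\pp)$ is a condition in~$\PP_2$ extending~$\pp$. Given $(x,y),(x',y')\in X^\qq$ with $x\ne x'$, let~$m$ be least with $x\rest{\ell_m}\ne x'\rest{\ell_m}$; since these two restrictions are distinct elements of~$\dom C_m$, the splittings inserted at stage~$m+1$ produce leaves $(\tau,\rho)\prec(x,y)$ and $(\tau',\rho')\prec(x',y')$ (in that stage's copy of~$A$) satisfying $\Gamma(\tau,\rho)\perp \Gamma(\tau',\rho')$---neither pair lies in~$B^\pp$ because~$B^\pp$ is open and the paths avoid~$[B^\pp]^\prec$. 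Hence $\Gamma(x,y)\perp \Gamma(x',y')$, so the sections~$\Gamma[X^\qq_x]$ for distinct $x\in[\dom S]$ are pairwise disjoint; \cref{lem:1-1_on_sections_of_compact} then gives $x\le_\Tur \Gamma(x,y)$ for every $(x,y)\in X^\qq$, so~$\qq$ forces $x^\GG\le_\Tur \Gamma(x^\GG,y^\GG)$ as required. The main technical point is propagating the largeness bound $h^\pp\ge 3^{|C_k|^2}g$ uniformly across stages, and this is handled once and for all by the initial choice of~$g$ with $h^\pp\gg g$.
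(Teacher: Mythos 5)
Your proposal is correct and follows exactly the route the paper sketches: strongly force totality, build exactly $(g,g)$-bushy balanced subsystems $S_k$, apply \cref{lem:two_steps:getting_many_global_splittings} to the set of leaves of $S_k$ (using exact bushiness and balanced levels to bound $|C_k|$ and hence keep $h^\pp\ge 3^{|C_k|^2}g$), and finish with \cref{lem:1-1_on_sections_of_compact}. The details you supply (the bound $3^{\bar g^4}g$, the thinning and padding to balanced levels, and the verification that distinct $x,x'$ yield incomparable $\Gamma$-values via leaves avoiding $B^\pp$) are all consistent with the paper's intended argument.
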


\begin{proof}
The construction is similar to the one in \cref{lem:two_step:forcing_a_minimal_cover,prop:one_step:splitting_tree}. It is here that we really use the fact that~$T^\pp$ is balanced, for we ensure that each~$S_k$ we build is exactly $(g,g)$-bushy. We assume that $h^\pp \gg 3^{{\bar g}^2}g$ above $\min \{|\s^\pp|,|\mu^\pp|\}$ and then apply \cref{lem:two_steps:getting_many_global_splittings} to~$C$ being the set of leaves of~$S_k$. We use \cref{lem:1-1_on_sections_of_compact}.
\end{proof}

And as a result:

\begin{proposition} \label{prop:two_step:minimality}
	Every condition in~$\PP_2$ forces that $\deg_\Tur(x^\GG,y^\GG)$ is a strong minimal cover of~$\deg_\Tur(x^\GG)$.
\end{proposition}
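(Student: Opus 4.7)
The plan is to package Propositions \ref{lem:two_step:forcing_a_minimal_cover} and \ref{prop:two_step:forcing_a_strong_minimal_cover} together, using Proposition \ref{prop:TwoStep:DNC} to handle the strict inequality. Fix $\pp \in \PP_2$. To see that $\pp$ forces $\deg_\Tur(x^\GG,y^\GG) > \deg_\Tur(x^\GG)$, recall that \cref{prop:TwoStep:DNC} gives $\pp \force y^\GG \in \DNC^{x^\GG}$. No function DNC relative to $x^\GG$ can be $x^\GG$-computable: if $y^\GG = \vphi_e^{x^\GG}$, then evaluating at $e$ contradicts the DNC condition at~$e$. Hence $\pp \force y^\GG \nle_\Tur x^\GG$, so $\pp$ forces $(x^\GG,y^\GG) \not\le_\Tur x^\GG$.

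Next, fix a Turing functional $\Gamma\colon \Baire{2}\to \Baire$; I would show that $\pp$ forces the trichotomy that either $\Gamma(x^\GG,y^\GG)$ is partial, or $\Gamma(x^\GG,y^\GG) \le_\Tur x^\GG$, or $\Gamma(x^\GG,y^\GG) \equiv_\Tur (x^\GG,y^\GG)$. By \cref{lem:forcing_behaves}(4) it is enough to show that the set of conditions forcing one of these options is dense below~$\pp$. Starting from an arbitrary $\pp' \le \pp$, I would first invoke \cref{cor:forcing_and_generics}(1) on the $\Pi^0_2$ statement ``$\Gamma(x^\GG,y^\GG)$ is total'' and, if totality is not forced to fail, pass via \cref{prop:two_step:forcing_Pi_2} to an extension which strongly forces totality. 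Then I would extend further to decide the Borel statement ``$\Gamma(x^\GG,y^\GG) \le_\Tur x^\GG$'' (expressible as a countable disjunction, over Turing reductions $\Delta$, of the $\Pi^0_2$ statement that $\Delta(x^\GG)$ is total and agrees with $\Gamma(x^\GG,y^\GG)$). If either of these two statements is forced positively, we are in one of the first two cases of the trichotomy and we are done.

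The remaining case is that we reach a condition $\qq \le \pp$ forcing $\Gamma(x^\GG,y^\GG)$ total and $\Gamma(x^\GG,y^\GG) \nle_\Tur x^\GG$. Now \cref{lem:two_step:forcing_a_minimal_cover} gives $\qq \force \Gamma(x^\GG,y^\GG) \oplus x^\GG \ge_\Tur y^\GG$, and \cref{prop:two_step:forcing_a_strong_minimal_cover} gives $\qq \force \Gamma(x^\GG,y^\GG) \ge_\Tur x^\GG$. Combining, $\qq \force \Gamma(x^\GG,y^\GG) \ge_\Tur x^\GG \oplus y^\GG \equiv_\Tur (x^\GG,y^\GG)$, and with the trivial $(x^\GG,y^\GG) \ge_\Tur \Gamma(x^\GG,y^\GG)$ we conclude $\qq \force \Gamma(x^\GG,y^\GG) \equiv_\Tur (x^\GG,y^\GG)$. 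Taking the conjunction of the trichotomy over all~$\Gamma$ yields that $\pp$ forces $\deg_\Tur(x^\GG,y^\GG)$ to be a strong minimal cover of $\deg_\Tur(x^\GG)$.

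There is no new combinatorics to do here: the proposition is really just the assembly of the two previous propositions together with the DNC property of $y^\GG$ over $x^\GG$. The only point that requires mild care is verifying that the reducibility ``$\Gamma(x^\GG,y^\GG)\le_\Tur x^\GG$'' is Borel (hence decidable by the forcing), but this is immediate from its expression as a countable disjunction of $\Pi^0_2$ statements.
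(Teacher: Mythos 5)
Your proof is correct and matches the paper's (implicit) argument: the paper states this proposition as an immediate consequence of \cref{lem:two_step:forcing_a_minimal_cover,prop:two_step:forcing_a_strong_minimal_cover} together with \cref{prop:TwoStep:DNC}, which is exactly the assembly you carry out (compare the explicit version of this argument in the proof of \cref{prop:one_step:minimality}). The only cosmetic point is that your functional should map into $2^\w$ rather than $\w^\w$, as in the cited propositions; this is harmless since every Turing degree below $\deg_\Tur(x^\GG,y^\GG)$ is realised by an element of Cantor space.
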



\begin{remark} \label{rmk:two_step:totally_splitting}
	We could combine the proofs of \cref{lem:two_steps:finding_many_local_splittings,lem:two_steps:getting_many_global_splittings} to build a ``totally $\Gamma$-splitting'' extension: a set~$A$ such that if $(\s_i,\mu_i)\in C$ (for $i<2$) and $(\tau_i,\rho_i)\in A\cap (\s_i,\mu_i)^\preceq \setminus B$, then $\Gamma(\tau_0,\rho_0)\perp \Gamma(\tau_1,\rho_1)$ provided that either $\s_0\ne \s_1$, or $\tau_0=\tau_1$ (and $\rho_0\ne \rho_1$). We could then have a single construction (replacing \cref{prop:two_step:minimality,prop:two_step:forcing_a_strong_minimal_cover}) giving a condition forcing that $\Gamma(x^\GG,y^\GG)\equiv_\Tur (x^\GG,y^\GG)$.
\end{remark}

\section{The general step} \label{sec:step_n}

We now generalise to get a linearly ordered initial segment of length~$n$. Once the correct definitions are in place, much of the development closely follows the previous section.

\subsection{\boldmath Length~$n$ forest systems} \label{subsec:step_n:forest_systems}

We work with~$n$-tuples of strings. We use boldface notation for tuples. If $\+{\tau}$ is a tuple then $\tau_i$ denotes the $i\tth$ component of~$\+\tau$. The partial ordering of extension~$\preceq$ on~$\Strings{n}$ is defined as expected. For a set~$A\subseteq \Strings{n}$ we let~$A^\preceq$ be the upward closure of~$A$ under this partial ordering. If $\+\tau$ is an $n$-tuple and $k\le n$ then we let $\+\tau\rest{k} = (\tau_1,\dots, \tau_k)$ and $\+\tau\rest{(k,n]} = (\tau_{k+1},\dots, \tau_n)$.
\smallskip

For a set~$A\subseteq \Strings{n}$ and $k<n$ we let $\dom_k A$ be the domain of~$A$ thought of as a relation between $k$-tuples and $(n-k)$-tuples:
\[
	\dom_k A = \left\{  \+{\tau}\rest{k} \,:\,  \+\tau\in  A   \right\}.
\]
For $\+\tau\in \Strings{k}$ we let
\[
	A(\+\tau) = \left\{ \+\rho\in \Strings{n-k} \,:\,  (\+\tau,\+\rho)\in A \right\}.
\]
We will frequently need to chop off the last bit, so for compact notation we let $\chop{\+\tau} = \+\tau\rest{n-1}$ for all $\+\tau\in \Strings{n}$, and let $\chop{A} = \dom_{n-1}A = \left\{ \chop{\+\tau} \,:\,  \+\tau\in A \right\}$ for all $A\subseteq \Strings{n}$.

\begin{definition} \label{def:n_step:prefix-free}
	By induction on~$n$ we define the notion of a \emph{prefix-free} set of tuples of strings: a set $A\subset\Strings{n}$ is prefix-free if $\chop{A}$ is prefix-free, and for all $\+\tau\in \chop{A}$, $A(\+\tau)$ is a prefix-free set of strings.
\end{definition}

If~$A$ is prefix-free and $\+\tau\in A^\preceq$ then there is a unique $\+\s\in A$ such that $\+\s\preceq \+\tau$ (formally this is proved by induction on~$n$); we denote this~$\+\s$ by $\+\tau^{-A}$. Note that if~$A$ is prefix-free and $\+\tau\in A^\preceq$ then $\chop{\+\tau}\in (\chop{A})^\preceq$ and $(\chop{\+\tau})^{-\cchop{A}} = \chop{\+\tau^{-A}}$.

\begin{definition} \label{def:length_n_forest_system}
By induction on~$n$ we define the notion of a length~$n$ forest system. Let~$A\subset \Strings{n}$ be prefix-free and finite. A \emph{length~$n$ forest system above~$A$} is a set~$T\subseteq A^\preceq$ such that:
\begin{itemize}
	\item $\chop{T}$ is a length $n-1$ forest system above~$\chop{A}$;
	\item for all $\+\tau\in \chop{T}$, $T(\+\tau)$ is a finite forest above $A(\+\tau^{-\cchop{A}})$;
	\item if $\+\tau\preceq \+{\tau}'\in \chop{T}$ then $T({\+\tau}')$ is an end-extension of~$T(\+\tau)$.
\end{itemize}
\end{definition}

A forest system~$S$ is a subsystem of~$T$ if $S\subseteq T$. We write $\length{T}$ for the length of~$T$. If~$A$ is a singleton~$\+\s$ then we say that~$T$ is a \emph{tree system} above~$\+\s$.

\begin{lemma} \label{lem:n_step:full_subsystem}
	Let~$T$ be a tree system and let~$\+\s\in T$. Then $T\cap \+\s^\preceq$ is a tree system above~$\s$.
\end{lemma}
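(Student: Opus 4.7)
The plan is induction on $n = \length{T}$. For $n=1$ a length-$1$ tree system above a string $\s$ is simply a tree above $\s$ in the sense of \cref{subsec:trees}, and $T \cap (\s')^\preceq$ is the standard full subtree of $T$ above any $\s' \in T$, which is a tree above $\s'$.

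For the inductive step fix $n \geq 2$ and assume the result for length $n-1$. Let $T$ be a length-$n$ tree system above $\+\s_0$, let $\+\s \in T$, and set $S = T \cap \+\s^\preceq$. The key preliminary computation is that $\chop{S} = \chop{T} \cap \chop{\+\s}^\preceq$. One inclusion is immediate from the definitions. For the other, since $\+\s \in T$ we have $\s_n \in T(\chop{\+\s})$; the end-extension clause in \cref{def:length_n_forest_system} then forces $\s_n \in T(\+\tau)$ for every $\+\tau \in \chop{T}$ extending $\chop{\+\s}$, whence $(\+\tau, \s_n) \in S$ and $\+\tau \in \chop{S}$. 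Applying the inductive hypothesis to $\chop{T}$ (a length $n-1$ tree system above $\chop{\+\s_0}$) with the tuple $\chop{\+\s}$, we conclude that $\chop{S}$ is a length $n-1$ tree system above $\chop{\+\s}$, which verifies the first clause of \cref{def:length_n_forest_system} for $S$ with $A = \{\+\s\}$.

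The second clause is routine: for $\+\tau \in \chop{S}$ we have $S(\+\tau) = T(\+\tau) \cap \s_n^\preceq$, which by the preliminary observation is the full subtree of the finite tree $T(\+\tau)$ above $\s_n$, hence a finite tree above $\s_n$. The main — though still minor — obstacle is the third (end-extension) clause: given $\+\tau \preceq \+\tau'$ in $\chop{S}$, an element $\rho \in S(\+\tau') \setminus S(\+\tau) \subseteq T(\+\tau') \setminus T(\+\tau)$ extends some leaf $\lambda$ of $T(\+\tau)$, and one needs $\lambda$ to be a leaf of $S(\+\tau)$ as well. Both $\s_n$ and $\lambda$ are initial segments of $\rho \succeq \s_n$ and hence comparable; the possibility $\lambda \prec \s_n$ would make $\s_n \in T(\+\tau)$ a proper extension of the leaf $\lambda$, which is absurd, so $\lambda \succeq \s_n$ and $\lambda$ lies in and is a leaf of $S(\+\tau) = T(\+\tau) \cap \s_n^\preceq$, as required.
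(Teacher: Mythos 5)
Your proof is correct and follows essentially the same route as the paper: induction on $\length{T}$, with the key observation that $\chop{(T\cap\+\s^\preceq)}=\chop{T}\cap(\chop{\+\s})^\preceq$ via the end-extension clause forcing $\s_n\in T(\+\tau)$, and then $S(\+\tau)=T(\+\tau)\cap\s_n^\preceq$. You additionally spell out the verification of the end-extension clause for $S$, which the paper leaves as routine; that verification is also correct.
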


(In fact~$\s$ can be replaced by any finite, prefix-free subset of~$T$).

\begin{proof}
	By induction on $\length{T}$. Let~$R = T\cap \s^\preceq$. The point is that $\chop{R} = \chop{T}\cap (\chop{\+\s})^\preceq$. For suppose that $\+\tau\in \chop{T}\cap (\chop{\+\s})^\preceq$. Then $T(\chop{\+\s})\subseteq T(\+\tau)$ and $\+\s\in T$ imply that $(\+\tau,\s_n)\in T$ and witnesses that $\+\tau\in \chop{R}$. Finally we also observe that for $\+\tau\in \chop{R}$ we have $R(\+\tau) = T(\+\tau)\cap (\s_n)^\preceq$.
\end{proof}

The definition of an $h$-bounded (and so of a computably bounded) tree system is as expected. If~$T$ is computable and computably bounded then for all $k<\length{T}$, $\dom_k T$ is computable and the map $\+\tau\mapsto T(\+\tau)$ is computable.

\smallskip

A \emph{leaf} of a forest system~$T$ is a $\preceq$-maximal element of~$T$. A tuple~$\+\tau$ is a leaf of~$T$ if and only if~$\chop{\+\tau}$ is a leaf of~$\chop{T}$ and~$\tau_{\length{T}}$ is a leaf of $T(\chop{\+\tau})$. The set of leaves of a forest system is prefix-free.

If~$T$ and~$S$ are length~$n$ forest systems then we say that~$T$ is an \emph{end-extension} of~$S$ if:
\begin{itemize}
	 \item $\chop{T}$ is an end-extension of~$\chop{S}$;
	 \item If $\+\tau\in \chop{S}$ is not a leaf of $\chop{S}$ then $T(\+\tau)= S(\+\tau)$;
	 \item If $\+\tau$ is a leaf of $\chop{S}$ then $T(\+\tau)$ is an end-extension of~$S(\+\tau)$.
\end{itemize}
Note that this is a transitive relation.

\begin{lemma} \label{lem:n_step:infinite_union_of_tree_systems}
	Let~$\seq{S_m}$ be a sequence of forest systems above~$A$, with each~$S_{m+1}$ an end-extension of~$S_m$. Then $\bigcup_m S_m$ is a forest system above~$A$. 
\end{lemma}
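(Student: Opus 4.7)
The plan is to proceed by induction on $n = \length{T}$. For the base case $n = 1$, where a length~1 forest system above $A$ is just a forest above $A$, the union of an end-extension chain of forests above $A$ is again a forest above $A$: it is contained in $A^\preceq$, it contains $A \subseteq S_0$, and it is closed under taking initial segments above $A$ since each $S_m$ is.

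For the inductive step, set $T = \bigcup_m S_m$ and verify the three clauses of \cref{def:length_n_forest_system}. Chopping commutes with unions, and by the definition of end-extension for length~$n$ forest systems each $\chop{S_{m+1}}$ end-extends $\chop{S_m}$, so the inductive hypothesis gives that $\chop T = \bigcup_m \chop{S_m}$ is a length $n-1$ forest system above $\chop A$. Next, for $\+\tau \in \chop T$, the section $T(\+\tau) = \bigcup\{S_m(\+\tau) : \+\tau\in\chop{S_m}\}$ is a union of an end-extension chain of forests, because the end-extension relation on length~$n$ forest systems tells us that this section freezes when $\+\tau$ is a non-leaf of $\chop{S_m}$ (so that $S_{m+1}(\+\tau) = S_m(\+\tau)$) and end-extends when $\+\tau$ is a leaf; the base case then delivers that $T(\+\tau)$ is a forest above $A(\+\tau^{-\cchop A})$.

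The third clause---that $T(\+\tau')$ end-extends $T(\+\tau)$ whenever $\+\tau \prec \+\tau'$ in $\chop T$---is where I would use the freezing observation just made. I would pick $m$ so that both $\+\tau, \+\tau' \in \chop{S_m}$; since $\+\tau'$ strictly extends $\+\tau$, the tuple $\+\tau$ is a non-leaf of $\chop{S_m}$, so $T(\+\tau) = S_m(\+\tau)$ (in particular, finite). Any $\rho \in T(\+\tau')$ lies in some $S_{m'}(\+\tau')$ for $m' \geq m$; transitivity of end-extension on forests makes $S_{m'}(\+\tau')$ an end-extension of $S_m(\+\tau')$, which itself end-extends $S_m(\+\tau) = T(\+\tau)$, so $\rho$ is either in $T(\+\tau)$ or extends a leaf of $T(\+\tau)$. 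There is no serious obstacle; the only subtlety is this freezing-versus-growing dichotomy of the sections, together with noting that at a leaf of $\chop T$ the section may remain infinite---this is compatible with how the lemma is to be used in the sequel.
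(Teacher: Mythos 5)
Your proof is correct and takes essentially the same route as the paper's: induction on the length, noting that $\cchop{}$ commutes with unions, that each section is a union of an end-extension chain of forests, and that the section at $\+\tau$ freezes once $\+\tau$ is a non-leaf. The paper's proof is terser (it states the freezing observation as a closing remark rather than spelling out the verification of the third clause), but the underlying argument is the same.
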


\begin{proof}
	Let $S = \bigcup_m S_m$. Then $\chop{S} = \bigcup_m \chop{S_m}$, and so by induction on the length, $\chop{S}$ is a forest system above~$\chop{A}$. Let~$\+\tau\in \chop{S}$. Then $S(\+\tau) = \bigcup_m S_m(\+\tau)$ is the union of a sequence of end-extensions above~$A(\+\tau^{-\cchop{A}})$, and so is a forest above that set; note that if~$\+\tau\in \chop{S_m}$ but is not a leaf of~$\chop{S_m}$ then $S(\+\tau) = S_m(\+\tau)$.
\end{proof}

\subsubsection*{Other breaking points} 
\label{ssub:other_breaking_points}

We don't have to isolate only the last coordinate. For example:
\begin{lemma} \label{lem:other_breaking_points:prefix-free}
	Let~$A\subseteq \Strings{n}$. The following are equivalent:
	\begin{enumerate}
		\item $A$ is prefix-free;
		\item For some $k\in \{1,\dots, n-1\}$, $\dom_k A$ is prefix-free and for all $\+\tau\in \dom_k A$, $A(\+\tau)$ is prefix-free; and
		\item For all $k\in \{1,\dots, n-1\}$, $\dom_k A$ is prefix-free and for all $\+\tau\in \dom_k A$, $A(\+\tau)$ is prefix-free.
	\end{enumerate}
\end{lemma}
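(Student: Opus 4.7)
The plan is to induct on~$n$. The base case $n = 2$ is immediate: the only admissible $k$ is $k = 1$, and $\dom_1 A = \chop{A}$, so all three conditions coincide with the defining recursive property from \cref{def:n_step:prefix-free}. For the inductive step, the implication $(3) \Rightarrow (2)$ is trivial, and $(1)$ is exactly $(2)$ at $k = n-1$ by the identity $\dom_{n-1} A = \chop{A}$. So the real content is to show that $(2)$ for some single $k \in \{1, \ldots, n-2\}$ is equivalent to $(1)$, which then also yields $(3)$.

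The entire argument rests on three elementary identities valid for $k \le n-2$, $\+\tau \in \Strings{k}$, and $\+\rho \in \Strings{n-1-k}$:
$\dom_k A = \dom_k \chop{A}$, $(\chop{A})(\+\tau) = \chop{A(\+\tau)}$, and $A(\+\tau)(\+\rho) = A(\+\tau,\+\rho)$. To prove $(1) \Rightarrow (2)$ at~$k$: since $\chop{A}$ is prefix-free of length $n-1$ by $(1)$, the inductive hypothesis applied to $\chop{A}$ delivers $(3)$ for $\chop{A}$, and in particular at the same~$k$: $\dom_k \chop{A}$ is prefix-free and each $(\chop{A})(\+\tau)$ is prefix-free. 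Via the first two identities, these translate to prefix-freeness of $\dom_k A$ and of $\chop{A(\+\tau)}$; the remaining obligation for $A(\+\tau)$ to satisfy the recursive definition at length~$n-k$—that each $A(\+\tau)(\+\rho) = A(\+\tau,\+\rho)$ be prefix-free—follows directly from $(1)$ applied to the tuple $(\+\tau,\+\rho) \in \chop{A}$.

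Conversely, for $(2) \Rightarrow (1)$ at some $k \in \{1, \ldots, n-2\}$: for $\+\sigma = (\+\tau,\+\rho) \in \chop{A}$, the prefix-freeness of $A(\+\sigma) = A(\+\tau)(\+\rho)$ is immediate from unfolding the recursive definition of the prefix-freeness of $A(\+\tau)$ given by $(2)$. To show $\chop{A}$ itself is prefix-free, we appeal to the inductive hypothesis: it suffices to verify $(2)$ at~$k$ for the length-$(n-1)$ set $\chop{A}$, which amounts to checking that $\dom_k \chop{A} = \dom_k A$ is prefix-free and that each $(\chop{A})(\+\tau) = \chop{A(\+\tau)}$ is prefix-free; the first is the hypothesis $(2)$, and the second is obtained by unfolding the recursive prefix-freeness of $A(\+\tau)$ once. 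The only genuine obstacle is notational bookkeeping—keeping straight at each step the length at which \emph{prefix-free} is being asserted, and matching each clause of the recursive definition against the appropriate output of either the inductive hypothesis applied to $\chop{A}$ or the hypothesis applied to~$A$ itself. There is no combinatorial content beyond this matching.
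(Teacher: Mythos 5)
Your proof is correct and follows exactly the route the paper indicates for this lemma (the paper only remarks that the proof ``relies on the fact that $(\chop{A})(\+\tau) = \chop{(A(\+\tau))}$, and induction''): induction on~$n$ driven by that identity together with $\dom_k A = \dom_k \chop{A}$ and $(A(\+\tau))(\+\rho) = A(\+\tau,\+\rho)$. The bookkeeping you describe is the whole content, and you carry it out correctly.
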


The proof relies on the fact that $(\chop{A})(\+\tau) = \chop{(A(\+\tau))}$, and induction. For forest systems we do not get as nice a result.

\begin{lemma} \label{lem:other_breaking_points:forest-systems}
	Let~$A\subset \Strings{n}$ be prefix-free and let $T\subseteq A^\preceq$.
	\begin{enumerate}
		\item Suppose that~$T$ is a forest system above~$A$. Then for all $k\in \{1,2,\dots, n-1\}$: (a) $\dom_k T$ is a forest system above~$\dom_k A$; (b) For all $\+\tau\in \dom_k T$, $T(\+\tau)$ is a forest system above	$A(\+\tau^{-\dom_k A})$; and (c) if $\+\tau\preceq \+\tau'$ are in $\dom_k T$ then $T(\+\tau)\subseteq T(\+\tau')$.
		\item Let $k\in \{1,2,\dots, n-1\}$; suppose that $\dom_k T$ is a forest system above~$\dom_k A$, that for all $\+\tau\in \dom_k T$, $T(\+\tau)$ is a forest system above $A(\+\tau^{-\dom_k A})$, and that if $\+\tau\preceq \+\tau'$ are in $\dom_k T$ then $T(\+\tau)$ is an end-extension of $T(\+\tau')$. Then~$T$ is a forest system above~$A$.
	\end{enumerate}
\end{lemma}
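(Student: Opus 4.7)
The plan is a simultaneous induction on $n \geq 2$, proving both parts together. The base $n = 2$ forces $k = 1 = n-1$, where both assertions reduce to the definition of a length-$2$ forest system. For the inductive step, the sub-case $k = n-1$ is again tautological; so I would fix $k \leq n-2$ and reduce to the length-$(n-1)$ induction hypothesis via two bookkeeping identities: $\dom_k A = \dom_k \chop{A}$ (and likewise for $T$) whenever $k \leq n-2$, and $\chop{T(\+\tau)} = (\chop T)(\+\tau)$ for $\+\tau\in\dom_k T$. Both are immediate by unpacking definitions, using \cref{lem:other_breaking_points:prefix-free} to move prefix-freeness between $A$ and its projections.

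For part~(1), assume $T$ is a forest system above $A$ (the breaking point $n-1$ of the definition) and fix $k\leq n-2$. Clause~(a) is handled by applying the induction hypothesis for~(1) at breaking point~$k$ to the length-$(n-1)$ forest system $\chop T$ above $\chop A$. For clause~(b), I would verify the original forest-system axioms for $T(\+\tau)$ viewed as a set of $(n-k)$-tuples: its bottom projection $\chop{T(\+\tau)}=(\chop T)(\+\tau)$ is a forest system above $\chop{A(\+\tau^{-\dom_k A})}$ by induction applied to $\chop T$ at breaking point~$k$; for $\+\rho\in\chop{T(\+\tau)}$, the set $T(\+\tau)(\+\rho) = T((\+\tau,\+\rho))$ is a finite forest above $A((\+\tau,\+\rho)^{-\chop A})$ by the hypothesis on $T$; and this ambient set coincides with $A(\+\tau^{-\dom_k A})(\+\rho^{-\chop{A(\+\tau^{-\dom_k A})}})$ after unwinding the unique lift $\+\sigma\in A$ of a witness in $T$ and splitting it at coordinate~$k$. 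Clause~(c) is direct from the original definition applied to any extension $(\+\tau,\+\rho)\preceq(\+\tau',\+\rho')$ in $\chop T$.

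For part~(2), the same identities show that $\chop T$ and $\chop A$ satisfy the hypothesis of~(2) at breaking point $k\leq n-2$, so the induction hypothesis for~(2) yields that $\chop T$ is a length-$(n-1)$ forest system above $\chop A$. To complete the original definition for $T$, I would write each $\+\sigma\in\chop T$ as $(\+\tau,\+\rho)$ with $\+\tau=\+\sigma\rest k$; the finite forest at $\+\sigma$ and the reparametrization of its ambient set $A(\+\sigma^{-\chop A})$ come from $T(\+\tau)$ being a length-$(n-k)$ forest system above $A(\+\tau^{-\dom_k A})$, and the end-extension clause follows by splitting the comparison $\+\sigma\preceq\+\sigma'$ into the case $\+\tau=\+\tau'$ (internal end-extension inside the forest system $T(\+\tau)$) and $\+\tau\prec\+\tau'$ (end-extension across distinct fibres, given by hypothesis), and combining the two via transitivity of end-extension.

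The main obstacle I anticipate is purely notational: keeping the parametrization of predecessor maps $\+\sigma^{-?}$ consistent across the nested prefix-free sets $A$, $\dom_k A$, $A(\+\tau^{-\dom_k A})$, and $\chop{A(\+\tau^{-\dom_k A})}$, so that the predecessor identity $A((\+\tau,\+\rho)^{-\chop A}) = A(\+\tau^{-\dom_k A})(\+\rho^{-\chop{A(\+\tau^{-\dom_k A})}})$ is verified cleanly; once this identity and the two projection identities above are in hand, the rest of the induction is essentially mechanical.
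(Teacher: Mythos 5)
Your proof is correct, and since the paper omits the argument entirely (``Again the proof is routine''), your simultaneous induction on $n$ via the identities $\dom_k T=\dom_k\chop{T}$, $\chop{T(\+\tau)}=(\chop{T})(\+\tau)$ and the predecessor identity is exactly the routine argument being alluded to. Two small points: in (1)(c) your argument correctly yields only containment (not end-extension), in line with the paper's counterexample remark immediately after the lemma; and you have silently corrected the typo in the statement of (2), where ``$T(\+\tau)$ is an end-extension of $T(\+\tau')$'' should read ``$T(\+\tau')$ is an end-extension of $T(\+\tau)$''.
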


Again the proof is routine. In the situation of~(1) we don't always get that $T(\+\tau')$ end-extends $T(\+\tau)$. Suppose for example that $\tau\prec \tau'$ are in $\dom_1 T$ and that $\rho\prec \rho'$ are in $\dom_1 T(\tau)$ (and so also in $\dom_1 T(\tau')$). It is possible that $T(\tau',\rho)\ne T(\tau,\rho)$, even though~$\rho$ is not a leaf of~$T(\tau)$. For example we could have $T(\tau',\rho') = T(\tau',\rho) = T(\tau,\rho')$ which is a proper end-extension of $T(\tau,\rho)$. For end-extending, though, we do get full invariance of breaking point:
\begin{lemma} \label{lem:other_breaking_points:end-extensions}
	Let~$S$ and~$T$ be forest systems of length~$n$. The following are equivalent:
	\begin{enumerate}
		\item $T$ is an end-extension of~$S$;
		\item For some $k\in \{1,\dots, n-1\}$, $\dom_k T$ is an end-extension of~$\dom_k S$, for all $\+\tau\in \dom_k S$, $T(\+\tau)$ is an end-extension of $T(\+\tau)$, and if $\+\tau\in \dom_k S$ is not a leaf of $\dom_k S$, then $T(\+\tau) = S(\+\tau)$.
		\item For all $k\in \{1,\dots, n-1\}$, $\dom_k T$ is an end-extension of~$\dom_k S$, for all $\+\tau\in \dom_k S$, $T(\+\tau)$ is an end-extension of $T(\+\tau)$, and if $\+\tau\in \dom_k S$ is not a leaf of $\dom_k S$, then $T(\+\tau) = S(\+\tau)$.
	\end{enumerate}
\end{lemma}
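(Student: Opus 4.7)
The implication $(3) \Rightarrow (2)$ is trivial, and $(1)$ is precisely the case $k = n-1$ of $(2)$ (reading the second ``$T(\+\tau)$'' in the statement of $(2)$ as ``$S(\+\tau)$''). The substantive content is therefore $(2) \Rightarrow (3)$; once this is established, the instance $k' = n-1$ recovers $(2) \Rightarrow (1)$ and closes the circle.

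I would prove $(2) \Rightarrow (3)$ by induction on $n$. The base case $n = 2$ is vacuous, since only $k = 1 = n-1$ is available. In the inductive step, fix a breaking point $k$ witnessing $(2)$ and an arbitrary target $k' \in \{1,\dots,n-1\}$; the goal is to verify condition $(2)$ at $k'$. The central mechanism is a pair of associativity identities: for $k' \le k$,
\[
\dom_{k'} T \,=\, \dom_{k'}(\dom_k T) \qquad\text{and}\qquad (\dom_k T)(\+\tau) \,=\, \dom_{k-k'}(T(\+\tau))
\]
for $\+\tau\in\Strings{k'}$ (with evident analogues for $S$); for $k' \ge k$, symmetrically, $T(\+\tau)$ for $\+\tau\in\Strings{k'}$ is obtained by further decomposing $T(\+\tau\rest k)$ at breaking point $k' - k$. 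Together with the trivial identity $T(\+\tau,\+\rho) = T(\+\tau)(\+\rho)$, these reduce every component of the $k'$-breaking condition to a statement about an end-extension of a strictly shorter forest system, to which the inductive hypothesis applies.

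Concretely, when $k' < k$, the piece ``$\dom_{k'} T$ end-extends $\dom_{k'} S$'' becomes a breaking-point-$k'$ end-extension statement for the length-$k$ forest systems $\dom_k T, \dom_k S$ (which are forest systems by \cref{lem:other_breaking_points:forest-systems}(1)). Hypothesis $(2)$ supplies end-extension for these length-$k$ systems at their own last coordinate --- this is their instance of $(1)$ --- and the inductive hypothesis applied at length $k$ then yields the same at breaking point $k'$. The $T(\+\tau)$-end-extension piece and the non-leaf constancy at $k'$ transfer analogously, using the inductive hypothesis at length $n-k$ applied to the systems $T(\+\sigma), S(\+\sigma)$ for $\+\sigma \in \dom_k S$ when $k' > k$.

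The main obstacle is not depth but careful bookkeeping: three breaking points ($k$, $k'$, and their difference) must be tracked, every invocation of the inductive hypothesis must be verified to take place at a strictly shorter length and on actual forest systems, and each end-extension claim must be re-interpreted at the appropriate level. Once the associativity identities are in place, each individual reduction is a routine unpacking of definitions.
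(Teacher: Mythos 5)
Your proof is correct. The paper offers no argument for this lemma at all --- it is simply asserted as part of the ``routine'' discussion of breaking points --- so your induction on~$n$, driven by the identities $\dom_{k'}T=\dom_{k'}(\dom_k T)$, $(\dom_k T)(\+\tau)=\dom_{k-k'}(T(\+\tau))$ and $T(\+\tau,\+\rho)=(T(\+\tau))(\+\rho)$, is a legitimate way to supply one, and every reduction you describe does go through. Two points are worth making explicit when writing it up. First, identifying (1) with the $k=n-1$ instance of (2) (besides correcting the typo, as you do) uses that equality of forest systems implies end-extension, so that the ``for all $\+\tau\in\dom_k S$'' clause of (2) follows from the leaf/non-leaf dichotomy in \cref{def:length_n_forest_system}'s notion of end-extension. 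Second, the clause your sketch passes over most quickly is the non-leaf \emph{equality} at the new breaking point: for $k'<k$ and $\+\sigma$ a non-leaf of $\dom_{k'}S$ one needs $T(\+\sigma)=S(\+\sigma)$ exactly, not merely an end-extension, and this comes from combining $\dom_{k-k'}(T(\+\sigma))=\dom_{k-k'}(S(\+\sigma))$ (the inductive hypothesis applied to the length-$k$ systems $\dom_k T$ and $\dom_k S$ at breaking point $k'$) with $T(\+\sigma,\+\rho)=S(\+\sigma,\+\rho)$ for every $\+\rho$ in that common domain (hypothesis (c) at $k$, using that $(\+\sigma,\+\rho)$ can only be a leaf of $\dom_k S$ if $\+\sigma$ is a leaf of $\dom_{k'}S$); the case $k'>k$ splits according to whether $\+\sigma\rest{k}$ is a leaf of $\dom_k S$, applying hypothesis (c) directly in the first subcase and the inductive hypothesis inside $S(\+\sigma\rest{k})$ in the second. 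All invocations of the inductive hypothesis occur at lengths $k$, $k'$, $n-k$ or $n-k'$, each strictly below $n$ and at least $2$ whenever a breaking point is actually needed, so the induction is well-founded.
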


Also note that if~$S$ is a forest system then $\+\tau\in S$ is a leaf of~$S$ if and only if for some (all) $k\in \{1,2,\dots, \length{S}-1\}$, $\+\tau\rest{k}$ is a leaf of~$\dom_k S$ and $\+\tau\rest{(k,\length{S}]}$ is a leaf of $S(\+\tau\rest{k})$.


\subsubsection*{Paths of tree systems} 
\label{ssub:n_step:paths_of_tree_systems}

We simplify our presentation by restricting ourselves to balanced tree systems.

\begin{definition} \label{def:n_step:balanced}
	Let~$T$ be a tree system and let $m<\w$. We say that~$m$ is a \emph{balanced level of~$T$} if for all $\tau\in \dom_1 T$ of length~$m$, every component of every leaf of~$T(\tau)$ has length~$m$. We say that~$T$ is \emph{balanced} if $\dom_1 T$ has no leaves and~$T$ has infinitely many balanced levels.
\end{definition}

For a balanced tree system~$T$ we let
\[
	[T] = \left\{ \+x\in \Baire{\length{T}} \,:\,  \+x\rest{m}\in T \text{ for every balanced level~$m$ of }T \right\}.
\]
The set~$[T]$ is a closed subset of~$\Baire{n}$.

\smallskip

For $\+x\in [\chop{T}]$ we let
$ T(\+x) = \bigcup_{\+\tau\prec \+x} T(\+\tau).$ This is a tree with no leaves. If~$T$ is balanced then so is $\chop{T}$, and $[T] = \left\{ (\+x,y) \,:\,  \+x\in [\chop{T}] \andd y\in [T(\+x)] \right\}$. If~$T$ is balanced, computable and computably bounded then~$[T]$ is effectively closed.


\subsubsection*{Bushiness for forest systems} 
\label{ssub:step_n:bushiness_for_tree_systems}

Let $\+g = (g_1,\dots, g_n)$ be a tuple of bounding functions, and let~$T$ be a length~$n$ forest system. We say that~$T$ is $\+g$-bushy if $\chop{T}$ is $\chop{\+g}$-bushy and for all $\+\tau\in \chop{T}$, $T(\+\tau)$ is $g_n$-buhsy. As usual, $T$ is $\+g$-bushy if and only if for some (all) $k\in \{1,2,\dots, n-1\}$, $\dom_k T$ is $\+g\rest{k}$-bushy and for all $\+\tau\in \dom_k T$, $T(\+\tau)$ is $\+g\rest{(k,n]}$-bushy.

We say that a set $B\subseteq \Strings{n}$ is \emph{$\+g$-big} above some finite prefix-free set~$A\subset \Strings{n}$ if there is a $\+g$-bushy finite forest system~$R$ above~$A$ whose leaves lie in~$B$. This is extended to all sets~$A$ as above. For $k<n$, $B\subseteq \Strings{n}$, a finite, prefix-free $A\subseteq \Strings{n}$ and an $(n-k)$-tuple~$\+h$ of bounding functions we let
\[
	\project{\+h}{A}{B} = \left\{ \+\tau\in (\dom_k A)^\preceq \,:\, B(\+\tau) \text{ is $\+h$-big above }A(\+\tau^{-\dom_k A})  \right\}.
\]
Note that this notation is different from the one used in the previous section; however, if~$A$ is a singleton $\+\s$ then we revert to the old notation and write $\project{\+h}{\+\s\rest{(k,n]}}{B}$ instead of $\project{\+h}{\+\s}{B}$. A set~$B$ is $\+g$-big above~$A$ if and only $\project{\+g\,\,\rest{(k,n]}}{A}{B}$ is $\+g\rest{k}$-big above~$A$. The proof of this follows the proof of \cref{lem:two_step:bigness}, using \cref{lem:other_breaking_points:forest-systems}(2) (and the fact that every finite prefix-free set is a forest system above itself, and any forest system~$R$ above~$A$ is an end-extension of~$A$).
The proof gives the analogue of \cref{rmk:two_step:minimal_bigness_witness}: if~$B$ is $\+g$-big above~$A$, $T$ is a forest system and $A,B\subseteq T$ then a finite forest system~$S$ witnessing the largeness can be taken to be a subset of~$T$.

\begin{remark} \label{rmk:commutativity_of_projection_operation}
	Let $1\le k < m < n$, let $\+\s\in \Strings{m-k}$, $\+\mu\in \Strings{n-m}$, $\+g$ be an $(m-k)$-tuple of bounding functions, and $\+h$ and $(n-m)$-tuple of bounding functions. Let $B\subseteq \Strings{n}$. Then
	\[
		\project{\+g}{\+\s}{\project{\+h}{\+\mu}{B}} = \project{\+g,\+h}{\+\s,\+\mu}{B}.
	\]
\end{remark}

\smallskip

The big subset property holds for largeness over singletons, with the same proof as that of \cref{lem:TwoSteps:BigSubset}.

\medskip

For the weak concatenation property, we will straightaway work within tree systems. But first we discuss concatenations. Suppose that~$S$ is a finite forest system, that~$A$ is the set of leaves of~$S$, and that~$R$ is a forest system above~$A$. Since~$S$ is finite, $\chop{A}$ is the set of leaves of~$\chop{S}$. We then define~$S\conc R$ by letting:
\begin{itemize}
	\item $\chop{(S\conc R)} = \chop{S}\!\!\conc \chop{R}$;
	\item For $\+\tau\in \chop{S}$, not a leaf of $\chop{S}$, we let $(S\conc R)(\+\tau)= S(\+\tau)$;
	\item For $\+\tau\in \chop{R}$ we let $(S\conc R)(\+\tau) = S(\+\tau^{-A})\conc R(\+\tau) = S(\+\tau^{-A})\cup R(\+\tau)$.
\end{itemize}
Then $S\conc R$ is an end-extension of~$S$, whose leaves are the leaves of~$R$. Also note that if $S,R\subseteq T$ for some forest system~$T$ then $S\conc R\subseteq T$. If both~$S$ and~$R$ are~$\+g$-bushy then so is $S\conc R$. We thus get the restricted analogue of \cref{lem:two_step:concatenating_forest_systems_and_bigness}. From now we fix a forest system~$T$.

\begin{itemize}
	\item Suppose that $B$ is $\+g$-big above~$A$, and that~$C$ is $\+g$-big above~$B$. Then~$C$ is $\+g$-big above~$A$. If $A,B,C\subseteq T$ then every forest system~$S\subseteq T$ witnessing that~$B$ is $\+g$-big above~$A$ has an end-extension $R\subseteq T$ which witnesses that~$C$ is $\+g$-big above~$A$.
\end{itemize}

We get an analogue of \cref{lem:two_step:one_step_induction_for_weak_concatenation}. The notion of an open subset of~$T$ is as expected.

\begin{lemma} \label{lem:n_step:analogue_of_one_step_induction}
	Let~$\mathcal B$ be a finite family of subsets of~$T$ which are open in~$T$. Let~$A\subseteq T$ be finite and prefix-free. Suppose that each~$B\in \mathcal B$ is $\+g$-big above~$A^\preceq\cap T$ (recall that this means that it is $\+g$-big above every finite, prefix-free subset of $A^\preceq \cap T$). Then $\bigcap\mathcal B$ is $\+g$-big above~$A$.
\end{lemma}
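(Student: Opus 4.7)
The plan is to prove the lemma by induction on $|\mathcal{B}|$, mimicking the proof of \cref{lem:two_step:one_step_induction_for_weak_concatenation} but now invoking the restricted concatenation property within~$T$ that was stated in the bullet immediately preceding the lemma. The crucial point is that the hypothesis asks for largeness over \emph{every} finite prefix-free subset of $A^\preceq\cap T$, not merely above~$A$; this strengthening is exactly what is needed for the induction to propagate.

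For the base case $|\mathcal{B}|=1$, the set $A$ is itself a finite prefix-free subset of $A^\preceq\cap T$, so the conclusion is just the hypothesis applied to $A$ itself. For the inductive step, I would split off one element, writing $\mathcal{B}=\mathcal{B}'\cup\{B\}$ with $B\notin\mathcal{B}'$. The inductive hypothesis (applied to~$\mathcal{B}'$, whose members are still $\+g$-big above $A^\preceq\cap T$) gives that $\bigcap\mathcal{B}'$ is $\+g$-big above~$A$. Using the length-$n$ analogue of \cref{rmk:two_step:minimal_bigness_witness} (established in the discussion following \cref{lem:two_step:bigness}, since any witnessing forest system can be taken to live inside~$T$), I can fix a finite $\+g$-bushy forest system $S\subseteq T$ above~$A$ whose leaves lie in $\bigcap\mathcal{B}'$.

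The set of leaves of~$S$ is a finite prefix-free subset of $A^\preceq\cap T$, so by hypothesis $B$ is $\+g$-big above this set. The restricted concatenation property then yields a $\+g$-bushy end-extension $R\subseteq T$ of~$S$, above~$A$, whose leaves lie in~$B$. To finish, I would observe that since each element of~$\mathcal{B}'$ is open in~$T$, so is $\bigcap\mathcal{B}'$; every leaf of~$R$ extends some leaf of~$S$, and the latter lies in $\bigcap\mathcal{B}'$, so by openness in~$T$ the leaf of~$R$ lies in $\bigcap\mathcal{B}'$ as well. Hence the leaves of~$R$ lie in $B\cap\bigcap\mathcal{B}'=\bigcap\mathcal{B}$, and $R$ witnesses that $\bigcap\mathcal{B}$ is $\+g$-big above~$A$.

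The main obstacle is conceptual rather than technical: one has to recognise that the hypothesis must be largeness over the whole family of finite prefix-free subsets of $A^\preceq\cap T$ (so that the inductive step can reapply the lemma to the leaves of~$S$), and one has to use the concatenation property in the form that preserves containment in~$T$. Both have already been arranged in the paragraphs preceding the statement, so the proof itself is essentially a repackaging of the length-$2$ argument.
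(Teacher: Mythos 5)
Your proof is correct and follows exactly the route the paper intends: induction on $|\mathcal B|$ using the concatenation property restricted to~$T$, which is all the paper itself offers for this lemma (it is stated as an analogue of \cref{lem:two_step:one_step_induction_for_weak_concatenation}, whose proof is likewise left as ``induction on $|\mathcal B|$'' from the concatenation property). The one point needing care --- that every leaf of the end-extension $R$ extends a leaf of $S$ and hence, by openness of $\bigcap\mathcal B'$ in~$T$, stays in $\bigcap\mathcal B'$ --- is handled correctly.
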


We can now prove the analogue of \cref{lem:two_step:last_step_for_weak_concatenation}.
\begin{lemma} \label{lem:n_step:weak_concatenation_the_meat}
	Let~$T$ be a forest system and let~$A,B\subseteq T$; suppose that~$B$ is open in~$T$. Suppose that for all $\+\tau\in A^\preceq\cap T$,~$B$ is $\+g$-big above~$\+\tau$. Then~$B$ is $\+g$-big above~$A$.
\end{lemma}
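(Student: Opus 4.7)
My plan is to prove the lemma by induction on $n := \length{T}$. The base case $n=1$ is the ordinary concatenation property (\cref{lem:concatentation_property}) carried out inside $T$: for any finite prefix-free $A' \subseteq A$, the union over $\tau \in A'$ of the $g$-bushy trees witnessing $B$ above each $\tau$ is, by \cref{rmk:canonical_bushy_witness_is_a_subtree}, a $g$-bushy forest sitting inside $T$ whose leaves lie in $B$ and so witnesses $g$-bigness above $A'$.

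For the inductive step, write $\+g = (\chop{\+g}, g_n)$ and set $C := \project{g_n}{A}{B}$. The $n$-ary analogue of \cref{lem:two_step:bigness} (derived, as noted in the paragraph preceding \cref{rmk:commutativity_of_projection_operation}, via \cref{lem:other_breaking_points:forest-systems}) makes $B$ being $\+g$-big above $A$ equivalent to $C$ being $\chop{\+g}$-big above $\chop{A}$ in $\chop{T}$; and $C$ is open in $\chop{T}$, because $B$ is open in $T$ and the inclusions $T(\+\tau) \subseteq T(\+\tau')$ from the forest-system axiom transport bushy witnesses upward. Applying the induction hypothesis at length $n-1$ to $\chop{T}$, $\chop{A}$, $C$ and $\chop{\+g}$, the goal reduces to showing that $C$ is $\chop{\+g}$-big above every individual $\+\tau \in (\chop{A})^\preceq \cap \chop{T}$.

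Fix such a $\+\tau$ and let $\+\sigma := \+\tau^{-\chop{A}}$. Since length-$1$ bigness of a set of strings over a finite prefix-free set decomposes into bigness over each element (the witnessing forest splits as a disjoint union of trees), for $\+\tau' \succeq \+\tau$ in $\chop T$ one has $\+\tau' \in C$ iff $\+\tau' \in \project{g_n}{\mu}{B}$ for every $\mu \in A(\+\sigma)$. I would then apply \cref{lem:n_step:analogue_of_one_step_induction} in $\chop{T}$ to the singleton $\{\+\tau\}$ and the finite family $\{\project{g_n}{\mu}{B} : \mu \in A(\+\sigma)\}$ (each member open in $\chop{T}$ by the same openness argument), further reducing the task to showing that each $\project{g_n}{\mu}{B}$ is $\chop{\+g}$-big above $\+\tau^\preceq \cap \chop{T}$.

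The main obstacle, and the step that requires extra care, is exactly this last reduction: \cref{lem:n_step:analogue_of_one_step_induction} demands largeness above every finite prefix-free subset of $\+\tau^\preceq \cap \chop{T}$, not merely above each singleton. I would close this gap by a \emph{second} invocation of the induction hypothesis, applied now to $\chop{T}$, to an arbitrary finite prefix-free $F \subseteq \+\tau^\preceq \cap \chop{T}$, to $\project{g_n}{\mu}{B}$, and to $\chop{\+g}$. The pointwise hypothesis it demands unwinds straight back to the assumption of the lemma: for each $\+\tau'' \in F^\preceq \cap \chop{T}$ the pair $(\+\tau'', \mu)$ lies in $A^\preceq \cap T$, using $(\+\tau'', \mu) \succeq (\+\sigma, \mu) \in A$ together with $\mu \in T(\+\sigma) \subseteq T(\+\tau'')$ from the forest-system end-extension property; so the hypothesis gives that $B$ is $\+g$-big above $(\+\tau'', \mu)$, which is precisely the statement that $\project{g_n}{\mu}{B}$ is $\chop{\+g}$-big above $\+\tau''$. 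Once this nested double use of the induction hypothesis is set up, all that remains is bookkeeping.
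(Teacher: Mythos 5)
Your proposal is correct and follows essentially the same route as the paper's proof: induction on $\length{T}$, reduction to showing $C=\project{g_n}{A}{B}$ is $\chop{\+g}$-big above $\chop{A}$, and an application of \cref{lem:n_step:analogue_of_one_step_induction} to the family $\left\{ \project{g_n}{\mu}{B} : \mu\in A(\+\s) \right\}$ above each singleton $\+\tau$. The one point where you go beyond the paper is your explicit second invocation of the induction hypothesis to upgrade ``$\chop{\+g}$-big above every tuple in $\+\tau^\preceq\cap\chop{T}$'' to ``$\chop{\+g}$-big above every finite prefix-free subset of $\+\tau^\preceq\cap\chop{T}$'', which is what the hypothesis of \cref{lem:n_step:analogue_of_one_step_induction} actually demands; the paper passes over this step silently, so your version is the more careful of the two.
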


\begin{proof}
	By induction on the length of~$T$. We may assume that~$A$ is finite and prefix-free. We need to show that $C = \project{g_n}{A}{B}$ is $\chop{g}$-big above~$\chop{A}$.  Let~$\+\tau\in (\chop{A})^\preceq\cap \chop{T}$. We claim that~$C$ is $\chop{\+g}$-big above~$\+\tau$ (and then apply the induction hypothesis). Let~$\+\s = \+\tau^{-\cchop{A}}$. Then $C\cap \+\s^\preceq$ equals $\bigcap_{\mu\in A(\+\s)}\project{g_n}{\mu}{B}$. By assumption, each $\project{g_n}{\mu}{B}$ is $\chop{g}$-big above every tuple in $\+\s^\preceq\cap \chop{T}$; we apply the analogue of \cref{lem:two_step:one_step_induction_for_weak_concatenation} mentioned above.
\end{proof}

\begin{corollary} \label{cor:n_step:weak_concatenation_property}
	Let~$T$ be a tree system, let~$A,B,C\subseteq T$, and suppose that~$C$ is open in~$T$. Suppose that~$B$ is $\+g$-big above~$A$, and that~$C$ is $\+g$-big above every tuple in $B^\preceq \cap T$. Then~$C$ is $\+g$-big above~$A$, and in fact every finite $\+g$-bushy forest system $S\subseteq T$ which witnesses that~$B$ is $\+g$-big above~$A$ has an end-extension $R\subseteq T$ which witnesses that~$C$ is $\+g$-big above~$A$.
\end{corollary}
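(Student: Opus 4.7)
The plan is to mimic the length-2 argument (\cref{lem:two_step:weak_concatentation}) and deduce the statement by combining \cref{lem:n_step:weak_concatenation_the_meat} with the restricted concatenation bullet point stated just before \cref{lem:n_step:analogue_of_one_step_induction}.

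First I would establish the non-effective half: $C$ is $\+g$-big above $B$. This is exactly the conclusion of \cref{lem:n_step:weak_concatenation_the_meat} applied with $A$ replaced by $B$ and $B$ replaced by $C$. The hypotheses of that lemma require that $C$ be open in~$T$, which is given, and that $C$ be $\+g$-big above every tuple in $B^\preceq \cap T$, which is also precisely the hypothesis of the corollary. So there is nothing to check beyond citing the lemma.

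Next I would produce the end-extension. Let $S\subseteq T$ be any finite $\+g$-bushy forest system witnessing that $B$ is $\+g$-big above~$A$, and let $E$ be its set of leaves. By definition $E\subseteq B$, $E$ is finite and prefix-free, and $E\subseteq T$. Since $C$ is $\+g$-big above~$B$, it is in particular $\+g$-big above~$E$; by \cref{rmk:commutativity_of_projection_operation}-style extraction (or rather the analogue of \cref{rmk:two_step:minimal_bigness_witness} obtained from the proof of the length-$n$ version of \cref{lem:two_step:bigness} mentioned in the text), we may choose a witnessing finite $\+g$-bushy forest system $R'\subseteq T$ above~$E$ whose leaves lie in~$C$. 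Forming the concatenation $R = S\conc R'$ as defined earlier in \cref{subsec:step_n:forest_systems}, we have that $R\subseteq T$ (since both $S$ and $R'$ are subsets of~$T$), that $R$ is $\+g$-bushy (since both $S$ and $R'$ are), that $R$ is an end-extension of~$S$, and that the leaves of~$R$ are the leaves of $R'$, hence lie in~$C$. Thus $R$ witnesses that $C$ is $\+g$-big above~$A$, completing the proof.

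There is no real obstacle here: all the combinatorial work is already embedded in \cref{lem:n_step:weak_concatenation_the_meat}, and the end-extension part is a purely formal concatenation, exactly as in the length-2 case. The only point requiring mild care is noting that the witnessing system $R'$ above~$E$ can be taken inside~$T$, but this is the analogue of \cref{rmk:two_step:minimal_bigness_witness} that was already observed to transfer to length~$n$ via the proof of the length-$n$ analogue of \cref{lem:two_step:bigness}.
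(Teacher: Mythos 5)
Your proof is correct and follows exactly the route the paper intends: the corollary is obtained by applying \cref{lem:n_step:weak_concatenation_the_meat} (with $B$ and $C$ in place of its $A$ and $B$) to get that $C$ is $\+g$-big above every finite prefix-free subset of~$B$, and then invoking the restricted concatenation observation (the bullet point preceding \cref{lem:n_step:analogue_of_one_step_induction}) via $R = S\conc R'$, with the witness $R'$ taken inside~$T$ by the length-$n$ analogue of \cref{rmk:two_step:minimal_bigness_witness}. Nothing is missing.
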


As a corollary we get the analogue of \cref{lem:two_step:can_take_full_subtree}:
\begin{itemize}
	\item If~$T$ is a bounded and balanced $\+b$-bushy tree system above~$\+\s$, and $B\subset T$ is open in~$T$ and $\+b$-small above~$\+\s$, then for every~$m$ there is some $\+\tau\in T$ such that $|\tau_i|\ge m$ for all $i\le \length{T}$, and above which~$B$ is $\+b$-small.
\end{itemize}

\subsection{The notion of forcing and restriction maps} 
\label{sub:n_step:the_notion_of_forcing_and_the_generic}

We let~$B_{\DNC^n}$ be the set of tuples $\+\tau\in \Strings{n}$ such that either $\chop{\+\tau}\in B_{\DNC^{n-1}}$, or $\tau_n \in B_{\DNC^{\cchop{\+\tau}}}$, that is, if there is some~$e<|\tau_n|$ such that $\tau_n(e) = J^{\cchop{\+\tau}}(e)$.

For brevity, for a tuple~$\+\s\in \Strings{n}$ we let $|\+\s| = \min \left\{ |\s_i| \,:\,  i\le n \right\}$. When a tuple-length~$n$ is clear from the context, then for a function~$g$ we let $\+g = (g,g,\dots, g)$.

\smallskip

We let~$\PP_n$ be the set of tuples $\pp = (\+{\s}^\pp,T^\pp,B^\pp,h^\pp,b^\pp)$ satisfying:
\begin{enumerate}
	\item $T^\pp$ is a computably bounded, computable, balanced tree system above~$\+{\s}^\pp$;
	\item $h^\pp\in \Quick$ and~$T^\pp$ is~$\+{h}^\pp$-bushy;
	\item $B^\pp\subset T^\pp$ is c.e.\ and open in~$T^\pp$, and $B^\pp\supseteq B_{\DNC^n}\cap T^\pp$;
	\item $b^\pp\in \Quick$ and $B^\pp$ is $\+{b}^\pp$-small above~$\+{\s}^\pp$; and
	\item $h^\pp\gg b^\pp$ and $h^\pp\ge b^\pp$ above $|\+{\s}^\pp|$.
\end{enumerate}

We define a partial ordering on~$\PP_n$ as follows. A condition~$\qq$ extends a condition~$\pp$ if $\+{\s}^\pp\preceq \+{\s}^\qq$, $T^\qq$ is a subsystem of~$T^\pp$, $B^\pp \cap T^\qq\subseteq B^\qq$, and $h^\qq\le h^\pp$ and $b^\qq\ge b^\pp$ above $|\+{\s}^\qq|$.

\smallskip

The assignment of closed sets $X^\pp = [T^\pp]\setminus [B^\pp]^\prec$ for $\pp\in \PP_n$ is acceptable; the proof is identical to the proof of \cref{lem:two_step:acceptability_of_closed_sets}.

If $\GG\subset \PP_n$ is sufficiently generic then we denote the generic tuple (the element of the singleton $\bigcap_{\pp\in \GG} [T^\pp]\setminus [B^\pp]^\prec$) by $\+x^\GG$. As above, every condition in~$\PP_n$ forces that~$x_n^\GG$ is $\DNC$ relative to $\chop{\+x^\GG}$.

\subsubsection*{The restriction maps} 
\label{ssub:n_step:the_restriction_maps}

For all $n\ge 2$, define $i_n\colon \PP_n\to \PP_{n-1}$ by letting
\[
	i_n(\qq) = \big( \chop{\+\s^\qq}, \chop{T^\qq},
	\project{b^\qq}{\s^\qq_n}{B^\qq},  h^\qq,b^\qq \big),
\]
where we have
\[
	\project{b^\qq}{\s^\qq_n}{B^\qq} =
	 \left\{ \+\tau\in \chop{T^\qq}  \,:\, B^\qq(\+\tau) \text{ is $b^\qq$-big above }\s_n^\qq   \right\}.
\]
It is routine to check that $i_n(\qq)\in \PP_{n-1}$ for all $\qq\in \PP_n$. Inductively we define $\QQ_n\subset \PP_n$: $\QQ_1 = \PP_1$, and $\QQ_n$ is the set of conditions $\qq\in \QQ_n$ such that:
\begin{itemize}
	\item $i_n(\qq)\in \QQ_{n-1}$; and
	\item $\project{b^\qq}{\+\s^\qq}{B^\qq} = \left\{ \+\tau\in \chop{T^\qq} \,:\,  \s^\qq_n\in B^\qq(\+\tau) \right\}$.
\end{itemize}
We again observe that for all $\qq\in \QQ_n$, $\chop{X^\qq} = X^{i_n(\qq)}$; the proof is the same as above. The proof that the restriction of~$i_n$ to~$\QQ_n$ is order-preserving is identical to that in the proof of \cref{lem:TwoStep:the_restriction_map}.

\begin{lemma} \label{lem:n_step:the_dense_suborderings}
	There is a map $\nu_n\colon \PP_n\to \QQ_n$ such that:
	\begin{enumerate}
		\item $\nu_n(\qq)\le \qq$ for all $\qq\in \PP_n$; and
		\item $i_n\circ \nu_n = \nu_{n-1}\circ i_n$.
	\end{enumerate}
\end{lemma}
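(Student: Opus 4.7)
The plan is to define $\nu_n$ by recursion on $n$, maintaining the inductive invariant that $\nu_n$ modifies only the $B$-component of a condition, leaving $\+\s$, $T$, $h$, $b$ untouched. For the base case $n=1$, since $\QQ_1 = \PP_1$, I take $\nu_1$ to be the identity. For the recursive step, given $\qq \in \PP_n$, I first compute $\rr = \nu_{n-1}(i_n(\qq))$. By the inductive invariant, $\rr$ has $\+\s^\rr = \chop{\+\s^\qq}$, $T^\rr = \chop{T^\qq}$, $h^\rr = h^\qq$, and $b^\rr = b^\qq$, with $B^\rr \supseteq B^{i_n(\qq)} = \project{b^\qq}{\s_n^\qq}{B^\qq}$ (the inclusion potentially strict, reflecting modifications cascaded from deeper levels). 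I then set $\nu_n(\qq) = (\+\s^\qq, T^\qq, B, h^\qq, b^\qq)$ where, for each $\+\tau \in \chop{T^\qq}$, $B(\+\tau) = T^\qq(\+\tau)$ if $\+\tau \in B^\rr$ and $B(\+\tau) = B^\qq(\+\tau)$ otherwise. This generalizes the $n=2$ construction from the proof of \cref{lem:TwoStep:the_restriction_map}, using $B^\rr$ rather than merely $B^{i_n(\qq)}$ to drive the modification.

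Checking $\nu_n(\qq) \le \qq$ and the propagation of the invariant is automatic, since $B \supseteq B^\qq$ and only $B$ changes. The key computation toward membership in $\QQ_n$ is that both $\project{b^\qq}{\s_n^\qq}{B}$ and $\{\+\tau \in \chop{T^\qq} : \s_n^\qq \in B(\+\tau)\}$ equal $B^\rr$. For the first: if $\+\tau \in B^\rr$, then $B(\+\tau) = T^\qq(\+\tau)$ is $h^\qq$-bushy above $\s_n^\qq$ and so $b^\qq$-big; if $\+\tau \notin B^\rr$, then $B(\+\tau) = B^\qq(\+\tau)$, and if it were $b^\qq$-big above $\s_n^\qq$ then $\+\tau \in B^{i_n(\qq)} \subseteq B^\rr$, a contradiction. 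A parallel argument, using the trivial observation that $\s_n^\qq \in B^\qq(\+\tau)$ already gives membership in $B^{i_n(\qq)}$, handles the second set. Consequently $i_n(\nu_n(\qq)) = \rr$, which simultaneously establishes the commutativity $i_n \circ \nu_n = \nu_{n-1} \circ i_n$ and, inductively, $i_n(\nu_n(\qq)) \in \QQ_{n-1}$, so that $\nu_n(\qq) \in \QQ_n$.

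The main obstacle is verifying $\nu_n(\qq) \in \PP_n$, specifically that $B$ is $\+b^\qq$-small above $\+\s^\qq$; the remaining clauses transfer directly, since computability, bushiness, and the inequalities on $h^\qq, b^\qq$ are inherited from $\qq$, while openness of $B$ in $T^\qq$ combines openness of $B^\qq$ in $T^\qq$ with openness of $B^\rr$ in $T^\rr = \chop{T^\qq}$. For smallness I argue by contradiction: if $S \subseteq T^\qq$ were a finite $\+b^\qq$-bushy forest system above $\+\s^\qq$ with leaves in $B$, then for each leaf $\+\tau$ of $\chop{S}$, either $\+\tau \in B^\rr$ already, or every leaf $\rho$ of the length-one forest $S(\+\tau)$ satisfies $\rho \in B^\qq(\+\tau)$, so $B^\qq(\+\tau)$ is $b^\qq$-big above $\s_n^\qq$ and hence $\+\tau \in B^{i_n(\qq)} \subseteq B^\rr$. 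In either case $\chop{S}$ is a $\chop{\+b^\qq}$-bushy forest system above $\chop{\+\s^\qq} = \+\s^\rr$ with all leaves in $B^\rr$, contradicting that $B^\rr$ is $\chop{\+b^\qq}$-small above $\+\s^\rr$.
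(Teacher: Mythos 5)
Your construction is correct and is essentially the paper's: unfolding your recursive definition of $B$ via $B^\rr$ yields exactly the paper's set $B^{\nu_n(\qq)}=\{\+\tau\in T^\qq : \+\tau\rest{k}\in C_k\text{ for some }k\le n\}$ with $C_k = \project{\+b^\qq}{\+\s^\qq\,\rest{(k,n]}}{B^\qq}$, and your verification of smallness and of the two defining clauses of $\QQ_n$ matches the paper's, with the only cosmetic difference that you peel off one coordinate and invoke the inductive hypothesis for $\rr$ where the paper walks down all $n$ coordinates of a witnessing forest system explicitly.
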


In particular, $\QQ_n$ is dense in~$\PP_n$.

\begin{proof}
	We omit the indices~$n$ and	$n-1$ from $i_n$, $\nu_n$ etc.; they will be clear from the context.

	Let $\qq\in \PP_n$. For brevity we let $C_n = B^\qq$ and for $k\in \{1,\dots, n-1\}$ we let $C_k = \project{\+b^\qq}{\+\s^\qq\,\,\rest{(k,n]}}{B^\qq}$. \Cref{rmk:commutativity_of_projection_operation} says that if $k<m\le n$ then $C_k = \project{\+b^\qq}{\+\s^\qq\,\,\rest{(k,m]}}{C_m}$.

	We define a tuple $\nu(\qq) = (\+\s^\qq,T^\qq,B^{\nu(\qq)}, h^\qq,b^\qq)$ by letting
	\[
		B^{\nu(\qq)} = \left\{ \+\tau\in T^\qq \,:\,  \+\tau\rest{k}\,\in C_k \text{ for some }k\le n \right\}.
	\]

	The set	$B^{\nu(\qq)}$ is $\+b^\qq$-small above~$\+\s^\qq$. For let~$D$ be the set of leaves of a $\+b^\qq$-bushy finite tree system~$S\subset T^\qq$ above~$\+\s^\qq$. Since $C_1$ is $b^\qq$-small above~$\s^\qq_1$ we find some $\tau_1\in (\dom_1 D)\setminus C_1$. Since $C_1 = \project{b^\qq}{\s_2^\qq}{C_2}$, $C_2(\tau)$ is $b^\qq$-small above~$\s^\qq_2$; we find some~$\tau_2$ such that $(\tau_1,\tau_2)\in (\dom_2 D) \setminus C_2$; and so on, we find some $\+\tau\in D\setminus B^{\nu(\qq)}$. We conclude that $\nu(\qq)\in \PP_n$ (and $\nu(\qq)\le \qq$).

	Now $B^{i(\qq)} = C_{n-1}$; so $B^{\nu(i(\qq))}$ is the set of tuples $\+\tau\in \chop{T^\qq}$ such that $\+\tau\rest{k}\,\in C_k$ for some $k\le n-1$.

	Let~$\+\tau\in T^\qq$. If $\chop{\+\tau}\in B^{\nu(i(\qq))}$ then $B^{\nu(\qq)}(\+\tau)= T^\qq(\+\tau)$, in particular $\s^\qq_n\in B^{\nu(\qq)}(\+\tau)$. Otherwise, $B^{\nu(\qq)}(\+\tau)= B^\qq(\tau)$, and since in this case $\+\tau\notin C_{n-1}$ we see that $B^{\nu(\qq)}(\+\tau)$ is $b^\qq$-small above $\s^\qq_n$. We conclude that $B^{i(\nu(\qq))} = \project{b^\qq}{\s^\qq_n}{B^{\nu(\qq)}} = B^{\nu(i(\qq))}$ and so that $i(\nu(\qq)) = \nu(i(\qq))$.

	We also conclude that $\+\tau\in \project{b^\qq}{\s^\qq_n}{B^{\nu(\qq)}}$ if and only if $\s^\qq_n\in B^{\nu(\qq)}(\+\tau)$. By induction, $\nu(i(\qq))\in \QQ_{n-1}$, so $\nu(\qq)\in \QQ_n$.
\end{proof}

\begin{proposition} \label{prop:n_step:restriction_maps}
	$i_n\rest{\QQ_n}$ is a restriction map from~$\QQ_n$ to~$\QQ_{n-1}$.
\end{proposition}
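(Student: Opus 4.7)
The plan is to adapt the proof of \cref{lem:TwoStep:the_restriction_map} to arbitrary~$n$. Order-preservation of $i_n$ on $\QQ_n$ generalizes directly: given $\qq\le \ss$ in $\QQ_n$, the only nontrivial containment to verify is $B^{i_n(\ss)}\cap T^{i_n(\qq)}\subseteq B^{i_n(\qq)}$. If $\+\tau$ lies in the intersection, the $\QQ_n$-defining property for $\ss$ gives $\s^\ss_n\in B^\ss(\+\tau)$; openness of $B^\ss$ in $T^\ss$, combined with $T^\qq\subseteq T^\ss$ and $\s^\qq_n\succeq \s^\ss_n$, propagates $(\+\tau,\s^\qq_n)$ into $B^\ss\cap T^\qq\subseteq B^\qq$, and the $\QQ_n$-property for $\qq$ then places $\+\tau$ in $B^{i_n(\qq)}$.

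For the density clause, fix $\qq\in \QQ_n$ and $\pp\in \QQ_{n-1}$ with $\pp\le i_n(\qq)$. By \cref{lem:n_step:the_dense_suborderings} it suffices to produce $\rr\in \PP_n$ with $\rr\le \qq$ and $i_n(\rr)\le \pp$: applying $\nu_n$ then yields a witness in $\QQ_n$, using $i_n\circ \nu_n=\nu_{n-1}\circ i_n$ together with the inequalities $\nu_n(\rr)\le \rr$ and $\nu_{n-1}(i_n(\rr))\le i_n(\rr)$. To construct $\rr$, generalize the length-$2$ construction: since $\pp\le i_n(\qq)$ we have $T^\pp\subseteq \chop{T^\qq}$, so the length-$n$ tree system $T$ above $(\+\s^\pp,\s^\qq_n)$ defined by $\chop{T}=T^\pp$ and $T(\+\tau)=T^\qq(\+\tau)$ for $\+\tau\in T^\pp$ is well-defined and $(\+h^\pp,h^\qq)$-bushy. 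Define $B\subseteq T$ by $B(\+\tau)=T(\+\tau)$ when $\+\tau\in B^\pp$ and $B(\+\tau)=B^\qq(\+\tau)$ otherwise; then $B$ is c.e., open in $T$, and contains $B_{\DNC^n}\cap T$. As in the two-step case, $B$ is $(\+b^\pp,b^\qq)$-small above $(\+\s^\pp,\s^\qq_n)$: if $S\subseteq T$ were a $(\+b^\pp,b^\qq)$-bushy witness to largeness, then $\chop{S}\subseteq T^\pp$ would be $\+b^\pp$-bushy, and $\+b^\pp$-smallness of $B^\pp$ would yield a leaf $\+\tau$ of $\chop S$ outside $B^\pp$; since $\pp\le i_n(\qq)$ forces $B^\qq(\+\tau)$ to be $b^\qq$-small above $\s^\qq_n$, the $b^\qq$-bushy tree $S(\+\tau)$ would have a leaf $\rho\notin B^\qq(\+\tau)=B(\+\tau)$, contradicting that $(\+\tau,\rho)\in S$ lies in $B$. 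Using $h^\pp\le h^\qq$ and $b^\pp\ge b^\qq$ above $|\+\s^\pp|$ (both inherited from $\pp\le i_n(\qq)$), the length-$n$ analogue of \cref{lem:two_step:can_take_full_subtree} supplies $\+\s\in T$ whose every coordinate has length at least $\max\{|\+\s^\pp|,|\s^\qq_n|\}$ and above which $B$ is $\+b^\pp$-small; set $\rr=(\+\s,\,T\cap\+\s^\preceq,\,B\cap\+\s^\preceq,\,h^\pp,\,b^\pp)$. The verifications $\rr\le \qq$ and $i_n(\rr)\le \pp$ then transcribe those from the length-$2$ proof.

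The one genuine new complication is ensuring $T^\rr$ is balanced---needed both for $\rr\in \PP_n$ and to invoke the full-subtree lemma above. A balanced level of the naive $T$ constructed above would need to be simultaneously balanced in $T^\pp$ and in $T^\qq$, yet the two infinite sets of balanced levels need not meet cofinally in general. The remedy is to build $T^\rr$ level-by-level inside $T$ rather than as a full subsystem: starting from $\+\s$, iteratively extend every current leaf of $T^\rr$ to a $\+b^\pp$-bushy finite collection of tuples whose coordinates all share a common length, using the absence of leaves in $\dom_1 T^\pp$ together with the end-extension structure and balancedness of $T^\qq$ to produce bushy extensions inside $T$ that reach the next common synchronized level. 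This mirrors the level-by-level technique already used in the proof of \cref{prop:two_step:forcing_Pi_2}, and once it is carried out the remaining checks are routine.
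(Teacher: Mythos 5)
Your treatment of order-preservation and the overall architecture of the density argument --- reduce to finding $\rr\in\PP_n$ via $\nu_n$ and \cref{lem:n_step:the_dense_suborderings}, glue $T$ from $\chop{T}=T^\pp$ and $T(\+\tau)=T^\qq(\+\tau)$, define $B$, verify smallness, and cut down to a suitable $\+\s$ --- is exactly the paper's route; the paper simply declares the density step ``identical'' to \cref{lem:TwoStep:the_restriction_map}. You are also right that balancedness of the glued system is the one point where ``identical'' is not literally true: for $n=2$ every balanced level of $T^\qq$ is automatically a balanced level of $T$, because $\dom T=T^\pp$ is just a leafless tree, whereas for $n\ge 3$ a level must be balanced for $T^\pp$ (to synchronise coordinates $2,\dots,n-1$) and simultaneously, in effect, for $T^\qq$ (to synchronise the last coordinate), and nothing in the definition of the partial order forces $T^\pp$ and $T^\qq$ to share infinitely many balanced levels. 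Credit for spotting this.

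However, your proposed repair does not close the gap. First, a parameter slip: extending leaves to a $\+{b^\pp}$-bushy collection cannot yield a condition, since $T^\rr$ must be $\+{h^\rr}$-bushy with $h^\rr\gg b^\rr=b^\pp$; you need (essentially) full $\+{h^\pp}$-bushiness. Second, and more seriously, the ``next common synchronized level'' is precisely what may fail to exist, and the level-by-level construction does not manufacture it. To stop every coordinate of $S_{k+1}\subseteq T$ at a common length $m$ you need (i) for each $\zeta_1\in\dom_1 T^\pp$ of length $m$, every leaf of $T^\pp(\zeta_1)$ above the current position to have all components of length at least $m$ --- in general guaranteed only when $m$ is a balanced level of $T^\pp$ --- and (ii) for each $\+\zeta\in T^\pp$ with all components of length $m$, every leaf of the finite tree $T^\qq(\+\zeta)$ above the current position to have length at least $m$ --- in general guaranteed only when $m$ is a balanced level of $T^\qq$ (one can arrange $T^\qq$ so that the trees $T^\qq(\+\zeta)$ only ``catch up'' to the level of $\+\zeta$ at balanced levels of $T^\qq$, so (ii) genuinely fails elsewhere; extending $\+\zeta$ further to deepen $T^\qq(\+\zeta)$ destroys the synchronisation of the first coordinates). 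So your construction is driven straight back to needing infinitely many common balanced levels. An honest fix must come from elsewhere: for instance, observe that every extension actually constructed in the paper has its balanced levels drawn from those of the system it refines, and add this requirement to the ordering (or to $\QQ_{n-1}$); with that amendment both your argument and the paper's go through.
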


\begin{proof}
	It remains to show that if $\qq\in \QQ_n$ and $\pp\in \QQ_{n-1}$ extends $i_n(\qq)$ then there is some $\rr\in \QQ_n$ extending~$\qq$ such that $i_n(\rr)\le \pp$. By using the map~$\nu_n$, it suffices to find~$\rr\in \PP_n$. The proof is identical to that of \cref{lem:TwoStep:the_restriction_map}.
\end{proof}

\begin{lemma} \label{lem:n_step:restriction_is_onto}
	$i_n\rest{\QQ_n}$ is onto~$\QQ_{n-1}$.
\end{lemma}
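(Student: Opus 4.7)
The plan is to avoid constructing $\qq\in\QQ_n$ by hand (which would entangle the choice of $\s^\qq_n$ with avoiding $B_{\DNC^{\+\tau}}$ for every $\+\tau\in T^\pp\setminus B^\pp$ --- a genuine obstruction). Instead, I would first produce a preimage $\qq_0\in\PP_n$ of $\pp$ under $i_n$, and then apply the map $\nu_n\colon\PP_n\to\QQ_n$ from \cref{lem:n_step:the_dense_suborderings}. The commutativity identity $i_n\circ\nu_n = \nu_{n-1}\circ i_n$, together with the fact that $\nu_{n-1}$ is the identity on $\QQ_{n-1}$, will then yield $i_n(\nu_n(\qq_0)) = \nu_{n-1}(\pp) = \pp$.

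To build $\qq_0$, fix any $\mu\in\w^{<\w}$ with $|\mu|\ge|\+\s^\pp|$, set $\+\s^{\qq_0} = (\+\s^\pp,\mu)$, $h^{\qq_0}=h^\pp$, $b^{\qq_0}=b^\pp$, and let $T^{\qq_0}$ be the balanced $\+h^\pp$-bushy computable tree system with $\chop{T^{\qq_0}} = T^\pp$ whose fibre over $\+\tau\in T^\pp$ is the full $h^\pp$-bounded tree above $\mu$ of height $\max(|\mu|,\min_i|\tau_i|)$; the height is monotone in $\+\tau$, so end-extensions hold, and balanced levels of $T^\pp$ exceeding $|\mu|$ remain balanced for $T^{\qq_0}$. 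Define
\[
B^{\qq_0} \,=\, \{(\+\tau,\rho)\in T^{\qq_0}: \+\tau\in B^\pp\} \,\cup\, \bigl(B_{\DNC^n}\cap T^{\qq_0}\bigr).
\]
This is c.e., open in $T^{\qq_0}$, and contains $B_{\DNC^n}\cap T^{\qq_0}$. For $\+b^\pp$-smallness above $(\+\s^\pp,\mu)$: given any $\+b^\pp$-bushy finite subsystem $S\subseteq T^{\qq_0}$ above $(\+\s^\pp,\mu)$, $\chop{S}$ is $\chop{\+b^\pp}$-bushy above $\+\s^\pp$, so by $\+b^\pp$-smallness of $B^\pp$ it has a domain-leaf $\+\tau\notin B^\pp$; since $B_{\DNC^{n-1}}\cap T^\pp\subseteq B^\pp$ we have $\+\tau\notin B_{\DNC^{n-1}}$, whence $B^{\qq_0}(\+\tau) = B_{\DNC^{\+\tau}}\cap T^{\qq_0}(\+\tau)$ is $2$-small --- and a fortiori $b^\pp$-small --- above $\mu$, producing a leaf of $S(\+\tau)$ outside $B^{\qq_0}$. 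The identity $i_n(\qq_0)=\pp$ is then immediate: the stem, tree, $h$, and $b$ components of $i_n(\qq_0)$ match $\pp$ by construction, and $\project{b^\pp}{\mu}{B^{\qq_0}} = B^\pp$ because $B^{\qq_0}(\+\tau) = T^{\qq_0}(\+\tau)$ is trivially $b^\pp$-big above $\mu$ when $\+\tau\in B^\pp$, while $B^{\qq_0}(\+\tau) = B_{\DNC^{\+\tau}}\cap T^{\qq_0}(\+\tau)$ is $b^\pp$-small above $\mu$ when $\+\tau\in T^\pp\setminus B^\pp$.

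Setting $\qq = \nu_n(\qq_0)\in \QQ_n$, the commutativity clause of \cref{lem:n_step:the_dense_suborderings} gives $i_n(\qq) = \nu_{n-1}(\pp)$, so everything reduces to showing that $\nu_{n-1}$ is the identity on $\QQ_{n-1}$. I would prove this by induction on $n$: the base $n-1=1$ is trivial since $\QQ_1=\PP_1$ and $\nu_1$ involves only the top-level closure $C_1 = B^\pp$. For the inductive step, inspect the definition of $B^{\nu(\pp)}$: the level-$(n-2)$ closure $C_{n-2} = \project{b^\pp}{\s^\pp_{n-1}}{B^\pp}$ equals $\{\+\tau'\in\chop{T^\pp}: \s^\pp_{n-1}\in B^\pp(\+\tau')\}$ by the second clause in the definition of $\QQ_{n-1}$, and openness of $B^\pp$ in $T^\pp$ combined with $\tau_{n-1}\succeq\s^\pp_{n-1}$ forces every $\+\tau$ with $\+\tau\rest{n-2}\in C_{n-2}$ into $B^\pp$; the lower-level closures $C_k$ for $k<n-2$ reduce, via the commutativity of projections (\cref{rmk:commutativity_of_projection_operation}), to the corresponding closures used to define $\nu_{n-2}$ applied to $i_{n-1}(\pp)\in\QQ_{n-2}$, which add nothing by the inductive hypothesis. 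Hence $B^{\nu(\pp)}=B^\pp$, giving $\nu_{n-1}(\pp)=\pp$ and so $i_n(\qq)=\pp$.

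The main conceptual point to flag is that one should resist the temptation to construct $\qq\in\QQ_n$ directly: once one routes through $\nu_n$ and invokes its compatibility with $i_n$, the genuinely difficult $\DNC$-avoidance problem at the stem never needs to be addressed, and the ontoness becomes a routine lifting together with a short closure-is-identity induction.
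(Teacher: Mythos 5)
There is a genuine gap, and it sits exactly where you claim to have routed around the difficulty: in the choice of $\mu=\s^{\qq_0}_n$. You fix \emph{any} $\mu$ with $|\mu|\ge|\+\s^\pp|$ and then assert, for a domain-leaf $\+\tau\notin B^\pp$, that $B^{\qq_0}(\+\tau)=B_{\DNC^{\+\tau}}\cap T^{\qq_0}(\+\tau)$ is $2$-small above~$\mu$. This holds only if $\mu\notin B_{\DNC^{\+\tau}}$: the set $B_{\DNC^{\+\tau}}$ is upwards closed, so if $\mu(e)=J^{\+\tau}(e)$ for some $e<|\mu|$ then it contains every extension of~$\mu$ and is trivially big above~$\mu$. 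The same hidden hypothesis underlies both the $\+b^\pp$-smallness of $B^{\qq_0}$ above the stem (a requirement for $\qq_0\in\PP_n$ itself, so the detour through $\nu_n$ does not help here --- $\nu_n$ only repairs the closure clause distinguishing $\QQ_n$ from $\PP_n$, not the smallness clause of $\PP_n$) and the identity $\project{b^\pp}{\mu}{B^{\qq_0}}=B^\pp$ (a single $\+\tau\in T^\pp\setminus B^\pp$ with $\mu\in B_{\DNC^{\+\tau}}$ enters the projection and destroys $i_n(\qq_0)=\pp$). Moreover, a nonempty $\mu$ avoiding $B_{\DNC^{\+\tau}}$ for sufficiently many $\+\tau$ need not exist at all: $\mu$ must be $h^\pp$-bounded, and for a fixed argument $e$ the values $J^{\+\tau}(e)$ may exhaust $[0,h^\pp(e))$ as $\+\tau$ varies, each value attained on a set of $\+\tau$ that is not small. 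So the obstruction you flag at the outset is not avoided by your construction; it is silently assumed away.

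The repair is exactly the paper's (one-line) proof: take $\mu=\emptystring$, i.e.\ $T^\qq(\+\s)=(h^\pp)^{\le|\+\s|}$, with $B^\qq(\+\s)=T^\qq(\+\s)$ for $\+\s\in B^\pp$ and $B^\qq(\+\s)=B_{\DNC^{\+\s}}$ otherwise. Since $\emptystring\notin B_{\DNC^{\+\tau}}$ for every $\+\tau$, all the smallness claims go through, and the second defining clause of $\QQ_n$ holds outright: $B^\qq(\+\tau)$ is $b^\pp$-big above $\emptystring$ iff $\emptystring\in B^\qq(\+\tau)$ iff $\+\tau\in B^\pp$. Thus the constructed condition already lies in $\QQ_n$ and the detour through $\nu_n$ is unnecessary. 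The one part of your argument that is correct and genuinely beyond what the paper records --- the induction showing that $\nu_{n-1}$ is the identity on $\QQ_{n-1}$, using openness of $B^\pp$ and the second $\QQ_{n-1}$ clause --- is sound, but it is not needed once the new stem coordinate is empty.
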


\begin{proof}
	Let $\pp\in \QQ_{n-1}$. We define $\qq\in \QQ_n$ such that $i_n(\qq)=\pp$ by letting, for $\+\s\in T^\pp$, $T^\qq(\+\s) = (h^\pp)^{\le |\+\s|}$, and let $B^\qq(\+\s) = T^\qq(\+\s)$ if $\+\s\in B^\pp$, otherwise $B^\qq(\+\s) = B_{\DNC^{\+\s}}$. 
\end{proof}


\subsubsection*{Totality} 
\label{ssub:n_step:totality}

\begin{proposition} \label{prop:n_step:totality}
	Let~$C\subseteq \Baire{n}$ be~$\Pi^0_2$ and let~$\pp\in \PP_n$. If $\pp\force \+x^\GG\in C$ then~$\pp$ has an extension which strongly forces that~$\+x^\GG\in C$.
\end{proposition}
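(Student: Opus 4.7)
The plan is to follow the argument of \cref{prop:two_step:forcing_Pi_2} essentially verbatim, replacing $(g,g)$ with $\+g$ throughout and using the $n$-step versions of the supporting lemmas developed in \cref{subsec:step_n:forest_systems}. First, by \cref{lem:density_of_gg} fix some $g\in\Quick$ with $h^\pp\gg g\gg b^\pp$; then, by the $n$-step analogue of \cref{lem:two_step:can_take_full_subtree} (stated as the bulleted consequence after \cref{cor:n_step:weak_concatenation_property}) together with \cref{lem:n_step:full_subsystem}, we may extend $\+\s^\pp$ and pass to the full subsystem above the extended stem so as to arrange that $h^\pp\ge g\ge b^\pp$ above $|\+\s^\pp|$.

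Fix a uniform sequence $\seq{C_k}$ of c.e.\ subsets of $T^\pp$, open in $T^\pp$, such that $C\cap[T^\pp]=[T^\pp]\cap\bigcap_k[C_k]^\prec$. I first claim that $B^\pp\cup C_k$ is $\+g$-big above every $\+\tau\in T^\pp$, for every $k$. Otherwise the tuple $(\+\tau,\,T^\pp\cap\+\tau^\preceq,\,(B^\pp\cup C_k)\cap\+\tau^\preceq,\,h^\pp,\,g)$ would be a condition extending~$\pp$ strongly forcing $\+x^\GG\notin[C_k]^\prec$, contradicting $\pp\force\+x^\GG\in C$.

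Next I recursively build finite $\+g$-bushy tree subsystems $S_k\subseteq T^\pp$ above $\+\s^\pp$ such that $S_{k+1}$ properly end-extends $S_k$, the leaves of $S_{k+1}$ lie in $B^\pp\cup C_k$, and (for $k\ge 1$) every leaf $\+\tau$ of $S_k$ has all components of common length $\ell_k$, with $\ell_k$ a balanced level of $T^\pp$. Start with $S_0=\{\+\s^\pp\}$. Given $S_k$, the weak concatenation property restricted to $T^\pp$ (\cref{cor:n_step:weak_concatenation_property}) applied to the leaves of $S_k$ produces a $\+g$-bushy finite end-extension $S'_k\subseteq T^\pp$ of $S_k$ whose leaves lie in $B^\pp\cup C_k$. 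Choose a balanced level $\ell_{k+1}$ of $T^\pp$ exceeding $|\tau_i|$ for all components of all leaves of $S'_k$; since $T^\pp$ is $\+h^\pp$-bushy with $h^\pp\ge g$ above $|\+\s^\pp|$, the set of $\+\tau\in T^\pp$ with $|\tau_i|=\ell_{k+1}$ for all $i$ is $\+g$-big above every leaf of $S'_k$, and a further application of \cref{cor:n_step:weak_concatenation_property} yields the required $S_{k+1}$.

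Finally, set $S=\bigcup_k S_k$. By \cref{lem:n_step:infinite_union_of_tree_systems} this is a tree system above $\+\s^\pp$; the $\ell_k$ witness that it is balanced, it is $\+g$-bushy, and it is computable and computably bounded (a tuple of length at most $\ell_k$ lies in $S$ iff it lies in $S_k$). By construction $[S]\setminus[B^\pp]^\prec\subseteq\bigcap_k[C_k]^\prec\cap[S]\subseteq C$. Since $g\gg b^\pp$ and $g\ge b^\pp$ above $|\+\s^\pp|$, the tuple $(\+\s^\pp,\,S,\,B^\pp\cap S,\,g,\,b^\pp)$ is a condition in $\PP_n$ extending~$\pp$ that strongly forces $\+x^\GG\in C$. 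The only step that is not perfectly parallel to the $n=2$ case is the applicability of the weak concatenation property inside the tree system $T^\pp$ when passing from $S_k$ to $S'_k$; this is precisely what \cref{cor:n_step:weak_concatenation_property} supplies, so the argument goes through without further complication.
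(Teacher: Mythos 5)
Your proposal is correct and is essentially the paper's own argument: the paper simply asserts that the proof is identical to that of \cref{prop:two_step:forcing_Pi_2}, and your write-up carries out exactly that translation, invoking the right $n$-step analogues (\cref{lem:n_step:full_subsystem}, the bulleted analogue of \cref{lem:two_step:can_take_full_subtree}, \cref{cor:n_step:weak_concatenation_property}, and \cref{lem:n_step:infinite_union_of_tree_systems}) at the corresponding steps.
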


The proof is identical to the proof of \cref{prop:two_step:forcing_Pi_2}.



\subsection{Minimality} 
\label{sub:step_n:minimality}

Let~$\Gamma\colon \Baire{n}\to 2^\w$ be a Turing functional.

\begin{definition} \label{def:step_n:local-splitting}
	Let $B\subseteq \Strings{n}$. Two sets $A_0,A_1\subset \Strings{n}$ form a \emph{local $\Gamma$-splitting mod~$B$} if for all $\+\tau\in \Strings{n-1}$, the sets $A_0(\+\tau)$ and $A_1(\+\tau)$ $\Gamma(\+\tau,-)$-split mod $B(\+\tau)$.
\end{definition}

\begin{definition} \label{def:n_step:uniform_bigness}
	Let~$A\subset \Strings{n}$ be finite and prefix-free, and let $\mathcal B$ be a collection of subsets of $\Strings{n}$. We say that the sets in~$\mathcal B$ are \emph{uniformly $\+g$-big above~$A$} if $\bigcap_{B\in \mathcal B} \project{g_n}{A}{B}$ is $\chop{\+g}$-big above~$\chop{A}$.
\end{definition}

\begin{lemma} \label{lem:step_n:finding_local_splittings}
	Suppose that $\pp\in \PP_n$ strongly forces that $\Gamma(\+x^\GG)$ is total, and forces that it is not computable from $\chop{\+x^\GG}$. Let $\+\s\in T^\pp$; let $g\in \Quick$ such that $h^\pp\gg g$, and $h^\pp\ge 3g$ and $ g\ge b^\pp$ above $|\+\s|$. Then there are sets $A_0,A_1\subset T^\pp$, uniformly $\+g$-big above~$\+\s$, which locally $\Gamma$-split mod~$B^\pp$.
\end{lemma}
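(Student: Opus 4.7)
The plan is to follow the two-step argument of \cref{lem:two_steps:finding_local_splittings}, with the tuple $\chop{\+\s}$ playing the role of the string~$\s$ there and $\s_n$ playing the role of~$\mu$; fiberwise, above each $\+\tau\in(\chop{\+\s})^\preceq\cap \chop{T^\pp}$, we carry out the one-step splitting construction from \cref{lem:one_step:finding_splittings}.

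Suppose for contradiction that the conclusion fails. Let~$C\subseteq \chop{T^\pp}$ denote the c.e.\ set of $\+\tau\in(\chop{\+\s})^\preceq \cap \chop{T^\pp}$ for which $T^\pp(\+\tau)$ contains subsets $A_0(\+\tau),A_1(\+\tau)$, each $g$-big above~$\s_n$, which $\Gamma(\+\tau,-)$-split mod $B^\pp(\+\tau)$. The assumption is that~$C$ is $\chop{\+g}$-small above~$\chop{\+\s}$. For each $\+\tau\in (\chop{\+\s})^\preceq \cap\chop{T^\pp}\setminus C$, set
\[
	A_\alpha(\+\tau) = B^\pp(\+\tau)\cup\{\rho\in T^\pp(\+\tau)\,:\,\Gamma(\+\tau,\rho)\succeq \alpha\}
\]
and $\Theta(\+\tau) = \{\alpha\in 2^{<\w}\,:\, A_\alpha(\+\tau)\text{ is }g\text{-big above }\s_n\}$. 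Precisely as in the two-step proof (using $h^\pp\ge 3g$, the big subset property, and $\Gamma$'s range being in~$2^\w$), the strings in $\Theta(\+\tau)$ are pairwise comparable, and $B^\pp(\+\tau)$ is $g$-small above~$\s_n$ (else $A_0(\+\tau)=A_1(\+\tau)=B^\pp(\+\tau)$ would trivially split, placing $\+\tau\in C$); strong forcing of totality of $\Gamma(\+x^\GG)$ makes $\Theta(\+\tau)$ finite. Set $\Theta^{\+\tau} = \bigcup \Theta(\+\tau)$; the inclusion $A_\alpha(\+\tau)\subseteq A_\alpha(\+\tau')$ yields monotonicity $\Theta^{\+\tau}\preceq \Theta^{\+\tau'}$ whenever $\+\tau\preceq \+\tau'$ both lie outside~$C$.

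Define the c.e.\ open set
\[
	D = \{\+\tau'\in T^\pp\cap\+\s^\preceq\,:\,\chop{\+\tau'}\in C,\text{ or }\chop{\+\tau'}\notin C\text{ and }\Gamma(\+\tau')\perp\alpha\text{ for some }\alpha\in\Theta(\chop{\+\tau'})\}.
\]
The key step is that $D\cup B^\pp$ is $\+g$-small above~$\+\s$. Given any finite $\+g$-bushy tree system $S\subseteq T^\pp$ above~$\+\s$, $\chop{S}$ is $\chop{\+g}$-bushy above~$\chop{\+\s}$, and $\chop{\+g}$-smallness of~$C$ provides a leaf $\+\tau$ of $\chop{S}$ outside~$C$. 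If every leaf of the $g$-bushy tree $S(\+\tau)$ above~$\s_n$ were in $B^\pp(\+\tau)\cup\{\rho:\Gamma(\+\tau,\rho)\perp\Theta^{\+\tau}\}$, then taking a $g$-bushy tree~$R$ above~$\s_n$ witnessing that $A_{\Theta^{\+\tau}}(\+\tau)$ is $g$-big (possible since $\Theta^{\+\tau}\in\Theta(\+\tau)$), the leaves of $S(\+\tau)$ and of~$R$ would exhibit two subsets of $T^\pp(\+\tau)$, each $g$-big above~$\s_n$, which $\Gamma(\+\tau,-)$-split mod $B^\pp(\+\tau)$ (a leaf $\rho_0$ of $R$ outside $B^\pp(\+\tau)$ has $\Gamma(\+\tau,\rho_0)\succeq\Theta^{\+\tau}$, while a leaf $\rho_1$ of $S(\+\tau)$ outside $B^\pp(\+\tau)$ has $\Gamma(\+\tau,\rho_1)\perp\Theta^{\+\tau}$, forcing $\Gamma(\+\tau,\rho_0)\perp\Gamma(\+\tau,\rho_1)$), placing $\+\tau\in C$ and contradicting the choice of~$\+\tau$. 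Thus $S(\+\tau)$ has a leaf $\rho$ with $(\+\tau,\rho)\notin D\cup B^\pp$, establishing smallness.

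Hence $\qq=(\+\s,T^\pp\cap\+\s^\preceq,(D\cup B^\pp)\cap\+\s^\preceq, h^\pp, g)$ is a condition in~$\PP_n$ extending~$\pp$. For $\+x\in X^\qq$, no initial segment of $\+x$ lies in~$D$, so $\chop{\+\tau}\notin C$ for every~$\+\tau\prec \+x$ and $\Gamma(\+\tau)$ is compatible with every $\alpha\in\Theta(\chop{\+\tau})$; monotonicity and a compactness argument (mirroring that in the 2-step proof) show that $\Theta^*(\chop{\+x}) = \bigcup_{\+\tau\prec \+x}\Theta^{\chop{\+\tau}}$ is total and equals~$\Gamma(\+x)$, so $\qq$ forces $\Gamma(\+x^\GG)\le_\Tur \chop{\+x^\GG}$, contradicting the hypothesis on~$\pp$. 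The main obstacle to anticipate is verifying the smallness of $D\cup B^\pp$: the cleanest route is the ``double-splitting'' witness just described, where the assumed coverage of $S(\+\tau)$ by the bad set combines with the $g$-bigness of $A_{\Theta^{\+\tau}}(\+\tau)$ to manufacture a local $\Gamma(\+\tau,-)$-splitting at~$\+\tau$, immediately placing $\+\tau$ in the very set~$C$ that it was chosen to avoid.
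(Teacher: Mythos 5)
Your proposal is correct and is essentially the paper's own proof: the paper simply declares this lemma's proof ``identical to'' that of \cref{lem:two_steps:finding_local_splittings}, and you have carried out exactly that argument with $\chop{\+\s}$ and $\s_n$ in the roles of $\s$ and $\mu$. Your ``double-splitting'' justification for the $\+g$-smallness of $D\cup B^\pp$ correctly fills in the one step the two-step proof leaves implicit.
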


\begin{proof}
	Identical to the proof of \cref{lem:two_steps:finding_local_splittings}.
\end{proof}

\begin{lemma} \label{lemma:n_steps:combinatorial}
	Let~$\+g$ and $\+h$ be $n$-tuples of bounding functions; let~$B\subseteq \Strings{n}$ be open. Suppose  that:
	\begin{itemize}
		\item $\+\s,\+\s^*\in \Strings{n}$;
		\item $A$ is $3\+g$-big above~$\+\s$;
		\item $E_0$ and~$E_1$ are uniformly $3\+g$-big above~$A$; and for all $\+\tau\in A$, $E_0\cap \+\tau^\preceq$ and $E_1\cap \+\tau^\preceq$ locally~$\Gamma$-split mod~$B$; and
		\item $F$ is $3\+h$-big above $\+\s^*$, and $|\Gamma(\+\rho)|> |\Gamma(\+\zeta)|$ for all $\+\rho\in F\setminus B$ and all $\+\zeta\in E\setminus B$, where $E = E_0\cup E_1$.
	\end{itemize}
	Then there are $E'\subseteq E$, $\+g$-big above~$\+\s$, and $F'\subseteq F$, $\+h$-big above~$\+\s^*$, which $\Gamma$-split mod~$B$.
\end{lemma}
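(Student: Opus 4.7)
The proof will closely mirror the structure of \cref{lemma:two_steps:combinatorial}, lifted from $2$-tuples to $n$-tuples and using the weak concatenation property for length~$n$ forest systems (\cref{cor:n_step:weak_concatenation_property}) in place of its $n=2$ counterpart.

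For $\alpha\in 2^{<\w}$ set $F_{\succeq \alpha} = (F\cap B)\cup \{\+\rho\in F : \Gamma(\+\rho)\succeq \alpha\}$, and define $F_{\perp\alpha}$, $F_{\succneq\alpha}$, $E_{\succeq\alpha}$, $E_{\preceq\alpha}$, $E_{\perp\alpha}$ analogously. If $F\cap B$ is $\+h$-big above~$\+\s^*$ we take $F' = F\cap B$ and $E' = E$; similarly if $E\cap B$ is $\+g$-big above~$\+\s$. Otherwise, since $F$ is finite, $F_{\succeq \alpha}= F\cap B$ is $\+h$-small for $\alpha$ long enough, so there is a maximal $\alpha$ such that $F_{\succeq \alpha}$ is $\+h$-big above~$\+\s^*$. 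The goal is then to exhibit either (1) $E_{\perp\alpha}$ is $\+g$-big above~$\+\s$, or (2) $E_{\succeq\alpha}$ is $\+g$-big above~$\+\s$ and $F_{\perp\alpha}$ is $\+h$-big above~$\+\s^*$; both yield the desired $E'$ and $F'$.

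The case split is on $E_{\preceq\alpha}$. \emph{Case A:} $E_{\preceq\alpha}$ is $\+g$-big above~$\+\s$. Let $R\subseteq \Strings{n}$ be a $\+g$-bushy finite forest system witnessing this, and fix a leaf~$\+\zeta$ of $\chop{R}$; set $\+\tau = \+\zeta^{-\cchop{A}}$. Every leaf of $R(\+\zeta)$ is a string in $E_{\preceq\alpha}(\+\zeta)$ extending some $\rho\in A(\+\tau)$; restricting $R(\+\zeta)$ to initial segments of $A(\+\tau)$ shows $A(\+\tau)\cap R(\+\zeta)$ is $g_n$-big above~$s_n$ and for each such~$\rho$, $E_{\preceq\alpha}(\+\zeta)$ is $g_n$-big above~$\rho$. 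The uniform $3\+g$-bigness of $E_0,E_1$ above~$A$, combined with a preliminary restriction of~$R$ to the $\chop{\+g}$-big projection set (via the weak concatenation property) ensures that $E_0(\+\zeta)\cap\rho^\preceq$ and $E_1(\+\zeta)\cap\rho^\preceq$ are each $3g_n$-big above~$\rho$ and (by local $\Gamma$-splitting) $\Gamma(\+\zeta,-)$-split mod $B(\+\zeta)$. The $1$-step argument from \cref{lemma:one_step:combinatorial} then gives $E_{\perp\alpha}(\+\zeta)$ being $g_n$-big above~$\rho$: if neither $E_i(\+\zeta)\cap\rho^\preceq$ meets $E_{\preceq\alpha}\setminus B$, then $E\cap B$ would be $g_n$-big above~$\rho$ (contradicting our standing assumption when lifted via $\chop{R}$); otherwise the opposite $E_{1-i}(\+\zeta)\cap\rho^\preceq$ is contained in $E_{\perp\alpha}$. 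The $1$-step concatenation property (applied to strings) upgrades this to $E_{\perp\alpha}(\+\zeta)$ being $g_n$-big above~$s_n$. Since~$\+\zeta$ was an arbitrary leaf of~$\chop{R}$ and $\chop{R}$ is $\chop{\+g}$-bushy above~$\chop{\+\s}$, we conclude $E_{\perp\alpha}$ is $\+g$-big above~$\+\s$, and (1) holds.

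\emph{Case B:} $E_{\preceq\alpha}$ is $\+g$-small above~$\+\s$. Since $E = E_{\perp\alpha}\cup E_{\succeq\alpha}\cup E_{\preceq\alpha}$ is $3\+g$-big above the singleton~$\+\s$, the big subset property yields that either $E_{\perp\alpha}$ is $\+g$-big above~$\+\s$ (giving (1)), or $E_{\succeq\alpha}$ is $\+g$-big above~$\+\s$. In the latter subcase, our standing assumption that $E\cap B$ is $\+g$-small produces some $\+\nu\in E_{\succeq\alpha}\setminus B$, so $|\Gamma(\+\nu)|\ge |\alpha|$; the hypothesis on~$F$ gives $|\Gamma(\+\rho)|>|\alpha|$ for every $\+\rho\in F\setminus B$, hence $F\setminus B = F_{\succneq\alpha}\cup F_{\perp\alpha}$. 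The maximality of~$\alpha$ together with the big subset property (contrapositive) forces $F_{\succneq\alpha} = F_{\succeq\alpha\conc 0}\cup F_{\succeq\alpha\conc 1}$ to be $2\+h$-small above~$\+\s^*$; combined with $F\cap B$ being $\+h$-small and $F$ being $3\+h$-big, another application of the big subset property yields that $F_{\perp\alpha}$ is $\+h$-big above~$\+\s^*$, so (2) holds.

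The main obstacle is the uniform bigness step in Case A: the uniform largeness of $E_0,E_1$ above~$A$ holds only on a $\chop{\+g}$-big projection set of $(n-1)$-tuples, so the leaf~$\+\zeta$ of $\chop{R}$ must be chosen inside that set. This is handled by thinning $R$ via the weak concatenation property in the forest-system setting (\cref{cor:n_step:weak_concatenation_property}), intersecting $E_{\preceq\alpha}$ with the projection set before extracting the witnessing forest system. All the remaining bookkeeping is a direct transcription of the $n=2$ argument, with the big subset property invoked only over singleton tuples~$\+\s,\+\s^*$, where it is valid.
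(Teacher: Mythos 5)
Your proof is correct and is essentially the paper's: the paper proves this lemma simply by declaring it identical to the proof of \cref{lemma:two_steps:combinatorial}, which is exactly the transcription you carry out (the maximal $\alpha$, the same dichotomy on $E_{\preceq\alpha}$, the one-step argument of \cref{lemma:one_step:combinatorial} run inside $R(\+\zeta)$ for each leaf $\+\zeta$ of $\chop{R}$ followed by concatenation on the last coordinate, and the big subset property invoked only over the singletons $\+\s$ and $\+\s^*$ in Case B). Your extra care in Case A --- arranging that the chosen leaf $\+\zeta$ of $\chop{R}$ lies in the projection set where the uniform largeness of $E_0,E_1$ actually holds --- addresses a detail the paper leaves implicit (it is absorbed by the convention of \cref{rmk:two_step:we_assume_that_big_sets_are_leaves_of_tree_systems} that $E_0,E_1$ are taken to be the leaf sets of the systems witnessing their uniform largeness, so that any witness for $E_{\preceq\alpha}\subseteq E$ automatically has its $(n-1)$-domain leaves in that set), and does not change the argument.
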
		

\begin{proof}
	Identical to the proof of \cref{lemma:two_steps:combinatorial}.
\end{proof}

\begin{lemma} \label{lem:main_minimality_lemma}
	Suppose that $\pp\in \PP_n$ strongly forces that $\Gamma(\+x^\GG)$ is total, and forces that it is not computable from $\chop{\+x^\GG}$. Let $k\in \{0,1,\dots, n-1\}$. Let $C\subset T^\pp$ be finite and prefix-free. Let~$g\in \Quick$ such that $h^\pp\gg g$, and $h^\pp\ge 3^{|C|^2}g$ and $ g\ge b^\pp$ above $|\+\s|$ for all $\+\s\in C$.

	Then there is a set~$A\subset T^\pp$, $\+g$-big above~$C$, such that for all $\+\tau\in \dom_k A$, the sets in the collection
	\[
		\left\{ A(\+\tau)\cap (\rho,\+\s^\pp\rest{(k+1,n]})^\preceq \,:\,  \rho\in \dom_1 A(\+\tau) \right\}
	\]
	pairwise $\Gamma(\+\tau,-)$-split mod~$B^\pp(\+\tau)$.
\end{lemma}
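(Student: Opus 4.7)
I intend to prove the lemma by induction on~$|C|$, extending the two-step argument of \cref{lem:two_steps:getting_many_global_splittings} to accommodate the parameter $k$ and the length-$n$ structure. In the base case $|C|=1$ we take $A = C$ itself: the trivial forest system (with no bushy extensions at any level) witnesses that $C$ is $\+g$-big above $C$, and since $\dom_1 A(\+\tau)$ reduces to a singleton — the $(k{+}1)$-st coordinate of the unique element of $C$ whose first $k$ coordinates are a prefix of $\+\tau$ — the pairwise splitting conclusion is vacuous.

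For the inductive step, write $C^* = C \cup \{\+\s^*\}$ with $|C^*| = m+1$ and $|C| = m$. First invoke the inductive hypothesis on $C$ with the function $3^{2m+1}g$ in place of $g$; since $h^\pp \ge 3^{(m+1)^2}g = 3^{m^2}\cdot 3^{2m+1}g$ the hypotheses transfer, and we obtain $A$ which is $(3^{2m+1}\+g)$-big above $C$ and satisfies the splitting conclusion for~$C$. Enumerate the \emph{splittable} elements of $C$, namely those $\+\s_j \in C$ with $\+\s_j\rest{k} = \+\s^*\rest{k}$ but $(\+\s_j)_{k+1} \ne (\+\s^*)_{k+1}$ — exactly those against which the conclusion requires a splitting with $\+\s^*$ — and let $l \le m$ be their number. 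By reverse recursion on $j = l, l-1, \dots, 0$ construct $A_j \subset T^\pp$ which is $3^j\+g$-big above $C^*$, satisfies $A_j \cap \+\s^\preceq \subseteq A^\preceq$ for each $\+\s \in C$ (so that the splittings inherited from~$A$ are preserved), and ensures that $A_j \cap (\+\s^*)^\preceq$ and $A_j \cap \+\s_i^\preceq$ $\Gamma$-split mod $B^\pp$ for every $i > j$. Starting from $A_l = A \cup \{\+\s^*\}$, the step $A_j \to A_{j-1}$ proceeds as in the two-step case: use \cref{lem:step_n:finding_local_splittings} to find $E_{j,0}, E_{j,1}$ uniformly $3^j\+g$-big above each point of $A_j \cap \+\s_j^\preceq$ and locally $\Gamma$-splitting mod $B^\pp$; use the strong forcing of totality of $\Gamma(\+x^\GG)$ by~$\pp$ to find a set $F_j$ which is $3^j\+g$-big above $A_j \cap (\+\s^*)^\preceq$ and all of whose $\Gamma$-values exceed those in $E_j = E_{j,0} \cup E_{j,1}$; finally apply the combinatorial \cref{lemma:n_steps:combinatorial} to thin to $E'_j$ and $F'_j$, each $3^{j-1}\+g$-big above their respective basepoints, which $\Gamma$-split mod~$B^\pp$. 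Define $A_{j-1}$ by replacing the portions of $A_j$ above $\+\s_j$ and above $\+\s^*$ by $E'_j$ and $F'_j$ respectively, leaving the portions above the other $\+\s_i \in C \setminus \{\+\s_j\}$ intact. The final $A_0$ is $\+g$-big above $C^*$, and by construction the splittings across all splittable pairs $(\+\s_j,\+\s^*)$ now hold.

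\textbf{Main obstacle.} The subtlest point is ensuring the full ``pairwise'' conclusion over all $\rho \in \dom_1 A(\+\tau)$ — in particular, the splittings at level $k+1$ \emph{within} the column above a single element of $C^*$, where multiple $\rho$'s arise from the $\+g$-bushy extension at level $k+1$. Splittings across different columns are produced by the reverse recursion; splittings inside columns above elements of $C$ are inherited from the inductive hypothesis for~$C$ (which includes this strong form of pairwise splitting). The genuinely new work is therefore to guarantee pairwise splittings at level $k+1$ within the column above $\+\s^*$, which the naive combinatorial step does not supply. I plan to construct each $F_j$ not as an arbitrary $\+g$-big witness but as one built from iterated applications of \cref{lem:step_n:finding_local_splittings}, one per pair of children at level $k+1$ above $\+\s^*$, so that $F_j$ already carries internal local splittings at level $k+1$; the combinatorial Lemma preserves these under refinement to $F'_j$, and the remaining bookkeeping — tracking the $3^{(\cdot)}$ loss factors so that the final bound remains $3^{(m+1)^2}g$, and maintaining the end-extension property through successive refinements — is routine but delicate.
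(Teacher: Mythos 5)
Your overall architecture matches the paper's: induction on $|C|$, adding a single $\+\s^*$; identifying the elements $\+\s_j\in C$ with $\+\s_j\rest{k}=\+\s^*\rest{k}$ but $(\s_j)_{k+1}\ne\s^*_{k+1}$ as the only ones requiring new splittings; and a reverse recursion which at each step extracts local splittings from \cref{lem:step_n:finding_local_splittings} (glued over all of $A_j\cap\+\s_j^\preceq$ via the weak concatenation machinery), a set $F_j$ of longer $\Gamma$-values from strong forcing of totality, and then thins via \cref{lemma:n_steps:combinatorial} at the cost of a factor of~$3$ per step; your factor bookkeeping works. But you diverge in two places. First, your ``main obstacle'' is not where the paper's proof puts the work: the paper never produces splittings between two $\rho\ne\rho'$ in $\dom_1 A(\+\tau)$ extending the \emph{same} $(k{+}1)$-st coordinate of a single element of $C^*$. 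It only splits the classes indexed by the distinct (hence incomparable) $(k{+}1)$-st coordinates of the elements of $C^*$ lying below $\+\tau$, exactly as in \cref{lem:two_steps:getting_many_global_splittings}, and that weaker form is all \cref{prop:n_step:final_straw} uses --- two generic paths differing at coordinate $k{+}1$ eventually pass through distinct leaves of some finite stage $S_t$, i.e.\ distinct elements of the next~$C$. So the extra machinery you sketch for internal splittings inside $F_j$ (the vaguest part of your plan) addresses a stronger reading of the statement than the paper proves or needs; it is closer to the optional \cref{rmk:two_step:totally_splitting}.

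Second, the genuine gap: for $k>0$ you cannot run the step ``as in the two-step case.'' The conclusion requires, for each $\+\tau\in\dom_k A$ above the shared prefix $\+\s^*\rest{k}=\+\s_j\rest{k}$, a $\Gamma(\+\tau,-)$-splitting between the two columns; and, more basically, the pieces of $A_{j-1}$ above $\+\s_j$ and above $\+\s^*$ must sit over the \emph{same} $k$-tuples, or else $A_{j-1}$ is not a forest system that is $\+g$-big above $C^*$ at all. A single global application of \cref{lemma:n_steps:combinatorial} gives neither, since $\dom_k E'_j$ and $\dom_k F'_j$ would be unrelated sets. The paper's fix is to first arrange $\dom_k E_j=\dom_k F_j$ --- possible precisely because the $k$-prefixes agree: extend one domain to the other and relabel --- and then apply \cref{lemma:n_steps:combinatorial} separately inside $T^\pp(\+\zeta)$ for each $\+\zeta$ in that common $k$-domain, with $n-k$ playing the role of $n$. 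You need to add this alignment step; without it the recursion does not close for $k>0$.
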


\begin{proof}
	The notation for the case $k=0$ is slightly easier. In this case we closely follow the proof of \cref{lem:two_steps:getting_many_global_splittings}. For simplicity of notation, for a set $A\subseteq T^\pp$ and some tuple $\+\tau\in \dom_k T^\pp$ (for some $k<n$) we let $A\cap (\+\tau)^\preceq = A\cap (\+\tau,\+\s^\pp\rest{(k,n]})^\preceq$. We prove the lemma by induction on~$|C|$; we let $C^* = C\cup \{\+\s^*\}$; by induction we are given $A$ which is $3^{|C|}g$-big above~$C$, and the sets $A\cap (\rho)^\preceq$ (for $\rho\in \dom_1 C$) pairwise~$\Gamma$-split mod~$B^\pp$. We list the elements $\+\s_1,\+\s_2,\dots, \+\s_m$ of~$C$ such that $(\s_j)_1\ne \s^*_1$. By reverse recursion on $j\le m$ we define sets~$A_j\subset T^\pp$ with $A_{j-1}\subset A_j^\preceq$ and $A_m\cap \+\s^\preceq \subset A^\preceq$ for all $\+\s\in C$. We ensure that~$A_j$ is $3^j\+g$-big above~$C^*$ and that  $A_{j-1}\cap \+\s_j^\preceq$ and $A_{j-1}\cap \+\s^*$ form a $\Gamma$-splitting mod~$B$.

	We start with $A_m = A \cup \{\+\s^*\}$. Say we are given~$A_j$, $j>0$. For brevity let~$D_j = \chop{(A_j \cap \+\s_j^\preceq)}$.
	For $\+\tau\in A_j \cap \+\s_j^\preceq$ we let $Q_{\+\tau}$ be the set of $\+\zeta\in {D_j}^\preceq\cap \chop{T^\pp}$ such that either:
	\begin{itemize}
		\item $\chop{\+\tau}\npreccurlyeq \+\zeta$; or
		\item in $T^\pp(\+\zeta)$ there are $G_0$ and~$G_1$, $3^jg$-big above~$\tau_n$, which $\Gamma(\+\zeta,-)$-split mod $B^\pp(\+\zeta)$.
	\end{itemize}
	Then \cref{lem:step_n:finding_local_splittings} says that for all $\+\mu\in D_j^\preceq \cap \chop{T^\pp}$ the set $Q_{\+\tau}$ is $3^j\+g$-big above~$\+\mu$. By \cref{lem:n_step:weak_concatenation_the_meat}, $Q_{\+\tau}$ is $3^j\+g$-big above $D_j^\preceq \cap \chop{T^\pp}$. By \cref{lem:n_step:analogue_of_one_step_induction}, $\bigcap_{\+\tau\in A_j\cap \+\s_j^\preceq} Q_{\+\tau}$ is $3^j\+g$-big above~$D_j$. Thus, we can find $E_{j,0}$ and~$E_{j,1}$, finite subsets of~$T^\pp$ which are uniformly $3^j\+g$-big above~$A_j\cap \+\s_j^\preceq$, which locally $\Gamma$-split mod~$B^\pp$. We obtain~$F_j$ as before. Applying \cref{lemma:n_steps:combinatorial} we finally get $F'_j\subset A_j^\preceq \cap (\+\s^*)^\preceq$, $3^{j-1}\+g$-big above~$\+\s^*$, and $E'_j\subset A_j^\preceq \cap \+\s_j^\preceq$, $3^{j-1}\+g$-big above~$\+\s_j$, which $\Gamma$-split mod~$B^\pp$.

	In this proof we emply the following notation: for a set $X\subset \Strings{n}$ and $k\le n$ we let $X_k = \dom_k X$. To define a set~$X$ it suffices to first define $X_1$; then, for all $\tau_1\in X_1$, define $X_2(\tau_1)$ (a set of strings); then, for all $(\tau_1,\tau_2)\in X_2$, define $X_3(\tau_1,\tau_2)$, and so on.

	We define the set~$A_{j-1}$. First, we consider all $\+\s\in C^*$ such that $\s_1\ne \s^*_1,(\s_j)_1$. For all such $\+\s$ we let $A_{j-1}\cap \+\s^\preceq = A_j\cap \+\s^\preceq$. We let $A_{j-1,1}\cap (\s^*_1)^\preceq = (F'_j)_1$ and $A_{j-1,1}\cap ((\s_j)_1)^\preceq = (E'_j)_1$.

	Next, consider all $\+\s\in C^*$ such that $\s_1 = \s^*_1$, but $\s_2\ne \s^*_2$. For all $\tau_1\in (F'_j)_1$ we let $A_{j-1}(\tau_1)\cap (\+\s\rest{(1,n]}) = A_j(\tau_1^{-A_{j,1}})\cap (\+\s\rest{(1,n]})$; this completely defines $A_{j-1}\cap \+\s^\preceq$. We similarly define $A_{j-1}\cap \+\s^\preceq$ for $\+\s\in C^*$ such that $\s_1 = (\s_j)_1$ but $\s_2\ne (\s_j)_2$. Then, for all $\tau_1\in (F'_j)_1$ we let $A_{j-1,2}(\tau_1) = (F'_j)(\tau_1)$; this defines $A_{j-1,2}\cap (\+\s^*\rest{2})^\preceq$, and similarly define $A_{j-1,2}\cap (\+\s_j\rest{2})^\preceq$. The process continues similarly until all of~$A_{j-1}$ is defined.

	\bigskip

	The case $k>0$ is very similar. Morally it follows the idea of the proof of \cref{lem:two_steps:finding_many_local_splittings}, extending bushily on the first~$k$ coordinates so that we can emulate the proof of the case $k=0$ (but with $n-k$ replacing~$n$) within the image. We give a sketch. Again we work by induction on~$|C|$; we start with some~$C$ for which we inductively already have~$A$ as required; and add to~$C$ a tuple~$\+\s^*$ to get~$C^*$. We now let the list~$\+\s_1,\+\s_2,\dots, \+\s_m$ contain those elements $\+\s\in C$ such that $\+\s\rest k = \+\s^*\rest{k}$ but $\s_{k+1}\ne \s^*_{k+1}$. We start with $A_m = A\cup \{\+\s^*\}$ and build sets~$A_j$ with the same properties as above. Given~$A_j$ we aim to find~$E_{j,0},E_{j,1}$ and~$F_j$ as above, except that we also require that $\dom_k E_j = \dom_k F_j$; this is possible because $\+\s_j\rest{k} = \+\s^*\rest{k}$: we first get~$E_j$ as above, and then extend $\dom_k E_j$ to $\dom_k F_j$; and ``relabel'' $E_j$ by letting $E_j(\zeta) = E_j(\tau)$ for all $\zeta\in \dom F_j$ extending $\tau\in \dom E_j$. Then we obtain $E'_j$ and~$F'_j$ but require that $\dom_k E'_j = \dom_k F'_j = \dom F_j$; we apply \cref{lemma:n_steps:combinatorial} within $T^\pp(\+\zeta)$ for each $\+\zeta\in \dom F_j$. We then define $A_{j-1}$ as above.
\end{proof}

\begin{proposition} \label{prop:n_step:final_straw}
	Every condition in~$\PP_n$ forces that $\deg_\Tur(\+x^\GG)$ is a strong minimal cover of $\deg_\Tur(\chop{\+x^\GG})$.
\end{proposition}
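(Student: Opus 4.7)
This proposition is the $n$-step analogue of \cref{prop:two_step:minimality}, and the plan is to follow the construction of \cref{prop:two_step:forcing_a_strong_minimal_cover}, with \cref{lem:main_minimality_lemma} doing the work of the two-step splitting lemmas. By the standard case split (extending $\pp$ to force partiality of $\Gamma(\+x^\GG)$, or to force $\Gamma(\+x^\GG) \le_\Tur \chop{\+x^\GG}$, and using \cref{prop:n_step:totality} to strongly force totality when neither extension exists), it suffices to assume that $\pp$ strongly forces $\Gamma(\+x^\GG)$ total and forces $\Gamma(\+x^\GG) \nle_\Tur \chop{\+x^\GG}$, and then to find an extension of $\pp$ forcing $\Gamma(\+x^\GG) \equiv_\Tur \+x^\GG$. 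Strictness of $\chop{\+x^\GG} <_\Tur \+x^\GG$ is automatic since every condition forces $x_n^\GG$ to be $\DNC$ relative to~$\chop{\+x^\GG}$.

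Choose $g \in \Quick$ with $h^\pp \gg g \gg b^\pp$ using \cref{lem:density_of_gg}, and extend $\+\s^\pp$ via the $n$-step analogue of \cref{lem:two_step:can_take_full_subtree} (stated after \cref{cor:n_step:weak_concatenation_property}) so that $g \ge b^\pp$ and $h^\pp \ge 3^{\bar g^{\,2}} g$ above $|\+\s^\pp|$, where $\bar g(m) = \prod_{k<m} g(k)$. Build an increasing sequence $\seq{S_k}$ of finite, exactly $\+g$-bushy, balanced sub-tree-systems of $T^\pp$ with $S_{k+1}$ a proper end-extension of $S_k$ whose leaves all lie at a common balanced level $\ell_{k+1}$ of~$T^\pp$. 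To go from $S_k$ to $S_{k+1}$, iterate \cref{lem:main_minimality_lemma} through the parameter values $k' = 0, 1, \dots, n-1$: at sub-step $k' = i$, with $C$ the current set of leaves, the lemma produces a $\+g$-bushy end-extension in which, for every $\+\tau \in \dom_i A$, the sets $A(\+\tau) \cap (\rho,\+\s^\pp\rest{(i+1,n]})^\preceq$ pairwise $\Gamma(\+\tau,-)$-split modulo~$B^\pp$. Because every $\+\mu \in T^\pp$ satisfies $\mu_j \succeq \s^\pp_j$ for each $j$, any two leaves $\+\mu, \+\mu'$ of the new forest that agree on the first $i$ coordinates but differ on the $(i+1)$-th automatically lie in two distinct such restricted sets and so satisfy $\Gamma(\+\mu) \perp \Gamma(\+\mu')$ once outside $B^\pp$. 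The size constraint $h^\pp \ge 3^{|C|^2}g$ required by \cref{lem:main_minimality_lemma} holds because $|C|$ at each sub-step is bounded by $\bar g(\ell_{k+1})$, and $h^\pp \ge 3^{\bar g^{\,2}} g$ above $|\+\s^\pp|$ by choice. After all $n$ sub-steps, end-extend within $T^\pp$ to the next balanced level to obtain $S_{k+1}$.

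Setting $S = \bigcup_k S_k$, one obtains a computable, computably bounded, balanced $\+g$-bushy tree system above~$\+\s^\pp$. The accumulated splittings guarantee that $\Gamma$ is $1$-$1$ on $[S] \setminus [B^\pp]^\prec$: any two distinct paths $\+x, \+x'$ first differ at some coordinate $i+1$, this divergence occurs at some stage $k$, and the sub-step $k' = i$ at that stage forces $\Gamma(\+x \rest{\ell_{k+1}}) \perp \Gamma(\+x' \rest{\ell_{k+1}})$, which monotonicity of~$\Gamma$ preserves upward. Therefore $(\+\s^\pp, S, B^\pp \cap S, g, b^\pp) \in \PP_n$ is a condition extending $\pp$ and, by \cref{lem:1-1_or_constant_on_compact_space} applied after identifying $\Baire{n}$ with Baire space, forces $\Gamma(\+x^\GG) \equiv_\Tur \+x^\GG$, as needed. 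The principal obstacle is the nested splitting bookkeeping: verifying that sub-step~$i$ does not spoil the splittings established at sub-steps $0, \dots, i-1$, which is secured by the fact that the splittings from \cref{lem:main_minimality_lemma} are local in the first $i$ coordinates and therefore persist under end-extension on coordinates $i+1, \dots, n$, together with checking that the size inequalities assemble consistently, which is arranged by the $\bar g^{\,2}$ margin in the initial choice of $g$.
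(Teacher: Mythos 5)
Your proposal is correct, and it is a legitimate variant of what the paper does; the difference is in how the $n$ splitting requirements are assembled. The paper fixes one $k\in\{0,\dots,n-1\}$ at a time, runs the familiar splitting-tree construction with \cref{lem:main_minimality_lemma} for that single $k$, and concludes via \cref{lem:1-1_on_sections_of_compact} that the resulting condition forces $\Gamma(\+x^\GG)\oplus(\+x^\GG\rest{k})\ge_\Tur x^\GG_{k+1}$; it then iterates, extending the condition $n$ times, and chains the reductions to get $\Gamma(\+x^\GG)\ge_\Tur\+x^\GG$. You instead interleave all $n$ values of $k$ within each stage of a single tree-system construction and conclude that $\Gamma$ is globally injective on $X^\qq$, invoking \cref{lem:1-1_or_constant_on_compact_space} once --- essentially the ``totally $\Gamma$-splitting'' route that the paper itself flags in \cref{rmk:two_step:totally_splitting} as an alternative. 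The paper's decomposition buys simpler bookkeeping at each stage (each construction only has to respect one splitting requirement); yours buys a single construction and a cleaner endgame, at the cost of the interference check you identify: that the sub-step-$j$ splittings survive the later sub-steps and the final end-extension to a balanced level. Your justification of that check is right for the right reasons --- if two paths of $S$ outside $[B^\pp]^\prec$ first differ in coordinate $i+1$, then at a late enough stage they pass through leaves $\+\lambda,\+\lambda'$ of the sub-step-$i$ forest with $\+\lambda\rest{i}=\+\lambda'\rest{i}$ (prefix-freeness of $\dom_i A$) and $\lambda_{i+1}\perp\lambda'_{i+1}$, so the local splitting applies and monotonicity of $\Gamma$ propagates the incomparability to the paths. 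The only soft spot is the exact exponent in the largeness budget ($|C|$ at an intermediate sub-step is the number of leaves of the whole intermediate forest, which for a length-$n$ exactly $\+g$-bushy system is of order $\bar g^{\,n}$ rather than $\bar g$), but since $h^\pp\gg g$ dominates every fixed tower built from iterates of $g$ (cf.\ \cref{lem:bounded_products}), this is the same harmless looseness the paper allows itself in \cref{prop:two_step:forcing_a_strong_minimal_cover}.
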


\begin{proof}
	Let $\pp\in \PP_n$ which strongly forces that~$\Gamma(\+x^\GG)$ is total, and forces that it is not computable from $\chop{\+x^\GG}$. Fix $k\in \{0,1,\dots, n-1\}$. Using \cref{lem:main_minimality_lemma} and the by now familiar construction we obtain an extension~$\qq$ of~$\pp$ which (strongly) forces that $\Gamma(\+x^\GG)\oplus (\+x^\GG\rest{k})\ge_\Tur x^\GG_{k+1}$. Iterating for each~$k$ we obtain a condition which forces that $\Gamma(\+x^\GG) \equiv_\Tur \+x^\GG$.
\end{proof}


\section{Proof of the main theorem} 
\label{sec:proof_of_the_main_theorem}

We prove \cref{thm:main}. We have obtained directed sequence of forcing notions

\begin{center}
	\begin{tikzpicture}
	\foreach \i/\j in {1/0,2/2,3/4,4/6}	
		\node (Q\i) at (\j,0) {$\QQ_\i$};
	\foreach \i/\j in {1/2,2/3,3/4}
	\draw[<-,>=latex] (Q\i) -- node[above] {$i_\j$} (Q\j);
	\draw[<-,>=latex] (Q4) -- node[above] {$i_5$} (8,0);
	\node at (8.5,0) {$\cdots$};
\end{tikzpicture}
\end{center}

With each~$i_n$ a restrction map. For $m<n$ let $i_{n\to m} = i_{m+1}\circ i_{m+2}\circ \cdots \circ i_n$ (and of course let $i_{n\to n} = \id_{\QQ_n}$). A composition of restriction maps is a restriction map, so each $i_{n\to m}$ is a restriction map. 

As sets, the forcing notions~$\QQ_n$ are pairwise disjoint. Let $\QQ_{<\w} = \bigcup_n \QQ_n$. We order~$\QQ_{<\w}$ as follows: if $\pp\in \QQ_n$ and $\qq\in \QQ_{m}$ then~$\qq$ extends~$\pp$ if $m\ge n$ and $i_{m\to n}(\qq)\le \pp$ in $\QQ_n$. Note that the ordering on each~$\QQ_n$ agrees with this ordering. 

For~$n< \w$ let $\QQ_{\le n} = \bigcup_{m\le n} \QQ_m$, ordered as a sub-order of~$\QQ_{<\w}$. Define $j_{\w\to n}\colon \QQ_{<\w}\to \QQ_{\le n}$ by letting, for $\qq\in \QQ_m$, $j_{\w\to n}(\qq) = \qq$ if $m\le n$, and otherwise $j_{\w\to n}(\qq) = i_{m\to n}(\qq)$. For $m\ge n$ let $j_{m\to n}\colon \QQ_{\le m}\to \QQ_{\le n}$ be $j_{\w\to n}\rest{\QQ_{\le m}}$. These maps are restriction maps and they commute: for $n\le m \le \alpha\le \w$, $j_{\alpha\to n} = j_{m\to n}\circ j_{\alpha\to m}$. 

Let~$G_{<\w}\subset \QQ_{<\w}$ be very generic. Let $G_{\le n}$ be the filter in~$\QQ_{\le n}$ generated by the generic directed set $j_{\w\to n}[G_{<\w}]$. By \cref{lem:n_step:restriction_is_onto}, each~$\QQ_n$ is dense in~$\QQ_{\le n}$; so $G_n = G_{\le n}\cap \QQ_n$ is a fairly generic filter of~$\QQ_n$; and $i_{m\to n}[G_m] \subseteq G_n$. 

This gives us a sequence $x_1,x_2,\dots$ of elements of Baire space such that $(x_1,\dots, x_n) = \+x^{G_n}$. By \cref{prop:n_step:final_straw}, each tuple $(x_1,\dots, x_n)$ is a strong minimal cover of $(x_1,\dots, x_{n-1})$; and $x_n\in \DNC^{(x_1,\dots, x_{n-1})}$.



\

\bibliographystyle{plain}
\bibliography{DNRchain}

%
%
%
%
%
%

\end{document}